\numberwithin{equation}{section}
\newcommandx{\unsure}[2][1=]{\todo[linecolor=red,backgroundcolor=red!25,bordercolor=red,#1]{#2}}
\newcommandx{\change}[2][1=]{\todo[linecolor=blue,backgroundcolor=blue!25,bordercolor=blue,#1]{#2}}
\newcommandx{\info}[2][1=]{\todo[linecolor=OliveGreen,backgroundcolor=OliveGreen!25,bordercolor=OliveGreen,#1]{#2}}
\newcommandx{\improvement}[2][1=]{\todo[linecolor=Plum,backgroundcolor=Plum!25,bordercolor=Plum,#1]{#2}}
\newcommand{\periodafter}[1]{#1.}
\titleformat{\paragraph}[runin]{\bfseries}{\theparagraph}{}{\periodafter}
\theoremstyle{plain}
\newtheorem{Thm}{Theorem}[section]
\newtheorem{Lemma}[Thm]{Lemma}
\newtheorem{Cor}[Thm]{Corollary}
\newtheorem{Prop}[Thm]{Proposition}
\newtheorem{Claim}{Claim}[section]
\newtheorem{ClaimRoman}{Claim}
 \theoremstyle{definition}
\newtheorem{Rmk}[Thm]{Remark}
\newtheorem{?}[Thm]{Problem}
\newtheorem{constr}{Constraint}[section]
\newtheorem*{Rmk*}{Remark}
\newenvironment{Ex*}
 {\pushQED{\qed}\Ex}
 {\popQED\endEx}
\newcommand{\diam}{{\rm{diam}}}
\newcommand{\HS}{\mathcal{H}_{\epsilon_d}(\star)}
\newcommand{\PH}{\partial \mathcal{H}_{\epsilon_d}(\star)}
\DeclareMathOperator{\Lip}{Lip}
\DeclareMathOperator{\Sob}{Sob}
\DeclareMathOperator{\vol}{vol}
\DeclareMathOperator{\disc}{disc}
\DeclareMathOperator{\SO}{SO}
\DeclareMathOperator{\GL}{GL}
\DeclareMathOperator{\inj}{inj}
\DeclareMathOperator{\Gr}{\mathbf{Gr}}
\DeclareMathOperator{\Conf}{\mathbf{Conf}}
\DeclareMathOperator{\isom}{Isom}
\DeclareMathOperator{\supp}{supp}
\DeclareMathOperator{\spanset}{span}
\DeclareMathOperator{\bigO}{O}
\title{\bfseries Effective equidistribution of intersection points in hyperbolic manifolds}
\author{Tina Torkaman and Yongquan Zhang}
\date{November 30, 2025}
\keywords{Hyperbolic manifolds, geodesic submanifolds, intersection points, effective equidistribution.}
\begin{document}
\maketitle
\begin{abstract}
    In this paper, we establish effective equidistribution of transverse intersection points between properly immersed totally geodesic submanifolds of complementary dimensions in a finite-volume hyperbolic manifold with respect to the hyperbolic volume measure, as the volume of the submanifolds tends to infinity.
\end{abstract}

\tableofcontents

\section{Introduction}
Let $M$ be a complete hyperbolic $d$-manifold, i.e.\ a complete Riemannian $d$-manifold of constant sectional curvature $-1$. In this paper, we always assume that $M$ has finite volume. The main goal of this paper is to study the distribution of transverse intersection points between properly immersed totally geodesic submanifolds of complementary dimensions in $M$. First, we discuss the case of closed geodesics and geodesic hypersurfaces.

\paragraph{Intersection of geodesics and hypersurfaces}
It is well-known that closed geodesics are equidistributed on $M$ when $M$ has finite volume \cite{anosov_syst, geodesic_equidistr, roblin}. When $d>2$, it is also known that any infinite sequence of distinct properly immersed totally geodesic hypersurfaces (which we call \emph{geodesic hypersurfaces} for simplicity) is equidistributed \cite{measure_equidistr}, but such a sequence may not exist. A recent remarkable result states that if $M$ contains infinitely many geodesic hypersurfaces, then it is arithmetic \cite{arithmetic_geodesic} (see also \cite{arithmetic_geodesic_dim3} in dimension 3). In fact, we only need to assume that $M$ is arithmetic and contains at least one geodesic hypersurface. It then follows from the density of the commensurator of an arithmetic group \cite{dense_commensurator} that $M$ contains infinitely many distinct geodesic hypersurfaces (see also \cite[Theorem~9.5.6]{arithmetic} in dimension 3). Such examples can be found in abundance, see e.g.\ \cite[\S2.8]{Non-arithmetic}.

Let $M$ be an arithmetic hyperbolic manifold of dimension $d\ge3$ containing at least one geodesic hypersurface. Given any positive number $\iota$ smaller than the Margulis constant, denote by $M_{\iota}$ the union of $\iota$-thin cuspidal neighborhoods for each cusp of $M$ (in particular, $M_\iota=\varnothing$ when $M$ is \emph{closed}, i.e.\ compact without boundary).
Let $\gamma_T$ be the sum of closed geodesics on $M$ with length $\le T$, and $\{S_n\}$ an infinite sequence of distinct geodesic hypersurfaces in $M$.
We want to determine the distribution of the intersection points between $\gamma_T$ and $S_n$ on $M$ as $n, T \to \infty$, with effective error terms.

Let $I(\gamma_T, S_n)$ denote the sum of the delta measures supported on the transverse intersection points between $\gamma_T$ and the hypersurface $S_n$, each weighted by its multiplicity. Let $\vol$ denote the hyperbolic volume measure on $M$. Given a finite measure $\mu$, let $|\mu|$ denote its total volume and $\mu^1:=\mu/|\mu|$ the normalized probability measure. For simplicity, throughout this paper, we use $C,\epsilon,\epsilon_1,\epsilon_2, \epsilon_2',q$ to denote certain positive constants depending only on the geometry of $M$, but their exact values may change for each appearance. Our main result is the following.

\begin{Thm}\label{thm: main}
Let $M$ be an arithmetic hyperbolic manifold of dimension $d \geq 3$. Let $\gamma_T$ be the sum of closed geodesics with length $\leq T$ and $\{S_n\}$ a sequence of distinct geodesic hypersurfaces in $M$. Then there exist positive constants $C, \epsilon_1,\epsilon_2,q$ depending only on the geometry of $M$, so that for any compactly supported smooth function $f \in C_c^{\infty}(M)$, we have 
$$
\left|\int_{M} f \, d\vol^1-\int_{M} f \, d\,I^1(\gamma_T,S_n)\right| \leq C(e^{-\epsilon_1T}+\vol(S_n)^{-\epsilon_2})\Sob_{q}(f),
$$
where $\Sob_{q}(f)$ is a degree $q$ Sobolev norm of $f$.
\end{Thm}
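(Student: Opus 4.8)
The plan is to reduce the theorem to two effective equidistribution statements --- one for closed geodesics, one for the hypersurfaces $S_n$ --- glued together by a geometric ``thickening along the geodesic flow'' argument in the unit tangent bundle $T^1M$. Write $G=\mathrm{SO}^+(d,1)$, $\Gamma=\pi_1(M)$, so that $T^1M=\Gamma\backslash G/\mathrm{SO}(d-1)$ carries the geodesic flow $(a_t)$, the Liouville measure $m$, and the projection $\pi\colon T^1M\to M$. A closed geodesic $c$ of length $\ell(c)$ is a periodic $(a_t)$-orbit with arc-length measure $\mathcal L_c$ of mass $\ell(c)$; the hypersurface $S_n$ lifts to the codimension-one submanifold $W_n:=\pi^{-1}(S_n)=T^1M|_{S_n}$; and a transverse intersection point $p\in c\cap S_n$ with interior angle $\theta_p\in(0,\pi/2]$ is precisely a transverse crossing of the orbit of $c$ with $W_n$ at the vector $\dot c(t_p)$, with $f(p)=\bar f(\dot c(t_p))$ for $\bar f:=f\circ\pi$. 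The key observation is that upon thickening $W_n$ in the flow direction, $W_n^\delta:=\{a_s w: w\in W_n,\ |s|\le\delta\}$, each crossing contributes a sojourn interval of length \emph{exactly} $2\delta$ to the orbit, irrespective of $\theta_p$, as long as $W_n^\delta$ is an embedded flow box near that crossing and no two intersection points of $c$ lie within flow-time $2\delta$ of each other. Hence, choosing smooth cut-offs $\chi_{n,\delta}\approx(2\delta)^{-1}\mathbf 1_{W_n^\delta}$ with $\Sob_q(\chi_{n,\delta})\lesssim\delta^{-q}$ (uniformly in $n$, after excising a ``thin part'' $S_n^{\mathrm{thin}}\subset S_n$ where $W_n$ self-approaches or goes deep into a cusp), one obtains
\[
\int_M f\,dI(\gamma_T,S_n)=\sum_{\ell(c)\le T}\int_0^{\ell(c)}(\bar f\chi_{n,\delta})(\dot c(t))\,dt+O\big(|\gamma_T|\,(\delta+\vol(S_n^{\mathrm{thin}}))\big)\Sob_q(f).
\]

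Next I would feed the main term into the effective equidistribution of closed geodesics in $T^1M$ toward $m$ --- which follows from the exponential mixing of the geodesic flow, available for any finite-volume hyperbolic manifold thanks to its spectral gap --- in the form $\sum_{\ell(c)\le T}\mathcal L_c=|\gamma_T|\big(m^1+O(e^{-\epsilon_1 T})\Sob_q(\cdot)\big)$, $|\gamma_T|:=\sum_{\ell(c)\le T}\ell(c)$, applied to $\bar f\chi_{n,\delta}$. Using the co-area identity $\int_{T^1M}\mathbf 1_{W_n^\delta}\,h\,dm=2\delta\int_{W_n}h\,|\langle X,\nu_{W_n}\rangle|\,d\vol_{W_n}+O(\delta^2)$, the pointwise evaluation $|\langle X,\nu_{W_n}\rangle|(v)=\sin\theta_v$ (the flow direction is the horizontal lift of $v$, the normal to $W_n$ the horizontal lift of the normal to $S_n$), and the fact that $\bar f$ is constant along the fibres of $\pi$ --- so the spherical integral $\int_{S^{d-1}}\sin\theta\,d\theta=:c_d$ factors out --- and then balancing $\delta=e^{-\epsilon_1 T/(q+1)}$, one gets
\[
\int_M f\,dI(\gamma_T,S_n)=\frac{c_d\,|\gamma_T|}{|m|}\int_{S_n}f\,d\vol_{S_n}+O\big(|\gamma_T|\,(e^{-\epsilon_1' T}+\vol(S_n^{\mathrm{thin}}))\big)\Sob_q(f).
\]

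Finally I would invoke the effective equidistribution of the geodesic hypersurfaces themselves, $\vol(S_n)^{-1}\int_{S_n}f\,d\vol_{S_n}=\vol(M)^{-1}\int_M f\,d\vol+O(\vol(S_n)^{-\epsilon_2})\Sob_q(f)$, which also supplies the bound $\vol(S_n^{\mathrm{thin}})\lesssim\vol(S_n)^{1-\epsilon}$ used above. This last input --- the effective equidistribution, with a polynomial rate in $\vol(S_n)$, of the closed orbit of the noncompact semisimple subgroup $H=\mathrm{SO}^+(d-1,1)\subset G$ attached to $S_n$ --- is the main obstacle and the one place where arithmeticity of $M$ is essential: it rests on a uniform spectral gap (property~$(\tau)$) for $\Gamma$ and an argument in the spirit of Einsiedler--Margulis--Venkatesh, while everything above it is comparatively soft. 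Combining the last two displays gives $\int_M f\,dI(\gamma_T,S_n)=Z_{n,T}\int_M f\,d\vol+Z_{n,T}\,O\big(e^{-\epsilon_1 T}+\vol(S_n)^{-\epsilon_2}\big)\Sob_q(f)$ with $Z_{n,T}=c_d\,|\gamma_T|\,\vol(S_n)/(|m|\,\vol(M))$; specializing to $f\equiv 1$, which is permissible after a routine approximation using that $I(\gamma_T,S_n)$ does not concentrate in the cusps (effective recurrence of closed geodesics and of geodesic hypersurfaces), gives $|I(\gamma_T,S_n)|=Z_{n,T}\,\vol(M)\big(1+O(e^{-\epsilon_1 T}+\vol(S_n)^{-\epsilon_2})\big)$, and dividing the two estimates yields the asserted bound on $\left|\int_M f\,d\vol^1-\int_M f\,dI^1(\gamma_T,S_n)\right|$ after renaming constants. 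When $\vol(S_n)$ is bounded the inequality is trivial (both sides are at most $2$), so one may assume $\vol(S_n)$ as large as needed.
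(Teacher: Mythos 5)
Your outline — thicken, apply two effective equidistribution results, then divide by the $f\equiv1$ normalization — matches the paper's strategy at the top level, but the specific thickening device does not deliver the decoupled error term, and there is a genuine gap. You thicken only the hypersurface, producing a test function $\chi_{n,\delta}$ on $T^1M$ which \emph{depends on $S_n$}. To feed $\bar f\chi_{n,\delta}$ into effective equidistribution of closed geodesics you assert $\Sob_q(\chi_{n,\delta})\lesssim\delta^{-q}$ uniformly in $n$, but the Sobolev norm in these equidistribution theorems is an $L^2$-type norm (see the discussion following Theorem~\ref{thm: mixing.surface}), and $\chi_{n,\delta}$ is supported on a region of volume $\asymp\delta\vol(S_n)$ with derivatives of size $\delta^{-1}t^{-m}$ concentrated on a boundary shell; one finds $\Sob_q(\chi_{n,\delta})\asymp\delta^{-1}t^{1/2-q}\vol(S_n)^{1/2}$, so a factor $\vol(S_n)^{1/2}$ enters the geodesic-equidistribution error and destroys the claimed decoupling $O(e^{-\epsilon_1 T}+\vol(S_n)^{-\epsilon_2})$. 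Moreover, already in your first displayed identity, overlaps between the $\delta$-thickenings of distinct sheets of $S_n$ — unavoidable once $\delta\vol(S_n)\gg1$ — cause undercounting; the excision of $S_n^{\mathrm{thin}}$ with the bound $\vol(S_n^{\mathrm{thin}})\lesssim\vol(S_n)^{1-\epsilon}$ is asserted without justification, and equidistribution of $S_n$ alone does not control the self-approach (pair-correlation) statistics needed for it; making the excision harmless would force $\delta\lesssim\vol(S_n)^{-1-\epsilon}$, which in turn blows up the other error term.

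The paper sidesteps both problems by working with a two-variable intersection kernel $K_\delta(u,v)$ on $\mathcal F(M)\times\mathcal F(M)$, defined in the universal cover as $\sum_{g\in\Pi}\mathbb K_\delta(g\tilde u,\tilde v)$ (Equation~\ref{def: K}). This kernel depends only on $\delta$ and the geometry of $M$, not on $\gamma_T$ or $S_n$; the unfolding over $\Pi$ automatically accounts for multiplicities and self-approaches. Equidistribution of closed geodesics is applied to $u\mapsto\widetilde K_\delta^{t,\rho}(u,v)$ for each fixed $v$, whose Sobolev norm $O(\lceil\delta/\iota\rceil^{d-1}\delta^{-d}t^{-q})$ is uniform over the thick part; then, since $\int\widetilde K_\delta^{t,\rho}(u,v)\,dL^1_M(u)$ is a constant in $v$, equidistribution of $S_n$ is applied to a test function whose Sobolev norm is $\asymp\Sob_q(f)$ — this is exactly what makes the errors separate. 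Two further points you treat as routine are, in fact, substantial pieces of the paper: the non-concentration of intersection points in the cusps is the content of \S\ref{sec:cusp_inter} (Theorem~\ref{thm: cusp_inter_points}, via a count of cuspidal geodesic excursions), and producing a smooth kernel that remains well-defined near tangential configurations requires the additional cutoff construction of Step 2 in \S\ref{sec:estimates1}.
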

\begin{Rmk}[On the Sobolev norm]\label{rmk: Sob}
   The Sobolev norm used here is weighted by a function that diverges near the cusps; see the detailed discussion in \cite{effective.surface}. In fact, \cite{effective.surface} shows that one can choose the parameter $q$ such that the norm $\Sob_q(f)$ controls both $\sup |f|$ and the Lipschitz norm $\Lip(f)$, up to a multiplicative constant.
\end{Rmk}

An effective equidistribution result also holds in the case $d=2$. Indeed, since our proof relies on the equidistribution of sequences of geodesic objects, the same method yields the following result.

\begin{Thm}\label{thm: main,d=2}
    Let $M$ be a complete hyperbolic surface of finite area. Let $\gamma_T$ be the sum of the closed geodesics in $M$ with length $\leq T$. Then there exist positive constants $C,\epsilon, q$ so that for any $f \in C^{\infty}_c(M)$, we have
    $$
    \left|\int_{M} f \, d\vol^1-\int_{M} f \, d\,I^1(\gamma_{T_1},\gamma_{T_2})\right| \leq C e^{-\epsilon \min\{T_1,T_2\}} \Sob_{q}(f),
    $$
    where $\Sob_{q}(f)$ is some degree $q$ Sobolev norm of $f$.
\end{Thm}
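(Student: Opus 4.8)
The plan is to descend to the unit tangent bundle $T^1M=\Gamma\backslash\mathrm{PSL}_2(\mathbb R)$ and to take as the single input the \emph{effective} equidistribution of closed geodesics there: writing $\widetilde{\gamma_T}$ for the lift of $\gamma_T$ to $T^1M$ with arclength and $m$ for the normalized Liouville measure, one has $\bigl|\int_{T^1M}F\,d\widetilde{\gamma_T}^{\,1}-\int F\,dm\bigr|\le Ce^{-\epsilon T}\Sob_q(F)$ for all $F\in C_c^\infty(T^1M)$. This is an effective form of the classical equidistribution of closed geodesics \cite{anosov_syst, geodesic_equidistr, roblin}; it follows from exponential mixing of the geodesic flow (equivalently, from the spectral gap of $\Gamma$), and the weighted Sobolev norms of Remark~\ref{rmk: Sob} are exactly what is needed to handle the cusps. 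This is the content of the relevant part of \cite{effective.surface}, which we quote.

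First I would upgrade this to the product $T^1M\times T^1M$: for $F\in C_c^\infty(T^1M\times T^1M)$,
\[
\left|\int F\,d\bigl(\widetilde{\gamma_{T_1}}^{\,1}\otimes\widetilde{\gamma_{T_2}}^{\,1}\bigr)-\int F\,d(m\otimes m)\right|\le C\bigl(e^{-\epsilon T_1}+e^{-\epsilon T_2}\bigr)\,\Sob_{q'}(F),
\]
with $q'$ depending only on $q$. This is a two–variable telescoping argument: freeze $x_2$ and apply the one–sequence bound in the variable $x_1$ (the estimate is uniform in $x_2$, so it survives integration against the probability measure $\widetilde{\gamma_{T_2}}^{\,1}$), then apply the one–sequence bound in $x_2$ to the smooth function $x_2\mapsto\int F(x_1,x_2)\,dm(x_1)$; the Sobolev norms combine in the obvious way.

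The heart of the matter is to realize $I(\gamma_{T_1},\gamma_{T_2})$ as a limit of smooth pairings against $\widetilde{\gamma_{T_1}}\otimes\widetilde{\gamma_{T_2}}$. Fix a bump $\phi$ with $\int_{\mathbb R^2}\phi(|x|)\,dx=1$, let $\Delta_\rho(p,q):=\rho^{-2}\phi\bigl(d(p,q)/\rho\bigr)$ be an approximate identity along the diagonal of $M\times M$, and set
\[
F_{\rho,f}(x_1,x_2):=f(\pi x_1)\,\bigl|{\sin\angle(x_1,x_2)}\bigr|\,\Delta_\rho(\pi x_1,\pi x_2),
\]
where $\angle(x_1,x_2)$ is the angle between the directions of $x_1$ and $x_2$ (well defined up to $O(\rho)$ on the support, after parallel transport; one smooths the $|\sin|$ slightly or handles the near-parallel locus separately). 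Since geodesics are straight at scale $\rho$, a local computation near each transverse intersection point, using that for two lines meeting at angle $\theta$ one has $\int_{\mathbb R^2}\phi\bigl(\sqrt{s_1^2+s_2^2-2s_1s_2\cos\theta}\bigr)\,ds_1\,ds_2=1/|\sin\theta|$, shows that the sine factor cancels the transversality Jacobian, so that $\int F_{\rho,f}\,d(\widetilde{\gamma_{T_1}}\otimes\widetilde{\gamma_{T_2}})=\int_M f\,dI(\gamma_{T_1},\gamma_{T_2})$ up to a controlled error. The same computation on the Haar side, using $\frac1{(2\pi)^2}\int_0^{2\pi}\!\!\int_0^{2\pi}|\sin(\beta_1-\beta_2)|\,d\beta_1\,d\beta_2=\tfrac2\pi$, gives $\int F_{\rho,f}\,d(m\otimes m)=\tfrac2\pi\int_M f\,d\vol^1+O(\rho\,\Sob(f))$. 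Applying the joint equidistribution to $F_{\rho,f}$, whose degree-$q'$ Sobolev norm is $O\bigl(\rho^{-2-q'}\Sob_{q''}(f)\bigr)$ because each derivative of $\Delta_\rho$ costs a factor $\rho^{-1}$, and using a preliminary non-escape-of-mass estimate (or the known asymptotic count $|I(\gamma_{T_1},\gamma_{T_2})|\sim\frac{2}{\pi}|\gamma_{T_1}|\,|\gamma_{T_2}|/|\gamma_{T}|$-type normalization) to pin down the constant, one arrives at
\[
\left|\int_M f\,d\vol^1-\int_M f\,dI^1(\gamma_{T_1},\gamma_{T_2})\right|\lesssim\bigl(\rho+(e^{-\epsilon T_1}+e^{-\epsilon T_2})\rho^{-2-q'}\bigr)\Sob_{q''}(f);
\]
optimizing $\rho=(e^{-\epsilon T_1}+e^{-\epsilon T_2})^{1/(3+q')}$ gives the stated bound $Ce^{-\epsilon'\min\{T_1,T_2\}}\Sob_q(f)$.

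The main obstacle is the passage from smooth test functions to the singular, codimension-two diagonal pairing, i.e.\ making precise the claim that $\int F_{\rho,f}\,d(\widetilde{\gamma_{T_1}}\otimes\widetilde{\gamma_{T_2}})$ really reproduces the intersection count. Three points require care: (i) pairs of geodesic strands that come within $\rho$ without crossing must contribute negligibly — for closed geodesics this holds because a transverse $\rho$-encounter forces an honest crossing within $O(\rho)$, while near-parallel encounters are suppressed by the $|\sin\angle|$ weight; (ii) when $T_1\le T_2$ the two systems share all geodesics of length $\le T_1$, so $\gamma_{T_1}$ and $\gamma_{T_2}$ overlap massively, and one must check that the would-be divergent contribution of the diagonal of $c\times c$ (for a common geodesic $c$) is exactly killed by the vanishing of $|\sin\angle|$ there, leaving only the genuine transverse self-intersections of $c$, counted with the convention used to define $I(\gamma_{T_1},\gamma_{T_2})$; and (iii) a non-escape-of-mass statement for $I^1$ near the cusps, again extracted from the same effective equidistribution on a slightly larger compact set, is needed to make the normalization step legitimate. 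Everything else is a matter of bookkeeping of Sobolev norms and of the exponents lost in optimizing $\rho$.
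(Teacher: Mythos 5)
Your kernel design is a genuinely different (and arguably cleaner) take on the paper's construction: the paper uses the indicator of transverse intersection of $\delta$-arcs, normalized by $c_\delta^{-1}$, and then a multi-parameter smoothing ($\delta,\rho,t$ with cutoffs $\chi,\zeta,\eta$), whereas you build the Jacobian factor $|\sin\angle|$ directly into the test function so that the area form cancels exactly at each crossing. Your two-variable telescoping to handle $\gamma_{T_1}\otimes\gamma_{T_2}$ and the optimization $\rho\sim e^{-\epsilon T/(3+q')}$ are the same ideas as Steps~2--5 of \S\ref{sec:estimates1}, dressed differently. Points (i) and (ii) in your list are genuine technical annoyances but not obstructions; with care they can be handled, either by your $|\sin|$-suppression or by the paper's route of excluding a thin neighborhood of the tangent plane via the $\chi^{t,\rho}$ cutoff.

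However, point (iii) is a real gap, not a bookkeeping step, and the remedy you propose does not work. You assert that the needed non-escape-of-mass for $I^1(\gamma_{T_1},\gamma_{T_2})$ near the cusps can be ``extracted from the same effective equidistribution on a slightly larger compact set,'' but Example~\ref{ex: cusp} in the paper shows precisely that this is false: one can exhibit weighted sums $\eta_n$ of closed geodesics with $\eta_n\to\vol^1_M$ in the sense of equidistribution, yet $I^1(\eta_n,\eta_n)\to 0$, all the mass of the intersection measure escaping into the cusp. Equidistribution of the one-dimensional objects controls the pairing against a \emph{fixed} compactly supported test function, but the normalization step requires controlling the \emph{total} count $|I(\gamma_{T_1},\gamma_{T_2})|$, and the kernel $K_\delta(u,v)$ grows like $\iota^{-(d-1)}$ over $M_{\ge\iota}$ (Proposition~\ref{prop: growth}(2)), so a priori a positive proportion of intersection points could live deeper and deeper in the cusp as $T_1,T_2\to\infty$. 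Ruling this out uses the specific structure of the \emph{unweighted} family $\gamma_T$: one counts $R$-excursions of closed geodesics via the ``cut-and-reconnect'' map (Proposition~\ref{prop: geod_excursion}), bounds pairwise crossings of excursions (Lemma~\ref{lem: inter_upper_bound}), and sums to get $|I_\iota(\gamma_{T_1},\gamma_{T_2})|\lesssim\iota\,e^{T_1+T_2}$ for $d=2$ (the analogue of Theorem~\ref{thm: cusp_inter_points}, cf.\ \cite{torkaman2024intersection}). This combinatorial/geometric input is independent of, and not deducible from, exponential equidistribution; it is the main content of \S\ref{sec:cusp_inter} and must be supplied to close your argument on a noncompact surface. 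For $M$ compact your outline is essentially complete.
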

We remark that for $M$ compact, this is a special case of a recent theorem of Katz in the more general setting of negatively curved surfaces \cite{katz2024density}.

Note that the error terms in Theorem~\ref{thm: main} arising from $\gamma_T$ and $S_n$ are decoupled. In particular, by fixing either a closed geodesic or a totally geodesic hypersurface, we obtain effective equidistribution results in each case. These follow directly from the proof of Theorem~\ref{thm: main} and its generalization, Theorem~\ref{thm: general}.

\begin{Cor}
Let $M$ be an arithmetic hyperbolic manifold of dimension $d\geq 3$. Then there exist positive constants $\epsilon_2,q$, and a positive function $C(\iota)$, depending only on the geometry of $M$, such that the following holds. Let $\gamma \subset M$ be a closed geodesic, and let $\ell$ denote the length measure on $\gamma$, induced by the hyperbolic metric on $M$. Consider an infinite sequence $\{S_n\}$ of distinct totally geodesic hypersurfaces in $M$. Suppose further that $\gamma$ is disjoint from the cuspidal neighborhood $M_{\iota}$. Then for any $f \in C^{\infty}_c(M)$, we have

$$
\left|\int_{M} f \, d\ell^1-\int_{M} f \, d\,I^1(\gamma,S_n)\right| \leq C(\iota)\vol(S_n)^{-\epsilon_2}\Sob_q(f).
$$
\end{Cor}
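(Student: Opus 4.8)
The plan is to derive the Corollary as a direct specialization of Theorem~\ref{thm: main} (and its generalization Theorem~\ref{thm: general}), keeping track of how the two error terms decouple. Recall that in Theorem~\ref{thm: main} the measure $I^1(\gamma_T, S_n)$ is compared to $\vol^1$ with an error $C(e^{-\epsilon_1 T} + \vol(S_n)^{-\epsilon_2})\Sob_q(f)$. The key structural observation is that the proof of that theorem proceeds in two independent stages: first one uses the effective equidistribution of the closed geodesics $\gamma_T$ (as $T\to\infty$) to replace the ``ambient'' object, and separately one uses the effective equidistribution of the hypersurfaces $S_n$ (as $n\to\infty$). Since a single fixed closed geodesic $\gamma$ plays the role of $\gamma_T$ ``at finite $T$'', the first stage is simply trivial: instead of comparing with $\vol^1$ we compare $I^1(\gamma, S_n)$ with the length measure $\ell^1$ on $\gamma$, which is precisely the object that would arise before taking $T\to\infty$. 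Thus the $e^{-\epsilon_1 T}$ term disappears entirely and only the $\vol(S_n)^{-\epsilon_2}$ term survives.

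Concretely, I would first recall from the proof of Theorem~\ref{thm: main}/Theorem~\ref{thm: general} the intermediate estimate comparing $I^1(\gamma, S_n)$ against $\ell^1$, where $\gamma$ is any fixed closed geodesic: this is exactly the ``hypersurface side'' of the argument, which equidistributes the intersection points of $S_n$ with a fixed transversal and hence yields
$$
\left|\int_M f\, d\ell^1 - \int_M f\, d\, I^1(\gamma, S_n)\right| \leq C' \vol(S_n)^{-\epsilon_2} \Sob_q(f)
$$
for a constant $C'$ that a priori may depend on $\gamma$. The second step is to make the dependence on $\gamma$ explicit and show it can be absorbed into a function $C(\iota)$ of the thickness parameter alone. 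Here the hypothesis that $\gamma$ avoids the cuspidal neighborhood $M_\iota$ is essential: the constant in the equidistribution estimate degenerates as $\gamma$ is allowed to wander deep into a cusp (this is exactly why the Sobolev norm in Theorem~\ref{thm: main} is weighted by a function diverging near the cusps, cf.\ Remark~\ref{rmk: Sob}), but on the compact piece $M\setminus M_\iota$ one has uniform control of injectivity radius, and the relevant geometric quantities (the angle of intersection, the local density of $S_n$ near $\gamma$, the size of the transversality neighborhoods used to count intersection points) are bounded in terms of $\iota$ only. So one takes $C(\iota) := \sup\{C'(\gamma) : \gamma \cap M_\iota = \varnothing\}$ and checks this supremum is finite using the compactness of $M\setminus M_\iota$ together with the fact that the length of $\gamma$ does not enter the estimate (only its position does, through the weighted Sobolev norm evaluation).

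I expect the main obstacle to be precisely this last point: extracting from the proof of Theorem~\ref{thm: main} a version of the hypersurface-side estimate in which the constant depends on $\gamma$ \emph{only through} $\max_{\gamma} w$, where $w$ is the cusp-divergent weight defining $\Sob_q$, and then bounding $\max_\gamma w$ by a function of $\iota$ when $\gamma \subset M\setminus M_\iota$. This requires that the test object $\gamma$ enters the estimates of Section (the general theorem) in a sufficiently ``soft'' way — e.g.\ through its length measure paired against weighted Sobolev norms, rather than through finer invariants — which should hold because $\gamma$ is used only as a transversal against which the flowed/averaged copies of $S_n$ equidistribute. Once this uniformity is in hand, the Corollary follows immediately by specialization, with $\epsilon_2$ and $q$ inherited verbatim from Theorem~\ref{thm: main}.
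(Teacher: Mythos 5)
Your high-level outline is reasonable (keep $\gamma$ fixed, apply only the hypersurface-side equidistribution, and use $\gamma\cap M_\iota=\varnothing$ to prevent escape of mass), and that is indeed the shape of the paper's implicit argument, which runs the machinery of Claims~\ref{claim: inter_kernel}--\ref{claim: coeff} (and their analogues in \S\ref{sec:estimates2}) with the $\mu_\gamma$-side integral left untouched. But the claim that ``the first stage is simply trivial'' glosses over the fact that the geodesic-flow equidistribution, Theorem~\ref{thm: mixing.geod.3}, is used in \emph{more than one place} in \S\ref{sec:estimates1}. It is not just Claim~\ref{claim: equi_geod} that one skips: it also enters Claim~\ref{claim: smoothing} (the second inner integral, measuring the $\mu_{\gamma_T}$-mass of the exclusion region where $K_\delta^{t,\rho}$ and $K_\delta$ differ, is bounded by pushing $\mu^1_{\gamma_T}$ to $L_M^1$ with an $e^{-\epsilon_1 T}$ error) and Claim~\ref{claim: coeff} (the coefficient $c(\gamma_T,S)$ is bounded via the whole chain $E_1,\dots,E_4$, which includes the geodesic-side step). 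For a fixed $\gamma$ that happens to lie close to tangent to some hyperplane directions, the $\mu_\gamma$-mass of the exclusion region can be of the same order as the $\mu_\gamma$-mass where the kernel is nonzero, so the argument of Claim~\ref{claim: smoothing} literally does not transfer; one must instead integrate in the $S_n$-variable first and exploit equidistribution of $S_n$ to show that the set of ``bad'' $v$ is small. That reordering is exactly how the paper's proof of the fixed-$\Gamma$ statement in Theorem~\ref{thm: general} is meant to proceed, and you do not engage with it.

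The second substantive gap is in how you handle the Sobolev norm. In Claim~\ref{claim: equi_hyper}, Theorem~\ref{thm: mixing.surface} is applied to $v\mapsto f(v)\int\widetilde K_\delta^{t,\rho}(u,v)\,dL_M^1(u)$, which is just a constant times $f$ \emph{because} the prior step replaced $\mu^1_{\gamma_T}$ by $L_M^1$; that is why the error comes out as $C\vol(S)^{-\epsilon_2}\Sob_q(f)$. If instead $\mu_\gamma^1$ is kept in the inner integral, the function tested against $\mu^1_{S_n}$ is $f(v)g(v)$ with $g(v)=\int\widetilde K_\delta^{t,\rho}(u,v)\,d\mu_\gamma^1(u)$, whose derivatives blow up like $\delta^{-d}(\rho t)^{-q}$ (Lemma~\ref{lem:kernel_smooth_prop}(3)), and this blow-up must be absorbed by choosing $\delta,\rho,t$ as suitable negative powers of $\vol(S_n)$, at the cost of a smaller $\epsilon_2$. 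Your proposal does not say how to control $\Sob_q(fg)$ or that the exponent degrades; you instead invoke compactness of $M\setminus M_\iota$ and a ``supremum over $\gamma$'' argument, which is not the mechanism the paper has in mind (the remark following the Corollary attributes $C(\iota)$ to the $\iota$-dependent factors in Theorem~\ref{thm: general}, namely the $\lceil\delta/\iota\rceil^{d-1}$ and $\iota^{-q}$ terms from Proposition~\ref{prop: growth}(2) and the restriction to $M_{\ge\iota}$). Finally, you also need an analogue of Claim~\ref{claim: coeff} giving $c(\gamma,S_n)=O_\iota(1)$ uniformly in $\gamma$ and $S_n$ before the chain of estimates closes; this is missing from the proposal entirely.
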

We remark here that the dependence of the constant $C(\iota)$ on $\iota$ here comes from Theorem~\ref{thm: general}, where one has to restrict oneself to a compact subset.
\begin{Cor}
Let $M$ be a hyperbolic manifold of dimension $d \geq 2$. There exist positive constants $C,\epsilon_1,q$ depending only on the geometry of $M$ so that the following holds. Given geodesic hypersurface $S$ in $M$, let $\alpha$ denote the volume measure on $S$ (obtained from the hyperbolic metric on $M$). Then for any $f \in C^{\infty}_c(M)$, we have
$$
\left|\int_{M} f \, d\alpha^1-\int_{M} f \, d\,I^1(\gamma_T,S)\right| \leq Ce^{-\epsilon_1T}\Sob_{q}(f).
$$ 
\end{Cor}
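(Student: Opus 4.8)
The plan is to extract this from the effective equidistribution of closed geodesics in the unit tangent bundle, which is the mechanism underlying Theorem~\ref{thm: main} and its generalization Theorem~\ref{thm: general}; concretely, this is the ``$S$ fixed'' half of that argument. The key simplification is that here only the family $\gamma_T$ varies while $S$ stays fixed, so the intersection points are confined to $S$ and the limiting measure is the volume measure $\alpha$ on $S$ rather than $\vol$ on $M$. For the same reason no arithmeticity of $M$ is needed, and since the full collection of closed geodesics equidistributes on all of $M$, no restriction of $S$ away from the cusps is needed either: the cusps are handled by the weight in the Sobolev norm (Remark~\ref{rmk: Sob}) and $f$ is compactly supported.

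Let $T^1M$ be the unit tangent bundle, $g_t$ the geodesic flow, $X$ its generating vector field and $m$ the Liouville measure. Write $\mu_T$ for the sum of the arc-length measures along the closed geodesics of length $\leq T$, viewed as a $g_t$-invariant measure on $T^1M$. The first ingredient, already used in the proof of Theorem~\ref{thm: general} (and, for $d=2$, of Theorem~\ref{thm: main,d=2}), is the effective equidistribution of closed geodesics: $\mu_T^1\to m^1$ with an error $O(e^{-\epsilon_1 T})$ against a weighted Sobolev norm, which ultimately rests on the spectral gap of $\Gamma$, equivalently on effective mixing of $g_t$.

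The second ingredient is a cross-section computation. Let $W_S\subset T^1M$ be the codimension-one set of unit vectors based at points of $S$. A transverse intersection point of a geodesic with $S$ is exactly a transverse crossing of $W_S$ by the corresponding $g_t$-orbit, and the operation ``contract a $g_t$-invariant measure with $X$, restrict to $W_S$, push forward along $W_S\to S$'' turns arc length along orbits into the counting measure on crossings. Applied to $\mu_T$ this reproduces $I(\gamma_T,S)$, each transverse crossing carrying weight one, while applied to $m$ a direct computation in coordinates adapted to $S$ gives $\iota_X m|_{W_S}=|\langle v,n_S\rangle|\,d\alpha(p)\,d\sigma_p(v)$, whose pushforward to $S$ is a purely dimensional constant times $\alpha$; hence this operation sends $m^1$ to $\alpha^1$. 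Combining the two ingredients, $\int_M f\,d\alpha^1-\int_M f\,dI^1(\gamma_T,S)$ is obtained by applying this functional to $\mu_T^1-m^1$ and pairing with $f$.

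The one genuine obstacle is that $W_S$ is $m$-null, so the smooth-test-function estimate for $\mu_T^1-m^1$ cannot be fed directly into its restriction to $W_S$. I would resolve this by the usual thickening, a Santal\'o-type ``sweeping out'' of a tubular neighborhood of $S$: replace the restriction to $W_S$ by averaging against a transverse bump of width $\delta$, which costs a geometric error $O(\delta)$ together with an equidistribution error polynomial in $\delta^{-1}$ times $e^{-\epsilon_1 T}\Sob_q(f)$ coming from differentiating the bump, and then optimize in $\delta$ to obtain an error of the form $Ce^{-\epsilon_1 T}\Sob_q(f)$ after renaming constants. The behaviour near the cusps of $S$, where $W_S$ enters the cuspidal part of $M$, is absorbed into the weighted Sobolev norm as in \cite{effective.surface}, exactly as in the proof of Theorem~\ref{thm: general}. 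The argument is identical when $d=2$, where $S$ is a fixed closed geodesic and $\alpha$ its arc-length measure.
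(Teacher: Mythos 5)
Your overall strategy is sound and is essentially the paper's own mechanism viewed through a more directly dynamical lens: the intersection kernel $K_\delta(u,v)$ of \S\ref{sec: inter_kernel}, tested against $f(v)\,d\mu_S(v)$, is precisely the ``thicken the cross-section $W_S$ by a $\delta$-tube'' construction you describe, and the smoothing of Step 2 plays the role of your transverse bump. You also correctly read off that fixing $S$ means only Theorem~\ref{thm: mixing.geod.3} is invoked (so no arithmeticity of $M$), and that the inner integral $\int K_\delta(u,v)\,dL_M^1(u)$ being constant in $v$ (Proposition~\ref{prop: growth}(3)) is what turns the Liouville side into a multiple of $\alpha$.

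There is, however, a genuine gap in how you dispose of the cusps. You write that the cusps are ``absorbed into the weighted Sobolev norm'' and that compact support of $f$ handles the rest, but this conflates two separate issues. The weighted Sobolev norm controls a \emph{test function} that lives near the cusps; it does nothing about the fact that $I^1(\gamma_T,S)$ is normalized by the \emph{total} number of intersection points, which includes those in arbitrarily deep cuspidal neighborhoods. Equivalently, to pass from the unnormalized equidistribution statement $\mu_T^1(g_\delta f)\approx m^1(g_\delta f)$ to the claim about probability measures, you must know that $c(\gamma_T,S)=\ell(\gamma_T)\vol(S)/|I(\gamma_T,S)|$ stays bounded (in fact converges) as $T\to\infty$, and for that you have to rule out a definite proportion of intersection points escaping into $M_\iota$. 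That is exactly the content of Theorem~\ref{thm: cusp_inter_points} (bounding $R$-excursions of $\gamma_T$), which enters through Claim~\ref{claim: coeff} via the estimate $|I_\iota(\gamma_T,S)|/|I(\gamma_T,S)|\le C\,c(\gamma_T,S)\,\iota^{d-2}$. Your sketch never produces an analogous input; testing the equidistribution against your bump $g_\delta$ alone (without the compactly supported $f$) to fix the normalization runs into the very divergence of the weighted Sobolev norm that you invoke, since $g_\delta$ is supported in a tube around all of $W_S$, which extends into the cusps. So the proposal as written would establish equidistribution only of the restricted measures on a compact part and would not yield the unrestricted statement of the Corollary without a separate geometric control of cuspidal intersections.

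As a smaller point, ``$\Phi(\mu_T)=I(\gamma_T,S)$, $\Phi(m)=c\alpha$'' is correct for the \emph{unnormalized} measures, but $\Phi(\mu_T^1)\ne I^1(\gamma_T,S)$ and $\Phi(m^1)\ne\alpha^1$; the passage to probability measures is exactly where the boundedness of $c(\gamma_T,S)$ is used, so the sentence ``apply this functional to $\mu_T^1-m^1$ and pair with $f$'' hides the step that needs the cusp bound.
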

In this corollary, $M$ need not be arithmetic; only the existence of one geodesic hypersurface is necessary. Examples of non-arithmetic hyperbolic $d$-manifolds containing a geodesic hypersurface can also be found in \cite{Non-arithmetic}. Note that for the case $d=2$, the hypersurface $S$ is simply a closed geodesic.

\begin{Rmk}
    We cannot replace $\gamma_T$ with a sequence of distinct closed geodesics, as they may not be equidistributed on $M$. For instance, if $M$ is a surface, consider the closed geodesics contained within a proper subsurface of $M$. On the other hand, when such a sequence is equidistributed in the compact case, our method does yield the non-effective version of the main result, but the error term cannot be made uniform; see Example~\ref{eg: geodesic} in  \S\ref{subsec: counterexamples}.

    However, in the presence of cusps, we may not even have a (non-effective) equidistribution result for an equidistributed sequence of closed geodesics; see Example \ref{ex: cusp} in \S \ref{subsec: counterexamples}.
\end{Rmk}

\paragraph{Generalization}
We next discuss similar results for properly immersed totally geodesic submanifolds (or simply \emph{geodesic submanifolds}) of complementary dimensions. To avoid the case of closed geodesics already discussed, assume $d\ge4$ and $2\le k\le d-2$.

A geodesic submanifold of dimension $k$ in $M$ is called \emph{maximal} if it is not contained in any geodesic submanifold of dimension larger than $k$ other than $M$ itself. Similar to geodesic hypersurfaces, any infinite sequence of distinct maximal geodesic $k$-submanifolds are equidistributed (see Proposition~3.1 in \cite{arithmetic_geodesic} and its proof). Moreover, if $M$ contains such a sequence, it is arithmetic \cite{arithmetic_geodesic}. We similarly have the converse by the density of the commensurator: if $M$ is arithmetic and contains at least one maximal geodesic $k$-submanifold, then it contains infinitely many.

Suppose that $M$ is an arithmetic hyperbolic $d$-manifold containing at least one maximal geodesic $k$-submanifold, and one maximal geodesic $(d-k)$-submanifold. Let $\{\Gamma_m\}, \{S_n\}$ be sequences of distinct maximal geodesic $k$- and $(d-k)$-submanifolds respectively. Let $I_{\ge\iota}(\Gamma_m,S_n)$ denote the sum of delta measures supported on the transverse intersection points between $\Gamma_m$ and $S_n$ contained in $M_{\ge\iota}:=\overline{M\backslash M_\iota}$, each weighted by its multiplicity. Denote also by $\vol_{\ge\iota}$ the volume measure restrictued to $M_{\ge\iota}$.
\begin{Thm}\label{thm: general}
    Let $M$ be an arithmetic hyperbolic manifold of dimension $d\ge 4$. Suppose $2\le k\le d-2$. Let $\{\Gamma_m\}, \{S_n\}$ be sequences of distinct maximal geodesic $k$- and $(d-k)$-submanifolds respectively. Then there exist positive constants $\epsilon_2,\epsilon_2',q$ and a positive function $C(\iota)$ depending only on the geometry of $M$, so that for any $f\in C_c^\infty(M)$, we have
    $$
    \left|\int_{M} f \, d\vol^1_{\ge\iota}-\int_{M} f \, d\,I^1_{\ge\iota}(\Gamma_m,S_n)\right| \leq C(\iota)(\vol(\Gamma_m)^{-\epsilon_2}+\vol(S_n)^{-\epsilon_2'})\Sob_{q}(f).
    $$
    Moreover, if we fix a $k$-submanifold $\Gamma$ with volume form $\vol_{\Gamma}$, we have
    $$\left|\int_{M} f \, d\vol^1_{\Gamma\cap M_{\ge\iota}}-\int_{M} f \, d\,I^1_{\ge\iota}(\Gamma,S_n)\right| \leq C(\iota)\vol(S_n)^{-\epsilon_2'} \Sob_{q}(f).$$
\end{Thm}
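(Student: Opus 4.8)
The plan is to reduce the statement to effective equidistribution of each family of submanifolds inside an appropriate Grassmann bundle, and then to convert these equidistribution statements into a count of transverse intersection points by an effective ``thickening'' (integral-geometric) argument.

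\textbf{Setup and equidistribution input.} Write $G=\mathrm{SO}(d,1)^\circ$ and $M=\Gamma\backslash\mathbb H^d$, and let $\mathbf{Gr}_k(M)$ denote the bundle of tangent $k$-planes over $M$, a compact-fibre quotient of the frame bundle $\Gamma\backslash G$ by $\mathrm{S}(\mathrm O(k)\times\mathrm O(d-k))$. Each maximal geodesic $k$-submanifold $\Gamma_m$ lifts canonically to a closed immersed $k$-submanifold $\widehat\Gamma_m=\{(x,T_x\Gamma_m):x\in\Gamma_m\}\subset\mathbf{Gr}_k(M)$, which is the image of a closed orbit of $H\cong\mathrm{SO}(k,1)^\circ\times\mathrm{SO}(d-k)$ in $\Gamma\backslash G$ of volume comparable to $\vol(\Gamma_m)$; similarly $\widehat S_n\subset\mathbf{Gr}_{d-k}(M)$. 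The first input is the effective equidistribution of $\widehat\Gamma_m$ (resp. $\widehat S_n$): for $\phi\in C_c^\infty(\mathbf{Gr}_k(M))$,
$$\left|\frac1{\vol(\Gamma_m)}\int_{\widehat\Gamma_m}\phi-\frac1{\vol(M)}\int_{\mathbf{Gr}_k(M)}\phi\right|\le C\,\vol(\Gamma_m)^{-\epsilon_2}\Sob_q(\phi),$$
with $\Sob_q$ as in Remark~\ref{rmk: Sob}; in the arithmetic case this rests on property $(\tau)$ along the congruence tower and effective decay of matrix coefficients, via an effective version of the Mozes--Shah argument, the arithmeticity and commensurability of the family being the content of Proposition~3.1 of \cite{arithmetic_geodesic}. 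Both this equidistribution and the thickening below are uniform only away from the cusps, which forces the restriction to $M_{\ge\iota}$ and the dependence of the constant on $\iota$.

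\textbf{Transverse intersections as a transversal intersection.} The key geometric observation is that $x\in M_{\ge\iota}$ is a transverse intersection point of $\Gamma_m$ and $S_n$ (with its multiplicity) precisely when $\widehat\Gamma_m$ meets the incidence locus
$$\mathcal I_n:=\{(x,P)\in\mathbf{Gr}_k(M):x\in S_n,\ P\oplus T_xS_n=T_xM\},$$
a smooth submanifold of $\mathbf{Gr}_k(M)$ of codimension $k=\dim\widehat\Gamma_m$, so that these intersections are transversal. I would then prove a local integral-geometric lemma: if $\mathcal I_n^{(\delta)}$ is a $\delta$-tube around $\mathcal I_n$ normalised so that its fibres have volume $c_{d,k}\delta^{k}$, then $c_{d,k}^{-1}\delta^{-k}$ times the $k$-volume of $\widehat\Gamma_m\cap\mathcal I_n^{(\delta)}$ restricted to a region converges, as $\delta\to0$, to $\widehat I(\Gamma_m,S_n):=\sum_{x\in\Gamma_m\cap S_n}J(x)\,\delta_{(x,T_x\Gamma_m,T_xS_n)}$ on $\mathbf{Gr}_k(M)\times_M\mathbf{Gr}_{d-k}(M)$, where $J(x)$ is the geometric Jacobian of the transversal intersection (a generalised sine of the angle between $T_x\Gamma_m$ and $T_xS_n$). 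Applying the equidistribution of $\widehat\Gamma_m$ with $S_n$ fixed, after smoothing $\mathcal I_n^{(\delta)}$, gives
$$\left|\frac{\widehat I(\Gamma_m,S_n)(\Phi)}{\vol(\Gamma_m)\vol(S_n)}-c\int\Phi\,J\,d\mathfrak m\right|\le C(\iota)\,\vol(\Gamma_m)^{-\epsilon_2}\Sob_q(\Phi)$$
for $\Phi$ supported over $M_{\ge\iota}$, where $\mathfrak m$ is volume on $M$ tensored with Haar on the two Grassmannian fibres; a second application of the equidistribution statement, now to $\widehat S_n$ --- legitimate once one checks the $\widehat\Gamma_m$-averaged quantity is a smooth function of the data of $S_n$ with controlled Sobolev norm, which is why the thickening must be carried out smoothly --- removes the residual dependence on $S_n$ at the cost of an extra $\vol(S_n)^{-\epsilon_2'}$, and this is exactly why the two error terms decouple.

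\textbf{Removing the angle weight, and the obstacle.} To pass from $\widehat I$ to $I_{\ge\iota}(\Gamma_m,S_n)$ itself, apply the previous estimate with test function $\Phi J^{-1}$ in place of $\Phi$ (legitimate since $J$ is bounded below and smooth on quantitatively transverse configurations), and use the elementary homogeneity fact that $\int J^{-1}$ over the product of the two Grassmannian fibres above any point of $M$ equals a universal constant $\kappa_{d,k}$ --- because $J$ depends only on the relative position of the two planes and the fibre measures are rotation-invariant. Taking $\Phi=1$ and $\Phi=f\circ\pi$ and dividing yields $I^1_{\ge\iota}(\Gamma_m,S_n)(f)=\vol^1_{\ge\iota}(f)+O\big((\vol(\Gamma_m)^{-\epsilon_2}+\vol(S_n)^{-\epsilon_2'})\Sob_q(f)\big)$. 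The ``moreover'' assertion is the same argument with $\widehat\Gamma$ fixed: one thickens the incidence locus determined by the fixed $\Gamma$ and applies equidistribution only to $\widehat S_n$; the local constants are purely geometric, hence uniform over $\Gamma$, so only the dependence on $\iota$ survives. The hard part is the effectivity of the thickening lemma with an error that is a genuine negative power of the volumes, uniformly on the thick part: this requires a uniform lower bound on $M_{\ge\iota}$ for the scale at which the linear model of a transverse intersection is valid, a polynomial-in-volume bound on the number of intersection points in $M_{\ge\iota}$ together with control on how close $J(x)$ can come to $0$ (so that near-tangential intersections are not over-counted by the tube), and a choice of $\delta$ as a suitable negative power of $\min(\vol(\Gamma_m),\vol(S_n))$ balancing the thickening error against the equidistribution error. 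Handling the near-tangential intersections uniformly, and propagating the cusp cut-off cleanly through the two successive equidistribution steps, are where I expect to spend most of the effort.
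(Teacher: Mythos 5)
Your proposal follows the same broad strategy as the paper (a thickened intersection kernel in a homogeneous bundle, two successive applications of effective equidistribution, and a normalization step), but the accounting of the geometric Jacobian contains an error that would make the argument break down as written. The $k$-volume of $\widehat\Gamma_m\cap\mathcal I_n^{(\delta)}$ at a transverse intersection point $x$ is, in the linear model, $\vol\{w\in T_x\Gamma_m:\ |\pi_{(T_xS_n)^\perp}w|<\delta\}=c_k\delta^k/|\det B|$, where $B$ is the projection of an orthonormal basis of $T_x\Gamma_m$ onto $(T_xS_n)^\perp$; a shallower crossing sweeps out a \emph{larger} portion of the tube. Thus $c_{d,k}^{-1}\delta^{-k}\vol(\widehat\Gamma_m\cap\mathcal I_n^{(\delta)})$ converges to $\sum_x J(x)^{-1}\delta_x$, not $\sum_x J(x)\delta_x$. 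Once this inversion is made, the step you propose --- testing against $\Phi J^{-1}$ and normalizing by the constant $\int J^{-1}$ over the Grassmannian fibres --- is exactly wrong: $\int_{\Gr(k,d)}J^{-1}\,d\mu$ \emph{diverges}, because the non-transverse locus $\{\dim(P\cap Q)\ge1\}$ has codimension one in $\Gr(k,d)\times\Gr(d-k,d)$, $J$ vanishes there to first order, and the invariant measure does not. (Concretely, for $k=2$, $d=4$, in the graph chart the integral becomes $c\int_{M_2(\mathbb R)}\det(I+X^TX)^{-3/2}dX$, which diverges: in terms of singular values the integrand is $\asymp\sigma_1^{-1}$ for $\sigma_1\gg1\gg\sigma_2$.)

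The fix is to test against $\Phi J$ rather than $\Phi J^{-1}$, in which case the normalization constant is $\int J\,d\mu\le1$, which is finite. This is implicitly what the paper's kernel computes: the constant $\int_{\mathcal F(\mathbb H^d)}\mathbb K_\delta(\tilde u,\tilde v)\,dL_{\mathbb H^d}(\tilde u)$, shown in Lemma~\ref{lem:kernel_smooth_prop_gen}(4) to be $\asymp1$, is precisely (up to normalization) $\int J\,d\mu$, and the support of $\mathbb K_\delta(\cdot,v)$ is shaped so that integrating over $\mu_\Gamma$ directly yields the unweighted count without ever inverting $J$. However, even with this correction, the item you flag as hardest --- controlling the near-tangential intersections --- is indeed where the real work lies, and your sketch does not resolve it: the thickening approximation $c_{d,k}^{-1}\delta^{-k}\vol(\widehat\Gamma_m\cap\mathcal I_n^{(\delta)})\approx\sum J^{-1}$ is only accurate when $\delta\ll J(x)$ at each $x$, so an unconditional error rate cannot be extracted from the tube alone; the paper's $\zeta$-smoothing (the function $\zeta_v^{t,\rho}$ excising a neighborhood of the non-transverse frame configurations) is precisely the device that makes this quantitative, and something of that kind is needed here too. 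Likewise the coefficient bound $c_{\iota,\upsilon}(\Gamma,S)=O(1)$ (Claim~\ref{claim: coeff_gen}) and the collar estimate (Lemma~\ref{lem: ratio}) for passing between $M_{\ge\iota}$ and its smooth truncation are missing from the sketch; the latter is what licenses your dependence of constants on $\iota$ but independence of $\Gamma_m,S_n$.
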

\begin{Rmk}
    Unlike Theorem~\ref{thm: main}, this result is essentially equidistribution restricted to the compact part $M_{\ge\iota}$. When $M$ is noncompact, this does not necessarily imply equidistribution over the entire $M$, since there could be escape of mass towards the cusp, i.e. a definite portion of intersection points in shrinking cuspidal neighborhoods. Our current method does not seem to yield a good control of the portion of intersection points in a cuspidal neighborhood unless closed geodesics are involved; see the discussion at the end of \S\ref{sec:cusp_inter}.
\end{Rmk}
\begin{Rmk}
    Our method applies in general without the maximal assumption. In this case, the sequence of geodesic $k$-submanifolds $\{\Gamma_m\}$ may equidistribute to a geodesic submanifold $N$ of intermediate dimension. Equidistribution of intersection points (to the submanifold $N$) still holds. However we can no longer guarantee that the error term depends only on the geometry of $M$; see Example~\ref{eg: no_maximal} and Example~\ref{eg: intermediate} in \S\ref{subsec: counterexamples}.
\end{Rmk}

\paragraph{Counterexamples}
In \S \ref{subsec: counterexamples}, we describe several counterexamples to illustrate the necessity of certain assumptions in our main results, as mentioned before. Notably, after removing some of the assumptions our proof may still yield non-effective versions of the results, but we cannot obtain any error term of the form $O(e_1(\vol(\Gamma_m))+e_2(\vol(S_n)))\Sob_q(f)$ with functions $e_1$ and $e_2$ satisfying $e_i(x)\to0$ as $x\to\infty$, so that the implied constant in $O(\cdot)$ and the functions $e_1$ and $e_2$ depend only on the geometry of $M$.

While we focus on unweighted sequences, our method in fact gives equidistribution of intersection points in the compact case for any sequence of weighted sums of closed geodesic submanifolds that equidistribute to the Liouville measure (however, the error term cannot generally be chosen to be uniform). In contrast, as previously mentioned, when $M$ has finite volume but is not compact, we can not in general conclude an equidistribution result for intersection points, unless we work with specific sequences. In the case of $k=1$, the sequence $\{\gamma_T\}$ works; but in \S \ref{subsec: counterexamples}, we include an example showing that not every equidistributed sequence does. We remark that this example is closely related to the discontinuity of the intersection form on the space of geodesic currents on a non-compact hyperbolic surface of finite area; cf.\ \cite{torkaman2024intersection}.

\paragraph{Joint equidistribution}
The main results can be generalized to a joint equidistribution result taking into account how the geodesic submanifolds intersect. For simplicity, we only state the result for the compact case.

Let $\Conf(k,d)$ be the configuration space of pairs of $k$-planes and $(d-k)$-planes in $\mathbb{R}^d$ up to rigid rotation (see \S\ref{sec:joint}). Each intersection point $P$ of a geodesic $k$-submanifold $\Gamma$ and a geodesic $(d-k)$-submanifold $S$ in fact determines a point in $(P,(T_P\Gamma,T_PS))\in M\times \Conf(k,d)$, by considering the tangent planes to $\Gamma$ and $S$ at $P$.

Let $I_{\text{conf}}(\Gamma,S)$ be the sum of delta measures at $(P,(T_P\Gamma,T_PS))$ over each transverse intersection point $P$ between $\Gamma$ and $S$. We have the following joint equidistribution theorem (for simplicity, we only state the case $2\le k\le d-2$).
\begin{Thm}\label{thm: joint_equidistr}
    There exists a probability measure $\mu_{k,d}$ on $\Conf(k,d)$ so that the following holds. Let $M$ be a closed arithmetic hyperbolic manifold of dimension $d$, and let $\{\Gamma_m\}$ and $\{S_n\}$ be sequences of distinct geodesic $k$- and $(d-k)$-submanifolds respectively. There exist positive constants $C,\epsilon_2,\epsilon_2',q$ depending only on the geometry of $M$, so that for any $f\in C^\infty(M)$ and $h\in C^\infty(\Conf(k,d))$, we have
    \begin{align*}
        &\left|\int_{M\times\Conf(k,d)} f(x)h(\theta)\,dI^1_{\text{conf}}(\Gamma_m,S_n)-\int_Mf(x)\,d\vol^1(x)\int_{\Conf(k,d)}h(\theta)\,d\mu_{k,d}(\theta)\right|\\
        &\qquad\qquad\qquad\qquad\le C(\vol(\Gamma_m)^{-\epsilon_2}+\vol(S_n)^{-\epsilon_2'})\Sob_q(f)\Sob_q(h).
    \end{align*}
    Moreover, if we fix a geodesic $(d-k)$-submanifold $S$, then
    \begin{align*}
        &\left|\int_{M\times\Conf(k,d)} f(x)h(\theta)\,dI^1_{\text{conf}}(\Gamma_m,S)-\int_Mf(x)\,d\vol^1_S(x)\int_{\Conf(k,d)}h(\theta)\,d\mu_{k,d}(\theta)\right|\\
        &\qquad\qquad\qquad\qquad\le C\vol(\Gamma_m)^{-\epsilon_2}\Sob_q(f)\Sob_q(h).
    \end{align*}
\end{Thm}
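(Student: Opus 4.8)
The plan is to deduce Theorem~\ref{thm: joint_equidistr} from the (effective, compact-case) equidistribution machinery behind Theorem~\ref{thm: general} by enlarging the target space from $M$ to the product bundle $M\times\Conf(k,d)$. The key observation is that the data of an intersection point together with the pair of tangent planes is already recorded in the natural dynamical object: a geodesic $k$-submanifold $\Gamma$ in $M=\Gamma_{\mathrm{GL}}\backslash\mathbb H^d$ corresponds to an orbit of a subgroup $H_k\cong\SO(k,1)$ (times a compact factor) in the homogeneous space $X=\Gamma_{\mathrm{GL}}\backslash G$, $G=\SO(d,1)^\circ$; likewise $S$ corresponds to an $H_{d-k}\cong\SO(d-k,1)$-orbit. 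A transverse intersection point $P$ with tangent planes $(T_P\Gamma,T_PS)$ is exactly a point of $X$ (modulo the appropriate compact subgroup fixing the configuration) lying in the intersection of the two orbits, and the $G$-invariant way to package $(P,(T_P\Gamma,T_PS))$ is as a point of $M\times\Conf(k,d)$, where $\Conf(k,d)=\SO(d)\backslash\big(\mathrm{Fr}_k\times\mathrm{Fr}_{d-k}\big)$ or, equivalently, the double coset space describing relative position of a $k$-plane and a $(d-k)$-plane. So $I_{\mathrm{conf}}(\Gamma,S)$ is the pushforward to $M\times\Conf(k,d)$ of the ``raw'' intersection counting measure on $X$ (or on a suitable frame bundle over $M$) that the proof of Theorem~\ref{thm: general} already analyzes; Theorem~\ref{thm: general} is the special case obtained by integrating out the $\Conf(k,d)$ factor, i.e.\ testing against $h\equiv 1$.

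The steps, in order, are as follows. First I would set up $\Conf(k,d)$ precisely as a homogeneous space and identify the fibered counting measure: show that each transverse intersection point of $\Gamma_m$ and $S_n$ contributes a delta mass at a well-defined point of $M\times\Conf(k,d)$, and that summing these is the same as pushing forward the geometric intersection measure from the relevant frame bundle. Second, I would invoke the core effective-mixing/equidistribution input used for Theorem~\ref{thm: general} --- effective equidistribution of the expanding translates of the $H_k$-orbit associated to $\Gamma_m$ (from \cite{measure_equidistr} together with the effective decay of matrix coefficients / spectral gap for arithmetic $M$) --- but now applied to test functions on the frame bundle $F M$ (or $\Gamma_{\mathrm{GL}}\backslash G$) rather than on $M$. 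Since the frame bundle is still a homogeneous space for $G$ with the same spectral gap, the same Sobolev-norm estimates hold verbatim; this yields effective equidistribution of $\vol(\Gamma_m)^{-1}$ times the $\Gamma_m$-orbit measure, lifted to $FM$, towards Haar. Third, I would run the same ``thickening/transversality'' argument as in the proof of Theorem~\ref{thm: general} --- approximate $I_{\mathrm{conf}}(\Gamma_m,S_n)$ by integrating a smoothed indicator of an $\eta$-neighborhood of $S_n$ in $FM$ against the $\Gamma_m$-orbit measure, control the error by $\eta$ and by $\Sob_q$, apply effective equidistribution, then let $S_n$ equidistribute (effectively) as well --- but carrying along a test function $h$ on the $\Conf$-directions. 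The local transversality computation (Jacobian of the intersection) is smooth in the relative position of the two planes, so multiplying by $h$ changes the estimates only by a factor $\Sob_q(h)$. Fourth, for the ``fixed $S$'' statement, I would simply skip the step where $S_n$ equidistributes and instead use the genuine intersection with the fixed orbit $H_{d-k}\cdot g_S$, exactly as in the second part of Theorem~\ref{thm: general}. The limiting measure on $\Conf(k,d)$ is then read off as: $\mu_{k,d}$ is the measure obtained by disintegrating Haar measure on the relevant double fibration --- concretely, the pushforward under $g\mapsto(\text{relative position of }gH_k\text{ and }gH_{d-k})$ of the $G$-invariant measure, weighted by the transversality Jacobian; its independence of $M$ is immediate because it only involves $G$, $H_k$, $H_{d-k}$ and the Jacobian factor, none of which see the lattice.

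The main obstacle I expect is bookkeeping rather than new ideas: one must be careful that the transversality Jacobian (the factor by which the local intersection count is weighted when one thickens $S_n$ to a tube and integrates against the $\Gamma_m$-orbit) is a \emph{smooth, bounded, bounded-below} function on the relevant open dense subset of $\Conf(k,d)$, and that its mild blow-up near the ``non-transverse'' locus (where the two planes fail to be complementary) does not spoil the Sobolev estimates --- this is the same issue already handled in the proof of Theorem~\ref{thm: general} for $h\equiv1$, and it goes through because that locus has positive codimension and the orbit measures do not concentrate there (again by the effective equidistribution, which gives quantitative non-concentration near any fixed proper subvariety). A secondary point requiring care is that $\Conf(k,d)$ as I have defined it must be exactly the quotient that makes $(P,(T_P\Gamma,T_PS))$ well-defined independently of the choice of frames --- i.e.\ one quotients the product of two Stiefel/flag manifolds by the diagonal $\SO(d)$ (or $\mathrm{O}(d)$, depending on orientation conventions) --- and that $h\in C^\infty(\Conf(k,d))$ pulls back to a genuine $\SO(d)$-invariant smooth function on the product of frame bundles with $\Sob_q$ controlled by $\Sob_q(h)$; this is routine once the identification is pinned down. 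Modulo these, the inequality and the decoupled error terms $\vol(\Gamma_m)^{-\epsilon_2}+\vol(S_n)^{-\epsilon_2'}$ come out of exactly the same two effective-equidistribution inputs used for Theorem~\ref{thm: general}, now applied one level up on the frame bundle.
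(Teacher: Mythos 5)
Your proposal follows essentially the same route as the paper: the paper defines a configuration-weighted intersection kernel $K^h_\delta(u,v)=c_\delta^{-1}h(T_P\Gamma,T_PS)\mathbf{1}_{\{B^k_\delta(u)\pitchfork B^{d-k}_\delta(v)\}}$ and runs the same relate--smooth--equidistribute pipeline as for Theorem~\ref{thm: general}, then identifies $\mu_{k,d}$ as a constant multiple of $V(\theta)\,d\mu(\theta)$ on $\Gr(k,d)$, where $V(\theta)=\lim_{\delta\to0}\delta^{-d}\vol(\mathcal{F}_\theta)$ is exactly your ``transversality Jacobian.'' One small imprecision in your write-up: the effective equidistribution input (Theorem~\ref{thm: mixing.surface}) for intermediate $k$ does not come from \cite{measure_equidistr} plus spectral gap alone, but from the substantially deeper results of \cite{effective.surface, effective.surface2}; and since that theorem is already stated for test functions on $\mathcal{F}(M)$, no separate ``lifting to the frame bundle'' step is needed.
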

Setting $h\equiv1$, we obtain Theorem~\ref{thm: general} in the compact case. Note that setting $f\equiv 1$, we also obtain an equidistribution of intersecting configuration in $\Conf(k,d)$ with respect to $\mu_{k,d}$.

\paragraph{Discussion on related works}
\emph{Topological} equidistribution (i.e.~intersection points are dense in $M$), instead of \emph{measure-theoretic} equidistribution, is much easier to establish, and follows from topological versions of Theorem~\ref{thm: mixing.geod.3} and Theorem~\ref{thm: mixing.surface} (see \cite{ratner, shah}).

Filip, Fisher and Lowe recently showed that if a closed, real-analytic Riemannian manifold with negative sectional curvature of dimension $d\ge3$ contains infinitely many closed totally geodesic immersed hypersurfaces, then it must be hyperbolic, and thus arithmetic \cite{filip2024finiteness}. Therefore, Theorem~\ref{thm: main} covers all cases of interest even if we move to this more general setting. It is conjectured in \cite{filip2024finiteness} that a similar result holds for maximal totally geodesic submanifolds of dimension $\ge2$ as well. If so, it would mean Theorem~\ref{thm: general} also covers all cases of interest in this general setting.

In dimension 2, Lalley proved that the self-intersection points of ``most'' closed geodesics on compact negatively curved surfaces approximately equidistribute, without explicit error terms \cite{lalley_equiv}, as part of the pioneering work on statistical properties of these intersection points \cite{lalley2009self, chas_lalley, lalley_stat_reg}. Recently, Katz established an effective equidistribution result for intersection points of closed geodesics in the same setting \cite{katz2024density}.

The first-named author also proved equidistribution of intersection points between closed geodesics on hyperbolic surfaces of \emph{finite area}, without explicit error terms \cite{torkaman2024intersection}. The method there relies on the theory of geodesic currents (cf.~\cite{Bon.gc, Bon.Tch}) to deal with the compact part, plus control of intersection points in cuspidal neighborhoods. We expect that a similar approach based on geodesic currents would also lead to equidistribution for the compact case in higher dimensions, although without error terms. One of the key contributions of \cite{torkaman2024intersection} is to control the escape of mass to infinity through cuspidal neighborhoods, which we adapt here to obtain Theorem~\ref{thm: main} for the non-compact case.

On the other hand, Jung and Sardari proved a joint distribution result with explicit exponents in the error term for the modular surface \cite{MR4595382}, which is noncompact. Our approach follows some of the ideas there to define the intersection kernel (see Equation~\ref{def: K}) in the noncompact setting. They were able to obtain explicit (albeit non-optimal) exponents using tools from analytic number theory.

We end the discussion by pointing out some settings where one might expect a similar equidistribution result, as possible directions of future research.
\begin{itemize}[itemsep=0mm, topsep=0mm]
    \item The case of geodesic $k$ and $(d-k)$ submanifolds in non-compact arithmetic hyperbolic manifolds, where $2\le k\le d-2$, without restricting to compact subsets. As mentioned before, the key difficulty here is to control the escape of mass to the cusps.
    
    \item Intersection points between closed geodesics and minimal hypersurfaces for a negatively curved Riemannian manifold. Recently, it was proved that for a generic Riemannian metric on a smooth manifold of dimension between 3 and 7, there exists a sequence of minimal hypersurfaces that is equidistributed \cite{minimal_equidistr}. If effectivized, it could lead to a version of the main theorem in this setting.
    
    \item Intersection points between geodesic submanifolds of complementary \emph{complex} dimension in a complex-hyperbolic manifold. The equidistribution results cited in \S\ref{sec:equidistri} also hold in this setting (at least the ones without effective error terms); the analysis in \S\ref{sec:estimates1} and \S\ref{sec:estimates2} however needs to be adapted.
    
    \item Intersection points between closed geodesics on a geometrically finite hyperbolic surface of infinite area. In this case, closed geodesics equidistribute with respect to the Bowen-Margulis-Sullivan measure on the unit tangent bundle \cite{roblin}, which is supported on a fractal set. Points on the surface seem less ``homogeneous'' compared to the case of finite area (e.g.\ no closed geodesics intersect the components of the convex core boundary, which is a finite collection of simple closed geodesics). Still, all other closed geodesics have closed geodesics intersecting them; thus, some modifications are needed.
\end{itemize}

\paragraph{Big $O$, small $o$}
Throughout the paper, by $f=O(g)$ we mean that there exists a constant $C$ depending only on the geometry of $M$ so that $f\le C g$. By $f=o(g)$, we mean that there exists a function $h$ depending only on the geometry of $M$ so that $f/g\le h$ and $h\to0$, as the input goes to certain limit (usually $0$ or $\infty$). Moreover, we write $f \asymp g$ if $f=O(g)$ and $g=O(f)$.

\paragraph{Acknowledgment}
We thank Alex Eskin for valuable suggestions and enlightening discussions. We also thank Junehyuk Jung for pointing out the method in \cite{MR4595382} can be adapted to deal with the noncompact case here. Y.~Z. is partialy supported by an AMS-Simons Travel Grant.

\section{The intersection kernel}\label{sec: inter_kernel}
Throughout this section, let $M$ be a $d$-dimensional oriented complete hyperbolic manifold of finite-volume containing a geodesic $k$-submanifold and a geodesic $(d-k)$-submanifold, presented as $M\cong\Pi\backslash\mathbb{H}^d$, where $\Pi\subseteq\isom^+(\mathbb{H}^d)$ is a discrete torsion-free subgroup of orientation-preserving isometries. We remark that the oriented assumption is simply for convenience; all results remain valid by considering the oriented double cover for the nonorientable case.

\paragraph{Frames, submanifolds, and measures}
Let $\mathcal{F}(M)$ be the frame bundle over $M$, consisting of orthonormal tangent frames $(u_1,\dots,u_d)$ at each point $x\in M$, consistent with the orientation of $M$.
Consider the projection $\pi:\mathcal{F}(M) \to M$ that sends a frame to its base point. Note that every continuous function $f\in C(M)$ can be pulled back to a continuous function $f\circ\pi\in C(\mathcal{F}(M))$. For simplicity, we use the same notation to denote the two functions; it will be clear from context which one we refer to.

Given $1\le k\le d-1$, for any immersed totally geodesic hyperbolic $k$-plane $\mathcal{P}$ in $M$ (which is not necessarily properly immersed), we say a frame $(u_1,\dots,u_d)$ is tangent to $\mathcal{P}$ if the base point lies on $\mathcal{P}$ and the first $k$ vectors in the frame are tangent to $\mathcal{P}$. We denote the $k$-dimensional hyperbolic ball of radius $\delta$ contained in $\mathcal{P}$ centered at $x=\pi(v)$ by $B^k_\delta(v)$, or $B^{\mathcal{P}}_\delta(x)$ if we only want to specify the base point. Note that $B_\delta^d(v):=B_\delta(\pi(v))$ is simply the open ball of radius $\delta$ centered at $\pi(v)$ in $M$.

Denote by $\phi_\tau$ the geodesic flow on the unit tangent bundle of $M$. We also have an induced flow on the frame bundle along the direction of the first vector by parallel transport, which we also denote by $\phi_\tau$. Note that $\pi( \{ \phi_\tau(v): \tau\in (-\delta,\delta)\})=B^1_\delta(v)$ is a geodesic segment of length $2\delta$ with midpoint $\pi(v)$. 

Let $\mathbf{S}^{d-1}\subset\mathbb{R}^{d}$ be the unit sphere in dimension $d$, with the canonical spherical volume measure $\nu_{d-1}$. Set $\alpha_{d-1}=|\nu_{d-1}|$. The set of orthonormal frames $(u_1,\ldots,u_d)$ in $\mathbb{R}^d$ supports a natural measure, locally given by $d\nu_{d-1}\times\cdots\times d\nu_1$.

Given a geodesic $k$-submanifold $N$ in $M$, a corresponding Borel measure $\mu_N$ on $\mathcal{F}(M)$ can be defined as follows.
Consider $\mathcal{F}_{M}(N) \subseteq \mathcal{F}(M)$ containing frames $(u_1,\dots,u_d)$ which the first $k$ vectors are tangent to $N$.
Note that $\mathcal{F}_{M}(N)$ has one or two connected components, depending on whether $N$ is orientable or not.
Define a measure $\widetilde{\mu_N}$ supported on $\mathcal{F}_{M}(N)$ locally by $\frac12d\vol_N\times (d\nu_{k-1}/\alpha_{k-1})\times\cdots\times(d\nu_1/\alpha_1)\times (d\nu_{d-k-1}/\alpha_{d-k-1})\times\cdots\times (d\nu_1/\alpha_1)$, where $d\vol_N$ is the hyperbolic volume measure on $N$, followed by $k-1$ factors corresponding to the $k$ vectors tangent to $N$ (given $k-1$ vectors, the $k$th vector is determined uniquely), and then by $d-k-1$ factors corresponding to the remaining vectors. Note that the factor $\frac12$ is introduced because of the two choices of local orientation.
Now define $\mu_{N}(A) :=\widetilde{\mu_{N}}(A\cap \mathcal{F}_{M}(N))$ for any Borel set $A\subseteq\mathcal{F}(M)$.

In particular, this construction works for a closed geodesic $\gamma$ (when $k=1$). We can extend the definition to any weighted sum of closed geodesics. When $k=d$, the measure we obtain is (up to a constant) the \emph{Liouville measure} on $\mathcal{F}(M)$, which we denote by $L_M$. Similar construction gives a uniform measure on the unit tangent bundle $T_1(M)$, which we also call the Liouville measure and denote by $L_M$. It will be clear from context which one we refer to.

\paragraph{Intersection measure}
Let $\Gamma$ be a geodesic $k$-submanifold and $S$ a geodesic $(d-k)$-sub\-manifold in $M$. When $k=1$ (or $d-k=1$), we also allow $\Gamma$ (or $S$) to be a (formal) sum of closed geodesics. Let $\mathscr{P}(\Gamma,S)$ denote the set of transverse intersection points between (components of) $\Gamma$ and $S$.

We define the \emph{intersection measure} $I(\Gamma,S)$ between $\Gamma$ and $S$ on $M$ as the sum of the delta measures at the transverse intersection points. It is considered with multiplicities; the weight we assign to the point is the minimum number of simple intersection points when we homotopically deform $\Gamma$ in a small neighborhood. For example, when a geodesic arc passes through the self-intersection point of a figure eight, the multiplicity at this intersection point between the figure eight and the arc is two.

\paragraph{Intersection kernel}
Unless otherwise stated, when we write $\vol(X)$ for some $k$-dimensional space (in $M$ or its universal cover $\mathbb{H}^d$), we mean the $k$-dimensional volume on $X$ induced from the hyperbolic metric. We also use $X\pitchfork Y$ to denote that they intersect transversely. 

We first define an intersection kernel in the universal cover $\mathbb{H}^d$. Given $u,v\in\mathcal{F}(\mathbb{H}^d)$, define
$$
\mathbb{K}_{\delta}(u,v)=
\begin{cases}
    \frac{1}{c_\delta}&\text{if $\overline{B_\delta^k(u)}\pitchfork \overline{B^{d-k}_{\delta}(v)}$,} \\
    0 &\text{otherwise.}
\end{cases}
$$
where $c_{\delta}=\vol(B_\delta^k(u))\times \vol(B^{d-k}_{\delta}(v))\asymp \delta^{d}$. Note that $\vol(B_\delta^k(u))$ is independent of $u$, which we simply denote by $\vol_k(\delta)$. Although the kernel $\mathbb{K}_{\delta}$ depends on $k$ we dropped index $k$ as it will be clear from the context. Clearly, $\mathbb{K}_\delta$ is $\isom(\mathbb{H}^d)$-invariant, i.e.\ $\mathbb{K}_\delta(g.u,g.v)=\mathbb{K}_\delta(u,v)$ for any $g\in\isom(\mathbb{H}^d)$.

We are now ready to define the \emph{intersection kernel} $K_{\delta}:\mathcal{F}(M)\times \mathcal{F}(M) \to \mathbb{R}$ on $M\cong\Pi\backslash\mathbb{H}^d$ as follows:
\begin{equation}\label{def: K}
K_{\delta}(u,v)=\sum_{g\in\Pi}\mathbb{K}_\delta(g.\tilde u,\tilde v)
\end{equation}
where $\tilde u,\tilde v\in\mathcal{F}(\mathbb{H}^d)$ are any lifts of $u,v\in\mathcal{F}(M)$. By invariance, the expression does not depend on the choice of the lifts. By proper discontinuity of the action of $\Pi$ on $\mathbb{H}^d$, for any fixed $\tilde u,\tilde v\in\mathcal{F}(\mathbb{H}^d)$, there are only finitely many $g\in\Pi$ so that $\mathbb{K}_\delta(g.\tilde u,\tilde v)$ is nonzero. Hence, the kernel $K_\delta$ is well-defined.

Note that when $M$ is compact, by choosing $\delta$ small enough, we may guarantee $\mathbb{K}_\delta(g.\tilde u,\tilde v)\neq0$ for at most one $g\in\Pi$. In particular, it is possible to define the kernel directly on $M$ for $\delta$ small enough without going to the universal cover. This is not possible in the presence of cusps.

This definition of the intersection kernel is motivated by the following proposition, which helps us to separate the contributions of $k$- and $(d-k)$-dimensional submanifolds to the intersection number.
\begin{Prop}\label{prop: intersection.kernel} 
Let $\Gamma$ be a geodesic $k$-submanifold and $S$ a geodesic $(d-k)$-submanifold in $M$ (not necessarily compact).
For any $f \in C_c(M)$, we have
\begin{equation}\label{equ: intersection.kernel}
\int_{M} f(x) \, dI(\Gamma,S)=\lim_{\delta \to 0} \int_{\mathcal{F}(M)} \int_{\mathcal{F}(M)} f(v)K_{\delta}(u,v) \, d\mu_{\Gamma}(u)d\mu_{S}(v).
\end{equation} 
\end{Prop}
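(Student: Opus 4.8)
The plan is to reduce the identity to a local computation near each transverse intersection point, using the fact that the kernel $K_\delta$ unfolds to $\mathbb{K}_\delta$ on $\mathbb{H}^d$ and that $\mathbb{K}_\delta$ is designed to ``count'' transverse intersections of small geodesic balls. First I would unfold both integrals: writing $M\cong\Pi\backslash\mathbb{H}^d$ and lifting $\mu_\Gamma,\mu_S$ to $\Pi$-periodic measures $\widetilde{\mu_\Gamma},\widetilde{\mu_S}$ on $\mathcal{F}(\mathbb{H}^d)$, the definition \eqref{def: K} lets me rewrite the double integral over $\mathcal{F}(M)\times\mathcal{F}(M)$ as an integral over (a fundamental domain in the first variable and all of $\mathcal{F}(\mathbb{H}^d)$ in the second, or symmetrically) of $f(v)\mathbb{K}_\delta(\tilde u,\tilde v)$ against $d\widetilde{\mu_\Gamma}(\tilde u)\,d\widetilde{\mu_S}(\tilde v)$; equivalently, summing over components of the preimages of $\Gamma$ and $S$ in $\mathbb{H}^d$. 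Since $f$ has compact support and $\Gamma,S$ are properly immersed, only finitely many pairs of lifted components $(\widetilde\Gamma_i,\widetilde S_j)$ meet the relevant compact region, so the sum is finite and all manipulations are legitimate; by linearity it then suffices to prove the statement for a single transverse intersection point $P$ of a $k$-plane $\mathcal P$ and a $(d-k)$-plane $\mathcal Q$ in $\mathbb{H}^d$, i.e.\ to show that
\[
\lim_{\delta\to0}\int\!\!\int f(v)\,\mathbb{K}_\delta(\tilde u,\tilde v)\,d\widetilde{\mu_{\mathcal P}}(\tilde u)\,d\widetilde{\mu_{\mathcal Q}}(\tilde v)=f(P).
\]

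The heart of the argument is this single-point computation. Fix the intersection point $P$ where $\mathcal P\pitchfork\mathcal Q$. The measure $\widetilde{\mu_{\mathcal P}}$ restricted to frames tangent to $\mathcal P$ disintegrates, via the projection to the base point, as (a constant times) $d\vol_{\mathcal P}$ on $\mathcal P$ times a fixed finite measure on the fiber of admissible rotations; likewise for $\widetilde{\mu_{\mathcal Q}}$. The rotational factors are normalized to have total mass $1$ by the choice of the $\alpha_j$ normalizations in the definition of $\mu_N$, so after integrating them out I am left with
\[
\frac{1}{c_\delta}\int_{\mathcal P}\int_{\mathcal Q} f(y)\,\mathbf{1}\bigl[\overline{B^{\mathcal P}_\delta(x)}\pitchfork\overline{B^{\mathcal Q}_\delta(y)}\bigr]\,d\vol_{\mathcal Q}(y)\,d\vol_{\mathcal P}(x),
\]
up to the $\tfrac12$ factors, which cancel against the two local orientation choices. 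For $\delta$ small, the transversality condition $\overline{B^{\mathcal P}_\delta(x)}\pitchfork\overline{B^{\mathcal Q}_\delta(y)}$ holds precisely when $x$ and $y$ lie in balls of size comparable to $\delta$ around $P$ — more precisely, I would use that near $P$ the pair $(\mathcal P,\mathcal Q)$ is, after an isometry, a small perturbation of a pair of complementary linear subspaces of $\mathbb{R}^d\cong T_P\mathbb{H}^d$, so the set of $(x,y)\in\mathcal P\times\mathcal Q$ with the two $\delta$-balls meeting is, to leading order, $\{(x,y): |x-P|<\delta-O(\delta^2),\ |y-P|<\delta-O(\delta^2)\}$ times a transversality-angle factor that tends to $1$. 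Hence the integral equals $\frac{1}{c_\delta}\cdot(\vol_k(\delta)\cdot\vol_{d-k}(\delta))\cdot(f(P)+o(1))=f(P)+o(1)$, using continuity of $f$ and $c_\delta=\vol_k(\delta)\vol_{d-k}(\delta)$; letting $\delta\to0$ gives $f(P)$.

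The main obstacle — and the step deserving the most care — is making the geometric claim ``$\overline{B^{\mathcal P}_\delta(x)}\pitchfork\overline{B^{\mathcal Q}_\delta(y)}$ iff $x,y$ are within $\sim\delta$ of $P$'' quantitative and uniform enough to control the error. One has to handle (i) the distinction between transverse intersection and mere intersection of the closed balls (the boundary case where the balls are tangent contributes a lower-order set), (ii) the dependence of the ``effective radius'' and the angle factor on how close $x,y$ are to $P$ and on the angle between $\mathcal P$ and $\mathcal Q$, and (iii) the comparison between hyperbolic and Euclidean balls at scale $\delta$, which introduces only $O(\delta^2)$ distortion. A clean way to organize this is to pass to normal coordinates at $P$, rescale by $1/\delta$ so that $\mathbb{H}^d$ converges to $\mathbb{R}^d$ and the rescaled $\mathcal P,\mathcal Q$ converge to linear subspaces $V,W$ with $V\oplus W=\mathbb{R}^d$, and observe that the rescaled indicator converges pointwise (off a measure-zero set) to the indicator that the unit $V$-ball around $x/\delta$ meets the unit $W$-ball around $y/\delta$, whose integral against Lebesgue measure is exactly $c_1$ by definition; dominated convergence then yields the limit. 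I would also note that for the case $k=1$ (or $d-k=1$), where $\Gamma$ or $S$ may be a weighted sum of closed geodesics possibly passing through a self-intersection, the multiplicity convention in the definition of $I(\Gamma,S)$ matches the count produced by unfolding to $\mathbb{H}^d$ — since distinct lifts of the geodesic through a self-intersection point are distinct $1$-planes in $\mathbb{H}^d$ — so no separate argument is needed there.
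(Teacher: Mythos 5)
Your overall structure matches the paper's: reduce to a local computation near each transverse intersection point $P$ and show the contribution there is $f(P)+o(1)$. (Unfolding explicitly to $\mathbb{H}^d$ versus working on $M$ as the paper does is an immaterial difference, and your remark on multiplicities for $k=1$ is a nice addition.) However, the heart of your argument contains a geometric misconception that makes the proof considerably more complicated than it needs to be. You treat the condition $\overline{B^{\mathcal P}_\delta(x)}\pitchfork\overline{B^{\mathcal Q}_\delta(y)}$ as holding only ``to leading order'' on a product of balls, with an $O(\delta^2)$ effective-radius correction and an angle factor, and then invoke rescaling and dominated convergence. None of these corrections exist. Since $\mathcal P$ and $\mathcal Q$ are totally geodesic subspaces of complementary dimension meeting transversally at $P$, we have $\mathcal P\cap\mathcal Q=\{P\}$ exactly, so $\overline{B^{\mathcal P}_\delta(x)}\cap\overline{B^{\mathcal Q}_\delta(y)}$ is either empty or equal to $\{P\}$, and the two balls meet (transversally, aside from a measure-zero boundary configuration) if and only if $d(x,P)\le\delta$ \emph{and} $d(y,P)\le\delta$. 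There is no tangency case, no dependence on the angle between $\mathcal P$ and $\mathcal Q$, and no hyperbolic--Euclidean distortion to control: the relevant set is \emph{exactly} $\overline{B^{\mathcal P}_\delta(P)}\times\overline{B^{\mathcal Q}_\delta(P)}$, of measure $\vol_k(\delta)\vol_{d-k}(\delta)=c_\delta$, for every $\delta$.

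This is precisely the simplification the paper exploits: once $\pi(v)\in\overline{B^S_\delta(P)}$ is fixed, the inner integral $\int K_\delta(u,v)\,d\mu_\Gamma(u)$ equals $\vol_k(\delta)/c_\delta$ identically, so the double integral over the $\delta$-neighborhood of $P$ equals $\frac{1}{c_\delta}\int_{\pi^{-1}(B^S_\delta(P))}f(v)\,\vol_k(\delta)\,d\mu_S(v)$, which differs from $f(P)$ by at most the modulus of continuity of $f$ at scale $\delta$. No rescaling limit is needed. Your proof would eventually land on the right value, since the spurious correction terms you introduce do vanish, but the ``main obstacle'' you isolate is not actually present, and recognizing the exactness of the identity is what makes the kernel definition (and later its effective versions) work cleanly. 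I would rewrite the local step to use the exact set identity rather than a blow-up argument.
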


\begin{proof}
    The left-hand side of Equation~\ref{equ: intersection.kernel} is precisely the sum of $f$ values at the intersection points, considered with the multiplicities. Let $P$ be an intersection point that, for simplicity, we can assume is simple (i.e.\ has multiplicity one). We want to show the contribution of the point $P$ to the right-hand side is $f(P)$ too.
    
    Since $f$ is compactly supported, we can fix $\delta>0$ small enough so that the closed $\delta$-neighborhoods around the intersection points in the support of $f$ are disjoint from each other.
    Since $f$ is furthermore continuous, there exists $\epsilon>0$ so that $|f(x)-f(y)|\le\epsilon$ whenever $|x-y|\le\delta$. Moreover, we can choose $\epsilon\to 0$ as $\delta\to 0$.

    Our assumption on $\delta$ guarantees that for any frame $v$ tangent to $S$, the set $\overline{B_\delta^{d-k}(v)}$ contains at most one intersection points. Indeed, if both $P,Q\in \overline{B_\delta^{d-k}(v)}$, then $d(P,Q)\le2\delta$ and hence $\overline{B_\delta(P)}\cap \overline{B_\delta(Q)}\neq\emptyset$, a contradiction.

    For any $\pi(v)\in \overline{B^S_\delta(P)}$ with first $d-k$ vectors of $v$ tangent to $S$, we have $\displaystyle \int_{\mathcal{F}(M)}K_\delta(u,v)d\mu_\Gamma(u)\allowbreak=\vol_k(\delta)/c_\delta$ and $|f(v)-f(P)|\leq \epsilon$. Thus
    \begin{equation*}
 \left|\int_{\pi^{-1}(B_\delta^S(P))}\int_{\mathcal{F}(M)}f(v)K_\delta(u,v)d\mu_\Gamma(u)d\mu_S(v)-f(P)\right|\le \frac{\vol_k(\delta)\vol(B_\delta^S(P))\epsilon}{c_\delta}=\epsilon.
    \end{equation*}
    Therefore, the contribution of $P$ to the right-hand side of Equation \ref{equ: intersection.kernel} is in $\lim_{\delta \to 0} [f(P)-\epsilon,f(P)+\epsilon]$ so it is $f(P)$, as desired.
\end{proof}

\paragraph{Regularity and growth}
More generally, suppose $\mathbb{K}:\mathcal{F}(\mathbb{H}^d)\times\mathcal{F}(\mathbb{H}^d)\to\mathbb{R}$ is a bounded function satisfying
\begin{itemize}
    \item $\mathbb{K}(g.u,g.v)=\mathbb{K}(u,v)$ for any $g\in\isom(\mathbb{H}^d)$, and
    \item given a fixed $v$, the function $u\mapsto\mathbb{K}(u,v)$ has compact support. We denote the diameter of this support by $\diam(\mathbb{K})$.
\end{itemize}
We can then define the associated function $K:\mathcal{F}(M)\times\mathcal{F}(M)\to\mathbb{R}$ by
$$K(u,v)=\sum_{g\in\Pi}\mathbb{K}(g.\tilde u,\tilde v),$$
where $\tilde u,\tilde v$ are any lifts of $u,v$. We have the following properties of the function $K$.
\begin{Prop}\label{prop: growth}
    \begin{enumerate}[label=\normalfont{(\arabic*)}]
        \item If $\mathbb{K}$ is of class $C^l$, then so is $K$.
        \item There exists a constant $C$ depending only on the geometry of $M$ so that
        $$\sup_{\pi(v)\in M_{\ge\iota}}|K(u,v)|\le C\lceil \diam(\mathbb{K})/\iota\rceil^{d-1}\sup|\mathbb{K}|.$$
        Similar statements hold for all derivatives of $K$ as well.
        \item We have
        $$\int_{\mathcal{F}(M)}K(u,v)dL_M(u)=\int_{\mathcal{F}(\mathbb{H}^d)}\mathbb{K}(\tilde u,\tilde v)dL_{\mathbb{H}^d}(\tilde u)$$
        where $\tilde v$ is any lift of $v$. In particular, it is a constant independent of $v$.
    \end{enumerate}
\end{Prop}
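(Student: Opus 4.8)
The proposition has three parts of rather different flavor, and I would prove them in the order (1), (3), (2): part (1) is a local-finiteness statement, part (3) is an unfolding identity, and part (2) is the one requiring a genuine orbit-counting input, which is where I expect the real work.

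\emph{Part (1).} I would argue locally. Near a point $(u_0,v_0)\in\mathcal{F}(M)\times\mathcal{F}(M)$, choose lifts $\tilde u_0,\tilde v_0$ and a relatively compact product neighborhood $U\times V\subseteq\mathcal{F}(\mathbb{H}^d)^2$ mapping diffeomorphically onto a neighborhood of $(u_0,v_0)$. Since the support of $u\mapsto\mathbb{K}(u,\tilde v)$ has diameter $\diam(\mathbb{K})$ and moves isometrically with $\tilde v$, the set $W:=\bigcup_{\tilde v\in\overline V}\{w:\mathbb{K}(w,\tilde v)\ne0\}$ is relatively compact; by proper discontinuity of the $\Pi$-action on $\mathcal{F}(\mathbb{H}^d)$ only finitely many $g\in\Pi$ satisfy $g\overline U\cap\overline W\ne\varnothing$, and for every other $g$ the term $\mathbb{K}(g\tilde u,\tilde v)$ vanishes identically on $U\times V$. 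Hence on $U\times V$ the kernel $K$ is a \emph{finite} sum of maps $(\tilde u,\tilde v)\mapsto\mathbb{K}(g\tilde u,\tilde v)$, each $C^l$ as a composition of $\mathbb{K}\in C^l$ with a smooth map, and a finite sum of $C^l$ functions is $C^l$; since $K$ is already well defined on $\mathcal{F}(M)^2$, this gives $K\in C^l$. \emph{Part (3)} is the standard unfolding for the covering $p:\mathcal{F}(\mathbb{H}^d)\to\mathcal{F}(M)$: writing $L_M=p_*(L_{\mathbb{H}^d}|_D)$ for a Borel fundamental domain $D$, splitting $\mathbb{K}(\cdot,\tilde v)$ into its (bounded, compactly supported, hence $L^1$) positive and negative parts so Tonelli applies, one gets $\int_{\mathcal{F}(M)}K(u,v)\,dL_M(u)=\sum_{g}\int_{gD}\mathbb{K}(\tilde u,\tilde v)\,dL_{\mathbb{H}^d}=\int_{\mathcal{F}(\mathbb{H}^d)}\mathbb{K}(\tilde u,\tilde v)\,dL_{\mathbb{H}^d}(\tilde u)$; independence of the lift $\tilde v$ follows by substituting $\tilde u\mapsto h\tilde u$ and using $\isom(\mathbb{H}^d)$-invariance of $\mathbb{K}$ and $L_{\mathbb{H}^d}$.

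\emph{Part (2).} Put $r:=\diam(\mathbb{K})$ and $N(u,v):=\#\{g\in\Pi:\mathbb{K}(g\tilde u,\tilde v)\ne0\}$. Then $|K(u,v)|\le N(u,v)\sup|\mathbb{K}|$, and, differentiating the locally finite sum from part (1) term by term and noting that each $g$ acts by an isometry (so with respect to an invariant differentiation on $\mathcal{F}(\mathbb{H}^d)$ one has $|\partial^\alpha(\mathbb{K}\circ g)|\le\sup|\partial^\alpha\mathbb{K}|$), also $|\partial^\alpha K(u,v)|\le N(u,v)\sup|\partial^\alpha\mathbb{K}|$. So it suffices to show $N(u,v)\le C\lceil r/\iota\rceil^{d-1}$ whenever $\pi(v)\in M_{\ge\iota}$. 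By the support hypothesis a nonzero term forces $g\cdot\pi(\tilde u)$ into a ball $B_{\mathbb{H}^d}(\pi(\tilde v),O(r))$, so $N(u,v)$ is at most the number of points of the single $\Pi$-orbit $\Pi\cdot\pi(\tilde u)$ in this ball. Here I would invoke the thick--thin decomposition: there is $\iota_0>0$ depending only on $M$ so that the part of $M$ of injectivity radius $\ge\iota_0$ is compact and the complement is a finite union of standard cusp neighborhoods, each lifting to a horoball on which the $\Pi$-stabilizer is a rank-$(d-1)$ lattice of parabolic translations of the horospheres. Over the compact part the orbit points are $2\iota_0$-separated, and $r$ is bounded (as holds in all applications), so there are only $O(1)$ of them. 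Over a cusp, the hypothesis $\pi(v)\in M_{\ge\iota}$ pins $\pi(\tilde v)$ to bounded height $\lesssim 1/\iota$, forcing the relevant orbit points to comparable, hence bounded, height; counting them then amounts to counting lattice points of a rank-$(d-1)$ lattice inside a Euclidean ball of radius $\asymp r/\iota$ on a horosphere, which gives $O((r/\iota)^{d-1})$, and summing over the finitely many cusps and the boundedly many cosets of the cusp subgroup that can carry $\pi(\tilde u)$ into the relevant horoball preserves this bound. Adding the two contributions yields $N(u,v)=O(\lceil r/\iota\rceil^{d-1})$.

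The soft parts (1) and (3) present no real obstruction. The crux of the proposition is the orbit count in part (2), and in particular getting the exponent $d-1$ rather than the naive packing exponent $d$: this forces one to exploit that, beyond a bounded contribution from the compact core, the additional orbit points accumulate only along the $(d-1)$ horospherical directions of the cusps, and to carry out the resulting lattice-point count uniformly in $\iota$ and in the position of $\pi(u)$ relative to the cusps. Making that cusp bookkeeping uniform is the step I expect to require the most care.
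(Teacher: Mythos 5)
Your proof is correct and follows essentially the same route as the paper's, which is extremely terse (especially for part (2), where the paper just notes that for $\pi(v)$ deep in a cusp the relevant $g$ are parabolic and says ``the estimate then follows''). Your fleshed-out version of part (2) --- reducing to counting $\Pi$-orbit points of $\pi(\tilde u)$ in a ball of radius $O(r)$ around $\pi(\tilde v)$, and then splitting into a bounded compact-core contribution plus a rank-$(d-1)$ lattice-point count on a horosphere at height $\asymp 1/\iota$ --- is exactly the intended argument, and you correctly identify that the horospherical count is what produces the exponent $d-1$ rather than $d$. You also correctly flag (and the paper silently elides) that the claimed bound with a constant $C$ depending only on $M$ really requires $\diam(\mathbb{K})$ to be bounded: without that, the compact-core contribution is not controlled by $\lceil r/\iota\rceil^{d-1}$ uniformly, since $\pi(u)$ can lie anywhere within distance $r$ of $\pi(v)$ and hence have injectivity radius as small as $\asymp\iota e^{-r}$. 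In the paper's applications all kernels have $\diam(\mathbb{K})=O(\delta)=O(1)$, so this is not a genuine gap in context, but it is a hypothesis worth stating explicitly. Parts (1) and (3) match the paper's proofs (local finiteness via proper discontinuity, and the standard fundamental-domain unfolding, respectively).
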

\begin{proof}
    For part (1), given any $u,v\in\mathcal{F}(M)$, choose small neighborhoods $U$ and $V$ of them. Take any lifts $\tilde U,\tilde V\subseteq\mathcal{F}(\mathbb{H}^d)$. By proper discontinuity, there exist only finitely many $g\in\Pi$ so that $\mathbb{K}$ is nonzero on $g.U\times V$. In particular, $K$ is the sum of finitely many functions of class $C^l$ in $U\times V$, and is hence of class $C^l$ there as well.

    For Part (2), for fixed $\tilde u, \tilde v$, we only need to control the number of $g$ so that $\mathbb{K}(g.\tilde u,\tilde v)$ is nonzero. When $v$ is a frame over a point $\pi(v)$ deep in the cusp, then it is easy to see that $g$ must be parabolic elements fixing the corresponding lift of the cusp. The estimate then follows.

    For Part (3), choose a fundamental domain $F$ of $\Pi$ in $\mathcal{F}(\mathbb{H}^d)$. Then
    \begin{align*}
        \int_{\mathcal{F}(M)}K(u,v)dL_M(u)&=\int_FK(\pi(\tilde u),v)dL_{\mathbb{H}^d}(\tilde u)=\int_{F}\sum_{g\in\Pi}\mathbb{K}(g.\tilde u,\tilde v)dL_{\mathbb{H}^d}(\tilde u)\\
        &=\sum_{g\in\Pi}\int_{g\cdot F}\mathbb{K}(\tilde u,\tilde v)dL_{\mathbb{H}^d}(\tilde u)=\int_{\mathcal{F}(\mathbb{H}^d)}\mathbb{K}(\tilde u,\tilde v)dL_{\mathbb{H}^d}(\tilde u)
    \end{align*}
    which is a constant by invariance of $\mathbb{K}$ and $L_{\mathbb{H}^d}$ under isometries.
\end{proof}

\section{Effective equidistribution of geodesic submanifolds}\label{sec:equidistri}

In this section, we recall some effective equidistribution theorems needed in the proof of Theorem \ref{thm: main} and Theorem~\ref{thm: general}. We also discuss several counterexamples to illustrate the necessity of certain assumptions in our main results. 
\subsection{Equidistribution results}
The first result gives an effective equidistribution of closed geodesics. Recall that $\gamma_T$ is the sum of all closed geodesics in $M$ of length $\le T$.
\begin{Thm}[Theorem~5.1 in \cite{effective.closedgeo}]\label{thm: mixing.geod.3}
    Let $M$ be a complete hyperbolic manifold of finite volume. There exist constants $\epsilon_1, q>0$ such that for any bounded $f \in C^\infty(T_1(M))$ we have
    $$
    \int_{T_1(M)} f \, d\mu_{\gamma_T}^1=\int_{T_1(M)} f \, dL_M^1+O(\Sob_q(f)e^{-\epsilon_1 T}). 
    $$
    where $\Sob_q$ is a degree $q$ Sobolev norm of $f$.
\end{Thm}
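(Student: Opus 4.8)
The statement is a by-now-standard effective equidistribution result, and I would prove it by deducing it from effective mixing of the frame (geodesic) flow together with an effective form of Margulis's closed-orbit counting argument. First I would pass to the homogeneous picture: write $\mathcal{F}(M)=\Pi\backslash G$ with $G=\mathrm{SO}^+(d,1)\cong\isom^+(\mathbb{H}^d)$, let $a_t\subset G$ be the one-parameter subgroup generating the geodesic flow, and identify $T_1(M)=\Pi\backslash G/M'$ with $M'\cong\mathrm{SO}(d-1)$. Primitive closed geodesics of length $\ell$ correspond to $\Pi$-conjugacy classes of primitive hyperbolic elements of translation length $\ell$, equivalently to primitive periodic $a_t$-orbits of period $\ell$; so $\mu_{\gamma_T}$ is, up to averaging over the compact $M'$-fibre, the sum over periodic orbits of period $\le T$ of their arclength measures.

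Next I would record effective mixing: there are $\epsilon,q>0$ so that for all $\psi_1,\psi_2\in C_c^\infty(\Pi\backslash G)$ and $t\ge0$,
\[
\left|\int_{\Pi\backslash G}\psi_1(a_tx)\,\psi_2(x)\,dL_M(x)-\frac{1}{|L_M|}\int\psi_1\,dL_M\int\psi_2\,dL_M\right|\le e^{-\epsilon t}\Sob_q(\psi_1)\Sob_q(\psi_2).
\]
This is the quantitative Howe--Moore decay of matrix coefficients of $L^2(\Pi\backslash G)$: for a fixed $M$ the bottom nonzero Laplace eigenvalue $\lambda_1>0$, together with the bottom $(d-1)^2/4$ of the continuous spectrum, gives uniform exponential decay of $K$-finite matrix coefficients, and the $\Sob_q$ norm reduces the general case to the $K$-finite one. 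In the non-compact case the weighted Sobolev norm of Remark~\ref{rmk: Sob} is needed for this bound to be genuinely uniform.

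The main step is the passage from mixing to counting, which I would do following Margulis in effective form. Fix a small flow box $\mathcal{B}$, a product of an unstable plaque, a stable plaque, and a short flow interval. Applying the mixing estimate to smoothings of $\mathbf{1}_{\mathcal{B}}$, estimate the $L_M$-measure of $\{x\in\mathcal{B}:a_tx\in\mathcal{B}\text{ for some }t\in[T,T+\rho]\}$; since the unstable leaf expands by $e^{(d-1)t}$, this measure is $\asymp\rho\,e^{-(d-1)T}|\mathcal{B}|^2/|L_M|$ up to the mixing error. By a quantitative Anosov closing lemma, each such near-return determines — with error exponentially small in $T$ in both period and location — a unique periodic orbit of period in $[T,T+\rho]$ near $\mathcal{B}$, and conversely. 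Integrating a test function $f$, summing over a flow-box partition of $\Pi\backslash G$, and dividing by the $f\equiv1$ case (the effective prime geodesic theorem $|\mu_{\gamma_T}|=\tfrac{1}{d-1}e^{(d-1)T}(1+O(e^{-\epsilon_1T}))$) then yields the claimed estimate. Non-primitive geodesics contribute only $O(e^{(d-1)T/2})$, short geodesics $O(1)$, and segments in the $\iota$-thin part are controlled by the divergence of the weight in $\Sob_q$.

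The hard part will be making this passage fully effective: controlling the boundary and overlap errors of the flow-box partition and the matching error in the quantitative closing lemma so that, after summing over the $\asymp e^{(d-1)T}$ contributing orbits, the accumulated error is still $O(e^{-\epsilon_1 T}\Sob_q(f))$; and, in the non-compact case, simultaneously handling the degeneration of the flow boxes near the cusps — i.e.\ establishing the effective non-escape of mass bounding the portion of $\mu_{\gamma_T}^1$ in the $\iota$-thin part, for which the cusp-weighted norm is again essential. An alternative special to the hyperbolic setting, which sidesteps the closing-lemma bookkeeping, is to feed a point-pair invariant built from $f$ into the Selberg--Gangolli trace formula on $\Pi\backslash\mathbb{H}^d$, reading the main term off the constant eigenfunction and the error off $\lambda_1$ and the continuous spectrum; the non-compact case then requires controlling the Eisenstein contribution.
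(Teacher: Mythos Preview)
Your sketch is a reasonable outline of how the cited result \cite[Theorem~5.1]{effective.closedgeo} is actually proved, but you are doing far more work than the paper does. The paper treats this theorem as a black box: its entire ``proof'' consists of quoting the cited theorem in its original unnormalized form (roughly $(d-1)e^{-(d-1)T}\mu_{\gamma_T}(f)=L_M^1(f)+O(\Sob_q(f)e^{-\epsilon_1T})$), specializing to $f\equiv1$ to get the asymptotic for $|\mu_{\gamma_T}|$, and dividing the two to pass to the probability measure $\mu_{\gamma_T}^1$.

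What you have written is essentially a proof sketch \emph{of} the cited theorem itself: the homogeneous dictionary, effective Howe--Moore mixing, the Margulis flow-box/closing-lemma passage from mixing to an effective prime geodesic theorem, and the trace-formula alternative. That sketch is correct in outline and identifies the genuine difficulties (boundary/overlap bookkeeping, non-escape of mass in the cusped case). The upside of your approach is that it is self-contained and makes transparent where the constants $\epsilon_1,q$ come from; the paper's approach is simply to outsource all of this to \cite{effective.closedgeo} and perform a one-line normalization. For the purposes of this paper --- which only \emph{uses} the result --- the citation is the appropriate choice; your outline would be appropriate if one were writing \cite{effective.closedgeo}.
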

While this theorem is stated here for the unit tangent bundle instead of the frame bundle, we only need this result when $f$ is constant on the fibers of $\mathcal{F}(M)\to T_1(M)$.

\begin{Thm}\label{thm: mixing.surface}
Let $M$ be an arithmetic hyperbolic manifold, and $k\ge 2$. There exist constants $\epsilon_2, q>0$ such that for any maximal geodesic $k$-submanifold $S$ and any function $f \in C_c^{\infty}(\mathcal{F}(M))$ we have: 
$$
\int_{\mathcal{F}(M)} f \, d\mu_S^1=\int_{\mathcal{F}(M)} f \, dL_M^1+ O(\Sob_q(f)\vol(S)^{-\epsilon_2}),
$$
where $\Sob_q$ is a degree $q$ Sobolev norm of $f$.
\end{Thm}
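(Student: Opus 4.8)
The plan is to deduce the statement from effective equidistribution of closed orbits of the relevant intermediate subgroup $H = \SO^+(k,1) \times \SO(d-k)$ (or rather its appropriate lift) acting on the homogeneous space $\Pi \backslash G$, where $G = \isom^+(\mathbb{H}^d)$ realized as (a finite cover of) $\SO^+(d,1)$ and $\mathcal{F}(M) = \Pi \backslash G$. A maximal geodesic $k$-submanifold $S$ of finite volume corresponds to a closed $H$-orbit $\mathcal{O}_S$ in $\Pi\backslash G$, and the measure $\mu_S$ is (a constant multiple of) the $H$-invariant probability measure on $\mathcal{O}_S$ pushed to the frame bundle; its total mass $|\mu_S|$ is comparable to $\vol(S)$. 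So the first step is to make this dictionary precise: identify $\mathcal{F}(M)$ with $\Pi\backslash G$, check that $\mu_S^1$ is the homogeneous probability measure on the closed $H$-orbit, and record that $\vol(S) \asymp \operatorname{vol}(\mathcal{O}_S)$ with constants depending only on the geometry of $M$.

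Next, since $M$ is arithmetic, the stabilizer subgroups of all these geodesic $k$-submanifolds are commensurable, and one invokes the known effective equidistribution theorem for closed orbits of semisimple subgroups in arithmetic quotients — concretely, the work on effective equidistribution of periodic $H$-orbits à la Einsiedler–Margulis–Venkatesh (and its refinements), which gives
$$
\int_{\mathcal{F}(M)} f \, d\mu_S^1 = \int_{\mathcal{F}(M)} f \, dL_M^1 + O\!\left(\Sob_q(f)\operatorname{vol}(\mathcal{O}_S)^{-\epsilon_2}\right)
$$
for all $f \in C_c^\infty(\mathcal{F}(M))$, with $\epsilon_2, q$ depending only on $\Pi$. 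The factor $\SO(d-k)$ is compact, so the dynamically relevant part is the closed $\SO^+(k,1)$-orbit (which is noncompact as a group when $k\ge 2$ but has finite volume, and is semisimple without compact factors when $k\ge 2$ — this is exactly where the hypothesis $k\ge 2$ enters, since for $k=1$ the group $\SO^+(1,1)$ is the geodesic flow, a rank-one abelian situation handled separately in Theorem 3.2). The maximality assumption guarantees that the orbit closure is all of $\Pi\backslash G$ rather than an intermediate submanifold, so that the equidistribution is to the full Liouville measure $L_M$ and not to some lower-dimensional piece; this is where one cites Proposition 3.1 of \cite{arithmetic_geodesic}.

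The main obstacle — and the step requiring the most care — is handling the noncompact case: the submanifolds $S$ may have cusps, so the orbits $\mathcal{O}_S$ are noncompact, and one needs the effective equidistribution statement to hold with a test function $f \in C_c^\infty$ while controlling the contribution near the cusps of $\mathcal{F}(M)$ uniformly in $S$. This requires a quantitative non-escape-of-mass estimate for the family $\{\mu_S^1\}$, ensuring $\mu_S^1(\mathcal{F}(M)_{<\iota})$ is small uniformly in $S$, which in the arithmetic setting follows from effective recurrence (e.g. via a Margulis-type function / linearization argument for the subgroup action, as in the relevant sources), together with the weighting of the Sobolev norm near the cusps as noted in Remark 1.2. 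With that in place, truncating $f$ near the cusps, applying the equidistribution on the compact part, and absorbing the error into the $\vol(S)^{-\epsilon_2}$ term completes the argument. I would expect the cleanest write-up to simply cite the effective equidistribution theorem for periodic orbits in the form already available in the literature and spend the remaining effort on the volume comparison $\vol(S)\asymp\operatorname{vol}(\mathcal{O}_S)$ and on extracting the $C_c^\infty$ statement from the $L^\infty$/compactly-supported version.
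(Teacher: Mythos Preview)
Your overall strategy---identify $\mathcal{F}(M)$ with $\Pi\backslash G$, recognize $\mu_S^1$ as the homogeneous probability measure on a closed orbit of the relevant semisimple subgroup, and invoke the Einsiedler--Margulis--Venkatesh effective equidistribution---is exactly the paper's route. Two points of divergence are worth flagging.

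First, the EMV-type theorems (Theorem~1.3 in \cite{effective.surface} and Theorem~1.7 in \cite{effective.surface2}) are stated for \emph{congruence} lattices, whereas an arithmetic $\Pi$ need not be congruence. The paper addresses this explicitly by passing to a finite-index subgroup contained in a congruence group and then pulling the estimate back, following \S1.6.1 of \cite{effective.surface}; you do not mention this step, and without it the citation is incomplete. Second, the error term in those theorems is expressed in terms of the \emph{discriminant} $\disc(xG_k^+)$ of the closed orbit, not its volume; the paper invokes Proposition~17.1 of \cite{effective.surface} to compare discriminant and $\vol(S)$ up to a power, which is the actual content of the ``volume comparison'' you allude to. Finally, your emphasis on a separate quantitative non-escape-of-mass argument is misplaced: the cited results already hold in the noncompact setting for $f\in C_c^\infty$, with the cusp behavior absorbed into the weighted Sobolev norm (cf.\ Remark~\ref{rmk: Sob}), so no additional truncation or recurrence estimate is needed here. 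The role of maximality is, as you say, to rule out intermediate orbit closures; the precise input is Lemma~3.2 of \cite{arithmetic_geodesic}.
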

This theorem follows from the results in \cite{effective.surface, effective.surface2}, most notably Theorem~1.3 in \cite{effective.surface} and Theorem~1.7 in \cite{effective.surface2}. We remark that the Sobolev norm in these results is a weighted $L^2$ norm, with a weight tending to infinity in the cusp (in fact, the weight is $\asymp\inj(x)^{-q}$), where $\inj(x)$ denotes the injectivity radius at $x$; see the relevant discussions in \cite{effective.surface} for more details. On the other hand, in the compact case, the Sobolev norms of the same degree are equivalent (up to a multiplicative constant), and it does not matter which one we choose.

The remainder of the section is devoted to briefly explain the discrepancies between the theorems stated above and their original versions in the cited papers.

\paragraph{Homogeneous dynamics: geodesic flow}
Let $G_d=\SO(d,1)$, and $G_d^+=\SO^+(d,1)$ be the identity component of $G_d$. Set $K_d\cong \SO(d)$ to be a maximal compact subgroup of $G_d^+$. The homogeneous space $G^+_d/K_d$ is endowed with a left $G^+_d$-invariant metric, normalized to have constant sectional curvature $-1$. This space is isometrically identified with $\mathbb{H}^d$, where $G^+_d$ acts as orientation preserving isometries.

Fix $o\in\mathbb{H}^d$ and a frame $v_0$ at $o$. Note that $G^+_d$ acts transitively on the frame bundle $\mathcal{F}(\mathbb{H}^d)$. The stabilizer of the frame $v_0$ is trivial. Thus the $G^+_d$ is identified with $\mathcal{F}(\mathbb{H}^d)$.

Let $T_1(\mathbb{H}^d)$ be the unit tangent bundle over $\mathbb{H}^d$. The stabilizer of a fixed tangent vector at $o$ is a compact subgroup $M_d$ of $K_d$, and so $T_1(\mathbb{H}^d)$ is identified with $G^+_d/M_d$. There exists a one-parameter subgroup $A=\{a_t:t\in\mathbb{R}\}$ of $G_d$ consisting of diagonalizable elements which commute with $M_d$. The right action of $a_t$ on $G^+_d/M_d$ corresponds to the geodesic flow $\phi_t$ on $T_1(\mathbb{H}^d)$.

Any hyperbolic $d$-manifold can be presented as $\Pi\backslash\mathbb{H}^d$ where $\Pi\subseteq G^+_d$ is a discrete subgroup without torsion. Our assumption on $M$ means that $\Pi$ is an arithmetic lattice. The quotient $\Pi\backslash G^+_d$ is naturally identified with the frame bundle $\mathcal{F}(M)$, and $\Pi\backslash G^+_d/M_d$ is identified with the unit tangent bundle $T_1(M)$. Moreover, the right action of $a_t$ on $\Pi\backslash G^+_d/M_d$ corresponds to the geodesic flow $\phi_t$ on $T_1(M)$.

\begin{proof}[Sketch of Theorem~\ref{thm: mixing.geod.3}.]
    By \cite[Theorem~5.1]{effective.closedgeo}, given any bounded smooth function $f$ on $T_1(M)$, we have $d\cdot e^{-dT}\mu_{\gamma_T}(f)=L^1_M(f)+O(\Sob_q(f)e^{-\epsilon_1T})$. Setting $f=1$, we conclude that $d\cdot e^{-dT}|\mu_{\gamma_T}|=1+O(\Sob_q(f)e^{-\epsilon_1T})$. Combining the two, we have Theorem~\ref{thm: mixing.geod.3}.
\end{proof}

\paragraph{Homogeneous dynamics: geodesic submanifolds}
Now for any $2\le k\le d-1$, $G_k$ is contained in $G^+_d$ as a subgroup. Orbits of the right action of $G_k$ on $G^+_d\cong\mathcal{F}(H^d)$ and $\Pi\backslash G^+_d\cong\mathcal{F}(M)$ are precisely frames over properly immersed totally geodesic $k$-planes in $\mathbb{H}^d$ and $M$, where two different connected components of $G_k$ give two different local orientations. In particular, geodesic $k$-submanifolds correspond to closed $G_k$-orbits.

\begin{proof}[Sketch of Theorem~\ref{thm: mixing.surface}]
    We first prove the statement with the extra assumption that $\Pi$ is a congruence subgroup. When $k=d-1$, the result follows directly from Theorem 1.3 in \cite{effective.surface}, as in this case, $G^+_{d-1}$ has finite centralizer in $G^+_d$. (Geometrically speaking, any tangent frame to a geodesic hypersurface uniquely determines a frame in $M$ with compatible orientation, so closed $G^+_{d-1}$-orbits are rigid.)

    More generally, we can apply Theorem~1.7 in \cite{effective.surface2}. Indeed, note that by our assumption, the closed $G^+_k$-orbit $xG^+_k$ corresponding to the maximal geodesic $k$-submanifold $S$ is not, up to compact factor, contained in the orbit of any intermediate closed subgroup of $G^+_d$ (see \cite[Lemma~3.2]{arithmetic_geodesic}). Hence we conclude that the error term is of the form $O(\Sob_q(f)\disc(xG^+_k)^{\epsilon_2})$ for some $q,\epsilon_2$ depending only on the geometry of $M$. Here $\disc(xG^+_k)$ denotes the discriminant of the closed orbit, and by \cite[Proposition 17.1]{effective.surface} it is comparable to $\vol(S)$ up to a positive exponent. Therefore by changing $\epsilon_2$ if necessary, we have the desired result in this case.
    
    It remains to argue that in our setting, we may drop the congruence assumption. This can be done following the argument of \cite[\S1.6.1]{effective.surface}. First note that we may assume $\Pi$ is contained in a congruence group $\Lambda$ by taking a finite index subgroup. Clearly any maximal geodesic $k$-submanifold in $M$ projects to a maximal geodesic $k$-submanifold in $\Lambda\backslash\mathbb{H}^d$. We can then apply the theorem in the congruence case and argue exactly as \cite[\S1.6.1]{effective.surface} to obtain the desired error term.
\end{proof}

\subsection{Counterexamples}\label{subsec: counterexamples}

In this subsection, we provide several counterexamples to demonstrate why certain assumptions in our main theorems are essential. In particular, if some of these assumptions are dropped, the arguments may still lead to non-effective versions of the results. However, we can no longer obtain an error bound of the form
\[
O\big(e_1(\vol(\Gamma_m)) + e_2(\vol(S_n))\big) \, \Sob_q(f),
\]
where the functions $e_1$ and $e_2$ tend to zero as $x \to \infty$, and both these functions, along with the implied constant in the $O(\cdot)$ notation, depend only on the geometry of $M$.

To simplify the discussion, we assume throughout this subsection that $M$ is closed, i.e.\ compact without boundary. 

\begin{Ex*}\label{eg: geodesic}

In general, we cannot replace \( \gamma_T \) with a sequence of distinct closed geodesics in Theorem~\ref{thm: main}, even if the sequence equidistributes in \( M \). To see this, suppose \( M \) contains an infinite sequence of distinct totally geodesic hypersurfaces \( \{S_n\} \), and let \( \{\gamma_m\} \) be a sequence of distinct closed geodesics entirely contained in \( S_1 \). Let \( \{\gamma_m'\} \) be a sequence of closed geodesics equidistributed in \( M \). For any \( K \in \mathbb{N} \), define a hybrid sequence \( \{\gamma_m^K\} \) by setting
\[
\gamma_m^K = 
\begin{cases}
\gamma_m & \text{if } m < K, \\
\gamma_m' & \text{if } m \geq K.
\end{cases}
\]
Then $I^1(\gamma_m^K, S_n) \to \vol_M^1$ as $m \to \infty$ (e.g., this follows from the proof without effective error terms). However, if  $K \gg 1$, for some large $m$ we have $\gamma_m^K \subset S_1$, so $I^1(\gamma_m^K, S_n)$ is far from $\vol_M^1$. However, the length $\ell(\gamma_m^K)$ may already be large, making the error term artificially small if the result was assumed to hold for arbitrary equidistributing sequences.
\end{Ex*}

\begin{Ex*}\label{eg: no_maximal}
    We cannot drop the maximal assumption in Theorem~\ref{thm: general} by a very similar construction. If the two sequences of submanifolds are not equidistributed then clearly we cannot expect the intersection points to be either. We provide a counterexample where the sequences remain equidistributed in $M$. Suppose $M$ contains a sequence of distinct maximal geodesic surfaces $\{S_n\}$. Let $N$ be a geodesic hypersurface in $M$. Suppose further that $N$ contains a sequence of distinct geodesic $(d-2)$-submanifolds $\{\Gamma_m\}$. By arithmeticity and density of commensurators, $M$ contains a sequence $\{\Gamma_m'\}$ of distinct geodesic $(d-2)$-submanifolds which equidistribute in $M$. Now as in the previous example, for any $K\in\mathbb{N}$, consider the sequence $\{\Gamma_m^K\}$ defined by $\Gamma_m^K=\Gamma_m$ when $m<K$ and $=\Gamma_m'$ when $m\ge K$. Then $I^1(\Gamma_m^K,S_n)\to\vol_M^1$ (e.g.~one may go through the proof without effective error terms). However, if $K\gg1$, then for some large $m$, we still have $\Gamma_m^K$ is contained in $N$, so $I^1(\Gamma_m^K,S_n)$ is far from being $\vol_M^1$, even though $\vol(\Gamma_m^K)$ is already very large.
\end{Ex*}
    
\begin{Ex*}\label{eg: intermediate}
    In Theorem~\ref{thm: general}, if we drop the maximal condition, then $\Gamma_m$ (up to a subsequence) equidistributes to a geodesic submanifold $N$ of $M$. One may ask whether we can have a uniform effective error with respect to the volume measure on $N$. We show that we can not. Indeed, suppose $M$ contains a sequence of distinct maximal geodesic surfaces $\{S_n\}$. Let $N_1,N_2$ be two distinct geodesic hypersurface in $M$. Suppose further that $N_i$ contains a sequence of distinct geodesic $(d-2)$-submanifolds $\{\Gamma_m^{(i)}\}$. As before, for any $K\in\mathbb{N}$, consider the sequence $\{\Gamma_m^K\}$ defined by $\Gamma_m^K=\Gamma^{(1)}_m$ when $m<K$ and $=\Gamma_m^{(2)}$ when $m\ge K$. Then $I^1(\Gamma_m^K,S_n)\to\vol_{N_2}^1$ (e.g.~one may go through the proof without effective error terms). However, if $K\gg1$, then for some large $m$, we still have $\Gamma_m^K$ is contained in $N_1$, so $I^1(\Gamma_m^K,S_n)$ is very far from being $\vol_{N_2}^1$, even though $\vol(\Gamma_m^K)$ is already very large.
\end{Ex*}

\paragraph{Remarks on finite volume} 
While we focus on unweighted sequences, our method in fact gives equidistribution of intersection points in the compact case for any sequence of weighted sum of closed geodesic submanifolds that equidistribute to the Liouville measure (however the error term cannot generally be chosen to be uniform).

In contrast, as mentioned above, when $M$ has finite volume but is not compact, we can not in general conclude an equidistribution result for intersection points, unless we work with specific sequences. In the case of $k=1$, the sequence $\{\gamma_T\}$ works; but the following example shows that not every equidistributed sequence does. We remark that this example is closely related to the discontinuity of the intersection form on the space of geodesic currents on a non-compact hyperbolic surface of finite area; cf.\ \cite{torkaman2024intersection}.
\begin{Ex*} \label{ex: cusp}
Let $M$ be a finite-area hyperbolic surface with a cusp. We now construct a sequence $\{\eta_n\}$ where each $\eta_n$ is a weighted sum of closed geodesics such that $\eta_n \to \vol^1_M$ but $I^1(\eta_n,\eta_n) \to 0$ as $n \to \infty$. Let $\gamma_n$ be the sum of closed geodesics with length $\leq n$ and $\alpha_n$ a closed geodesic that goes around the cusp $n$ times. Assume that the homotopy types of $\alpha_n$ after removing the part that goes around the cusp $n$ times is equal to $[\alpha_0]$, see Figure \ref{fig: cusp}.
     
\begin{figure}[htp]
    \centering
    \includegraphics{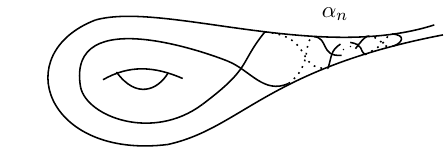}
    \caption{$\alpha_n$, a closed geodesic that goes around the cusp $n$ times}
    \label{fig: cusp}
\end{figure}

Note that the intersection number $i(\alpha_n,\alpha_n)\sim 2n$ (to be compatible with the definition of the intersection number, self-intersection points are counted twice). Now, define $\eta_n=\gamma_n/|\gamma_n|+\alpha_N/|\alpha_{N}|^2$ where $N$ is a large enough integer (with respect to $n$) which goes to infinity as $n \to \infty$. Since $\alpha_N/|\alpha_N|^2 \to 0$, we have $\eta_n \to \vol^1_M$ as $n \to \infty$. Now we estimate the self-intersection measure of $\eta_n$. The behavior of $\alpha_N$ when $N$ is large is similar to $\alpha_0$ union $2r$ where $r$ is a geodesic ray that goes straight into the cusp. Therefore, for large $n$ we have 
\begin{equation}\label{equ: cusp}
I^1(\eta_n,\eta_n)\sim \frac{1}{|I(\eta_n,\eta_n)|} \left( \frac{ I(\gamma_n,\gamma_n)}{|\gamma_n|^2}+\frac{2I(\gamma_n,\alpha_0+2r)}{|\gamma_n||\alpha_N|^2}+\frac{I(\alpha_N,\alpha_N)}{|\alpha_N|^4} \right). 
\end{equation}
Note that we have $|I(\eta_n,\eta_n)|=i(\eta_n,\eta_n)\geq i(\alpha_N,\alpha_N)\geq N$. Therefore, if we pick $N$ large enough relative to $n$, the volume of first two terms in Equation \ref{equ: cusp} tend to zero and the last term is a measure whose support goes to infinity (into the cusp). We conclude the measure $I^1(\eta_n,\eta_n)$ converges to $0$ when $n \to \infty$, as required.
\end{Ex*}

\section{Intersection points in cuspidal neighborhoods}\label{sec:cusp_inter}
In this section, we give upper bounds on the number of intersection points in cuspidal neighborhoods when $k=1$. In the next section, this will be used in the proof of effective equidistribution for $k=1$ to show that there is no escape of mass to the cusp. 

\begin{Thm}\label{thm: cusp_inter_points}
    Let $M$ be a noncompact finite-volume complete hyperbolic manifold of dimension $d \geq 3$. Let $\gamma_T$ be the sum of closed geodesics with length $\leq T$ and $S$ a geodesic hypersurfaces in $M$. There exist positive constants $C, \iota_0$ depending only on the geometry of $M$ so that for any positive $\iota<\iota_0$, the number $|I_\iota(\gamma_T,S)|$ of transverse intersection points between $\gamma_T$ and $S$ in $M_\iota$ satisfies
    $$|I_\iota(\gamma_T,S)|\le C\iota^{d-2}e^{(d-1)T}\vol(S).$$
\end{Thm}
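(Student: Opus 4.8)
The plan is to estimate $|I_\iota(\gamma_T,S)|$ by lifting to the universal cover and counting lattice translates of $S$ that a given lift of a closed geodesic can cross inside a horoball neighborhood of a cusp. Fix a cusp of $M$ and pass to the upper half-space model so that the corresponding parabolic subgroup $\Pi_\infty\subseteq\Pi$ fixes $\infty$; the $\iota$-thin neighborhood $M_\iota$ of this cusp lifts to a horoball $\mathcal{H}=\{x_d\ge h(\iota)\}$ with $h(\iota)\asymp 1/\iota$ (up to the usual normalizations), and we sum over the finitely many cusps at the end. A transverse intersection point of $\gamma_T$ with $S$ in $M_\iota$ lifts to a point where a geodesic segment $\tilde\gamma$ (a lift of an arc of some closed geodesic of length $\le T$) meets some $\Pi$-translate $g\tilde S$ of a fixed lift $\tilde S$ of $S$, with the intersection point lying in $\mathcal{H}$. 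So the first step is the bookkeeping: bound $|I_\iota(\gamma_T,S)|$ by (number of relevant lifts of closed geodesics, weighted appropriately) times (number of translates $g\tilde S$ meeting a single lift inside $\mathcal{H}$), being careful about the $\Pi_\infty$-action so as not to overcount.

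Next I would count closed geodesics. By the prime geodesic theorem (or the weaker Margulis-type bound that suffices here), the number of closed geodesics of length $\le T$, counted with the appropriate multiplicity, is $O(e^{(d-1)T})$; this supplies the $e^{(d-1)T}$ factor. More precisely, each closed geodesic of length $\ell$ contributes a lift which is a bi-infinite geodesic in $\mathbb{H}^d$ invariant under a hyperbolic element of translation length $\ell$, and the portion of this axis lying in the horoball $\mathcal{H}$ (after quotienting by the cyclic group it generates) has bounded length once we restrict attention to how it can re-enter the cusp; the total length of $\gamma_T$ inside $M_\iota$ is itself $O(e^{(d-1)T})$ with the implied constant independent of $\iota$ (a closed geodesic can enter a cusp neighborhood only by going up and coming back down, and the time spent above height $h$ is $O(\log(\text{something}))$ but the number of such excursions is controlled). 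The heart of the matter is the remaining factor: for a single geodesic segment inside the horoball $\mathcal{H}=\{x_d\ge h\}$, how many translates $g\tilde S$ of a totally geodesic hypersurface can it cross transversely? A totally geodesic hypersurface in $\mathbb{H}^d$ is a hemisphere (or vertical hyperplane); the translates $g\tilde S$ with $g\in\Pi$ that reach up to height $\ge h$ inside $\mathcal{H}$ are hemispheres of Euclidean radius $\gtrsim h\gtrsim 1/\iota$, and the number of such hemispheres whose Euclidean "footprint" on a bounded horizontal region is counted is controlled by a volume/packing argument, giving a bound of the shape $C\iota^{d-2}\vol(S)$: the factor $\vol(S)$ is the total number of $\Pi_\infty$-inequivalent translates of $\tilde S$ (proportional to the volume of $S$), and the factor $\iota^{d-2}$ comes from the fact that a geodesic segment of bounded horizontal extent inside a horoball at height $\ge 1/\iota$ can only cross hemispheres of radius $\gtrsim 1/\iota$ that are packed into a region of the appropriate size, with a $(d-1)$-dimensional horosphere cross-section shrinking like $\iota^{d-1}$ but one dimension "used up" along the segment.

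Putting these together gives $|I_\iota(\gamma_T,S)|\le C\iota^{d-2}e^{(d-1)T}\vol(S)$ after summing over the finitely many cusps. The main obstacle I anticipate is making the middle estimate — counting translates of $\tilde S$ crossed by a single lifted geodesic arc inside the horoball — both correct and independent of $\iota$ in the way claimed; this requires separating the contribution of the abelian cusp group $\Pi_\infty$ (which produces $\asymp\vol(S)/\iota^{\,?}$ translates but arranged periodically) from the "genuinely new" translates, and checking that a geodesic arc, which inside a horoball looks like a bounded Euclidean arc at large height, really meets only $O(\iota^{d-2})$ of the large hemispheres per unit of $\vol(S)$. I would handle this by a direct Euclidean computation in the horoball: a hemisphere of radius $r$ based at a point $p$ on $\partial\mathbb{H}^d=\mathbb{R}^{d-1}$ reaches height $h$ only if it passes within horizontal distance $\sqrt{r^2-h^2}\le r$ of any point above which it rises that high, and transverse crossing by a fixed bounded arc forces the base point $p$ to lie in a slab of volume $O(r^{d-2})$ in $\mathbb{R}^{d-1}$; summing over the discrete set of base points, whose density is controlled by $\vol(S)$ via the fact that $S$ has finite volume and the cusp cross-section is a flat $(d-1)$-torus (or more generally a flat manifold), yields the claim.
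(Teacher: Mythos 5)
Your proposal has the right high-level shape (lift to a horoball, count how many $\Pi$-translates of $\tilde S$ a given geodesic arc can cross, account for the density of these translates via $\vol(S)$), but there is a genuine gap: you treat the number of crossings per geodesic arc as controlled by a quantity that is essentially independent of how deep the arc goes into the cusp, and you use the crude Margulis-type bound ``number of closed geodesics of length $\le T$ is $O(e^{(d-1)T})$'' (or ``total length in $M_\iota$ is $O(e^{(d-1)T})$'') as the source of the $e^{(d-1)T}$ factor. Neither of these is strong enough. The number of translates of $\tilde S$ crossed by a single excursion grows with the Euclidean radius $R$ of the excursion (roughly like $R$, or like $R^{d-1}$ in the other regime), so there is no $\iota$-dependent uniform bound per arc. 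To make the bookkeeping close, one needs a compensating decay in $R$ for the \emph{number} of excursions of radius $\approx R$, and this is precisely what your argument does not supply and cannot extract from the prime geodesic theorem alone.

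The paper's proof supplies exactly this missing ingredient (Proposition~\ref{prop: geod_excursion}): the number $A_R^\star(T)$ of $R$-excursions among all closed geodesics of length $\le T$ is $O(e^{(d-1)T}/R^d)$, not just $O(e^{(d-1)T})$. The extra factor $R^{-d}$ is obtained by a ``cut-and-homotope'' map: given a closed geodesic with an $R$-excursion, delete the excursion (which by Lemma~\ref{lem: vol.cap} has length $\simeq 2\log R$), reconnect the two endpoints to a fixed basepoint $Q$ on $\PH$, and take the geodesic representative — a loop in $\mathcal{G}_{T-2\log R + c}(Q)$. Counting the fibers of this map (at most $O(R^{d-2})$ to one, by a lattice count on $\PH$) and bounding $|\mathcal{G}_{T'}(Q)|$ by a standard ball-covering argument gives $A_R^\star(T)\lesssim R^{d-2}\cdot e^{(d-1)(T-2\log R)}=e^{(d-1)T}R^{-d}$. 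Only with this decay does the double sum over the excursion radius $R_1$ and cap radius $R_2$ (using $N(R_1,R_2)\lesssim R_1R_2^{d-2}$ or $R_2^{d-1}$, and $\sum R_2^{d-2}\#_{R_2}\lesssim\vol(S)$) converge and produce the $\iota^{d-2}$ factor, since $\sum_{R_1\ge 1/\iota}R_1\cdot R_1^{-d}\asymp\iota^{d-2}$. Your Euclidean slab computation is in the right spirit for the per-pair intersection bound $N(R_1,R_2)$, and your density-of-translates remark is essentially the paper's observation that $\sum R^{d-2}\#_R\lesssim\vol(S)$; what is missing is the structural counting of deep excursions, without which the argument cannot produce the claimed $\iota^{d-2}$ exponent.
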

Our method is based on bounding cuspidal excursions of closed geodesics, see Proposition~\ref{prop: geod_excursion}. We believe that an effective control of cuspidal parts of geodesic submanifolds of dimension $k\ge2$ should yield a similar result, although our current method does not seem to give geometric estimates strong enough for this purpose.

\paragraph{Structure of cuspidal neighborhoods}

Recall that given $\iota>0$, the \emph{$\iota$-thin part} of $M$, denoted by $M_{<\iota}$, consists of points $x\in M$ where the injectivity radius $\inj(x)<\iota$. The structure of $M_{<\iota}$ is well-understood when $\iota$ is small enough: there exists a constant $\epsilon_d>0$ depending only on the dimension $d$ (called the Margulis constant), so that when $\iota<\epsilon_d$, each connected component of $M_{<\iota}$ is either a Margulis tube (a small neighborhood of a closed geodesic of length $<\iota$) or a cuspidal neighborhood (see the discussion below). Note that $M_{<\iota}$ contains exactly one cuspidal neighborhood $\mathcal{H}_\iota(\star)$ for each cusp $\star$ of $M$. We use $M_{\iota}$ to denote the union of all cuspidal neighborhoods $\mathcal{H}_\iota(\star)$ in $M_{<\iota}$. For the remainder of the section, we always assume $\iota<\epsilon_d$.

In terms of the upper half space model $\mathbb{H}^d\cong\mathbb{R}^{d-1}\times\mathbb{R}_+$, a cuspidal neighborhood is isomorphic to $\Lambda\backslash\mathbb{H}^d_L$, where $\mathbb{H}^d_L=\{(\xi,\tau)\in\mathbb{R}^{d-1}\times\mathbb{R}: \tau>L\}$ and $\Lambda$ is a discrete subgroup of translations on $\mathbb{R}^{d-1}$ of rank $d-1$. Topologically, a cuspidal neighborhood is homeomorphic to $\mathbb{T}^{d-1}\times(0,\infty)$, where $\mathbb{T}^{d-1}$ is a $(d-1)$-dimensional torus.

Since the hyperbolic metric in this model is given by $ds^2=\dfrac{d\xi^2+d\tau^2}{\tau^2}$, the injectivity radius at the projection of $(\xi,\tau)\in\mathbb{H}^d_L$ to $\Lambda\backslash \mathbb{H}^d_L$ is $c/\tau$ for some constant $c$ depending only on $\Lambda$.

To make consistent and convenient choices throughout the section, we normalize the group $\Lambda$ so that $\mathcal{H}_{\epsilon_d}(\star)\cong\Lambda\backslash\mathbb{H}^d_1$. Then for any $\iota<\epsilon_d$, $\mathcal{H}_{\iota}(\star)\cong\Lambda\backslash\mathbb{H}^d_{\epsilon_d/\iota}$.

\paragraph{Caps, tubes, excursions}
Let $S$ be a geodesic $k$-submanifold in $M$, where $2\le k\le d-1$. Then each connected component of $S\cap\overline{\mathcal{H}_\iota(\star)}$ is either compact, or (the closure of) a cuspidal neighborhood in $S$. We call the former a \emph{cap} and the latter a \emph{tube}. The case $k=1$ is somewhat different: a closed geodesic $\gamma$ only meets $\overline{\mathcal{H}_\iota(\star)}$ in compact segments. We call each segment a \emph{(cuspidal) excursion} of $\gamma$.

We briefly give a geometric picture. Suppose $\mathcal{H}_\iota(\star)\cong\Lambda\backslash\mathbb{H}^d_L$. Lifts of $S$ are totally geodesic $k$-planes in $\mathbb{H}^d$, so are either half $k$-planes or half $k$-spheres in the upper half space model. A cap of $S$ is then given by the projection of the portion above $\tau=L$ of a half $k$-sphere lift of $S$ (which is a spherical cap), and a tube is the corresponding projection of a half $k$-plane lift. On the other hand, lifts of closed geodesics can only be half circles when $\infty$ is a cusp, so an excursion is the projection of the half circle above $\tau=L$.

\paragraph{Geometric estimates}
Let $R$ be a positive integer. Under the normalization $\mathcal{H}_{\epsilon_d}(\star)\cong\Lambda\backslash\mathbb{H}^d_1$, a cap in $S$ is called an \emph{$R$-cap} (resp.\ an excursion in $\gamma$ is called an \emph{$R$-excursion}) if the corresponding lift is a half $k$-sphere (resp.\ a half circle) of Euclidean radius in $[R,R+1)$. The following lemma can be proved similarly to \cite[Lemma~4.4]{torkaman2024intersection}. Here $f\simeq_C g$ means $|f-g|\le C$.
\begin{Lemma}\label{lem: vol.cap}
    Suppose $R\ge 2$. For any $k\ge2$, there exists a constant $C$ depending only on the geometry of $M$ so that any $k$-dimensional $R$-cap is a hyperbolic disk of radius $\simeq_C\log R$, and hence has volume $\asymp R^{k-1}$. Moreover for $k=1$, an $R$-excursion is a geodesic segment of length $\simeq_C2\log R$.
\end{Lemma}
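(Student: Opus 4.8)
The plan is to reduce everything to an explicit computation in the upper half-space model $\mathbb{H}^d \cong \mathbb{R}^{d-1} \times \mathbb{R}_+$, using the normalization $\mathcal{H}_{\epsilon_d}(\star) \cong \Lambda \backslash \mathbb{H}^d_1$. First I would treat the case $k \ge 2$. Fix an $R$-cap, i.e.\ the projection to $M$ of the portion above $\tau = 1$ of a half $k$-sphere $\Sigma$ of Euclidean radius $\rho \in [R, R+1)$, centered at some point $(\xi_0, 0)$ on the boundary $\mathbb{R}^{d-1} \times \{0\}$. Such a half $k$-sphere is a totally geodesic copy of $\mathbb{H}^k$, and the subset $\{\tau \ge 1\} \cap \Sigma$ is intrinsically a hyperbolic ball inside it: in the hyperbolic metric on $\Sigma$ the level set $\tau = 1$ is a circle, and by symmetry the cap is exactly the set of points of $\Sigma$ at distance $\le r$ from the ``north pole'' $(\xi_0, \rho)$ (after translating so the sphere is centered at the origin). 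A direct computation of the hyperbolic distance along a great semicircle of Euclidean radius $\rho$ from its top to the point at height $\tau=1$ gives $r = \int_{1}^{\rho} \frac{d\tau}{\tau \sqrt{1 - (\tau/\rho)^2}}$-type integral, which evaluates to $\log(2\rho) + o(1)$ as $\rho \to \infty$ (more precisely $r = \operatorname{arccosh}(\rho)$ up to the exact parametrization, and $\operatorname{arccosh}(\rho) = \log(2\rho) + O(1/\rho^2)$). Hence $r \simeq_C \log R$ for a constant $C$ depending only on the geometry (through $\epsilon_d$ and $\Lambda$, which fix the normalization). The volume of a hyperbolic $k$-ball of radius $r$ is $\asymp \sinh^{k-1}(r) \asymp e^{(k-1)r} \asymp R^{k-1}$, giving the volume estimate. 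One must also check that the projection $\mathbb{H}^d_1 \to \Lambda \backslash \mathbb{H}^d_1$ is injective on the cap so that ``volume'' is unambiguous; this follows because $\Lambda$ consists of horizontal translations, and a horizontal translate of $\Sigma$ by a nonzero lattice vector meets $\Sigma$ above $\tau = 1$ only if the two centers are within Euclidean distance $2\rho$ — but even when the cap is not embedded, the bound $\asymp R^{k-1}$ holds for the intrinsic volume of the hyperbolic ball, which is what the statement refers to, so this is a minor point (cf.\ the analogous treatment in \cite[Lemma~4.4]{torkaman2024intersection}).

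For $k = 1$, an $R$-excursion is the projection of the portion above $\tau = 1$ of a half-circle of Euclidean radius $\rho \in [R, R+1)$. This is a geodesic segment whose length is twice the hyperbolic distance from the top of the semicircle to the height-one point, i.e.\ $2\operatorname{arccosh}(\rho) = 2\log(2\rho) + O(1/\rho^2) = 2\log R + O(1)$, so the length is $\simeq_C 2\log R$. This is really the $k=1$ specialization of the same arccosh computation, since a cap degenerates to (half of) an excursion.

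The main obstacle, such as it is, is purely bookkeeping: carefully setting up the parametrization of the half $k$-sphere as an isometric copy of $\mathbb{H}^k$ and verifying that the slice $\{\tau \ge 1\}$ is genuinely a metric ball (not merely a sublevel set of $\tau$), together with tracking how the normalization constant in $\mathcal{H}_{\epsilon_d}(\star) \cong \Lambda\backslash\mathbb{H}^d_1$ enters the constant $C$. Once the arccosh formula $r = \operatorname{arccosh}(\rho)$ (up to the precise conventions) is in hand, the asymptotics $\operatorname{arccosh}(\rho) = \log R + O(1)$ for $\rho \in [R, R+1)$ and the volume formula $\vol(B^k_r) \asymp \sinh^{k-1} r$ are standard, and the whole argument parallels \cite[Lemma~4.4]{torkaman2024intersection} closely enough that I would simply indicate the modifications needed in dimension $d$ rather than reproduce the computation in full.
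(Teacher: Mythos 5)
Your proposal is correct and matches the computation the paper points to (via \cite[Lemma~4.4]{torkaman2024intersection}): the cap on a half $k$-sphere of Euclidean radius $\rho$ above height $\tau=1$ is, by rotational symmetry, an intrinsic hyperbolic ball of radius exactly $\operatorname{arccosh}(\rho)=\log\bigl(\rho(1+\sqrt{1-\rho^{-2}})\bigr)=\log R+O(1)$, and the volume of a hyperbolic $k$-ball then gives $\asymp R^{k-1}$, with $k=1$ degenerating to a segment of length $2\operatorname{arccosh}(\rho)$. This is the same route the paper intends; the paper simply defers the calculation to the cited reference.
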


Fix a set of $\mathbb{Z}$-basis for $\Lambda$, i.e. $\Lambda=\mathbb{Z}x_1\oplus\cdots\oplus\mathbb{Z}x_{d-1}$. With respect to this choice of basis, a fundamental domain for the action of $\Lambda$ on $\mathbb{R}^{d-1}$ may be chosen as $F_{\Lambda}:=\{\sum r_ix_i: r_i\in[0,1]\}$. Let $\langle \cdot,\cdot\rangle$ be the inner product associated with this set of basis (i.e.\ $\langle x_i,x_j\rangle=\delta_{ij}$ for $1\le i,j\le d$).

Given an integral vector $u=(u_1,\ldots,u_{d-1})\in\mathbb{Z}^{d-1}$, a $(d-1)$-dimensional tube is called a $u$-tube if its lift intersects the plane $\tau=1$ in a subspace orthogonal to $u_1x_1+\cdots+u_{d-1}x_{d-1}$ with respect to $\langle\cdot,\cdot\rangle$. We have
\begin{Lemma}\label{lem: vol.u}
 Suppose $u$ is a primitive integral vector. There exists a positive constant $C$ depending only on $\Lambda$ and the choice of $\mathbb{Z}$-basis so that the following holds. The orbit of a $u$-tube under the action of $\Lambda$ meets a fixed fundamental domain of $\Lambda$ in at most $C\|u\|$ pieces. Moreover, any $u$-tube has volume $\asymp\|u\|$.
\end{Lemma}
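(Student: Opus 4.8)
The plan is to work entirely in the upper half-space model and reduce both assertions to elementary lattice geometry in $\mathbb{R}^{d-1}$. After the normalization $\mathcal{H}_{\epsilon_d}(\star)\cong\Lambda\backslash\mathbb{H}^d_1$, I would fix a $u$-tube and a lift $\tilde T=V\times\mathbb{R}_+$ of it, where $V\subseteq\mathbb{R}^{d-1}$ is an affine $(d-2)$-plane on which the linear functional $x\mapsto\langle x,\hat u\rangle$, with $\hat u:=u_1x_1+\cdots+u_{d-1}x_{d-1}$, is constant, say equal to $c_0$. Writing $\lambda=\sum n_ix_i\in\Lambda$ in coordinates $(n_1,\dots,n_{d-1})\in\mathbb{Z}^{d-1}$ and using that the $x_i$ are $\langle\cdot,\cdot\rangle$-orthonormal, one has $\langle\lambda,\hat u\rangle=\sum n_iu_i$, so the stabilizer of $\tilde T$ in $\Lambda$ is $\Lambda_u:=\{\lambda\in\Lambda:\langle\lambda,\hat u\rangle=0\}$, which corresponds to $\ker\big(\mathbb{Z}^{d-1}\to\mathbb{Z},\ n\mapsto\textstyle\sum n_iu_i\big)$; this is a free lattice of rank $d-2$ precisely because $u$ is primitive. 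The $u$-tube is then $\Lambda_u\backslash(\tilde T\cap\{\tau\ge1\})$.

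For the volume, I would use that $\tilde T$ is a totally geodesic copy of $\mathbb{H}^{d-1}$, so its induced volume form on $V\times\mathbb{R}_+$ is $\tau^{-(d-1)}\,d\xi'\,d\tau$ with $d\xi'$ the ambient Euclidean measure on $V$. By Fubini, $\vol(u\text{-tube})=\big(\int_1^\infty\tau^{-(d-1)}\,d\tau\big)\cdot\vol_{\mathrm{eucl}}(F')=\tfrac{1}{d-2}\,\vol_{\mathrm{eucl}}(F')$, where $F'$ is a Euclidean fundamental domain for $\Lambda_u$ in the linear subspace parallel to $V$, so $\vol_{\mathrm{eucl}}(F')$ is the ambient covolume of $\Lambda_u$. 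In the coordinates above, the covolume of $\ker(n\mapsto\langle n,u\rangle)$ equals $\|u\|$, by the standard identity $\mathrm{covol}(\mathbb{Z}^{d-1})=\mathrm{covol}(\mathbb{Z}^{d-1}\cap u^\perp)\cdot\mathrm{covol}(\mathrm{proj}_{\mathbb{R}u}\mathbb{Z}^{d-1})$ together with $\mathrm{proj}_{\mathbb{R}u}\mathbb{Z}^{d-1}=\|u\|^{-1}\mathbb{Z}u$ (again using primitivity). Passing from these coordinates to the ambient Euclidean structure distorts $(d-2)$-dimensional volumes by a factor bounded above and below by constants depending only on the fixed linear map $e_i\mapsto x_i$, i.e.\ on $\Lambda$ and the basis; hence the ambient covolume of $\Lambda_u$ is $\asymp\|u\|$, giving $\vol(u\text{-tube})\asymp\|u\|$.

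For the counting statement, I would take the fundamental domain $F_\Lambda=\{\sum r_ix_i:r_i\in[0,1]\}$ and observe that the distinct $\Lambda$-translates of $\tilde T$ are parametrized by $\Lambda/\Lambda_u\cong\mathbb{Z}$ via $\lambda+V\mapsto c_0+\langle\lambda,\hat u\rangle$; concretely they are the vertical slabs $V_m\times\mathbb{R}_+$ with $V_m=\{x:\langle x,\hat u\rangle=c_0+m\}$, $m\in\mathbb{Z}$. For each $m$, the set $(V_m\cap F_\Lambda)\times[1,\infty)$ is convex, hence connected, and the sets for different $m$ are disjoint, so the number of connected pieces of the full preimage in $\mathbb{H}^d_1$ of the $u$-tube inside $F_\Lambda\times[1,\infty)$ is at most $\#\{m\in\mathbb{Z}:V_m\cap F_\Lambda\ne\varnothing\}$. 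Since $\{\langle x,\hat u\rangle:x\in F_\Lambda\}=\{\sum r_iu_i:r_i\in[0,1]\}$ is an interval of length $\sum_i|u_i|\le\sqrt{d-1}\,\|u\|$, this number is at most $\sqrt{d-1}\,\|u\|+1\le(\sqrt{d-1}+1)\|u\|$, where I use $\|u\|\ge1$ since $u$ is a nonzero integer vector. This yields the claim with $C=\sqrt{d-1}+1$, which in particular depends only on $d$.

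The argument is essentially routine, and I do not expect a genuine obstacle; the only point needing care is bookkeeping — correctly identifying $\Lambda_u$ as the exact stabilizer of the lift (so that the quotient used to compute the volume has the right rank and covolume), and keeping track of the bounded, basis-dependent distortion between the coordinate inner product $\langle\cdot,\cdot\rangle$ and the ambient Euclidean metric, which is the one governing the hyperbolic volume form. One should also double-check that each parallel translate $V_m$ contributes at most one connected piece to the chosen fundamental domain, which is immediate from convexity. This is parallel to the treatment of $R$-caps in Lemma~\ref{lem: vol.cap} and its source \cite[Lemma~4.4]{torkaman2024intersection}.
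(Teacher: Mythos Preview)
Your proof is correct and follows essentially the same route as the paper: both reduce to elementary lattice geometry in $\mathbb{R}^{d-1}$ after a linear change of coordinates sending the chosen basis to the standard one, identify the $\Lambda$-translates of a lift of the $u$-tube with the family of parallel hyperplanes orthogonal to $u$, and count how many meet a bounded fundamental domain.

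The one noteworthy difference is in the volume computation. The paper first establishes the spacing $c_u=1/\|u\|$ between consecutive orbit hyperplanes (by computing the distance $|u\cdot a|/\|u\|$ from an integer point $a$ to $u^\perp$ and invoking primitivity), then deduces the volume estimate indirectly: among the $\asymp\|u\|$ slices of the fundamental domain, a definite proportion have $(d-2)$-area bounded below by a constant independent of $u$, so the quotient torus has area $\asymp\|u\|$. You instead compute the covolume of the stabilizer $\Lambda_u=\mathbb{Z}^{d-1}\cap u^\perp$ directly via the identity $\mathrm{covol}(\Lambda_u)\cdot\mathrm{covol}(\mathrm{proj}_{\mathbb{R}u}\mathbb{Z}^{d-1})=1$, which together with $\mathrm{proj}_{\mathbb{R}u}\mathbb{Z}^{d-1}=\|u\|^{-1}\mathbb{Z}u$ gives $\mathrm{covol}(\Lambda_u)=\|u\|$ immediately. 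Your computation is more direct and avoids the somewhat informal ``definite proportion'' step; the paper's route has the mild advantage that both assertions flow from the single spacing calculation. The two are of course equivalent, since the spacing $c_u$ and the covolume of $\Lambda_u$ are reciprocal.
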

\begin{proof}
    For simplicity, we assume $\Lambda=\mathbb{Z}^n$ with the standard basis $x_i=e_i$. The general case follows by applying an appropriate element of $\GL(d-1,\mathbb{R})$.

Let $u = (u_1, \ldots, u_{d-1})$ be a primitive integral vector. Then the orbits of any hyperplane orthogonal to $u$ under the action of  $\Lambda = \mathbb{Z}^{d-1}$ form a family of equidistant, parallel hyperplanes, spaced $c_{u}$ apart. We claim that $c_{u}=1/\|u\|$. To see this, consider the hyperplane $P$ through the origin orthogonal to $u$. Then the shortest distance from an integer point not lying on $P$ to the plane $P$ is $c_{u}$. Let $a = (a_1, \ldots, a_{d-1}) \in \mathbb{Z}^{d-1}$ be any such point, and let  $b$ denote its orthogonal projection onto $P$. Then $b-a=t\cdot u$ for some scalar $t$, and since $b-a$ is orthogonal to $P$, we have $(b-a)\cdot b = 0.$ Thus we have $t (u \cdot a + t \|u\|^2) = 0$. Solving for $t$, we get:
$$
t = -\frac{u \cdot a}{\|u\|^2}.
$$
Therefore, the distance from $a$ to $P$ is $\|b - a\| = |u \cdot a|/\|u\|.$ Since $u$ is primitive, there exists $a \in \mathbb{Z}^{d-1}$ such that $u \cdot a = \pm 1$, and hence the minimal nonzero distance is $1/\|u\|$, as claimed.
    
    It is then easy to see that the number of such planes intersecting a fundamental domain $F_\Lambda$ is $\asymp C\|u\|$. A definite portion of these intersections have area bounded below by a constant independent of $u$, so it follows that the projection of the plane in $\Lambda\backslash\mathbb{R}^{d-1}$ is a $(d-2)$-dimensional torus of volume $\asymp \|u\|$.
\end{proof}

\paragraph{Bounding geodesic excursions}
Our first goal is to prove the following upper bound on the number of geodesic excursions. Our proof is adapted from \cite{torkaman2024intersection}; see Proposition 4.5 there. Let $A_R^\star(T)$ be the number of $R$-excursions in $\mathcal{H}_{\epsilon_d}(\star)$ among all closed geodesics of length $\le T$. Let $\Lambda'$ be the lattice in the hyperplane $\tau=\epsilon_d$ corresponding to $\PH$, the boundary of $\HS$, induced by $\Lambda$. 

\begin{Prop}\label{prop: geod_excursion}
    There exists a constant $C$ depending only on the geometry of $M$ so that
    $$A_R^\star(T)\le \frac{Ce^{(d-1)T}}{R^d}$$
\end{Prop}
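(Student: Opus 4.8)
The plan is to count $R$-excursions by relating them to closed orbits of the geodesic flow and applying the effective equidistribution of closed geodesics (Theorem~\ref{thm: mixing.geod.3}), exploiting the fact that an $R$-excursion forces the geodesic to spend a long time in a shrinking neighborhood of the cusp. First I would fix a cusp $\star$ and normalize $\mathcal{H}_{\epsilon_d}(\star)\cong\Lambda\backslash\mathbb{H}^d_1$ as set up above. An $R$-excursion lifts to a half-circle of Euclidean radius in $[R,R+1)$ above $\tau=1$; by Lemma~\ref{lem: vol.cap}, such a segment has length $\simeq_C 2\log R$, and crucially, a definite sub-segment of it lies in the much deeper region $\tau\gtrsim R/2$, i.e.\ in the thin part $M_{<c/R}$ for a suitable constant. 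The key geometric observation is that the midpoint direction of an $R$-excursion points essentially ``straight up'' into the cusp, so the set of unit tangent vectors that are midpoints of $R$-excursions is contained in a set $\mathcal{E}_R\subset T_1(M)$ of Liouville measure $\asymp R^{-d}$ (the cross-sectional torus at height $\asymp R$ has volume $\asymp R^{-(d-1)}$ in $\Lambda\backslash\mathbb{H}^d$, and the directional constraint costs another factor $\asymp R^{-1}$).

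Next I would make the counting argument precise. Each $R$-excursion of a closed geodesic $\gamma$ of length $\le T$ contributes a sub-arc of $\gamma$ of length $\asymp 1$ whose tangent vectors all lie in $\mathcal{E}_R$ (after possibly shrinking $\mathcal{E}_R$ by a constant factor to absorb the $\simeq_C$ error and still capture a unit-length piece of every excursion). Therefore $A_R^\star(T)$ times a constant is bounded above by $\mu_{\gamma_T}(\mathbf{1}_{\mathcal{E}_R})$, the total length of $\gamma_T$ spent in $\mathcal{E}_R$. I would then apply Theorem~\ref{thm: mixing.geod.3}: up to normalization, $e^{-dT}\mu_{\gamma_T}(\mathbf{1}_{\mathcal{E}_R}) \asymp L_M^1(\mathbf{1}_{\mathcal{E}_R}) + O(e^{-\epsilon_1 T}\Sob_q(\mathbf{1}_{\mathcal{E}_R}))$, which should give $A_R^\star(T) = O(e^{dT}R^{-d})$. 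The stated bound has $e^{(d-1)T}$ rather than $e^{dT}$; to recover the sharper exponent I would not use the raw measure of $\mathcal{E}_R$ but rather run the counting against the measure $e^{-(d-1)T}\mu_{\gamma_T}$ which (as recalled in the sketch of Theorem~\ref{thm: mixing.geod.3}) has bounded total mass $\asymp 1$, together with the fact that $L_M^1(\mathcal{E}_R)\asymp R^{-d}$; the factor $e^{(d-1)T}$ then comes from $|\mu_{\gamma_T}|\asymp e^{(d-1)T}$ (the mean-exponent normalization for the geodesic flow on $\Pi\backslash G_d^+$, where the expanding horospherical dimension is $d-1$), while the gain $R^{-d}$ is exactly $L_M^1(\mathcal{E}_R)$.

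The main obstacle I anticipate is the regularity issue: $\mathbf{1}_{\mathcal{E}_R}$ is not smooth, so one cannot directly feed it into Theorem~\ref{thm: mixing.geod.3}, and smoothing it introduces a Sobolev norm $\Sob_q$ of a bump adapted to a set of size $\asymp R^{-d}$, which grows polynomially in $R$ (like $R^{O(q)}$). One must therefore either (i) choose $R$ small relative to $e^{\epsilon_1 T}$ so the error term is still dominated — which suffices if the proposition is only needed for $R$ up to a power of $e^T$, as is the case in the proof of Theorem~\ref{thm: cusp_inter_points} — or (ii) follow \cite{torkaman2024intersection} and argue more combinatorially: directly count excursions via the lattice-point count of half-circles of radius $\asymp R$ in $\mathbb{H}^d_1$ subtended between $\Lambda$-translates, controlling the relevant count by the volume/excursion-length estimates of Lemmas~\ref{lem: vol.cap} and~\ref{lem: vol.u}, and only invoking the closed-geodesic count $|\gamma_T|\asymp e^{(d-1)T}$ as a black box. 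I would follow approach (ii), mirroring Proposition~4.5 of \cite{torkaman2024intersection}, since it avoids the smoothing loss entirely and yields the clean bound $A_R^\star(T)\le Ce^{(d-1)T}/R^d$; the bookkeeping of which $\Lambda$-orbit representatives give genuinely distinct excursions of a closed geodesic is the one place where care is required, but it is handled exactly as in the surface case.
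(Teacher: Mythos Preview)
Your proposal eventually points to the right strategy --- you invoke Proposition~4.5 of \cite{torkaman2024intersection}, which is exactly what the paper adapts --- but your written description of approach~(ii) omits the mechanism that actually produces the factor $R^{-d}$, and what you do write would not yield it.

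The paper's argument is \emph{not} a direct lattice-point count combined with the black-box asymptotic $\ell(\gamma_T)\asymp e^{(d-1)T}$. The key step is a \emph{cut-and-homotope} map: given an $R$-excursion $\alpha$ of a closed geodesic $\gamma$ (with $\ell(\gamma)\le T$), one deletes $\alpha$ and joins the two boundary points $a_1,a_2\in\partial\mathcal{H}_{\epsilon_d}(\star)$ to a fixed basepoint $Q\in\partial\mathcal{H}_{\epsilon_d}(\star)$ by short arcs of total length $\le c$. Since $\alpha$ has length $\simeq_C 2\log R$ (Lemma~\ref{lem: vol.cap}), the resulting based loop lies in $\mathcal{G}_{T-2\log R+c}(Q)$, which has size $\le C\,e^{(d-1)(T-2\log R)} = C\,e^{(d-1)T}R^{-2(d-1)}$. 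The map is at most $c'R^{d-2}$-to-one, because the homotopy class of the removed excursion is recorded by the displacement $a_2-a_1\in\Lambda'$, a lattice point in an annulus of radii $\asymp R$ in $\mathbb{R}^{d-1}$. Multiplying, $R^{d-2}\cdot e^{(d-1)T}R^{-2(d-1)} = e^{(d-1)T}R^{-d}$.

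Your sentence ``only invoking the closed-geodesic count $|\gamma_T|\asymp e^{(d-1)T}$ as a black box'' is the gap: without the length reduction $T\mapsto T-2\log R$ (and the based-loop count at that shorter length), combining $e^{(d-1)T}$ with the annular lattice count $R^{d-2}$ gives a bound that goes the wrong way in $R$. Your approach~(i) via equidistribution is a sound heuristic and you correctly diagnose the Sobolev-norm obstruction; but note that even for the ranges of $R$ where the error term is controlled, it is a genuinely different (and more delicate) route than the elementary surgery argument the paper uses.
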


Let $Q\in M$. We first consider the set $\mathcal{G}_T(Q)$ of geodesics based at $Q$ (i.e.~a geodesic segment starting and ending at $Q$) of length $\le T$. Recall that $\inj(Q)$ denotes the injectivity radius at $Q$.
\begin{Lemma}
    There exists a constant $C>0$ depending only on the dimension $d$ so that $$|\mathcal{G}_T(Q)|\le C\cdot\iota(Q)^{-d} e^{(d-1)T},$$
    where $\iota(Q)=\min\{\inj(Q),1\}$.
\end{Lemma}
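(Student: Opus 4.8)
The plan is to count geodesic loops based at a fixed point $Q$ by lifting to the universal cover $\mathbb{H}^d$ and reducing the count to a lattice-point estimate. Fix a lift $\tilde Q\in\mathbb{H}^d$ of $Q$. A geodesic segment based at $Q$ of length $\le T$ corresponds precisely to an element $g\in\Pi$ with $d(\tilde Q, g.\tilde Q)\le T$, since such a $g$ determines a geodesic in $\mathbb{H}^d$ from $\tilde Q$ to $g.\tilde Q$, which descends to a based loop in $M$, and conversely every based loop lifts to such a geodesic with endpoint in the orbit $\Pi.\tilde Q$. Thus $|\mathcal{G}_T(Q)|=\#\{g\in\Pi: d(\tilde Q,g.\tilde Q)\le T\}$, and it suffices to bound the number of orbit points of $\Pi.\tilde Q$ inside the hyperbolic ball $B_T(\tilde Q)$.

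To make this bound uniform and with the stated dependence on $\iota(Q)$, I would use a packing argument. The orbit points $\{g.\tilde Q: g\in\Pi\}$ are $2\,\inj(Q)$-separated in $\mathbb{H}^d$ (two distinct orbit points at distance less than $2\inj(Q)$ would give an embedded loop at $Q$ shorter than twice the injectivity radius, a contradiction). Hence the balls of radius $r:=\min\{\inj(Q),1\}/2 \asymp \iota(Q)$ centered at the orbit points in $B_T(\tilde Q)$ are pairwise disjoint and all contained in $B_{T+1}(\tilde Q)$. Comparing volumes,
\begin{equation*}
|\mathcal{G}_T(Q)|\cdot \vol(B_r(\tilde Q)) \le \vol(B_{T+1}(\tilde Q)).
\end{equation*}
In $\mathbb{H}^d$ one has $\vol(B_r)\asymp r^d$ for $r\le 1$ (small balls look Euclidean, up to a dimensional constant) and $\vol(B_{T+1})\asymp e^{(d-1)(T+1)}\asymp e^{(d-1)T}$, the implied constants depending only on $d$. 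Combining these gives $|\mathcal{G}_T(Q)|\le C\,\iota(Q)^{-d}e^{(d-1)T}$ with $C=C(d)$, as claimed.

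The argument is essentially routine once set up correctly; the only point requiring a little care is the separation claim — namely that distinct elements of $\Pi.\tilde Q$ are at distance at least $2\inj(Q)$, which I would justify by noting that if $g.\tilde Q$ and $h.\tilde Q$ with $g\neq h$ satisfy $d(g.\tilde Q,h.\tilde Q)<2\inj(Q)$, then $h^{-1}g$ is a nontrivial element moving $\tilde Q$ by less than $2\inj(Q)$, producing a homotopically nontrivial loop at $Q$ of length less than $2\inj(Q)$ and contradicting the definition of injectivity radius. I expect the main (minor) obstacle to be stating the volume comparison $\vol(B_r)\asymp r^d$ for $r\le 1$ with constants depending only on $d$; this is standard, but one must be careful that the lower bound $\vol(B_r)\ge c_d r^d$ holds uniformly for all $r\in(0,1]$ and not merely asymptotically as $r\to 0$, which follows from monotonicity of $r\mapsto \vol(B_r)/r^d$ or an explicit formula for the hyperbolic ball volume.
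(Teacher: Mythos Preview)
Your proof is correct and follows essentially the same approach as the paper: lift to $\mathbb{H}^d$, identify $\mathcal{G}_T(Q)$ with orbit points of $\Pi.\tilde Q$ in a ball of radius $T$, and use a packing argument with small balls of radius comparable to $\iota(Q)$ to bound the count by a ratio of hyperbolic ball volumes. The paper simply cites the analogous lemma in \cite{torkaman2024intersection} for the inequality $|\mathcal{G}_T(Q)|\le \vol_d(T+2\iota)/\vol_d(\iota)$ and then invokes the same asymptotics $\vol_d(x)\asymp e^{(d-1)x}$ for large $x$ and $\asymp x^d$ for small $x$; your write-up spells out the separation and packing steps explicitly, but the underlying argument is identical.
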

\begin{proof}
    Arguing exactly as \cite[Lemma~4.2]{torkaman2024intersection}, we conclude that
    $$|\mathcal{G}_T(Q)|\le \vol_d(T+2\iota)/\vol_d(\iota),$$
    for some $\iota<\inj(Q)$. Since $\vol_d(x)\asymp e^{(d-1)x}$ when $x$ is large and $\asymp x^{d}$ when $x$ is small, we have the desired estimate.
\end{proof}

\begin{proof}[Proof of Proposition~\ref{prop: geod_excursion}]
    Fix $Q$ on $\PH$. Let $c$ be the length of $\PH$. Similarly as in \cite{torkaman2024intersection}, we construct a map from $A_R^\star(T)$ to $\mathcal{G}_{T-2\log R+c}$, as follows. Given a closed geodesic $\gamma$ and an $R$-excursion $\alpha$ of it, where $\alpha$ is in $\mathcal{H}_{\epsilon_d}(\star)$ and its endpoints $a_1,a_2$ are on $\PH$, remove $\alpha$ from $\gamma$ and connect $a_1,a_2$ to $Q$ by simple geodesic arcs of total length $\leq c$. See Figure \ref{fig: exc-cut}. Then we obtain a closed curve from $Q$ to itself with length $\leq T-2\log R+c$; the geodesic representative of this closed curve is in $\mathcal{G}_{T-2\log R+c}$.

  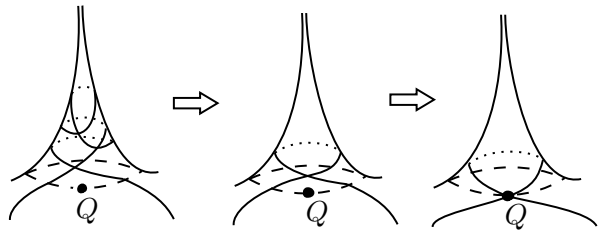
\begin{figure}[h]
      \centering

\tikzset{every picture/.style={line width=0.75pt}} 

\begin{tikzpicture}[x=0.65pt,y=0.55pt,yscale=-0.6,xscale=0.6]

\draw    (28,238) .. controls (92,208) and (98,79) .. (92,22) ;
\draw    (98,23) .. controls (93,60) and (115,241) .. (171,228) ;
\draw  [dash pattern={on 4.5pt off 4.5pt}]  (42,230) .. controls (68,248) and (115,257) .. (155,227) ;
\draw  [dash pattern={on 4.5pt off 4.5pt}]  (42,230) .. controls (82,210) and (120,210) .. (155,227) ;
\draw [line width=0.75]    (66,240) .. controls (95,221) and (119,201) .. (120,178) ;
\draw  [dash pattern={on 0.84pt off 2.51pt}]  (76,176) .. controls (87,162) and (114,161) .. (120,178) ;
\draw [line width=0.75]    (76,176) .. controls (97,198) and (114,172) .. (109,134) ;
\draw [line width=0.75]    (67,200) .. controls (70,218) and (102,231) .. (137,236) ;
\draw  [dash pattern={on 0.84pt off 2.51pt}]  (67,200) .. controls (85,177) and (117,193) .. (128,192) ;
\draw [line width=0.75]    (87,138) .. controls (83,167) and (98,221) .. (128,192) ;
\draw  [dash pattern={on 0.84pt off 2.51pt}]  (87,138) .. controls (95,126) and (103,130) .. (109,134) ;
\draw   (186,143) -- (211.44,143) -- (211.44,137) -- (228.4,149) -- (211.44,161) -- (211.44,155) -- (186,155) -- cycle ;
\draw [line width=0.75]    (27,283) .. controls (26,269) and (44,251) .. (66,240) ;
\draw [line width=0.75]    (137,236) .. controls (168,252) and (179,266) .. (186,282) ;
\draw    (245,244) .. controls (309,214) and (315,85) .. (309,28) ;
\draw    (315,29) .. controls (310,66) and (332,247) .. (388,234) ;
\draw  [dash pattern={on 4.5pt off 4.5pt}]  (259,236) .. controls (285,254) and (332,263) .. (372,233) ;
\draw  [dash pattern={on 4.5pt off 4.5pt}]  (259,236) .. controls (299,216) and (337,216) .. (372,233) ;
\draw [line width=0.75]    (283,246) .. controls (312,227) and (342,236) .. (348,205) ;
\draw [line width=0.75]    (284,206) .. controls (287,224) and (319,237) .. (354,242) ;
\draw  [dash pattern={on 0.84pt off 2.51pt}]  (284,206) .. controls (306,187) and (340,195) .. (348,205) ;
\draw [line width=0.75]    (244,289) .. controls (243,275) and (261,257) .. (283,246) ;
\draw [line width=0.75]    (354,242) .. controls (385,258) and (396,272) .. (403,288) ;
\draw  [color={rgb, 255:red, 255; green, 255; blue, 255 }  ,draw opacity=1 ][line width=3] [line join = round][line cap = round] (82,22) .. controls (84.68,22) and (112.66,19.68) .. (107,31) .. controls (106.96,31.09) and (100.26,31.95) .. (100,32) .. controls (93.78,33.24) and (87.33,33.37) .. (81,33) .. controls (80.26,32.96) and (79.75,32) .. (79,32) ;
\draw  [color={rgb, 255:red, 255; green, 255; blue, 255 }  ,draw opacity=1 ][line width=3] [line join = round][line cap = round] (95,31) .. controls (101.67,31) and (108.34,31.33) .. (115,31) .. controls (115.32,30.98) and (108.42,27.47) .. (107,27) .. controls (103.97,25.99) and (91.62,24.34) .. (87,25) .. controls (86.09,25.13) and (90.31,26.83) .. (91,27) .. controls (95.41,28.1) and (98.66,28) .. (104,28) .. controls (106,28) and (99.98,27.72) .. (98,28) .. controls (95.96,28.29) and (90,27.13) .. (90,30) ;
\draw  [color={rgb, 255:red, 255; green, 255; blue, 255 }  ,draw opacity=1 ][line width=3] [line join = round][line cap = round] (346,34) .. controls (335.23,34) and (328.7,36) .. (320,36) ;
\draw  [color={rgb, 255:red, 255; green, 255; blue, 255 }  ,draw opacity=1 ][line width=3] [line join = round][line cap = round] (329,32) .. controls (334.01,32) and (339.02,31.55) .. (344,31) .. controls (346.98,30.67) and (350,31) .. (353,31) .. controls (354.8,31) and (349.79,29.09) .. (348,29) .. controls (339.67,28.56) and (331.34,28) .. (323,28) ;
\draw  [color={rgb, 255:red, 255; green, 255; blue, 255 }  ,draw opacity=1 ][line width=3] [line join = round][line cap = round] (20,30) .. controls (20,89) and (19.67,148) .. (20,207) .. controls (20.03,212.5) and (22.3,220.91) .. (24,226) .. controls (24.06,226.19) and (25,229) .. (25,229) .. controls (25,229) and (25,227.67) .. (25,227) ;
\draw    (438,246) .. controls (502,216) and (508,87) .. (502,30) ;
\draw    (508,31) .. controls (503,68) and (525,249) .. (581,236) ;
\draw  [dash pattern={on 4.5pt off 4.5pt}]  (452,238) .. controls (478,256) and (525,265) .. (565,235) ;
\draw  [dash pattern={on 4.5pt off 4.5pt}]  (452,238) .. controls (492,218) and (530,218) .. (565,235) ;
\draw [line width=0.75]    (510,256) .. controls (536,245) and (540,232) .. (544,214) ;
\draw [line width=0.75]    (472,219) .. controls (476,236) and (486,244) .. (510,256) ;
\draw  [dash pattern={on 0.84pt off 2.51pt}]  (472,219) .. controls (494,197) and (536,204) .. (544,214) ;
\draw [line width=0.75]    (437,291) .. controls (436,277) and (488,267) .. (510,256) ;
\draw [line width=0.75]    (510,256) .. controls (544,265) and (589,274) .. (596,290) ;
\draw  [color={rgb, 255:red, 255; green, 255; blue, 255 }  ,draw opacity=1 ][line width=3] [line join = round][line cap = round] (95,247) .. controls (88.2,247) and (96,247) .. (96,247) .. controls (97.76,248.17) and (94,251.08) .. (94,248) ;
\draw  [color={rgb, 255:red, 0; green, 0; blue, 0 }  ,draw opacity=1 ][line width=3] [line join = round][line cap = round] (98,247) .. controls (98,244.69) and (96,245.72) .. (96,248) ;
\draw  [color={rgb, 255:red, 0; green, 0; blue, 0 }  ,draw opacity=1 ][line width=3] [line join = round][line cap = round] (315,251) .. controls (315,247.76) and (322.86,253) .. (315,253) ;
\draw  [color={rgb, 255:red, 0; green, 0; blue, 0 }  ,draw opacity=1 ][line width=3] [line join = round][line cap = round] (510,256) .. controls (515.04,256) and (510.81,253.09) .. (509,254) .. controls (507.82,254.59) and (506.83,257) .. (510,257) ;
\draw  [color={rgb, 255:red, 255; green, 255; blue, 255 }  ,draw opacity=1 ][line width=3] [line join = round][line cap = round] (307,41) .. controls (314.67,41) and (322.34,41.33) .. (330,41) .. controls (331.2,40.95) and (329,38.67) .. (328,38) .. controls (326.96,37.3) and (324.42,35.35) .. (323,35) .. controls (316.21,33.3) and (310.46,34) .. (302,34) .. controls (300.67,34) and (304.72,33.63) .. (306,34) .. controls (307.87,34.53) and (309.07,36.79) .. (311,37) .. controls (313.98,37.33) and (317.01,36.7) .. (320,37) .. controls (320.47,37.05) and (321.47,37.96) .. (321,38) .. controls (317.61,38.31) and (306.32,38.21) .. (303,36) .. controls (300.91,34.61) and (312.37,29.05) .. (315,28) .. controls (316.28,27.49) and (320.37,27) .. (319,27) .. controls (299.68,27) and (309.62,30) .. (319,30) ;
\draw  [color={rgb, 255:red, 255; green, 255; blue, 255 }  ,draw opacity=1 ][line width=3] [line join = round][line cap = round] (491,43) .. controls (499.33,43) and (507.67,43) .. (516,43) .. controls (517.8,43) and (512.67,41.67) .. (511,41) .. controls (507.14,39.45) and (501.97,37.99) .. (498,37) .. controls (496.67,36.67) and (493.03,36.97) .. (494,36) .. controls (496.53,33.47) and (508.72,32.54) .. (513,32) .. controls (515.01,31.75) and (517.19,31.91) .. (519,31) .. controls (519.3,30.85) and (520.33,31) .. (520,31) .. controls (513.99,31) and (506.25,27.75) .. (502,32) .. controls (501.47,32.53) and (514.29,35.57) .. (516,36) .. controls (517.65,36.41) and (522.7,37) .. (521,37) .. controls (516.17,37) and (511.51,36.29) .. (507,35) .. controls (504.67,34.33) and (502.17,34.09) .. (500,33) .. controls (499.58,32.79) and (498.53,32) .. (499,32) .. controls (503.9,32) and (520.54,38.19) .. (510,39) .. controls (506.01,39.31) and (502,39) .. (498,39) ;
\draw  [color={rgb, 255:red, 255; green, 255; blue, 255 }  ,draw opacity=1 ][line width=3] [line join = round][line cap = round] (97,30) .. controls (93.85,30) and (91.75,32) .. (89,32) ;
\draw   (396,139) -- (421.44,139) -- (421.44,133) -- (438.4,145) -- (421.44,157) -- (421.44,151) -- (396,151) -- cycle ;


\draw (88,254.4) node [anchor=north west][inner sep=0.75pt]    {$Q$};
\draw (311,258.4) node [anchor=north west][inner sep=0.75pt]    {$Q$};
\draw (504,261.4) node [anchor=north west][inner sep=0.75pt]    {$Q$};

\end{tikzpicture}
        \caption{How the map modifies a curve by removing an $R-$excursion}
        \label{fig: exc-cut}
    \end{figure}

    We can see that this map is at most $c'R^{d-2}$ to $1$ since there are at most $c'R^{d-2}$ class of $R$-excursions in $\mathcal{H}_{\epsilon_d}(\star)$ up to homotopy with endpoints stay in a ball of radius $\leq 2c$ inside the boundary of $\mathcal{H}_{\epsilon_d}(\star)$. To see that fix a preimage of $a_1$ in the upper half-space, then preimage of $a_2$ is a point between two balls of radii $R-1$ and $R+1$ centered at $a_1$ inside the hyperplane $\tau=\epsilon_d$ (corresponding to $\PH$). We can see that the number of different lattice cubes of $\Lambda$ inside this bounded region is $\leq c'R^{d-2}$; see Figure \ref{fig:circle-boundary}.

\begin{figure}[htp]
    \centering

\tikzset{every picture/.style={line width=0.75pt}} 

\begin{tikzpicture}[x=0.75pt,y=0.75pt,yscale=-0.7,xscale=0.7]

\draw  [draw opacity=0] (102,40.2) -- (374.44,40.2) -- (374.44,272.2) -- (102,272.2) -- cycle ; \draw   (102,40.2) -- (102,272.2)(122,40.2) -- (122,272.2)(142,40.2) -- (142,272.2)(162,40.2) -- (162,272.2)(182,40.2) -- (182,272.2)(202,40.2) -- (202,272.2)(222,40.2) -- (222,272.2)(242,40.2) -- (242,272.2)(262,40.2) -- (262,272.2)(282,40.2) -- (282,272.2)(302,40.2) -- (302,272.2)(322,40.2) -- (322,272.2)(342,40.2) -- (342,272.2)(362,40.2) -- (362,272.2) ; \draw   (102,40.2) -- (374.44,40.2)(102,60.2) -- (374.44,60.2)(102,80.2) -- (374.44,80.2)(102,100.2) -- (374.44,100.2)(102,120.2) -- (374.44,120.2)(102,140.2) -- (374.44,140.2)(102,160.2) -- (374.44,160.2)(102,180.2) -- (374.44,180.2)(102,200.2) -- (374.44,200.2)(102,220.2) -- (374.44,220.2)(102,240.2) -- (374.44,240.2)(102,260.2) -- (374.44,260.2) ; \draw    ;
\draw   (169.5,151.1) .. controls (169.5,115.59) and (198.29,86.8) .. (233.8,86.8) .. controls (269.31,86.8) and (298.1,115.59) .. (298.1,151.1) .. controls (298.1,186.61) and (269.31,215.4) .. (233.8,215.4) .. controls (198.29,215.4) and (169.5,186.61) .. (169.5,151.1) -- cycle ;
\draw  [line width=3] [line join = round][line cap = round] (232.22,152.1) .. controls (232.22,151.76) and (232.22,151.43) .. (232.22,151.1) ;
\draw  [fill={rgb, 255:red, 155; green, 155; blue, 155 }  ,fill opacity=1 ] (162,119.6) -- (181.4,119.6) -- (181.4,139) -- (162,139) -- cycle ;
\draw  [fill={rgb, 255:red, 155; green, 155; blue, 155 }  ,fill opacity=1 ] (162,139.6) -- (181.4,139.6) -- (181.4,159) -- (162,159) -- cycle ;
\draw  [fill={rgb, 255:red, 155; green, 155; blue, 155 }  ,fill opacity=1 ] (162,159) -- (181.4,159) -- (181.4,178.4) -- (162,178.4) -- cycle ;
\draw  [fill={rgb, 255:red, 155; green, 155; blue, 155 }  ,fill opacity=1 ] (162,179.6) -- (181.4,179.6) -- (181.4,199) -- (162,199) -- cycle ;
\draw  [fill={rgb, 255:red, 155; green, 155; blue, 155 }  ,fill opacity=1 ] (182.6,179) -- (202,179) -- (202,198.4) -- (182.6,198.4) -- cycle ;
\draw  [fill={rgb, 255:red, 155; green, 155; blue, 155 }  ,fill opacity=1 ] (182,199.6) -- (201.4,199.6) -- (201.4,219) -- (182,219) -- cycle ;
\draw  [fill={rgb, 255:red, 155; green, 155; blue, 155 }  ,fill opacity=1 ] (202.6,200.2) -- (222,200.2) -- (222,219.6) -- (202.6,219.6) -- cycle ;
\draw  [fill={rgb, 255:red, 155; green, 155; blue, 155 }  ,fill opacity=1 ] (222,199.6) -- (241.4,199.6) -- (241.4,219) -- (222,219) -- cycle ;
\draw  [fill={rgb, 255:red, 155; green, 155; blue, 155 }  ,fill opacity=1 ] (242.6,199.6) -- (262,199.6) -- (262,219) -- (242.6,219) -- cycle ;
\draw  [fill={rgb, 255:red, 155; green, 155; blue, 155 }  ,fill opacity=1 ] (262,199.6) -- (281.4,199.6) -- (281.4,219) -- (262,219) -- cycle ;
\draw  [fill={rgb, 255:red, 155; green, 155; blue, 155 }  ,fill opacity=1 ] (262.6,179.6) -- (282,179.6) -- (282,199) -- (262.6,199) -- cycle ;
\draw  [fill={rgb, 255:red, 155; green, 155; blue, 155 }  ,fill opacity=1 ] (282.6,179.6) -- (302,179.6) -- (302,199) -- (282.6,199) -- cycle ;
\draw  [fill={rgb, 255:red, 155; green, 155; blue, 155 }  ,fill opacity=1 ] (282,159.6) -- (301.4,159.6) -- (301.4,179) -- (282,179) -- cycle ;
\draw  [fill={rgb, 255:red, 155; green, 155; blue, 155 }  ,fill opacity=1 ] (282.6,139) -- (302,139) -- (302,158.4) -- (282.6,158.4) -- cycle ;
\draw  [fill={rgb, 255:red, 155; green, 155; blue, 155 }  ,fill opacity=1 ] (282,119.6) -- (301.4,119.6) -- (301.4,139) -- (282,139) -- cycle ;
\draw  [fill={rgb, 255:red, 155; green, 155; blue, 155 }  ,fill opacity=1 ] (282,99) -- (301.4,99) -- (301.4,118.4) -- (282,118.4) -- cycle ;
\draw  [fill={rgb, 255:red, 155; green, 155; blue, 155 }  ,fill opacity=1 ] (262,99.6) -- (281.4,99.6) -- (281.4,119) -- (262,119) -- cycle ;
\draw  [fill={rgb, 255:red, 155; green, 155; blue, 155 }  ,fill opacity=1 ] (262,79.6) -- (281.4,79.6) -- (281.4,99) -- (262,99) -- cycle ;
\draw  [fill={rgb, 255:red, 155; green, 155; blue, 155 }  ,fill opacity=1 ] (242.6,80.2) -- (262,80.2) -- (262,99.6) -- (242.6,99.6) -- cycle ;
\draw  [fill={rgb, 255:red, 155; green, 155; blue, 155 }  ,fill opacity=1 ] (222,79.6) -- (241.4,79.6) -- (241.4,99) -- (222,99) -- cycle ;
\draw  [fill={rgb, 255:red, 155; green, 155; blue, 155 }  ,fill opacity=1 ] (202.6,79.6) -- (222,79.6) -- (222,99) -- (202.6,99) -- cycle ;
\draw  [fill={rgb, 255:red, 155; green, 155; blue, 155 }  ,fill opacity=1 ] (182,79.6) -- (201.4,79.6) -- (201.4,99) -- (182,99) -- cycle ;
\draw  [fill={rgb, 255:red, 155; green, 155; blue, 155 }  ,fill opacity=1 ] (182.6,99.6) -- (202,99.6) -- (202,119) -- (182.6,119) -- cycle ;
\draw  [fill={rgb, 255:red, 155; green, 155; blue, 155 }  ,fill opacity=1 ] (162,99) -- (181.4,99) -- (181.4,118.4) -- (162,118.4) -- cycle ;
\draw  [fill={rgb, 255:red, 155; green, 155; blue, 155 }  ,fill opacity=1 ] (162.6,199.6) -- (182,199.6) -- (182,219) -- (162.6,219) -- cycle ;
\draw  [fill={rgb, 255:red, 155; green, 155; blue, 155 }  ,fill opacity=1 ] (202.6,220.2) -- (222,220.2) -- (222,239.6) -- (202.6,239.6) -- cycle ;
\draw  [fill={rgb, 255:red, 155; green, 155; blue, 155 }  ,fill opacity=1 ] (222,220.2) -- (241.4,220.2) -- (241.4,239.6) -- (222,239.6) -- cycle ;
\draw  [fill={rgb, 255:red, 155; green, 155; blue, 155 }  ,fill opacity=1 ] (242.6,220.2) -- (262,220.2) -- (262,239.6) -- (242.6,239.6) -- cycle ;
\draw  [fill={rgb, 255:red, 155; green, 155; blue, 155 }  ,fill opacity=1 ] (282.6,200.8) -- (302,200.8) -- (302,220.2) -- (282.6,220.2) -- cycle ;
\draw  [fill={rgb, 255:red, 155; green, 155; blue, 155 }  ,fill opacity=1 ] (302,160.2) -- (321.4,160.2) -- (321.4,179.6) -- (302,179.6) -- cycle ;
\draw  [fill={rgb, 255:red, 155; green, 155; blue, 155 }  ,fill opacity=1 ] (302.6,139.6) -- (322,139.6) -- (322,159) -- (302.6,159) -- cycle ;
\draw  [fill={rgb, 255:red, 155; green, 155; blue, 155 }  ,fill opacity=1 ] (302.6,120.2) -- (322,120.2) -- (322,139.6) -- (302.6,139.6) -- cycle ;
\draw  [fill={rgb, 255:red, 155; green, 155; blue, 155 }  ,fill opacity=1 ] (281.4,80.2) -- (300.8,80.2) -- (300.8,99.6) -- (281.4,99.6) -- cycle ;
\draw  [fill={rgb, 255:red, 155; green, 155; blue, 155 }  ,fill opacity=1 ] (142,139.6) -- (161.4,139.6) -- (161.4,159) -- (142,159) -- cycle ;
\draw  [color={rgb, 255:red, 208; green, 2; blue, 27 }  ,draw opacity=1 ] (174.27,151.1) .. controls (174.27,118.22) and (200.92,91.57) .. (233.8,91.57) .. controls (266.68,91.57) and (293.33,118.22) .. (293.33,151.1) .. controls (293.33,183.98) and (266.68,210.63) .. (233.8,210.63) .. controls (200.92,210.63) and (174.27,183.98) .. (174.27,151.1) -- cycle ;
\draw  [color={rgb, 255:red, 208; green, 2; blue, 27 }  ,draw opacity=1 ] (136.91,151.1) .. controls (136.91,97.59) and (180.29,54.21) .. (233.8,54.21) .. controls (287.31,54.21) and (330.69,97.59) .. (330.69,151.1) .. controls (330.69,204.61) and (287.31,247.99) .. (233.8,247.99) .. controls (180.29,247.99) and (136.91,204.61) .. (136.91,151.1) -- cycle ;
\draw  [fill={rgb, 255:red, 155; green, 155; blue, 155 }  ,fill opacity=1 ] (202,239.6) -- (221.4,239.6) -- (221.4,259) -- (202,259) -- cycle ;
\draw  [fill={rgb, 255:red, 155; green, 155; blue, 155 }  ,fill opacity=1 ] (262,220.2) -- (281.4,220.2) -- (281.4,239.6) -- (262,239.6) -- cycle ;
\draw  [fill={rgb, 255:red, 155; green, 155; blue, 155 }  ,fill opacity=1 ] (182.6,220.2) -- (202,220.2) -- (202,239.6) -- (182.6,239.6) -- cycle ;
\draw  [fill={rgb, 255:red, 155; green, 155; blue, 155 }  ,fill opacity=1 ] (163.2,220.2) -- (182.6,220.2) -- (182.6,239.6) -- (163.2,239.6) -- cycle ;
\draw  [fill={rgb, 255:red, 155; green, 155; blue, 155 }  ,fill opacity=1 ] (282.6,220.2) -- (302,220.2) -- (302,239.6) -- (282.6,239.6) -- cycle ;
\draw  [fill={rgb, 255:red, 155; green, 155; blue, 155 }  ,fill opacity=1 ] (222.6,240.8) -- (242,240.8) -- (242,260.2) -- (222.6,260.2) -- cycle ;
\draw  [fill={rgb, 255:red, 155; green, 155; blue, 155 }  ,fill opacity=1 ] (242.6,240.2) -- (262,240.2) -- (262,259.6) -- (242.6,259.6) -- cycle ;
\draw  [fill={rgb, 255:red, 155; green, 155; blue, 155 }  ,fill opacity=1 ] (262,240.2) -- (281.4,240.2) -- (281.4,259.6) -- (262,259.6) -- cycle ;
\draw  [fill={rgb, 255:red, 155; green, 155; blue, 155 }  ,fill opacity=1 ] (302.6,200.8) -- (322,200.8) -- (322,220.2) -- (302.6,220.2) -- cycle ;
\draw  [fill={rgb, 255:red, 155; green, 155; blue, 155 }  ,fill opacity=1 ] (302,181.4) -- (321.4,181.4) -- (321.4,200.8) -- (302,200.8) -- cycle ;
\draw  [fill={rgb, 255:red, 155; green, 155; blue, 155 }  ,fill opacity=1 ] (322.6,180.8) -- (342,180.8) -- (342,200.2) -- (322.6,200.2) -- cycle ;
\draw  [fill={rgb, 255:red, 155; green, 155; blue, 155 }  ,fill opacity=1 ] (322.6,160.2) -- (342,160.2) -- (342,179.6) -- (322.6,179.6) -- cycle ;
\draw  [fill={rgb, 255:red, 155; green, 155; blue, 155 }  ,fill opacity=1 ] (322.6,140.8) -- (342,140.8) -- (342,160.2) -- (322.6,160.2) -- cycle ;
\draw  [fill={rgb, 255:red, 155; green, 155; blue, 155 }  ,fill opacity=1 ] (322,120.8) -- (341.4,120.8) -- (341.4,140.2) -- (322,140.2) -- cycle ;
\draw  [fill={rgb, 255:red, 155; green, 155; blue, 155 }  ,fill opacity=1 ] (142.6,200.8) -- (162,200.8) -- (162,220.2) -- (142.6,220.2) -- cycle ;
\draw  [fill={rgb, 255:red, 155; green, 155; blue, 155 }  ,fill opacity=1 ] (142,180.8) -- (161.4,180.8) -- (161.4,200.2) -- (142,200.2) -- cycle ;
\draw  [fill={rgb, 255:red, 155; green, 155; blue, 155 }  ,fill opacity=1 ] (122.6,160.8) -- (142,160.8) -- (142,180.2) -- (122.6,180.2) -- cycle ;
\draw  [fill={rgb, 255:red, 155; green, 155; blue, 155 }  ,fill opacity=1 ] (142,161.4) -- (161.4,161.4) -- (161.4,180.8) -- (142,180.8) -- cycle ;
\draw  [fill={rgb, 255:red, 155; green, 155; blue, 155 }  ,fill opacity=1 ] (122.6,140.8) -- (142,140.8) -- (142,160.2) -- (122.6,160.2) -- cycle ;
\draw  [fill={rgb, 255:red, 155; green, 155; blue, 155 }  ,fill opacity=1 ] (302.6,100.2) -- (322,100.2) -- (322,119.6) -- (302.6,119.6) -- cycle ;
\draw  [fill={rgb, 255:red, 155; green, 155; blue, 155 }  ,fill opacity=1 ] (322,101.4) -- (341.4,101.4) -- (341.4,120.8) -- (322,120.8) -- cycle ;
\draw  [fill={rgb, 255:red, 155; green, 155; blue, 155 }  ,fill opacity=1 ] (301.4,79.6) -- (320.8,79.6) -- (320.8,99) -- (301.4,99) -- cycle ;
\draw  [fill={rgb, 255:red, 155; green, 155; blue, 155 }  ,fill opacity=1 ] (281.4,60.8) -- (300.8,60.8) -- (300.8,80.2) -- (281.4,80.2) -- cycle ;
\draw  [fill={rgb, 255:red, 155; green, 155; blue, 155 }  ,fill opacity=1 ] (122.6,120.2) -- (142,120.2) -- (142,139.6) -- (122.6,139.6) -- cycle ;
\draw  [fill={rgb, 255:red, 155; green, 155; blue, 155 }  ,fill opacity=1 ] (142,121.4) -- (161.4,121.4) -- (161.4,140.8) -- (142,140.8) -- cycle ;
\draw  [fill={rgb, 255:red, 155; green, 155; blue, 155 }  ,fill opacity=1 ] (142.6,100.2) -- (162,100.2) -- (162,119.6) -- (142.6,119.6) -- cycle ;
\draw  [fill={rgb, 255:red, 155; green, 155; blue, 155 }  ,fill opacity=1 ] (142.6,80.2) -- (162,80.2) -- (162,99.6) -- (142.6,99.6) -- cycle ;
\draw  [fill={rgb, 255:red, 155; green, 155; blue, 155 }  ,fill opacity=1 ] (162.6,79.6) -- (182,79.6) -- (182,99) -- (162.6,99) -- cycle ;
\draw  [fill={rgb, 255:red, 155; green, 155; blue, 155 }  ,fill opacity=1 ] (162,60.8) -- (181.4,60.8) -- (181.4,80.2) -- (162,80.2) -- cycle ;
\draw  [fill={rgb, 255:red, 155; green, 155; blue, 155 }  ,fill opacity=1 ] (181.4,60.8) -- (200.8,60.8) -- (200.8,80.2) -- (181.4,80.2) -- cycle ;
\draw  [fill={rgb, 255:red, 155; green, 155; blue, 155 }  ,fill opacity=1 ] (202,60.8) -- (221.4,60.8) -- (221.4,80.2) -- (202,80.2) -- cycle ;
\draw  [fill={rgb, 255:red, 155; green, 155; blue, 155 }  ,fill opacity=1 ] (222.6,60.8) -- (242,60.8) -- (242,80.2) -- (222.6,80.2) -- cycle ;
\draw  [fill={rgb, 255:red, 155; green, 155; blue, 155 }  ,fill opacity=1 ] (182.6,240.8) -- (202,240.8) -- (202,260.2) -- (182.6,260.2) -- cycle ;
\draw  [fill={rgb, 255:red, 155; green, 155; blue, 155 }  ,fill opacity=1 ] (242.6,60.8) -- (262,60.8) -- (262,80.2) -- (242.6,80.2) -- cycle ;
\draw  [fill={rgb, 255:red, 155; green, 155; blue, 155 }  ,fill opacity=1 ] (262.6,60.2) -- (282,60.2) -- (282,79.6) -- (262.6,79.6) -- cycle ;
\draw  [fill={rgb, 255:red, 155; green, 155; blue, 155 }  ,fill opacity=1 ] (202.6,40.8) -- (222,40.8) -- (222,60.2) -- (202.6,60.2) -- cycle ;
\draw  [fill={rgb, 255:red, 155; green, 155; blue, 155 }  ,fill opacity=1 ] (223.2,41.4) -- (242.6,41.4) -- (242.6,60.8) -- (223.2,60.8) -- cycle ;
\draw  [fill={rgb, 255:red, 155; green, 155; blue, 155 }  ,fill opacity=1 ] (242.6,41.4) -- (262,41.4) -- (262,60.8) -- (242.6,60.8) -- cycle ;
\draw  [color={rgb, 255:red, 208; green, 2; blue, 27 }  ,draw opacity=1 ] (136.45,152.57) .. controls (136.45,98.49) and (180.29,54.65) .. (234.38,54.65) .. controls (288.46,54.65) and (332.3,98.49) .. (332.3,152.57) .. controls (332.3,206.66) and (288.46,250.5) .. (234.38,250.5) .. controls (180.29,250.5) and (136.45,206.66) .. (136.45,152.57) -- cycle ;

\draw (226,154) node [anchor=north west][inner sep=0.75pt]    {$a_{1}$};

\end{tikzpicture}

    \caption{ Plane for $\tau=\epsilon_d$ (corresponding to $\PH$) for $d=3$ tiled by fundamental domains of the lattice $\Lambda'$. The other endpoint of the geodesic excursion, $a_2$, lies in one of the shaded fundamental domains.}
    \label{fig:circle-boundary}
\end{figure}
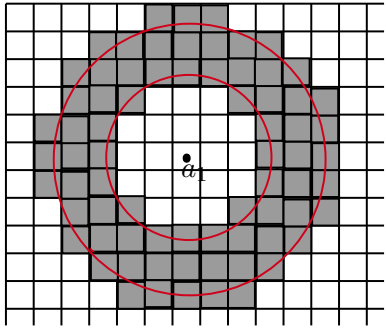

    Hence
    $$|A_R^\star(T)|\le CR^{d-2}e^{(d-1)T}R^{-2(d-1)}=Ce^{(d-1)T}R^{-d},$$
    as desired.
\end{proof}

\paragraph{Bounding intersection points}
Finally we have the following estimates on the number of intersections in a cuspidal neighborhood.

Let $N(R_1,R_2)$ denote the number of the intersection points between an $R_1$-excursion and a $(d-1)$-dimensional $R_2$-cap in $\mathcal{H}_{\epsilon_d}(\star)$.

\begin{Lemma}\label{lem: inter_upper_bound}
    There exists $C, R_0>0$ depending only on $\Lambda$ such that for any integers $R_1, R_2\ge R_0$, we have:
    
    $$N(R_1,R_2)\le \begin{cases}
        CR_1R_2^{d-2}&\text{if $R_1\le R_2$,}\\
        CR_2^{d-1}&\text{if $R_1>R_2$.}
    \end{cases}$$
\end{Lemma}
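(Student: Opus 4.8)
\emph{The plan.} I would work in the upper half-space model with the normalization $\HS\cong\Lambda\backslash\mathbb{H}^d_1$. Fix a lift $\tilde\alpha\subseteq\mathbb{H}^d_1$ of the $R_1$-excursion: it is the part with $\tau>1$ of a Euclidean half-circle of radius $\rho_1\in[R_1,R_1+1)$ lying in a vertical $2$-plane, which I parametrize as $\{(\xi_0+se,\sqrt{\rho_1^2-s^2}): |s|<\sqrt{\rho_1^2-1}\}$ with $\xi_0\in\mathbb{R}^{d-1}$ and $e\in\mathbb{R}^{d-1}$ a unit vector. Fix one lift $\tilde C$ of the $R_2$-cap: it is the part with $\tau>1$ of the upper Euclidean half-sphere of radius $\rho_2\in[R_2,R_2+1)$ centered at some $c\in\mathbb{R}^{d-1}$. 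Since $\Lambda$ acts by translations of $\mathbb{R}^{d-1}$, every lift of the cap is $g\tilde C$ for some $g\in\Lambda$, the half-sphere centered at $c+g$. Every transverse intersection point in $\HS$ lifts, after translation by an element of $\Lambda$, to a point of $\tilde\alpha\cap g\tilde C$, so (counting with multiplicity)
\[
N(R_1,R_2)\ \le\ \sum_{g\in\Lambda}\#\bigl(\tilde\alpha\cap g\tilde C\bigr)\ \le\ 2\cdot\#\{g\in\Lambda:\tilde\alpha\cap g\tilde C\neq\varnothing\},
\]
where the last bound holds because $\tilde\alpha$ is an arc of a circle in a $2$-plane $P$ and $g\tilde C\cap P$ is (in nondegenerate position) a circle in $P$, so two circles meet in at most two points; a coincidence $\tilde\alpha\subseteq g\tilde C$ produces no transverse intersections. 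So it remains to count the translates $g$ for which $\tilde\alpha\cap g\tilde C\ne\varnothing$.

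\emph{Localizing the intersection point.} Suppose $P=(\xi_P,\tau_P)\in\tilde\alpha\cap g\tilde C$. Since $P\in\tilde\alpha$ we have $\tau_P\le\rho_1$ and $\xi_P=\xi_0+s_Pe$ with $s_P^2=\rho_1^2-\tau_P^2$; since $P\in g\tilde C$ we have $\tau_P\le\rho_2$ and $|\xi_P-(c+g)|=\sqrt{\rho_2^2-\tau_P^2}$. Two consequences. First, $\tau_P^2<\rho_2^2$ forces $s_P^2>\rho_1^2-\rho_2^2$, so $\xi_P$ lies in the projection to $\mathbb{R}^{d-1}$ of the portion of $\tilde\alpha$ with $|s|\in(\sqrt{(\rho_1^2-\rho_2^2)_+},\sqrt{\rho_1^2-1})$ — a union of one or two segments of total length $\ell\asymp\min\{R_1,\ R_2^2/R_1\}$ (when $\rho_1>\rho_2$ multiply $\sqrt{\rho_1^2-1}-\sqrt{\rho_1^2-\rho_2^2}$ by its conjugate to get $\asymp R_2^2/R_1$; otherwise $\ell\asymp R_1$), where $(x)_+:=\max\{x,0\}$. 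Second, since $\tau_P\le\min\{\rho_1,\rho_2\}$, the point $c+g$ lies at Euclidean distance in $[\,r_{\min},\ R_2+1\,]$ from $\xi_P$, where $r_{\min}:=\sqrt{(R_2^2-R_1^2)_+}$. Combining, $c+g$ lies in the region $\mathcal{A}\subseteq\mathbb{R}^{d-1}$ of points at distance in $[r_{\min},R_2+1]$ from that union of segments.

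\emph{Lattice count and volume.} The covolume of $\Lambda$ is bounded above and below by constants depending only on the cusp, so a standard lattice-point estimate (after thickening $\mathcal{A}$ by the diameter of a fundamental domain of $\Lambda$, which only changes the order of its volume) gives $\#\{g\in\Lambda:c+g\in\mathcal{A}\}\lesssim\vol(\mathcal{A})$. Decomposing $\mathcal{A}$ into the part over the interior of the segments (length times a $(d-2)$-dimensional annulus of radii $r_{\min}$ and $\asymp R_2$) and the shells near the finitely many endpoints (in $\mathbb{R}^{d-1}$, between radii $r_{\min}$ and $\asymp R_2$),
\[
\vol(\mathcal{A})\ \asymp\ \ell\bigl(R_2^{\,d-2}-r_{\min}^{\,d-2}\bigr)\ +\ \bigl(R_2^{\,d-1}-r_{\min}^{\,d-1}\bigr).
\]
If $R_1>R_2$ then $r_{\min}=0$ and $\ell\asymp R_2^2/R_1\le R_2$, so this is $\asymp \ell R_2^{d-2}+R_2^{d-1}\asymp R_2^{d-1}$, as claimed. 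If $R_1\le R_2$ then $\ell\asymp R_1$ and $r_{\min}=\sqrt{R_2^2-R_1^2}$; the first term is $\le R_1R_2^{d-2}$, and for the second I would write $R_2^{d-1}-r_{\min}^{d-1}\lesssim R_2^{d-2}\bigl(R_2-\sqrt{R_2^2-R_1^2}\bigr)\le R_2^{d-2}\cdot R_1^2/R_2=R_1^2R_2^{d-3}\le R_1R_2^{d-2}$ (the bounded-radius corrections only add terms $\lesssim R_2^{d-2}\le R_1R_2^{d-2}$). Either way $\vol(\mathcal{A})\lesssim R_1R_2^{d-2}$, which combined with the previous step proves the lemma with a suitable constant $R_0$ (large enough that $\rho_i\asymp R_i$ and all the comparisons above are uniform).

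\emph{Where the difficulty lies.} The naive estimate — that $g$ contributes only when the base-disc of $g\tilde C$ meets the base-segment of $\tilde\alpha$ — gives merely $R_2^{d-1}$, which is too weak when $R_1\le R_2$ and is not even the claimed bound when $R_1>R_2$. The real content is the height localization in the second paragraph: when $R_1\le R_2$ the intersection must occur near the low rim of the cap, producing the inner radius $r_{\min}\asymp R_2$ that cuts the volume down by a factor $\asymp R_1/R_2$; when $R_1>R_2$ it must occur on the short low ends of the excursion, shrinking $\ell$ to $R_2^2/R_1$. Making these two mechanisms yield exactly $R_1R_2^{d-2}$ and $R_2^{d-1}$ respectively, while smoothly covering the borderline regime $R_1\asymp R_2$, is the only delicate point; everything else is the standard dictionary between $\HS$ and $\Lambda\backslash\mathbb{H}^d_1$ together with elementary lattice-point counting.
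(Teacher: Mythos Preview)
Your approach is sound in outline and your final bounds are correct, but there is a genuine gap in the case $R_1\le R_2$. From $|c+g-\xi_P|\ge r_{\min}$ you cannot conclude that $d(c+g,I)\ge r_{\min}$: the point $\xi_P$ is one particular point of $I$, and $c+g$ may well sit closer to another point of the segment. Concretely, with $\rho_1=5$, $\rho_2=10$ (so $r_{\min}=\sqrt{75}\approx 8.66$), $\xi_0=0$, $e=e_1$, and $c+g=(8,0,\dots,0)$, one finds $s_P=-11/16$ and $\tau_P\approx 4.95>1$, so there is a valid intersection, yet $d(c+g,I)=8-\sqrt{24}\approx 3.1<r_{\min}$. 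Thus your region $\mathcal{A}=\{x:d(x,I)\in[r_{\min},R_2{+}1]\}$ (which is what your ``length $\times$ $(d{-}2)$-annulus plus end-shells'' decomposition computes) does \emph{not} contain all valid centers; indeed when $R_1\ll R_2$ its volume is only $\asymp R_1^2R_2^{d-3}$, strictly smaller than the true count. The honest container is the \emph{union} $\bigcup_{\xi\in I}\{x:|x-\xi|\in[r_{\min},R_2{+}1]\}$, i.e.\ the full $(R_2{+}1)$-tube around $I$ minus the lens $B(p_1,r_{\min})\cap B(p_2,r_{\min})$; a short spherical-cap estimate on the lens then recovers the desired $\lesssim R_1R_2^{d-2}$, so your argument is repairable. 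Your case $R_1>R_2$ is fine as written, since there $r_{\min}=0$ and the two regions coincide.

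For comparison, the paper's proof sidesteps this geometry entirely by observing that along the arc the function $|\xi_0+se-c|^2+\tau^2-\rho_2^2$ is \emph{linear} in $s$ (this also sharpens your ``two circles meet in $\le 2$ points'' to $\le 1$). Hence the excursion meets the cap if and only if exactly one of the two feet $\tilde a_1,\tilde a_2\in\PH$ lies inside the base disk of the cap. For $R_1\le R_2$ one then fixes the cap and counts lattice translates of $\tilde a_1$ in the $\asymp R_1$-neighborhood of the base $(d{-}2)$-sphere (volume $\asymp R_1R_2^{d-2}$); for $R_1>R_2$ one fixes the excursion and counts cap centers within $\rho_2$ of one of $\tilde a_1,\tilde a_2$ (volume $\asymp R_2^{d-1}$). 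Your height-localization route has the appeal of treating both regimes uniformly without switching which object is fixed, but the paper's ``one in, one out'' criterion is shorter and avoids the annulus/tube subtlety altogether.
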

\begin{proof}
The number of intersections is equal to the number of preimages of the $R_1$-excursion (or $R_2$-cap) that intersect a fixed preimage of the $R_2$-cap (or $R_1$-excursion). Assume that $R_1 \leq R_2$, and fix a preimage of the $R_2$-cap. Its intersection with the hyperplane $\PH$ is a $(d-2)$-dimensional sphere, which we denote by $S$.

A preimage of the $R_1$-excursion intersects $\PH$ at two points $\tilde{a}_1, \tilde{a}_2$, where the vector $\tilde{a}_2 - \tilde{a}_1$ is a fixed vector $v$ of length approximately $2R_1$. The set of such preimages corresponds to the translations of the geodesic segment $\tilde{a}_1 \tilde{a}_2$ by elements of the lattice $\Lambda'$.

A geodesic segment $\tilde{a}_1 \tilde{a}_2$ intersects the fixed  $R_2$-cap if and only if one endpoint lies inside $S$ and the other lies outside. Thus, $\tilde{a}_1$ must lie within the $\|v\|$-neighborhood of $S$. The number of such $\tilde{a}_1$ is approximately the volume of this neighborhood, which is asymptotically $\sim C R_1 R_2^{d-2}$.

In the case $R_1>R_2$, we fix a preimage of the excursion, intersecting $\PH$ at two points $\tilde{a}_1, \tilde{a}_2$. Similar to the previous case, the number of intersecting $R_2$-caps is approximately equal to the volume of the $2R_2$-neighborhood of  $\tilde{a}_1$ and $\tilde{a}_2$ in $\PH$, which is asymptotically $\sim C R_2^{d-1}$.
\end{proof}

Let $Q(R,u)$ denote the number of intersections between an $R$-excursion and a $u$-tube in $\mathcal{H}_{\epsilon_d}(\star)$.

\begin{Lemma}
    There exists $C,R_0>0$ depending only on $\Lambda$ and the choice of integral basis so that for any integers $R\ge R_0$ and any primitive integral vector $u$, we have:
    $$Q(R,u)\le CR\|u\|.$$
\end{Lemma}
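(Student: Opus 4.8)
The plan is to prove this as the $u$-tube analogue of Lemma~\ref{lem: inter_upper_bound}, by the same mechanism. As there, $Q(R,u)$ is bounded by the number of $\Lambda$-translates of a fixed lift $\tilde T\subset\mathbb{H}^d_1$ of the $u$-tube that separate the two endpoints of a fixed lift $\tilde\alpha$ of the $R$-excursion. I use the normalization $\HS\cong\Lambda\backslash\mathbb{H}^d_1$, under which $\PH$ corresponds to $\{\tau=1\}$, identified with $\mathbb{R}^{d-1}$ via the coordinates $x_1,\dots,x_{d-1}$ and the inner product $\langle\cdot,\cdot\rangle$.

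First I would record the elementary geometry on $\{\tau=1\}$. The lift $\tilde\alpha$ is an arc of a Euclidean half-circle of radius $\rho\in[R,R+1)$; it meets $\{\tau=1\}$ at two points $\tilde a_1,\tilde a_2$, and setting $v:=\tilde a_2-\tilde a_1$ one computes $\|v\|=2\sqrt{\rho^2-1}$, so $R\le\|v\|\le 4R$ once $R\ge R_0\ge 2$; moreover $\tilde\alpha$ lies over the closed segment $[\tilde a_1,\tilde a_2]$, with $\tau\ge 1$ there. The lift $\tilde T$ is a vertical half-hyperplane meeting $\{\tau=1\}$ in an affine hyperplane $P\subset\mathbb{R}^{d-1}$ with $\langle\cdot,\cdot\rangle$-normal $u$. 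Since two vertical flats in $\mathbb{H}^d$ meet precisely over the intersection of their horizontal projections, $\tilde\alpha$ meets a translate $P+\lambda$ in at most one point, and it does so only if $P+\lambda$ separates $\tilde a_1$ from $\tilde a_2$ along the line through them; the degenerate case $\langle v,u\rangle=0$ (where $P+\lambda$ is parallel to that line, so the excursion either misses $\tilde T$ or lies inside it) contributes no transverse intersection and is discarded. Hence $Q(R,u)$ is at most the number of hyperplanes in the $\Lambda$-orbit of $P$ that separate $\tilde a_1$ and $\tilde a_2$.

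To finish, I would invoke the structure established in the proof of Lemma~\ref{lem: vol.u}: the family $\{P+\lambda:\lambda\in\Lambda\}$ consists of equally spaced parallel hyperplanes with spacing $\asymp 1/\|u\|$, the implied constant depending only on $\Lambda$ and the chosen basis. Projecting orthogonally onto $u/\|u\|$, the segment $[\tilde a_1,\tilde a_2]$ maps onto an interval of length $|\langle v,u\rangle|/\|u\|\le\|v\|\le 4R$, which therefore meets at most $O(R\|u\|)+1$ of these hyperplanes. Since $R\ge R_0\ge 1$ and $\|u\|\ge 1$, the additive constant is absorbed, giving $Q(R,u)\le CR\|u\|$ with $C$ depending only on $\Lambda$ and the basis.

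There is no serious obstacle here: the argument is a direct transcription of Lemma~\ref{lem: inter_upper_bound}. The only mildly delicate points are the elementary length bound $\|v\|\asymp R$ and the justification that counting separating $\Lambda$-translates dominates $Q(R,u)$ (the same reduction as in Lemma~\ref{lem: inter_upper_bound}, and only an upper bound is needed), together with the disposal of the parallel and contained degenerate configurations noted above.
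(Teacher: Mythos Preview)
Your proof is correct and follows essentially the same approach as the paper: both bound $Q(R,u)$ by the elementary count (segment of horizontal length $\asymp R$) $\times$ (density of $\Lambda$-translates of the $u$-hyperplane $\asymp\|u\|$), with the key input being the spacing $\asymp 1/\|u\|$ established in Lemma~\ref{lem: vol.u}. The paper phrases this as ``the excursion meets $\asymp R$ fundamental domains, each containing $\asymp\|u\|$ pieces of the tube,'' while you project onto $u/\|u\|$ and count separating hyperplanes directly; these are two presentations of the same count.
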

\begin{proof}
    Note that orbits of a lift of an $R$-excursion meet a fixed fundamental domain of the cuspidal region at most $\asymp R$ times. Each time it meets at most $\asymp\|u\|$ pieces of the $u$-tube by Lemma \ref{lem: vol.u}. Hence $Q(R,u)\le CR\|u\|$ as desired.
\end{proof}

We are now ready to prove Theorem~\ref{thm: cusp_inter_points}.
\begin{proof}[Proof of Theorem~\ref{thm: cusp_inter_points}]
    Let $A_R(T)$ denote the number of all $R$-excursions among closed geodesics of length $\le T$, $\#_R$ the number of $(d-1)$-dimensional $R$-caps, and $\#_u$ the number of $u$-tubes.
    
    We only have to consider $R$-caps and excursions with $R+1>\epsilon_d/\iota$. Set $N_\iota=\lceil\epsilon_d/\iota\rceil-1$. From the two previous lemma, we have
    \begin{align*}
        &|I_\iota(\gamma_T,S)|\le \sum_{R_1,R_2=N_\iota}^\infty N(R_1,R_2)A_{R_1}(T)\#_{R_2}+\sum \limits_{u}\sum_{R=N_\iota}^\infty Q(R,u)A_{R}(T)\#_u\\
        &\le C\sum_{R_2=N_\iota}^\infty \sum_{R_1=N_\iota}^{R_2}R_1^{-(d-1)}R_2^{d-2}e^{(d-1)T}\#_{R_2}+C\sum_{R_2=N_\iota}^\infty\sum_{R_1=R_2+1}^\infty R_1^{-d}R_2^{d-1}e^{(d-1)T}\#_{R_2}\\
        &\qquad\qquad+C\sum_{u}\sum_{R_1=N_\iota}^\infty R_1^{-(d-1)}\|u\|e^{(d-1)T}\#_u\\
        &\le Ce^{(d-1)T}\left(N_\iota^{-(d-2)}\sum_{R_2=N_\iota}^\infty R_2^{d-2}\#_{R_2}+\sum_{R_2=N_\iota}^\infty\#_{R_2}+N_\iota^{-(d-2)}\sum_{u}\|u\|\#_u\right)
    \end{align*}
    Since the volume of an $R_2$-cap is given by $\asymp R_2^{d-2}$ (see Lemma \ref{lem: vol.cap}), we have 
    
    $$\sum_{R_2=N_\iota}^\infty R_2^{d-2}\#_{R_2} \leq c \vol(S)$$. 
    
    Similarly, since the volume of a $u$-tube is given by $\asymp \|u\|$ (see Lemma \ref{lem: vol.u}), we have 
    $$
    \sum_{u}\|u\|\#_u \leq c\vol(S).
    $$ 
    Therefore, the last expression in the bracket is bounded above by $CN_\iota^{-(d-2)}\vol(S)$. Now $N_\iota\asymp1/\iota$, which leads to the estimate
    $$|I_\iota(\gamma_T,S)|\le Ce^{(d-1)T}\iota^{d-2}\vol(S)$$
    as desired.
\end{proof}
\paragraph{On the case $2\le k\le d-1$}
    Unlike closed geodesics, we cannot apply the ``cut-and-homotope'' strategy in Proposition~\ref{prop: geod_excursion} to modify caps / tubes of a geodesic submanifold of dimension $k\in [2,d-1]$. Given a geodesic $k$-submanifold, let $A_R^\star(S)$ be the number of $R$-caps contained in $S$ inside $\mathcal{H}_{\epsilon_d}(\star)$. Then, based on a volume computation and by Lemma \ref{lem: vol.cap}, we have the easy upper bound
    \begin{equation}
        |A_R^\star(S)|\le C\frac{\vol(S)}{R^{k-1}}\tag{$\dagger$}
    \end{equation}
    which is generally much weaker than the bound in Proposition~\ref{prop: geod_excursion}.

    Without going into details, we remark that with only $(\dagger)$, our method does not yield effective control of the number of intersections in cuspidal neighborhoods. However, if we could show a stronger estimate e.g.\ $|A_R^\star(S)|\le C\dfrac{\vol(S)}{R^{k}}$, then we would have the equidistribution result. 

On the other hand, to prove Theorem \ref{thm: general}, which is weaker than our Theorem \ref{thm: main} (equidistribution for $k=1$), we need the following much weaker estimate in \S\ref{sec:estimates2} for the case $2\le k\le d-2$. Let $\Gamma$ and $S$ be geodesic submanifolds in $M$ of dimension $k$ and $d-k$ respectively. Recall that $M_{\iota}$ is the union of $\iota$-thin neighborhood of the cusps. For any $0<\iota<\epsilon_d$ and $L>1$, consider the region $\mathcal{R}({\iota,L}):=M_{\iota}\backslash M_{\iota/L}$. We denote by $|I_{\mathcal{R}(\iota,L)}(\Gamma,S)|$ the number of transverse intersection points between $\Gamma$ and $S$ in $\mathcal{R}({\iota,L})$, and $|I_{\ge\iota}(\Gamma,S)|$ the number of intersection points in $M_{\ge\iota}=\overline{M\backslash M_{\iota}}$. Then we have
\begin{Lemma}\label{lem: ratio}
    There exists a constant $\iota_0\in(0,\epsilon_d)$ depending only on the geometry of $M$ so that the following holds. For any $0<\iota<\iota_0$, there exist constants $C_\iota,D_{\iota}>0$ (depending only on $\iota$ and the geometry of $M$) so that
    $$\frac{|I_{\mathcal{R}({\iota,L})}(\Gamma,S)|}{|I_{\ge\iota}(\Gamma,S)|}\le D_{\iota}$$
    for any $1<L\leq C_\iota$.
\end{Lemma}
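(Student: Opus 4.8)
The plan is to work one cusp at a time in the upper half space model and to \emph{transfer} each intersection point lying in the thin shell to a companion point in the fixed collar just outside the shell, using the lattice $\Lambda$ attached to the cusp. First I would reduce and parametrize. Since $\mathcal{R}(\iota,L)\subseteq M_\iota$, every transverse intersection point of $\Gamma$ and $S$ in $\mathcal{R}(\iota,L)$ lies in some cuspidal neighborhood $\mathcal{H}_\iota(\star)\cong\Lambda\backslash\mathbb{H}^d_{\tau_0}$ with $\tau_0=\epsilon_d/\iota$, and there $\mathcal{R}(\iota,L)$ is the slab $\{\tau_0<\tau\le L\tau_0\}$ while $M_{\ge\iota}$ contains the collar $\{\tau_0/C_\iota<\tau\le\tau_0\}$; for $\iota$ below a threshold $\iota_0$ depending only on the geometry of $M$ (so that $M$ has no short closed geodesics below scale $\iota$) and $C_\iota$ kept bounded in terms of $\iota$, this collar is a fixed compact region disjoint from every thin part, hence genuinely inside $M_{\ge\iota}$; this is what fixes $\iota_0$. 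A key structural point is that two totally geodesic submanifolds of $\mathbb{H}^d$ of complementary dimensions meet, if at all, in a single point: their intersection is a nonempty geodesically convex totally geodesic subspace, and at a transverse point it must be $0$-dimensional. Consequently, after fixing for each cap or tube $P$ of $\Gamma$ and of $S$ a lift $\tilde P\subset\mathbb{H}^d$ (a half $k$-sphere or half $k$-plane, resp.\ a half $(d-k)$-sphere or $(d-k)$-plane), the transverse intersection points of a cap/tube $P_\Gamma$ with a cap/tube $P_S$ inside $\mathcal{H}_\iota(\star)$ correspond, up to the natural (finite-rank) stabilizers in $\Lambda$, to the $\lambda\in\Lambda$ for which $\tilde P_\Gamma\cap\lambda\tilde P_S$ is a single point; I record each such point together with its height $h(\lambda)\ge\tau_0$.

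The heart of the argument is then a depth-band comparison, uniform over all cusps, all pairs of geodesic planes, and all $1<L\le C_\iota$:
$$\#\{\lambda\in\Lambda:\ h(\lambda)\in(\tau_0,L\tau_0]\}\ \le\ D_\iota\cdot\#\{\lambda\in\Lambda:\ h(\lambda)\in(\tau_0/C_\iota,\tau_0]\},$$
where $h(\lambda)$ is understood only for those $\lambda$ with $\tilde P_\Gamma\cap\lambda\tilde P_S\neq\varnothing$. I would prove this by analyzing how the height of the (unique) intersection point varies as the translate $\lambda\tilde P_S$ moves over the lattice. For two caps, writing the translation along the axis joining the two Euclidean centers, the full round spheres meet in a round $(d-2)$-sphere centered on $\partial\mathbb{H}^d$ whose radius $r_\cap(\lambda)$ is a unimodal function of the center separation, vanishing at the two ends of the annulus of admissible translations; the geodesic-plane intersection point lies on this $(d-2)$-sphere, so $h(\lambda)\le r_\cap(\lambda)$, with $h(\lambda)\asymp r_\cap(\lambda)$ along a fixed proportion of the admissible $\lambda$. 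Hence the lattice points with height in the \emph{thin} multiplicative band $(\tau_0,L\tau_0]$ (hyperbolic width $\asymp\log L$) are controlled, for $L\le C_\iota$, by those in the \emph{fixed}-width band $(\tau_0/C_\iota,\tau_0]$, the loss $D_\iota$ accounting for the ratio of band widths and for lattice-point discrepancies (the relevant counts are of the type in Lemma~\ref{lem: vol.cap} and \cite[\S4]{torkaman2024intersection}). The case where $P_\Gamma$ or $P_S$ is a tube (a vertical half-plane) is analogous and in fact simpler, since the intersection height then varies monotonically as the wall is translated transversally; a tube-tube pair produces a whole geodesic of intersection, hence no transverse points, and is discarded.

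Finally I would sum the comparison over all ordered pairs (cap or tube of $\Gamma$, cap or tube of $S$): the left-hand sides add up to $|I_{\mathcal{R}(\iota,L)}(\Gamma,S)|$, and the right-hand sides add up to at most $|I_{\ge\iota}(\Gamma,S)|$, since the intersection points recorded on the right are distinct points of the collar $\subset M_{\ge\iota}$; this yields the bound with $D_\iota$ the constant from the comparison. I expect the main obstacle to be exactly the uniformity (and two-sidedness) of the depth-band comparison: nearly tangent large spheres, a large sphere against a small one, and the various tube configurations behave quite differently, and one must both bound the thin-band count from above \emph{and} produce enough translates with height in the collar band, uniformly over all configurations — equivalently, rule out that the slab $(\tau_0,L\tau_0]$ ever traps disproportionately many lattice translates relative to the collar just below $\tau_0$. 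This is where the cuspidal geometry has to be pushed through carefully, and it is also the reason the constants $C_\iota$ and $D_\iota$ are allowed to depend on $\iota$.
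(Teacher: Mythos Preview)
Your strategy is genuinely different from the paper's, and the step you yourself flag as the obstacle is where the proposal has a real gap. The paper does \emph{not} try to match, for each fixed pair of caps or tubes, shell intersections against collar intersections of the same pair. Instead it bounds the numerator by direct geometric counting in the thin band (Claims~I--II), obtaining
\[
|I_{\mathcal{R}(\iota,C_\iota)}(\Gamma,S)|=O\Big(\vol(S)\sum_{R_1\ge R_\iota}\#_{R_1}^k+\vol(\Gamma)\sum_{R_2\ge R_\iota}\#_{R_2}^{d-k}\Big),
\]
and then lower-bounds the denominator using \emph{effective equidistribution} (Theorem~\ref{thm: mixing.surface}): for $\vol(\Gamma),\vol(S)$ large, $\Gamma$ has $\asymp\vol(\Gamma)$ caps of the fixed small scale $R_\iota/3$, each of which is forced to meet every $R_2$-cap of $S$ with $R_2\ge R_\iota$ at a point of $M_{\ge\iota}$. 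This manufactures $\gtrsim\vol(\Gamma)\sum_{R_2}\#_{R_2}^{d-k}$ thick-part intersections coming from \emph{new} small caps, not from the caps that enter the shell, and symmetrically for the other term (Claim~III). The two bounds combine immediately.

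Your pairwise depth-band inequality, by contrast, is only asserted, and there is reason to doubt it holds uniformly. The bound $h(\lambda)\le r_\cap(\lambda)$ via the ambient $(d-1)$-spheres is correct but one-sided: for $2\le k\le d-2$ the unique intersection point of a $k$-hemisphere with a $(d-k)$-hemisphere lies somewhere on that $(d-2)$-sphere, but \emph{where} on it---and hence its height---is governed by the direction subspaces of the two geodesic planes, not by $r_\cap$ alone, so ``$h(\lambda)\asymp r_\cap(\lambda)$ along a fixed proportion of translates'' does not follow from what you wrote. More structurally, the $\lambda$-region where $h(\lambda)$ lands in the shell versus the collar can have very different shapes and measures depending on the pair of direction subspaces, and nothing in the sketch controls their ratio uniformly over all configurations; for nearly-degenerate pairs the comparison constant could blow up, or the collar count for that pair could simply vanish. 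Recovering from such failures would require averaging across pairs---which is precisely what the paper's equidistribution step supplies by producing a large reservoir of fixed-scale caps guaranteed to hit the big caps in the thick part. So the obstacle you identify is real, and the paper resolves it by a different mechanism rather than by pushing the pairwise geometry through.
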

\begin{proof}
    Take $\iota_0=\epsilon_d/4$. For any $\iota<\iota_0$, set $R_\iota:=\epsilon_d/\iota>4$, and $C_\iota=\log(R_\iota+1)-\log R_\iota$. We will show that the number of intersection points in $\mathcal{R}(\iota,L)$ is comparable with that in $M_{\ge\iota}$.

    We will focus on the case of caps; the case involving tubes is similar. Let $\#^k_R$ (resp.\ $\#_R^{d-k}$) be the number of $k$-dimensional (resp.\ $(d-k)$-dimensional) $R$-caps in $\Gamma$ (resp.\ $S$). We use $N_{\iota,C}(R_1,R_2)$ to denote the number of intersection points in $\mathcal{R}(\iota,L)$ between a $k$-dimensional $R_1$-cap and a $(d-k)$-dimensional $R_2$-cap.
    
    The lemma follows from the following sequence of claims.
\begin{ClaimRoman}
    We have the following upper bound
    $$N_{\iota,C_\iota}(R_1,R_2)=\begin{cases}
        O(R_2^{d-k-1})&\text{if }R_1\le R_2,\\O(R_1^{k-1})&\text{if }R_1>R_2.
    \end{cases}$$
    where the implied constant in $O(\cdot)$ depends only on $\iota$ and the geometry of $M$.
\end{ClaimRoman}
The proof of this claim is similar to that of Lemma~\ref{lem: inter_upper_bound}.
\begin{ClaimRoman}
    We have the upper bound
    $$|I_{\mathcal{R}(\iota,C_\iota)}(\Gamma,S)|=O\left(\vol(S)\sum_{R_1\ge R_\iota}\#_{R_1}^k+\vol(\Gamma)\sum_{R_2\ge R_\iota}\#_{R_2}^{d-k}\right).$$
\end{ClaimRoman}
Indeed, from the previous claim, we have
\begin{align*}
    |I_{\mathcal{R}(\iota,C_\iota)}(\Gamma,S)|\le\sum_{R_1,R_2\ge R_\iota}\#^k_{R_1}\#^{d-k}_{R_2}N_{\iota,C_\iota}(R_1,R_2)\\
    \le \sum_{R_{\iota} \le R_1\le R_2}\#^k_{R_1}\#^{d-k}_{R_2}O(R_2^{d-k-1})+\sum_{R_1> R_2 \geq R_{\iota}}\#^k_{R_1}\#^{d-k}_{R_2}O(R_1^{k-1})
\end{align*}
Now since $\sum_{R_1}\#^k_{R_1}R_1^{k-1}=O(\vol(\Gamma))$ and $\sum_{R_2}\#^k_{R_2}R_2^{d-k-1}=O(\vol(S))$, we have the claim.
\begin{ClaimRoman}
    We have
    $$\vol(S)\sum_{R_{\iota} \leq R_1}\#_{R_1}^k+\vol(\Gamma)\sum_{R_{\iota} \leq R_2}\#_{R_2}^{d-k}=O(|I_{\ge\iota}(\Gamma,S)|).$$
\end{ClaimRoman}
To see this, note that any $R$-cap intersects $\PH$ with an angle in a specific range. So by applying the equidistribution result Theorem~\ref{thm: mixing.surface} and assuming $\vol(S)$ and $\vol(\Gamma)$ are large enough, we conclude that $\#_{R_\iota/3}^k\asymp\vol(\Gamma)$ and $\#_{R_\iota/3}^{d-k}\asymp\vol(S)$ (the multiplicative constants depend on $\iota$ and $M$). Now every $R_\iota/3$-cap in $\Gamma$ must intersect a $R_2$-cap in $S$ if $R_2\ge R_{\iota}$, and similarly every $R_\iota/3$-cap in $S$ must intersect a $R_1$-cap in $\Gamma$ if $R_1\ge R_\iota$. These intersection points all lie in $M_{\ge\iota}$. So we have 
\begin{align*}
|I_{\ge\iota}(\Gamma,S)|\ge\sum_{R_1\ge R_\iota}\#_{R_1}^{k}\#_{R_\iota/3}^{d-k}+\sum_{R_2\ge R_\iota}\#_{R_\iota/3}^{k}\#_{R_2}^{d-k}  \\
\asymp \vol(S)\sum_{R_1\geq R_{\iota}}\#_{R_1}^k+\vol(\Gamma)\sum_{R_2 \geq R_{\iota}}\#_{R_2}^{d-k}
\end{align*}
This gives the desired estimates.
\end{proof}

\section{Effective estimates I: the case \texorpdfstring{$k=1$}{k=1}}\label{sec:estimates1}

We are now ready to describe the effective estimates. We first give a complete proof of the case $k=1$, which illustrates the main ideas without additional technicalities. We will discuss the general case in the next section.

Throughout this section, all constants implied in the expression $O(\cdot)$ depend only on the geometry of $M$. After the proof of each claim, we will record the constraints on some of the variables introduced to guarantee convergence.

Given a sum of closed geodesics $\gamma$ and a geodesic hypersurface $S$ on $M$, define
$$
c(\gamma,S):=\frac{\ell(\gamma)\vol(S)}{|I(\gamma,S)|}.
$$
Recall that $\gamma_T$ is sum of the closed geodesics with length $\leq T$.

\paragraph{Step 1: Relate to the intersection kernel}
Our first claim is an effective version of Proposition~\ref{prop: intersection.kernel}.
\begin{Claim}\label{claim: inter_kernel}
For any $f \in C^1(M)$ we have
$$
    \left|\int_{M} f(x) \, dI^1(\gamma_T,S)-c(\gamma_T,S)\int_{\mathcal{F}(M)} f(v) \int_{\mathcal{F}(M)} K_{\delta}(u,v)\, d\mu^1_{\gamma_T}(u)d\mu^1_{S}(v)\right|
    \leq \delta \, \Lip(f).
$$

\end{Claim}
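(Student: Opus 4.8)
The plan is to make the proof of Proposition~\ref{prop: intersection.kernel} quantitative, using that a Lipschitz $f$ oscillates by at most $\delta\,\Lip(f)$ over a ball of radius $\delta$. First I would unwind the normalizations. By the construction of $\mu_N$ and of $I(\Gamma,S)$ in \S\ref{sec: inter_kernel} one has $|\mu_{\gamma_T}|=\ell(\gamma_T)$, $|\mu_S|=\vol(S)$, $I(\gamma_T,S)=|I(\gamma_T,S)|\cdot I^1(\gamma_T,S)$, and $c(\gamma_T,S)=\ell(\gamma_T)\vol(S)/|I(\gamma_T,S)|$, so multiplying the asserted inequality through by $|I(\gamma_T,S)|$ reduces the claim to the un-normalized estimate
\[
\left|\int_M f\,dI(\gamma_T,S)-\int_{\mathcal F(M)}f(v)\int_{\mathcal F(M)}K_\delta(u,v)\,d\mu_{\gamma_T}(u)\,d\mu_S(v)\right|\le \delta\,\Lip(f)\,|I(\gamma_T,S)|.
\]
All quantities here are finite: $\gamma_T$ is compact, so $|I(\gamma_T,S)|<\infty$, and $K_\delta(u,v)=0$ unless $d_M(\pi(u),\pi(v))\le 2\delta$, so the double integral reduces to an integral over the compact $2\delta$-neighborhood of $\gamma_T$ and in particular converges.

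The key step is to evaluate the inner integral $F(v):=\int_{\mathcal F(M)}K_\delta(u,v)\,d\mu_{\gamma_T}(u)$ exactly. Since $\mathbb K_\delta(u,v)$ depends on $u$ only through $\pi(u)$ and the geodesic segment $B^1_\delta(u)$, and on $v$ (taken tangent to $S$) only through the disk $B^{d-1}_\delta(v)=B^S_\delta(\pi(v))$, while the spherical factors of $\mu_{\gamma_T}$ are probability measures on the normal frames, $F(v)$ depends only on $y:=\pi(v)$. Expanding $K_\delta$ over $\Pi$ and lifting to $\mathbb H^d$, each lift of $\gamma_T$ meeting a fixed lift of $B^S_\delta(y)$ does so in exactly one transverse point (a complete geodesic meets a totally geodesic hyperplane at most once), over which the $\mu_{\gamma_T}$-mass of frames $u$ with $B^1_\delta(u)$ containing that point is $\vol_1(\delta)$; since $\mathbb K_\delta=1/c_\delta$ there and $c_\delta=\vol_1(\delta)\vol_{d-1}(\delta)$, the contribution of each such lift is $1/\vol_{d-1}(\delta)$. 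Hence $F(v)=N(y)/\vol_{d-1}(\delta)$, where $N(y)$ is the number, counted with multiplicity, of transverse crossings of $\gamma_T$ with the immersed disk $B^S_\delta(y)$. Using $f(v)=f(\pi(v))$ and $\int_{\mathcal F(M)}(g\circ\pi)\,d\mu_S=\int_S g\,d\vol_S$, the double integral equals $\vol_{d-1}(\delta)^{-1}\int_S f(y)N(y)\,d\vol_S(y)$.

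To finish, I would decompose over intersection points. Let $\mathscr X$ be the finite set of pairs $(\sigma,P)$ where $P\in\mathscr P(\gamma_T,S)$ and $\sigma$ is a local strand of $\gamma_T$ through $P$ transverse to $S$; since the multiplicity of such a $P$ in $I(\gamma_T,S)$ equals the number of transverse strands through it, $|\mathscr X|=|I(\gamma_T,S)|$ and $\int_M f\,dI(\gamma_T,S)=\sum_{(\sigma,P)\in\mathscr X}f(P)$. For each $(\sigma,P)$ the exponential map $w\mapsto\exp_P(w)$ identifies $\{w\in T_PS:|w|<\delta\}$ with the geodesics in $S$ of length $<\delta$ ending at $P$, and tracking the crossing count gives $N(y)=\sum_{(\sigma,P)\in\mathscr X}\#\{\text{geodesics in }S\text{ from }y\text{ to }P\text{ of length }<\delta\}$; integrating in $y$ and changing variables,
\[
\int_S f(y)N(y)\,d\vol_S(y)=\sum_{(\sigma,P)\in\mathscr X}\int_{B^S_\delta(P)}f\,d\vol_S,
\]
the integral on the right being over the immersed $\delta$-ball read with multiplicity, of total volume $\vol_{d-1}(\delta)$. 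Subtracting $\sum_{(\sigma,P)}f(P)$ and bounding $|f(y)-f(P)|\le\Lip(f)\,d_M(y,P)<\delta\,\Lip(f)$ on each ball gives the un-normalized estimate with constant $|\mathscr X|\,\delta\,\Lip(f)=|I(\gamma_T,S)|\,\delta\,\Lip(f)$; dividing by $|I(\gamma_T,S)|$ proves the claim.

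I expect the main obstacle to be bookkeeping rather than any single inequality: one must carry the ``counted-with-multiplicity'' convention for the immersed disks $B^S_\delta$ consistently, so that the single constant $\vol_{d-1}(\delta)$ plays two roles at once---the normalization buried in $\mathbb K_\delta$ through $c_\delta$, and $\vol(B^S_\delta(P))$---which is exactly what makes the telescoping in the last step come out clean; and one must verify that the homotopy-theoretic multiplicity appearing in the definition of $I(\gamma_T,S)$ is precisely the number of transverse local strands, which is what forces $|\mathscr X|=|I(\gamma_T,S)|$. With those two points pinned down, the remaining estimates are routine, and (in contrast to Proposition~\ref{prop: intersection.kernel}) no smallness assumption on $\delta$ relative to $\gamma_T$ or $S$ is needed.
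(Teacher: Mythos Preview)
Your proposal is correct and follows essentially the same approach as the paper: both unfold the Poincar\'e series defining $K_\delta$ over $\Pi$ and identify the double integral as $|I(\gamma_T,S)|^{-1}\sum_P\vol_{d-1}(\delta)^{-1}\int_{B^S_\delta(P)}f$, then apply the Lipschitz bound on each immersed $\delta$-disk. The paper carries out the unfolding via the explicit double-coset decomposition $\pi_1(S)\backslash\Pi/\langle\tilde\gamma\rangle$, while you phrase it geometrically through the crossing count $N(y)$ and the exponential map at each $P$; your formula $N(y)=\sum_{(\sigma,P)}\#\{\text{geodesics in }S\text{ from }y\text{ to }P\text{ of length}<\delta\}$ is exactly the statement that the lifts in a fixed double coset are a $\pi_1(S)$-torsor (valid because a transverse $\gamma$ cannot have a power lying in $\pi_1(S)$), so the two presentations are equivalent.
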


\begin{proof}
    Let $\widetilde S$ be a lift of $S$, and $F_S$ a fundamental domain for the action of $\pi_1(S)$ on $\widetilde S$. Consider $\mathcal{F}_{\mathbb{H}^d}(F_S)$ of frames whose first $d-1$ vectors are tangent to $F_S$. Similarly, given any closed geodesic $\gamma$, let $\tilde\gamma$ be a lift of $\gamma$, and $F_\gamma$ a fundamental domain on $\tilde{\gamma}$ for the action of the corresponding loxodromic element stabilizing $\tilde\gamma$. We will denote the loxodromic element by $\tilde\gamma$ as well. Set $\mathcal{F}_{\mathbb{H}^d}(F_\gamma)$ to be the set of frames whose first vector is tangent to $F_\gamma$. We have
    \begin{align}
    \nonumber
    &c(\gamma_T,S)\int_{\mathcal{F}(M)} f(v) \int_{\mathcal{F}(M)} K_{\delta}(u,v) \, d\mu^1_{\gamma_T}(u)d\mu^1_{S}(v)\\
    \nonumber
    =&\frac{1}{|I(\gamma_T,S)|} \sum_{\ell(\gamma)\le T}\int_{ \mathcal{F}(S)} f(v) \int_{\mathcal{F}(M)}K_{\delta}(u,v)d\mu_{\gamma}(u)d\mu_S(v)\\
    \nonumber
    =&\frac{1}{|I(\gamma_T,S)|} \sum_{\ell(\gamma)\le T}\int_{\mathcal{F}_{\mathbb{H}^d}(F_S)} f(\tilde v)\sum_{g\in\Pi}\int_{\mathcal{F}_{\mathbb{H}^d}(F_\gamma)}\mathbb{K}_{\delta}(g.\tilde u,\tilde v)d\mu_{\tilde\gamma}(\tilde u)d\mu_{\widetilde{S}}(\tilde v)\\
    \label{equ: left_cosets}
    =&\frac{1}{|I(\gamma_T,S)|} \sum_{\ell(\gamma)\le T}\int_{\mathcal{F}_{\mathbb{H}^d}(F_S)} f(\tilde v)\sum_{g\in\Pi/\langle\tilde\gamma\rangle}\int_{\mathcal{F}_{\mathbb{H}^d}(\tilde\gamma)}\mathbb{K}_{\delta}(g.\tilde u,\tilde v)d\mu_{\tilde\gamma}(\tilde u)d\mu_{\widetilde{S}}(\tilde v)\\
    \label{equ: right_cosets}
    =&\frac{1}{|I(\gamma_T,S)|} \sum_{\ell(\gamma)\le T}\sum_{g\in\Pi/\langle\tilde\gamma\rangle}\int_{\mathcal{F}_{\mathbb{H}^d}(F_S)} f(\tilde v)\int_{\mathcal{F}_{\mathbb{H}^d}(\tilde\gamma)}\mathbb{K}_{\delta}(g.\tilde u,\tilde v)d\mu_{\tilde\gamma}(\tilde u)d\mu_{\widetilde{S}}(\tilde v)\\
    \label{equ: claim2.proof1}
    =&\frac{1}{|I(\gamma_T,S)|} \sum_{\ell(\gamma)\le T}\sum_{g\in\pi_1(S)\backslash\Pi/\langle\tilde\gamma\rangle}\int_{\mathcal{F}_{\mathbb{H}^d}(\tilde S)} f(\tilde v)\int_{\mathcal{F}_{\mathbb{H}^d}(\tilde\gamma)}\mathbb{K}_{\delta}(g.\tilde u,\tilde v)d\mu_{\tilde\gamma}(\tilde u)d\mu_{\widetilde{S}}(\tilde v)
    \end{align}
    We briefly explain Equality~\ref{equ: left_cosets} (the third equality above). Equality~\ref{equ: right_cosets} holds by similar arguments. First note that the expression
    $$\int_{\mathcal{F}_{\mathbb{H}^d}(\tilde\gamma)}\mathbb{K}_{\delta}(g.\tilde u,\tilde v)d\mu_{\tilde\gamma}(\tilde u)$$
    does not depend on the representative $g$ for a coset in $\Pi/\langle\tilde\gamma\rangle$. Indeed, this follows from the fact that the set $\mathcal{F}_{\mathbb{H}^d}(\tilde\gamma)$ and the measure $\mu_{\tilde\gamma}$ are both $\langle\tilde\gamma\rangle$-invariant. Because of the same invariance of the measure, we then have
    $$\sum_{k\in\mathbb{Z}}\int_{\mathcal{F}_{\mathbb{H}^d}(F_\gamma)}\mathbb{K}_{\delta}(g\tilde\gamma^k.\tilde u,\tilde v)d\mu_{\tilde\gamma}(\tilde u)=\sum_{k\in\mathbb{Z}}\int_{\mathcal{F}_{\mathbb{H}^d}(\tilde\gamma^k\cdot F_\gamma)}\mathbb{K}_{\delta}(g.\tilde u,\tilde v)d\mu_{\tilde\gamma}(\tilde u)=\int_{\mathcal{F}_{\mathbb{H}^d}(\tilde\gamma)}\mathbb{K}_{\delta}(g.\tilde u,\tilde v)d\mu_{\tilde\gamma}(\tilde u)$$
    for any $g\in\Pi$. Equality~\ref{equ: left_cosets} then follows by grouping the terms into $\langle\tilde\gamma\rangle$-cosets.
    
    Note that for any representative $g$ of an element in $\pi_1(S)\backslash\Pi/\langle\tilde\gamma\rangle$, the integral
    $$\int_{\mathcal{F}_{\mathbb{H}^d}(\tilde S)} f(\tilde v)\int_{\mathcal{F}_{\mathbb{H}^d}(\tilde\gamma)}\mathbb{K}_{\delta}(g.\tilde u,\tilde v)d\mu_{\tilde\gamma}(\tilde u)d\mu_{\widetilde{S}}(\tilde v)=\int_{\mathcal{F}_{\mathbb{H}^d}(\tilde S)} f(\tilde v)\int_{\mathcal{F}_{\mathbb{H}^d}(g.\tilde\gamma)}\mathbb{K}_{\delta}(\tilde u,\tilde v)d\mu_{\tilde\gamma}(\tilde u)d\mu_{\widetilde{S}}(\tilde v)$$
    is only nonzero when $\tilde S$ and $g.\tilde\gamma$ intersects transversely at $\tilde P$, which is a lift of an intersection point $P\in\mathscr{P}(\gamma,S)$. This in fact gives a one-to-one correspondence between elements in $g\in\pi_1(S)\backslash\Pi/\langle\tilde\gamma\rangle$ for which the integral is nonzero and points $P\in\mathscr{P}(\gamma,S)$.
    Therefore, Equation \ref{equ: claim2.proof1} is equal to
    \begin{align*}
    &\sum_{P\in \mathscr{P}(\gamma_T,S)} \frac{2\delta}{c_{\delta}|I(\gamma_T,S)|} \int_{\pi^{-1}( B^{\widetilde{S}}_{\delta}(\tilde P))} f(\tilde v)  d\mu_{\widetilde S}(\tilde v)\\
    =&\sum_{P\in \mathscr{P}(\gamma_T,S)} \frac{1}{\vol(B^{\widetilde{S}}_{\delta}(\tilde P))|I(\gamma_T,S)|} \int_{ \pi^{-1}(B^{\widetilde{S}}_{\delta}(\tilde P))}f(\tilde v)d\mu_{\widetilde S}(\tilde v).
    \end{align*}
    Now we have
    \begin{align}
    \nonumber
    &\left|\int_{M} f(x) \, dI(\gamma_T,S)/|I(\gamma_T,S)|-c(S,\gamma_T)\int_{\mathcal{F}(M)} f(v) \int_{\mathcal{F}(M)} K_{\delta}(u,v) \, d\mu^1_{\gamma_T}(u)d\mu^1_{S}(v)\right|\\
    \nonumber
    \leq &\sum_{P\in \mathscr{P}(\gamma_T,S)}\left|\frac{f(\tilde P)}{|I(\gamma_T,S)|}-\frac{1}{\vol(B^{\widetilde{S}}_{\delta}(\tilde P))|I(\gamma_T,S)|} \int_{\pi(\tilde v)\in B^{\widetilde{S}}_{\delta}(\tilde P)}f(\tilde v)d\mu_{\widetilde S}(\tilde v)\right|
    \\
    \nonumber
    \leq &\frac{1}{|I(\gamma_T,S)|}  \sum_{P\in \mathscr{P}(\gamma_T,S)} \frac{1}{\vol(B^{\widetilde{S}}_{\delta}(\tilde P))} \int_{\pi(\tilde v)\in B^{\widetilde{S}}_{\delta}(\tilde P)}|f(\tilde P)-f(\tilde v)|d\mu_{\widetilde S}(\tilde v)\\
    \nonumber
    \leq &\frac{1}{|I(\gamma_T,S)|}  \sum_{P\in \mathscr{P}(\gamma_T,S)} \frac{\Lip(f)}{\vol( B^{\widetilde{S}}_{\delta}(\tilde P))} \int_{\pi(\tilde v)\in B^{\widetilde{S}}_{\delta}(\tilde P)} \delta \, d\mu_{\widetilde S}(\tilde v)\leq \delta \, \Lip(f),
    \end{align} 
    as desired.
\end{proof}
\begin{constr}\label{constr: inter_kernel}
    $\delta$ should go to zero as $T,\vol(S)\to\infty$.
\end{constr}

\paragraph{Step 2: Smooth the kernel}
We wish to apply the equidistribution results in \S\ref{sec:equidistri}. However, the intersection kernel is not smooth; in fact, it is clearly not even continuous, as it is only a positive constant within a $\delta$-neighborhoods of the base points. While it is possible to smooth $K_\delta$ by taking convolution with a standard mollifier, it is better for later arguments to smooth it in a way so that the contribution of $u$ and $v$ are still separate.

One could try to smooth the kernel by taking the product of smoothed characteristic functions of the neighborhoods appearing in its definition. However, the result would still be discontinuous, as explained below.

Fix a frame $v$, and suppose $\delta>0$ is small enough. Consider a frame $u$ so that the base point of $\phi_\delta(u)$ lies on the boundary of $B_\delta^{d-1}(v)$. By definition of the intersection kernel, we know that $K_\delta(u,v)\neq0$. Choose a sequence of such frames $\{u_n\}$ so that the angle of intersection between $B_\delta^1(u_n)$ and $\overline{B_\delta^{d-1}(v)}$ goes to zero. Up to a subsequence, we may assume $u_n\to u_0$ for some frame $u_0$ lying on the hyperplane tangent to $v$. Then we have $K_\delta(u_n,v)=\text{constant}\neq0$, while $K_\delta(u_0,v)=0$ (since $B_{\delta}^1(u)$ doesn't intersect $B_{\delta}^{d-1}(v)$ transversely).

Moreover, one can arrange so that $d(\pi(v),\pi(u_0))=2\delta$, so it is easy to construct a different sequence $u_n'\to u_0$ so that $K_\delta(u_n',v)=0$ (by considering $u_n'$ almost parallel to the hyperplane tangent to $v$). Note that this issue arises when $\pi(u)$ is close to the geodesic hyperplane tangent to $v$. We resolve this by taking away a small neighborhood of $B_{2\delta}^{d-1}(v)$ as follows.

Fix $o\in\mathbb{H}^d$, and let $v_0$ be a frame based at $o$. Let $\rho\in (0,\delta)$ be a positive parameter to be determined later. Let $\mathcal{N}_{\delta,\rho}(v_0)$ be the $\rho$-neighborhood of $B_{2\delta}^{d-1}(v_0)$. Note that $\mathcal{N}_{\delta,\rho}(v_0)$ is convex and rotationally symmetric around the axis passing through $\pi(v_0)$ tangent to the last vector of $v_0$ (see Figure \ref{fig: N}).
\begin{figure}[htp]
    \centering

\tikzset{every picture/.style={line width=0.75pt}} 

\begin{tikzpicture}[x=0.75pt,y=0.75pt,yscale=-0.8,xscale=0.8]

\draw   (100,132.2) .. controls (119.22,118.95) and (152.75,108.2) .. (174.9,108.2) .. controls (197.05,108.2) and (199.42,118.95) .. (180.2,132.2) .. controls (160.98,145.45) and (127.45,156.2) .. (105.3,156.2) .. controls (83.15,156.2) and (80.78,145.45) .. (100,132.2) -- cycle ;
\draw  [line width=3] [line join = round][line cap = round] (138.2,128.2) .. controls (138.2,128.2) and (138.2,128.2) .. (138.2,128.2) ;
\draw  [dash pattern={on 4.5pt off 4.5pt}] (60.17,130.2) .. controls (78.38,102.59) and (128.97,80.2) .. (173.17,80.2) .. controls (217.37,80.2) and (238.44,102.59) .. (220.23,130.2) .. controls (202.02,157.81) and (151.43,180.2) .. (107.23,180.2) .. controls (63.03,180.2) and (41.96,157.81) .. (60.17,130.2) -- cycle ;
\draw  [dash pattern={on 4.5pt off 4.5pt}] (40.99,131.57) .. controls (33.68,94.36) and (73.07,64.2) .. (128.98,64.2) .. controls (184.88,64.2) and (236.12,94.36) .. (243.43,131.57) .. controls (250.74,168.79) and (211.34,198.95) .. (155.44,198.95) .. controls (99.53,198.95) and (48.29,168.79) .. (40.99,131.57) -- cycle ;
\draw  [dash pattern={on 4.5pt off 4.5pt}] (95,131.2) .. controls (73.02,96.41) and (75.66,68.2) .. (100.9,68.2) .. controls (126.15,68.2) and (164.42,96.41) .. (186.4,131.2) .. controls (208.38,165.99) and (205.74,194.2) .. (180.5,194.2) .. controls (155.25,194.2) and (116.97,165.99) .. (95,131.2) -- cycle ;
\draw    (137.05,128.04) -- (191.79,114.25) ;
\draw    (89.45,142.56) -- (105.3,156.2) ;
\draw    (93.45,137.56) -- (113.45,155.56) ;
\draw    (97.34,132.2) -- (122.45,154.56) ;
\draw    (105.45,129.56) -- (132.45,152.56) ;
\draw    (112.45,125.56) -- (141.45,150.56) ;
\draw    (120.45,120.56) -- (151.45,147.56) ;
\draw    (130.45,117.56) -- (159.45,143.56) ;
\draw    (139.71,113.75) -- (168.45,139.56) ;
\draw    (148.45,111.56) -- (174.45,136.56) ;
\draw    (157.45,110.56) -- (180.2,132.2) ;
\draw    (168.93,107.53) -- (186.71,126.75) ;
\draw    (179.93,108.53) -- (191.68,120.8) ;
\draw    (91.2,70.2) -- (137.05,128.04) ;
\draw  [color={rgb, 255:red, 255; green, 255; blue, 255 }  ,draw opacity=1 ][fill={rgb, 255:red, 255; green, 255; blue, 255 }  ,fill opacity=1 ] (155,121.38) -- (167.48,121.38) -- (167.48,131.53) -- (155,131.53) -- cycle ;
\draw  [color={rgb, 255:red, 255; green, 255; blue, 255 }  ,draw opacity=1 ][fill={rgb, 255:red, 255; green, 255; blue, 255 }  ,fill opacity=1 ] (126.95,129.36) -- (136.15,129.36) -- (136.15,138.06) -- (126.95,138.06) -- cycle ;

\draw (122.45,123.96) node [anchor=north west][inner sep=0.75pt]  [font=\footnotesize]  {$v_{0}$};
\draw (157.08,119.12) node [anchor=north west][inner sep=0.75pt]  [font=\footnotesize]  {$\delta $};
\draw (111,79.4) node [anchor=north west][inner sep=0.75pt]  [font=\footnotesize]  {$\rho $};

\end{tikzpicture}

    \caption{$\mathcal{N}_{\delta,\rho}(v_0)$ is the $\rho-$neighborhood of $B_{2\delta}^{d-1}(v_0)$.}
    \label{fig: N}
\end{figure}
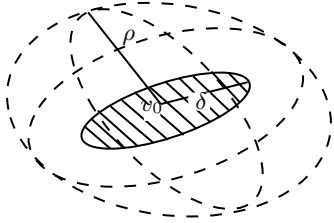
Now for any $v\in\mathcal{F}(\mathbb{H^d})$, there exists an isometry $g\in\isom^+(\mathbb{H}^d)$ sending $v_0$ to $v$. Define $\mathcal{N}_{\delta,\rho}(v)$ as the image of $\mathcal{N}_{\delta,\rho}(v_0)$ under this isometry.

Suppose $t \in (0,\rho)$ is another parameter to be determined later. Let $\chi_{\delta}^{t,\rho}$ be a smooth function identically $1$ outside $\mathcal{N}_{\delta,\rho}(v_0)$ and identically $0$ on $\mathcal{N}_{\delta,\rho-t}(v_0)$. Moreover, we can guarantee that $m$-th derivatives of $\chi_{\delta}^{t,\rho}$ are of the order $O(t^{-m})$. This can be done by taking the convolution of the characteristic function of the complement of $\mathcal{N}_{\delta,\rho-t/2}(v_0)$ with a smooth mollifier supported in a ball of radius $t/2$. We can then push forward the function $\chi_{\delta}^{t,\rho}$ to a function identically $1$ outside $\mathcal{N}_{\delta,\rho}(v)$ via the isometry $g$. We denote this function by $\chi_{v,\delta}^{t,\rho}$.

Let $\eta_\delta^{t}:\mathbb{R}\to\mathbb{R}$ be a smooth even function supported on the interval $[-\delta,\delta]$ and identically $1$ in $[-\delta+t,\delta-t]$. We also assume that $\eta_\delta^{t}$ is monotone on $\mathbb{R}_+$. Again, the $m$-th derivatives of $\eta_\delta^{t}$ can be chosen to have order $O(t^{-m})$. Define 
\begin{align*}
\mathbb{K}_{\delta}^{t,\rho}(u,v)&=
\begin{cases}
    \frac{1}{c_\delta}\chi_{v,\delta}^{t,\rho}(\pi(u))\cdot\eta_\delta^{t}(d(P,\pi(v)))&\text{if $P=\pi(\phi_{\tau_0}(u))\in B^{d-1}_{\delta}(v)$ for some $|\tau_0|<\delta$},\\
    0 &\text{otherwise}. 
\end{cases}
\\
\widetilde{\mathbb{K}}_{\delta}^{t,\rho}(u,v)&=
\begin{cases}
    \mathbb{K}_{\delta}^{t,\rho}(u,v)\cdot\eta^{t}_\delta(\tau_0)&\text{if $P=\pi(\phi_{\tau_0}(u))\in B^{d-1}_{\delta}(v)$ for some $|\tau_0|<\delta$},\\
    0 &\text{otherwise.}
\end{cases}
\end{align*}
Roughly speaking, $\mathbb{K}_\delta^{t,\rho}$ becomes zero whenever the base point of $u$ is close to $B_{2\delta}^{d-1}(v)$, and is smooth in the $v$ variable; $\widetilde{\mathbb K}_{\delta}^{t,\rho}(u,v)$ is further smooth in the $u$ variable.

Clearly, both are bounded, invariant under isometries, and have compact support when the second input is fixed. So we can define the corresponding kernels $K_\delta^{t,\rho}$ and $\widetilde{K}_\delta^{t,\rho}$ on $\mathcal{F}(M)\times\mathcal{F}(M)$ as in Equation \ref{def: K} in \S\ref{sec: inter_kernel}.

We collect some properties of these kernels below.
\begin{Lemma}\label{lem:kernel_smooth_prop}
\begin{enumerate}[label=\normalfont{(\arabic*)}]
    \item For a fixed $v$, $\mathbb{K}_\delta^{t,\rho}(u,v)$ differs from $\mathbb{K}_\delta(u,v)$ over a region of volume $\asymp\delta^{d-1}\rho$ if $t=O(\rho^2)$ and $\delta=O(1)$.
    \item $\widetilde{\mathbb{K}}_\delta^{t,\rho}$ is smooth. Consequently,
     $\widetilde K_\delta^{t,\rho}$ is smooth.
    \item The $m$-th derivatives of $\widetilde{\mathbb{K}}_\delta^{t,\rho}$ have size $O(\delta^{-d}t^{-m})$. Consequently, over $M_{\ge\iota}$, the $m$-th derivatives of $\widetilde{K}_\delta^{t,\rho}$ have size $O(\lceil\delta/\iota\rceil^{d-1}\delta^{-d}t^{-m})$.
    \item For a fixed $v$, $\widetilde{\mathbb{K}}_\delta^{t,\rho}(u,v)$ is nonzero on a region with volume $\asymp \delta^d$ if $\rho/\delta \to 0$ as $\delta \to 0$, which is true in this paper based on how we pick $\rho$ later. 
\end{enumerate}
\end{Lemma}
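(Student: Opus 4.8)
The plan is to take the four assertions in the order (2), (1), (3), (4). Statement (2) is the heart of the matter --- the cutoff $\chi_{v,\delta}^{t,\rho}$ was introduced precisely to make it true --- while (1), (3), (4) are bookkeeping resting on (2) and on Proposition~\ref{prop: growth}. For (2), fix $v$ and let $\mathcal{H}$ be the totally geodesic hyperplane containing $B_\delta^{d-1}(v)$. I will first establish a \emph{robust transversality} estimate: whenever $\widetilde{\mathbb{K}}_\delta^{t,\rho}(u,v)\neq 0$, the geodesic $\ell_u$ through $\pi(u)$ in the direction of the first vector of $u$ crosses $\mathcal{H}$ transversely, with crossing angle $\theta$ satisfying $\sin\theta\ge \sinh(\rho-t)/\sinh\delta>0$. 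Indeed, nonvanishing forces $\chi_{v,\delta}^{t,\rho}(\pi(u))\neq 0$, hence $d(\pi(u),B_{2\delta}^{d-1}(v))\ge\rho-t$, and forces a crossing point $P=\pi(\phi_{\tau_0}(u))\in B_\delta^{d-1}(v)$ with $|\tau_0|<\delta$; writing $w$ for the foot of the perpendicular from $\pi(u)$ to $\mathcal{H}$, a leg of the right geodesic triangle $\pi(u)\,w\,P$ is shorter than its hypotenuse, so $d(w,P)\le d(\pi(u),P)=|\tau_0|<\delta$, whence $w\in\overline{B_{2\delta}^{d-1}(v)}$ and $d(\pi(u),\mathcal{H})=d(\pi(u),w)\ge\rho-t$; the angle bound is then the standard relation $\sinh d(\pi(u),\mathcal{H})=\sinh(d(\pi(u),P))\sin\theta$. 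Given this, $\mathcal{F}(\mathbb{H}^d)$ is covered by two open sets: one on which $\chi_{v,\delta}^{t,\rho}(\pi(\cdot))\equiv 0$ (so $\widetilde{\mathbb{K}}_\delta^{t,\rho}\equiv 0$), and one on which $d(\pi(\cdot),B_{2\delta}^{d-1}(v))$ is bounded below so that, by the estimate, every crossing of $\ell_u$ with $\mathcal{H}$ lying near the disk is robustly transverse; on the latter set the implicit function theorem makes $P(u)$ and $\tau_0(u)$ smooth functions of $u$, and gives $\widetilde{\mathbb{K}}_\delta^{t,\rho}(u,v)=\frac{1}{c_\delta}\chi_{v,\delta}^{t,\rho}(\pi(u))\,\eta_\delta^{t}(d(P(u),\pi(v)))\,\eta_\delta^{t}(\tau_0(u))$ near each point (if the geodesic fails to cross $B_\delta^{d-1}(v)$ within time $\delta$, both sides vanish, the right side because a factor $\eta_\delta^{t}$ is then $0$). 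Thus $\widetilde{\mathbb{K}}_\delta^{t,\rho}$ is locally a product of smooth functions, hence smooth; joint smoothness in $(u,v)$ is identical since $\mathcal{H}$, the distances and the cutoffs vary smoothly with $v$. As $\widetilde{\mathbb{K}}_\delta^{t,\rho}$ is bounded, isometry-invariant and compactly supported in $u$ for fixed $v$ (with $\diam(\widetilde{\mathbb{K}}_\delta^{t,\rho})\asymp\delta$), Proposition~\ref{prop: growth}(1) then yields smoothness of $\widetilde{K}_\delta^{t,\rho}$ on $\mathcal{F}(M)$.

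For (1), I would first note that $\mathbb{K}_\delta^{t,\rho}(\cdot,v)$ vanishes wherever $\mathbb{K}_\delta(\cdot,v)$ does (nonvanishing of the former forces a transverse crossing of $\overline{B_\delta^{d-1}(v)}$ by the estimate above), so the two functions differ only on the set where $\mathbb{K}_\delta(\cdot,v)=1/c_\delta$ and either $\chi_{v,\delta}^{t,\rho}(\pi(u))<1$ or $\eta_\delta^{t}(d(P,\pi(v)))<1$. The first possibility means $\pi(u)\in\mathcal{N}_{\delta,\rho}(v)$, and the set of frames over the $\rho$-neighborhood of the $(d-1)$-disk of radius $2\delta$ has volume $\asymp\delta^{d-1}\rho$ for $\rho<\delta$; the second means the crossing point $P$ lies within $t$ of $\partial B_\delta^{d-1}(v)$, a set of frames of volume $\asymp\delta^{d-1}t=O(\delta^{d-1}\rho)$ since $t<\rho$. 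Adding a measure-zero boundary set gives the upper bound $O(\delta^{d-1}\rho)$, and for the matching lower bound I would exhibit an explicit family --- base points at distance $\asymp\rho$ from $B_{2\delta}^{d-1}(v)$ sitting over $B_{\delta/2}^{d-1}(v)$, each carrying a positive proportion of directions crossing $\overline{B_\delta^{d-1}(v)}$ transversely --- on which $\chi_{v,\delta}^{t,\rho}(\pi(u))<1$ while $\mathbb{K}_\delta(u,v)=1/c_\delta$. (Only $t<\rho$ is needed here; the standing hypothesis $t=O(\rho^2)$ merely keeps $t$ negligible against $\rho$.)

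Part (3) then follows by differentiating the local formula from (2): an $m$-th derivative of $\widetilde{\mathbb{K}}_\delta^{t,\rho}$ is a sum of products of $1/c_\delta\asymp\delta^{-d}$, derivatives of $\chi_{v,\delta}^{t,\rho}$ and $\eta_\delta^{t}$ of size $O(t^{-m})$, and derivatives of the geometric data $\pi(u),P(u),\tau_0(u)$ and of the distance function, which are controlled on the support by the uniform transversality from (2) together with $\delta=O(1)$; this gives $O(\delta^{-d}t^{-m})$, and combining with Proposition~\ref{prop: growth}(2) gives the stated $O(\lceil\delta/\iota\rceil^{d-1}\delta^{-d}t^{-m})$ bound for $\widetilde{K}_\delta^{t,\rho}$ over $M_{\ge\iota}$. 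For (4), since each geodesic meets $\mathcal{H}$ at most once when $\delta$ is small, $\supp\widetilde{\mathbb{K}}_\delta^{t,\rho}(\cdot,v)$ and $\supp\mathbb{K}_\delta(\cdot,v)$ differ only inside the difference region of (1), of volume $\asymp\delta^{d-1}\rho$; as $\supp\mathbb{K}_\delta(\cdot,v)$ --- the frames whose length-$2\delta$ geodesic segment crosses the radius-$\delta$ disk transversely --- has volume $\asymp c_\delta\asymp\delta^{d}$ by a direct computation, and $\delta^{d-1}\rho=o(\delta^{d})$ once $\rho/\delta\to 0$, we obtain $\vol(\supp\widetilde{\mathbb{K}}_\delta^{t,\rho}(\cdot,v))\asymp\delta^{d}$.

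The main obstacle is (2): one must check that $\widetilde{\mathbb{K}}_\delta^{t,\rho}$ is genuinely smooth across the boundary of its support and near frames where $\ell_u$ would become tangent to (or enter) $\mathcal{H}$ --- exactly the configurations the cutoff $\chi_{v,\delta}^{t,\rho}$ was designed to exclude --- so the robust transversality estimate and the implicit-function description of $P(u),\tau_0(u)$ must be arranged carefully. Once (2) is in place, (1), (3) and (4) are routine consequences of it and of Proposition~\ref{prop: growth}.
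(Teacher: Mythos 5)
Your argument is correct, and at a high level it follows the same structure as the paper's: decompose the difference set in (1) into the contribution of $\chi$ and the contribution of $\eta$, assert smoothness in (2), propagate the mollifier derivative bounds through $c_\delta\asymp\delta^d$ and Proposition~\ref{prop: growth} in (3), and compare the support to that of $\mathbb{K}_\delta$ using (1) in (4). Where you add genuine content is in (2): the paper dismisses smoothness as ``clear from the definition,'' whereas you make explicit the \emph{robust transversality} estimate $\sin\theta\ge\sinh(\rho-t)/\sinh\delta$ (correct, using the right-triangle relation at the crossing point and the fact that nonvanishing of $\chi$ forces the foot of the perpendicular into $\overline{B^{d-1}_{2\delta}(v)}$), and then use it to run the implicit function theorem, which is exactly the reason the cutoff $\chi_{v,\delta}^{t,\rho}$ was introduced. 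In (1) your estimate for the $\eta$-defect also differs slightly: you bound the frames crossing the annulus of width $t$ by $\asymp\delta^{d-1}t$ directly via a Crofton/proportionality argument, whereas the paper integrates the angular fraction $O(t/\rho)$ over a flow box to get $O(\delta^{d}t/\rho)$; both are valid under the stated constraints ($t<\rho$ for yours, $t=O(\rho^2)$ and $\delta=O(1)$ for theirs), and yours is the tighter of the two. Your reordering of the claims so that (2) precedes (1), (3), (4) is also a sensible choice since the local smooth formula underpins the derivative bounds in (3). In short: same method, with (2) actually spelled out rather than asserted, and a marginally sharper count in (1).
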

\begin{proof}
  For Part (1), note that they differ either when $\pi(u)\in\mathcal{N}_{\delta,\rho}(v)$, or when $B_\delta^1(u)$ meets the region $B_\delta^{d-1}(v)-B_{\delta-t}^{d-1}(v)$. The former region clearly has volume proportional to the volume of $\mathcal{N}_{\delta,\rho}(v)$, which has size $\asymp \delta^{d-1}\rho$. The latter is contained in the following region. The base point is clearly contained in the geodesic flow box given by flowing $B_{2\delta}^{d-1}(v)$ along its normal vector forward and backward by distance $\delta$. For each base point, the portion of those vectors meeting $B_\delta^{d-1}(v)-B_{\delta-t}^{d-1}(v)$ via forward or backward geodesic flow is at most $O(t/\rho)$ (since $\pi(u)$ is not in $(\rho-t)$-neighborhood of $B_\delta^{d-1}(v)$). Hence the volume of the latter region is $O(\delta^d t/\rho)=O(\delta^{d}\rho)=O(\delta^{d-1}\rho)$. This gives Part (1).

    For Part (2), $\widetilde{\mathbb{K}}_\delta^{t,\rho}(u,v)$ is invariant under any isometry $g$ of $\mathbb{H}^d$, i.e.~$\widetilde{\mathbb K}_\delta^{t,\rho}(g_*u,g_*v)=\widetilde{\mathbb K}_\delta^{t,\rho}(u,v)$. Hence it suffices to show $\widetilde{K}_\delta^{t,\rho}(u,v)$ is smooth in $u$ after fixing $v$. This is clear from the definition.

    The estimates for the derivatives of $\widetilde{\mathbb{K}}_\delta^{t,\rho}$ in Part (3) follow from the choice of $\chi_{v,\delta}^{t,\rho}$ and $\eta_\delta^t$, as well as the fact that $c_\delta\asymp\delta^d$. Combined with Proposition~\ref{prop: growth}, we then have the estimates for $\widetilde K_\delta^{t,\rho}$.

    For Part (4), note that this region is the same as the region where $\mathbb{K}_{\delta}^{t, \rho}(u,v)$ is nonzero, and it suffices to estimate the region for $\mathbb{K}_\delta$, as by Part (1) the difference would be $o(\delta^{d-1}\rho)=o(\delta^{d})$. The set of $u$'s in the support can be characterized by $(P,\theta,\tau_0)$ where $P,\tau_0$ are as in the definition of $\mathbb{K}_{\delta}^{t,\rho}$, and $\theta$ presents the angle between $B_{\delta}^{d-1}(v)$ and $B_{\delta}^1(u)$. The set of possible $P$'s is $B_{\delta}^{d-1}(v)$, angle $\theta$ can be anything in $[0,\pi]$ and $\tau_0$ ranges from $-\delta$ to $\delta$, so the volume is $\vol_{d-1}(\delta)\cdot \pi \cdot 2\delta  \asymp \delta^d$.
\end{proof}
We are now ready to replace the intersection kernel by the smoothed one.
\begin{Claim}\label{claim: smoothing}
Suppose $t=O(\rho^2)$ and $\delta=O(1)$. Choose $\iota$ small enough so that $\supp(f)\subseteq M_{\ge\iota}$. We have
    \begin{align}\label{equ: statement-smoothing}
        &\left|\int_{\mathcal{F}(M)} f(v) \int_{\mathcal{F}(M)} \widetilde{K}^{t,\rho}_{\delta}(u,v) \, d\mu^1_{\gamma_T}(u)d\mu^1_{S}(v)-\int_{\mathcal{F}(M)} f(v) \int_{\mathcal{F}(M)} K_{\delta}(u,v) \, d\mu^1_{\gamma_T}(u)d\mu^1_S(v)\right| \nonumber \\ 
        \leq &  C\lceil\delta/\iota\rceil^{d-1}\sup|f|\left(\frac{t}{\delta^{d+1}}+\frac{\rho^q}{t^q\delta^d}e^{-\epsilon_1 T}\right)+C\sup|f|\frac{\rho}{\delta\vol(M)}. 
        \end{align}
where $q,\epsilon_1$ are the constants from Theorem~\ref{thm: mixing.geod.3}.
\end{Claim}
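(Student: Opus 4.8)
The plan is to reduce the two-variable estimate to a one-variable supremum using positivity, and then split the error into three geometrically meaningful pieces. Note first that $0\le\widetilde{\mathbb{K}}_\delta^{t,\rho}\le\mathbb{K}_\delta$ pointwise on $\mathcal F(\mathbb H^d)\times\mathcal F(\mathbb H^d)$: the extra factors $\chi_{v,\delta}^{t,\rho}(\pi(u))$, $\eta^t_\delta(d(P,\pi v))$, $\eta^t_\delta(\tau_0)$ all take values in $[0,1]$, and $\chi$ vanishes precisely on the near-tangent configurations, so the support of $\widetilde{\mathbb K}_\delta^{t,\rho}$ sits inside $\{\overline{B^1_\delta(u)}\pitchfork\overline{B^{d-1}_\delta(v)}\}$ where $\mathbb{K}_\delta=\tfrac1{c_\delta}$. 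Hence $K_\delta-\widetilde K_\delta^{t,\rho}\ge0$, and since $\mu^1_{\gamma_T},\mu^1_S$ are probability measures and $\supp(f)\subseteq M_{\ge\iota}$, the left side of \eqref{equ: statement-smoothing} is at most $\sup|f|\cdot\sup_{\pi(v)\in\supp(f)}\int_{\mathcal F(M)}\bigl(K_\delta-\widetilde K_\delta^{t,\rho}\bigr)(u,v)\,d\mu^1_{\gamma_T}(u)$. Using $\mathbf 1-\chi\eta_1\eta_2\le(1-\chi)+(1-\eta_1)+(1-\eta_2)$ (with $\eta_1=\eta^t_\delta(d(P,\pi v))$, $\eta_2=\eta^t_\delta(\tau_0)$), dominate $K_\delta-\widetilde K_\delta^{t,\rho}$ by the sum of three nonnegative kernels on $M$ corresponding to the $\eta_1$-cut, the $\eta_2$-cut, and the $\chi$-cut, and bound each.

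For the two $\eta$-cuts I would argue purely geometrically, with no appeal to equidistribution. Unfolding $\int_{\mathcal F(M)}(\cdots)\,d\mu_{\gamma_T}(u)$ to $\mathbb H^d$ as in the proof of Claim~\ref{claim: inter_kernel}, and using that for $k=1$ the kernels do not depend on the frame directions normal to the geodesic, each transverse crossing of $B^{d-1}_\delta(\pi v)$ by a lift of $\gamma_T$ contributes an integral of $\tfrac1{c_\delta}\bigl(1-\eta^t_\delta(\cdot)\bigr)$ along a geodesic line; since $1-\eta^t_\delta$ is supported on a set of measure $O(t)$, and since at scale $\delta$ in $\mathbb H^d$ a geodesic meets the ball $B_\delta(\pi v)$ in a nearly straight segment of length $O(\delta)$ of which only an $O(t)$-portion lies in the shell $\{d(\cdot,\pi v)\in[\delta-t,\delta]\}$, each crossing contributes $O(t/c_\delta)=O(t\delta^{-d})$. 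The number of crossings of the fixed disk by all closed geodesics of length $\le T$, counted with the $O(\lceil\delta/\iota\rceil^{d-1})$ multiplicity of overlapping lifts near the thin part furnished by Proposition~\ref{prop: growth}(2), is $O\bigl(\lceil\delta/\iota\rceil^{d-1}|\mu_{\gamma_T}|/\delta\bigr)$, because a curve of total length $|\mu_{\gamma_T}|$ crosses a fixed disk at most $O(|\mu_{\gamma_T}|/\delta)$ times. Dividing by $|\mu_{\gamma_T}|$ produces the contribution $O\bigl(\lceil\delta/\iota\rceil^{d-1}t/\delta^{d+1}\bigr)$, i.e.\ the first term in \eqref{equ: statement-smoothing}, for each of the two $\eta$-cuts.

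For the $\chi$-cut I would use effective equidistribution. By Lemma~\ref{lem:kernel_smooth_prop}(1), for fixed $v$ this piece is supported, as a function of $u$, on a set $E_v\subseteq\mathcal F(M)$ which is the $\Pi$-image of a region of $\mathbb H^d$-volume $O(\delta^{d-1}\rho)$, and it is bounded by $\delta^{-d}$ there. Construct an $\isom(\mathbb H^d)$-invariant function $\Psi(u,v)\ge\mathbf 1$ on the relevant region, smooth and compactly supported in $u$, by mollifying at scale $\rho$; the associated kernel $\psi_v(u)=\sum_{g\in\Pi}\Psi(g.\tilde u,\tilde v)$ then dominates $\mathbf 1_{E_v}$, satisfies $\int_{\mathcal F(M)}\psi_v\,dL_M=O(\delta^{d-1}\rho)$ by Proposition~\ref{prop: growth}(3) (with no cusp factor, since this is a genuine $\mathbb H^d$-volume), and has degree-$q$ Sobolev norm $O(\lceil\delta/\iota\rceil^{d-1}\rho^{-q})$ by Proposition~\ref{prop: growth}(1)--(2) and the scale of mollification. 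Theorem~\ref{thm: mixing.geod.3} then gives $\int\psi_v\,d\mu^1_{\gamma_T}=O(\delta^{d-1}\rho/\vol(M))+O(\lceil\delta/\iota\rceil^{d-1}\rho^{-q}e^{-\epsilon_1 T})$; multiplying by the pointwise bound $\delta^{-d}$ on the $\chi$-cut yields the terms $O(\rho/(\delta\vol(M)))$ and $O(\lceil\delta/\iota\rceil^{d-1}\delta^{-d}\rho^{-q}e^{-\epsilon_1 T})$, and since $t=O(\rho^2)$ we have $\rho^{-q}=O(\rho^q/t^q)$, so the last term is absorbed into $C\lceil\delta/\iota\rceil^{d-1}\sup|f|\cdot\tfrac{\rho^q}{t^q\delta^d}e^{-\epsilon_1 T}$. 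Adding the three contributions gives \eqref{equ: statement-smoothing}.

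The main obstacle is the bookkeeping. One must make the ``$O(|\mu_{\gamma_T}|/\delta)$ crossings'' and ``$O(t)$ shell portion per crossing'' bounds genuinely uniform over $v\in\supp(f)$ while honestly tracking the thin-part multiplicity $\lceil\delta/\iota\rceil^{d-1}$; one must check that the near-tangent crossings (where a geodesic grazes the $\delta$-disk) do not spoil the $O(t)$ per-crossing estimate — this is exactly where the $\chi$-cut has already removed the dangerous configurations, so the remaining crossings are uniformly transverse; and one must verify that the dominating function $\Psi$ can be chosen $\isom(\mathbb H^d)$-invariant (so that Proposition~\ref{prop: growth} applies and the main term carries no cusp factor) while still controlling both its sup norm and its derivatives. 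Once these points are settled, the argument rests only on the elementary splitting $\mathbf 1-\chi\eta_1\eta_2\le(1-\chi)+(1-\eta_1)+(1-\eta_2)$, the monotonicity $\widetilde{\mathbb K}_\delta^{t,\rho}\le\mathbb K_\delta$, the unfolding of Claim~\ref{claim: inter_kernel}, and the estimates already recorded in Lemma~\ref{lem:kernel_smooth_prop} and Proposition~\ref{prop: growth}.
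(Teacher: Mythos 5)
Your decomposition $\mathbf 1-\chi\eta_1\eta_2\le(1-\chi)+(1-\eta_1)+(1-\eta_2)$ and the overall plan (some pieces geometric, some via equidistribution) are sensible, and the $\eta_2$-cut and $\chi$-cut treatments are essentially correct. However, there is a genuine gap in the purely geometric treatment of the $\eta_1$-cut.

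The factor $\eta_1=\eta^t_\delta(d(P,\pi v))$ depends only on the intersection point $P$ of the geodesic \emph{line} through $u$ with the hypersurface $B^{d-1}_\delta(v)$ --- it does not vary as $\pi(u)$ moves along the crossing segment. Consequently, for a fixed crossing, $1-\eta_1$ is either identically zero on the whole segment (if $P\notin\{d(\cdot,\pi v)\in[\delta-t,\delta)\}$) or bounded below by a positive constant on the whole segment (if $P$ lies in that thin annulus of $B^{d-1}_\delta(v)$). The per-crossing contribution is therefore $O(\delta/c_\delta)$ for crossings landing in the annulus, not $O(t/c_\delta)$. Your ``$O(t)$-portion of the segment lies in the shell'' reasoning is exactly the $\eta_2$ argument (the flow parameter $\tau_0$ does sweep across the segment), and you have implicitly transferred it to $\eta_1$, where it does not apply. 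Moreover, the $\chi$-cut does not rescue this: removing $\mathcal{N}_{\delta,\rho}(v)$ excises frames whose base point is close to the hyperplane (controlling near-tangent crossings), but a crossing meeting $B^{d-1}_\delta(v)$ at right angles near its boundary survives the $\chi$-cut entirely and still has $1-\eta_1>0$ along its full length. Without an equidistribution input, one cannot bound the fraction of crossings of $\gamma_T$ whose intersection point with the disk lies in the thin annulus; a priori it could be of order $1$, not $O(t/\delta)$.

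This is precisely why the paper does not split $\eta_1$ off from $\chi$: it groups the $\eta_1$-cut and the $\chi$-cut into a single piece $|K_\delta^{t,\rho}-K_\delta|$ (Lemma~\ref{lem:kernel_smooth_prop}(1) explicitly covers both the $\mathcal{N}_{\delta,\rho}(v)$ region and the $B^{d-1}_\delta(v)\setminus B^{d-1}_{\delta-t}(v)$ region), dominates it by a smooth kernel of Sobolev norm $O(\lceil\delta/\iota\rceil^{d-1}(t/\rho)^{-q})$, and applies Theorem~\ref{thm: mixing.geod.3} to control how often $\mu_{\gamma_T}$ charges that region. If you fold the $\eta_1$-cut into your equidistribution step for the $\chi$-cut (which in fact costs nothing, since Lemma~\ref{lem:kernel_smooth_prop}(1) already gives the same $O(\delta^{d-1}\rho)$ volume bound for the union), your proof becomes correct and is then essentially the paper's proof.
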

\begin{proof}

From the construction of the smoothing, we conclude that the left hand side of Equation \ref{equ: statement-smoothing} satisfies
\begin{align}\label{line: K.delta.tt'}
    \nonumber
    \le&\int_{\mathcal{F}(M)} |f(v)| \int_{\mathcal{F}(M)} |\widetilde{K}_{\delta}^{t,\rho}(u,v)-K^{t,\rho}_{\delta}(u,v)| d\mu_{\gamma_T}^1(u)d\mu_S^1(v)\\ &\qquad\qquad\qquad+\int_{\mathcal{F}(M)} |f(v)| \int_{\mathcal{F}(M)}|K_{\delta}^{t,\rho}(u,v)-K_{\delta}(u,v)|d\mu_{\gamma_T}^1(u)d\mu_S^1(v)
\end{align}

For the first inner integral, note that by definition, on a geodesic segment of length $2\delta$, the length of the segment where $\mathbb{K}_\delta^{t,\rho}$ and $\widetilde{\mathbb K}_\delta^{t,\rho}$ differ is at most $2t$. Similar to Proposition~\ref{prop: growth}, the number of overlaps when we move from the universal cover to the manifold itself is $O((\delta/\iota)^{d-1})$. Hence the first inner integral is bounded above by $O(t\lceil\delta/\iota\rceil^{d-1}/(c_\delta\delta))$.

For the second inner integral, consider the region where $\mathbb{K}_\delta^{t,\rho}(u,v)$ differs from $\mathbb{K}_\delta(u,v)$ for a fixed $v$. Based on Part (1) of the previous lemma, this region has volume $\asymp\delta^{d-1}\rho$. Moreover, based on the description of the region in the proof of the lemma, we can find a smooth function $\psi_v(u)$ identically $1$ over this region, and has $m$-th derivatives of size $O((t/\rho)^{-m})$. Set $\mathbf{\Psi}(u,v):=\psi_v(u)/c_\delta$, and let $\psi(u,v)$ to be the corresponding kernel on $\mathcal{F}(M)$. Then $|\mathbb{K}_\delta^{t,\rho}(u,v)-\mathbb{K}_\delta(u,v)|\le \mathbf{\Psi}(u,v)$, and hence $|K_\delta^{t,\rho}(u,v)-K_\delta(u,v)|\le\psi(u,v)$.

Now by equidistribution of the geodesic flow (Theorem \ref{thm: mixing.geod.3}) we conclude that the second inner integral is bounded above by  
$$
\int_{\mathcal{F}(M)}\psi(u,v)dL^1_M(u)+O(\lceil\delta/\iota\rceil^{d-1}(t/\rho)^{-q}e^{-\epsilon_1 T})\le \frac{C\rho\delta^{d-1}}{c_\delta\vol(M)}+O(\lceil\delta/\iota\rceil^{d-1}(t/\rho)^{-q}e^{-\epsilon_1 T}c_\delta^{-1}).
$$
where we make use of Part (3) of Proposition~\ref{prop: growth}.

Therefore, the sum in $(\ref{line: K.delta.tt'})$ is
$$
\leq C\lceil\delta/\iota\rceil^{d-1}\sup|f|\left(\frac{t}{\delta^{d+1}}+\frac{\rho^q}{t^q\delta^d}e^{-\epsilon_1 T}\right)+C\sup|f|\frac{\rho}{\delta\vol(M)},
$$
as desired.
\end{proof}
\begin{constr}\label{constr: smoothing}
    Generally, we will ensure $\delta<\iota$, and in that case, we need $t=O(\rho^2)$, $\delta=O(1)$, $t/\delta^{d+1}\to0$, $\rho/\delta\to0$, and $\rho^q/(t^q\delta^d)e^{-\epsilon_1T}\to0$.
\end{constr}

\paragraph{Step 3: Apply the equidistribution results}
We can now apply the two equidistribution results to get integration over the entire frame bundle with respect to the uniform measures.
\begin{Claim}\label{claim: equi_geod}
Suppose $\supp(f)\subseteq M_{\ge\iota}$. Replacing $\mu_{\gamma_T}^1$ with $L_M^1$, we have
\begin{align}
       &\left|\int_{\mathcal{F}(M)} f(v) \int_{\mathcal{F}(M)} \widetilde{K}^{t,\rho}_{\delta}(u,v) \, d\mu^1_{\gamma_T}(u)d\mu^1_{S}(v)-\int_{\mathcal{F}(M)} f(v) \int_{\mathcal{F}(M)} \widetilde{K}^{t,\rho}_{\delta}(u,v) \, dL^1_M(u)d\mu^1_{S}(v)\right| \nonumber\\
       \leq& C\lceil\delta/\iota\rceil^{d-1}\cdot \sup|f| \cdot \delta^{-d}t^{-q}e^{-\epsilon_1 T}.
       \end{align}
\end{Claim}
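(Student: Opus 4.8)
The plan is to apply Theorem~\ref{thm: mixing.geod.3} with the test function $u\mapsto\widetilde K_\delta^{t,\rho}(u,v)$ for each fixed $v$, and then integrate the resulting error against $f(v)\,d\mu_S^1(v)$. Concretely, for fixed $v\in\mathcal F(M)$, Part (2) of Lemma~\ref{lem:kernel_smooth_prop} says $u\mapsto\widetilde K_\delta^{t,\rho}(u,v)$ is a smooth function on $\mathcal F(M)$; since $\gamma_T$ is a union of closed geodesics the measure $\mu_{\gamma_T}^1$ pushes to $\mu_{\gamma_T}^1$ on $T_1(M)$ after averaging over the remaining frame directions, and $\widetilde K_\delta^{t,\rho}$ is constant on those fibers only if we first integrate — but in fact we may simply regard $u\mapsto\widetilde K_\delta^{t,\rho}(u,v)$ as a smooth function on $\mathcal F(M)$ and use that $\mu_{\gamma_T}$ on $\mathcal F(M)$ projects to the corresponding measure on $T_1(M)$ with the uniform measure on fibers, so the equidistribution statement of Theorem~\ref{thm: mixing.geod.3} still applies after integrating out the fiber. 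This yields
\[
\int_{\mathcal F(M)}\widetilde K_\delta^{t,\rho}(u,v)\,d\mu_{\gamma_T}^1(u)=\int_{\mathcal F(M)}\widetilde K_\delta^{t,\rho}(u,v)\,dL_M^1(u)+O\!\left(\Sob_q\big(\widetilde K_\delta^{t,\rho}(\cdot,v)\big)e^{-\epsilon_1 T}\right).
\]

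The next step is to bound the Sobolev norm $\Sob_q(\widetilde K_\delta^{t,\rho}(\cdot,v))$ uniformly for $v$ with $\pi(v)$ in a region relevant to the support of $f$. Here I would invoke Part~(3) of Lemma~\ref{lem:kernel_smooth_prop}: over $M_{\ge\iota}$, the $m$-th derivatives of $\widetilde K_\delta^{t,\rho}$ have size $O(\lceil\delta/\iota\rceil^{d-1}\delta^{-d}t^{-m})$, so a degree-$q$ Sobolev norm is $O(\lceil\delta/\iota\rceil^{d-1}\delta^{-d}t^{-q})$. One subtlety: the $u$-variable support of $\widetilde K_\delta^{t,\rho}(\cdot,v)$ lies within bounded distance (namely $\le 2\delta+\rho=O(1)$) of $\pi(v)$, so if $\supp(f)\subseteq M_{\ge\iota}$ then all relevant $u$ also lie in $M_{\ge\iota'}$ for a comparable $\iota'\asymp\iota$; thus the derivative bound from Part~(3) applies with $\iota$ replaced by a comparable constant, absorbed into the implied constant. (Strictly, one should take $\iota$ slightly smaller so that a neighborhood of $\supp(f)$ sits in $M_{\ge\iota}$ — this is the kind of harmless adjustment already flagged in the constraints.)

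Combining, I multiply the pointwise-in-$v$ estimate by $|f(v)|$ and integrate against $d\mu_S^1(v)$; since $\mu_S^1$ is a probability measure and $|f(v)|\le\sup|f|$, the error term integrates to
\[
\int_{\mathcal F(M)}|f(v)|\cdot O\!\left(\lceil\delta/\iota\rceil^{d-1}\delta^{-d}t^{-q}e^{-\epsilon_1 T}\right)d\mu_S^1(v)\le C\lceil\delta/\iota\rceil^{d-1}\sup|f|\,\delta^{-d}t^{-q}e^{-\epsilon_1 T},
\]
which is exactly the claimed bound. The main obstacle — really the only non-bookkeeping point — is verifying that Theorem~\ref{thm: mixing.geod.3}, stated for functions on $T_1(M)$, legitimately applies to $\widetilde K_\delta^{t,\rho}(\cdot,v)$ which lives on the frame bundle; the resolution is that $\mu_{\gamma_T}$ and $L_M$ on $\mathcal F(M)$ both disintegrate over their images on $T_1(M)$ with the \emph{same} (uniform) conditional measures on the $M_d$-fibers, so integrating $\widetilde K_\delta^{t,\rho}(\cdot,v)$ over those fibers produces a smooth function on $T_1(M)$ to which the theorem applies verbatim, and the fiber-averaged function has Sobolev norm controlled by that of $\widetilde K_\delta^{t,\rho}(\cdot,v)$. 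Everything else is a direct substitution of the derivative estimates from Lemma~\ref{lem:kernel_smooth_prop}.
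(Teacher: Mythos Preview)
Your proposal is correct and follows essentially the same approach as the paper: apply Theorem~\ref{thm: mixing.geod.3} to the inner integral for each fixed $v$, bound $\Sob_q(\widetilde K_\delta^{t,\rho}(\cdot,v))$ via Part~(3) of Lemma~\ref{lem:kernel_smooth_prop}, and then integrate $|f(v)|$ against $d\mu_S^1$. You are in fact more careful than the paper about two points it leaves implicit: the passage from $T_1(M)$ to $\mathcal F(M)$ (which you resolve correctly by noting that $\mu_{\gamma_T}$ and $L_M$ share the same uniform conditional measures on the $M_d$-fibers, so one may fiber-average first), and the location of the $u$-support relative to $M_{\ge\iota}$.
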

\begin{proof}
    It is a direct consequence of the equidistribution of the geodesic flow, Theorem \ref{thm: mixing.geod.3}; the difference is
    $$
    \leq \int_{\mathcal{F}(M)} |f(v)| O(\Sob_q(\widetilde{K}_{\delta}^{t,\rho})e^{-\epsilon_1 T})d\mu_S^1(v)\leq \sup|f|\cdot O(\Sob_q(\widetilde{K}_{\delta}^{t,\rho})e^{-\epsilon_1 T}).
    $$
    The final estimate then follows from Part (3) of Lemma~\ref{lem:kernel_smooth_prop}.    
\end{proof}
\begin{constr}\label{constr: equi_geod}
    We need $\delta^{-d}t^{-q}e^{-\epsilon_1 T}\to0$.
\end{constr}
\begin{Claim}\label{claim: equi_hyper}
Replacing $\mu_S^1$ with $L_M^1$, we have 
    \begin{align}
\nonumber
&\left|\int_{\mathcal{F}(M)} f(v) \int_{\mathcal{F}(M)} \widetilde{K}^{t,\rho}_{\delta}(u,v) \, dL^1_M(u)d\mu^1_{S}(v)-\int_{\mathcal{F}(M)} f(v) \int_{\mathcal{F}(M)} \widetilde{K}^{t,\rho}_{\delta}(u,v) \, dL^1_M(u)dL^1_M(v)\right| \\ 
\leq& C \vol(S)^{-\epsilon_2} \Sob_q(f).
\label{eq: claim 5.4}
\end{align}
where $q,\epsilon_2>0$ are the constants from Theorem~\ref{thm: mixing.surface}.
\end{Claim}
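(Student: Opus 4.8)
The plan is to exploit that, after integrating out the variable $u$ against the Liouville measure, the resulting function of $v$ is \emph{constant}. Indeed, $\widetilde{\mathbb{K}}_\delta^{t,\rho}$ is bounded, $\isom(\mathbb{H}^d)$-invariant, and compactly supported in its first argument once the second is fixed, so Part~(3) of Proposition~\ref{prop: growth} applies to the associated kernel $\widetilde{K}_\delta^{t,\rho}$ and gives that
$$
c_0:=\int_{\mathcal{F}(M)}\widetilde{K}_\delta^{t,\rho}(u,v)\,dL^1_M(u)=\frac{1}{|L_M|}\int_{\mathcal{F}(\mathbb{H}^d)}\widetilde{\mathbb{K}}_\delta^{t,\rho}(\tilde u,\tilde v)\,dL_{\mathbb{H}^d}(\tilde u)
$$
does not depend on $v$. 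Consequently the left-hand side of Equation~\ref{eq: claim 5.4} is exactly
$$
c_0\left|\int_{\mathcal{F}(M)}f\,d\mu^1_S-\int_{\mathcal{F}(M)}f\,dL^1_M\right|.
$$

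Next I would bound $c_0$. By Part~(4) of Lemma~\ref{lem:kernel_smooth_prop}, for fixed $v$ the set where $\widetilde{\mathbb{K}}_\delta^{t,\rho}(\cdot,v)$ is nonzero has volume $\asymp\delta^d$, and by construction $\widetilde{\mathbb{K}}_\delta^{t,\rho}\le 1/c_\delta\asymp\delta^{-d}$; hence $\int_{\mathcal{F}(\mathbb{H}^d)}\widetilde{\mathbb{K}}_\delta^{t,\rho}(\tilde u,\tilde v)\,dL_{\mathbb{H}^d}(\tilde u)=O(1)$, and since $|L_M|\asymp\vol(M)$ we obtain $c_0=O(1)$.

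Finally I would apply Theorem~\ref{thm: mixing.surface}. The function $f\in C^\infty_c(M)$ pulls back to a smooth function on $\mathcal{F}(M)$ that is constant on the fibers, with degree-$q$ (weighted) Sobolev norm comparable to $\Sob_q(f)$; and the geodesic hypersurface $S$, having dimension $d-1\ge 2$, is maximal, since the only geodesic submanifold of $M$ of strictly larger dimension is $M$ itself. Therefore Theorem~\ref{thm: mixing.surface} yields $\big|\int f\,d\mu^1_S-\int f\,dL^1_M\big|=O(\Sob_q(f)\vol(S)^{-\epsilon_2})$, and multiplying by $c_0=O(1)$ gives Equation~\ref{eq: claim 5.4}. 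There is no real obstacle in this step: the only things to keep straight are the normalizations relating $\mu^1_S$ on $\mathcal{F}(M)$ to the volume measure on $S$ and the fact---already recorded in \S\ref{sec:equidistri}---that the weighted Sobolev norm appearing in Theorem~\ref{thm: mixing.surface} is precisely the one in the statement; the substance of the claim is entirely its reduction to that cited equidistribution result.
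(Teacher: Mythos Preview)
Your proposal is correct and follows essentially the same approach as the paper: both arguments hinge on Proposition~\ref{prop: growth}(3) to see that the inner integral $\int \widetilde K_\delta^{t,\rho}(u,v)\,dL_M^1(u)$ is a constant $\asymp 1$ in $v$, then invoke Theorem~\ref{thm: mixing.surface}. The only cosmetic difference is order of operations---the paper applies Theorem~\ref{thm: mixing.surface} to the product $f(v)\int \widetilde K_\delta^{t,\rho}(u,v)\,dL_M^1(u)$ and then observes the second factor is constant, whereas you factor the constant out first and apply the theorem to $f$ alone; your version also makes explicit that a hypersurface is automatically maximal, which the paper leaves implicit.
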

\begin{proof}
By Theorem \ref{thm: mixing.surface}, the left hand side of Equation \ref{eq: claim 5.4} is 
\begin{align*}
\le C\vol(S)^{-\epsilon_2}\Sob_q\left(f(v) \int_{\mathcal{F}(M)} \widetilde{K}^{t,\rho}_{\delta}(u,v) \, dL^1_M(u)\right).
\end{align*}
Note that
$$
\int_{\mathcal{F}(M)} \widetilde{K}^{t,\rho}_{\delta}(u,v) \, dL^1_M(u)=\frac1
{\vol(M)}\int_{\mathcal{F}(\mathbb{H}^d)}\widetilde{\mathbb K}^{t,\rho}_{\delta}(u,v) \, dL_{\mathbb{H}^d}(u)
$$
is a constant function of $v$ by Proposition~\ref{prop: growth}. Moreover, for a fixed $v$, $\widetilde{\mathbb K}^{t,\rho}_{\delta}$ is nonzero on a region with volume $\asymp \delta^d$ by Part (4) of Lemma \ref{lem:kernel_smooth_prop}, and its supremum is $\asymp1/\delta^d$, so this integral is in fact $\asymp 1$.
\end{proof}
\begin{constr}\label{constr: equi_hyper}
    We need $\vol(S)\to\infty$, which is part of the assumption.
\end{constr}

\paragraph{Step 4: Bound the coefficient}
To obtain an upper bound on the final error terms, we need to bound $c(\gamma_T,S)$ from above, independent of $T$ and $\vol(S)$. Let $h_\iota$ be a smooth function equal to $1$ on $M_{\ge\iota}$ and $0$ over $M_{\iota/2}$. Moreover, note that the hyperbolic distance between $M_{\iota/2}$ and $M_{\geq \iota}$ is $\asymp \log(2)$, therefore, we can arrange so that $0\le h_\iota\le 1$ and all derivatives of $h_\iota$ are bounded above independent of $\iota$. 

\begin{Claim}\label{claim: coeff}
Let $A_2,A_3$ be the upper bounds we obtained in Claims~\ref{claim: smoothing} and \ref{claim: equi_geod} for the function $h_\iota$. Then we have
$$
\left|1-c(\gamma_T,S)\int_{\mathcal{F}(M)}\widetilde{K}^{t,\rho}_{\delta}(u,v)dL_M^1(u)\right|\leq C\delta+c(\gamma_T,S)(A_2+A_3+O(\iota^{d-1}))+C\vol(S)^{-\epsilon_2}\iota^{-q}.
$$
Consequently, if $A_2,A_3=o(1)$, $\delta,\iota\to0$, and $\vol(S)^{-\epsilon_2}\iota^{-q}\to0$, then $c(\gamma_T,S)=O(1)$ as $T\to\infty$. 
\end{Claim}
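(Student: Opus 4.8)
The plan is to push the cutoff function $h_\iota$ through the entire chain of estimates assembled in Claims~\ref{claim: inter_kernel}--\ref{claim: equi_hyper} and then compare the result with $1$, using that $h_\iota$ is close to the constant function $1$ both in the measure $\vol$ and --- the delicate point --- in the intersection measure. Recall that $h_\iota\equiv1$ on $M_{\ge\iota}$, $h_\iota\equiv0$ on $M_{\iota/2}$, and all derivatives of $h_\iota$ are bounded independently of $\iota$; in particular $\Lip(h_\iota)=O(1)$, $\supp h_\iota\subseteq M_{\ge\iota/2}$, and $\Sob_q(h_\iota)=O(\iota^{-q})$ (the cusp weight being $\asymp\iota^{-q}$ on $\supp h_\iota$).

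First I would apply Claim~\ref{claim: inter_kernel} to $f=h_\iota$: it expresses $\int_M h_\iota\,dI^1(\gamma_T,S)$, up to an error $O(\delta)$, as $c(\gamma_T,S)$ times the double integral of $h_\iota(v)K_\delta(u,v)$ against $\mu^1_{\gamma_T}\times\mu^1_S$. Then Claim~\ref{claim: smoothing} replaces $K_\delta$ by $\widetilde K^{t,\rho}_\delta$ at cost $c(\gamma_T,S)\,A_2$, and Claim~\ref{claim: equi_geod} replaces $\mu^1_{\gamma_T}$ by $L^1_M$ at cost $c(\gamma_T,S)\,A_3$. Now the inner integral $\kappa:=\int_{\mathcal F(M)}\widetilde K^{t,\rho}_\delta(u,v)\,dL^1_M(u)$ is independent of $v$ by Proposition~\ref{prop: growth}(3), and satisfies $\kappa\asymp1$ by Part~(4) of Lemma~\ref{lem:kernel_smooth_prop} together with $\sup|\widetilde{\mathbb K}^{t,\rho}_\delta|\asymp\delta^{-d}$; so what is left is $c(\gamma_T,S)\,\kappa\,\mu^1_S(h_\iota)$. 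Claim~\ref{claim: equi_hyper} applied to $h_\iota$ turns $\mu^1_S(h_\iota)$ into $L^1_M(h_\iota)$ up to $C\vol(S)^{-\epsilon_2}\iota^{-q}$, and finally $L^1_M(h_\iota)=1-O(\iota^{d-1})$ because the union of the $\iota$-cuspidal neighbourhoods has volume $\asymp\iota^{d-1}$. Collecting the errors bounds $|\int_M h_\iota\,dI^1(\gamma_T,S)-c(\gamma_T,S)\,\kappa|$ by the right-hand side of the claim (the precise placement of the $c(\gamma_T,S)$ factor among the lower-order terms is immaterial for what follows).

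The one ingredient not of ``equidistribution'' type is the control of $1-\int_M h_\iota\,dI^1(\gamma_T,S)$, i.e.\ the fact that only a negligible proportion of the intersection points can escape into the cusps. Since $0\le1-h_\iota\le1$ and $1-h_\iota$ is supported in $\overline{M_\iota}$, this difference is nonnegative and at most $|I_\iota(\gamma_T,S)|/|I(\gamma_T,S)|$; by Theorem~\ref{thm: cusp_inter_points} and the comparison $\ell(\gamma_T)\asymp e^{(d-1)T}$ (prime geodesic theorem) this ratio is $O(\iota^{d-2})\,c(\gamma_T,S)$, which is absorbable into the right-hand side of the claim. Combining this with the previous paragraph via the triangle inequality gives the displayed inequality. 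For the ``Consequently'': rearranging it, $c(\gamma_T,S)\bigl(\kappa-O(A_2+A_3)-O(\iota^{d-2})\bigr)\le1+C\delta+C\vol(S)^{-\epsilon_2}\iota^{-q}$; under the stated hypotheses the factor in parentheses tends to $\kappa\asymp1>0$ while the right-hand side tends to $1$, so $c(\gamma_T,S)=O(1)$ provided the parameters are taken small enough as functions of $T$ and $\vol(S)$.

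The hard part is precisely this last estimate, the no-escape-of-mass bound. It is the only place where Theorem~\ref{thm: cusp_inter_points}, and hence the hypothesis $k=1$, enters essentially: for geodesic submanifolds of dimension $\ge2$ one has only the far weaker bound $(\dagger)$ of \S\ref{sec:cusp_inter}, which is insufficient here, so this whole ``bound the coefficient'' step --- and with it effective equidistribution over all of $M$, rather than merely over $M_{\ge\iota}$ --- does not extend to that setting.
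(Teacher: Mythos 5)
Your proposal is correct and follows essentially the same path as the paper's proof: the same telescoping chain through $E_1,\dots,E_4$, the same invocation of Claims~\ref{claim: inter_kernel}--\ref{claim: equi_hyper} with $f=h_\iota$, and the same use of Theorem~\ref{thm: cusp_inter_points} together with $\ell(\gamma_T)\asymp e^{(d-1)T}$ to control $1-\int h_\iota\,dI^1$. Your observation that the placement of the $c(\gamma_T,S)$ factor among the lower-order error terms does not affect the ``Consequently'' rearrangement, and that the escape-of-mass ratio is genuinely $O(\iota^{d-2})$ (which dominates the $O(\iota^{d-1})$ coming from $\vol(M_\iota)/\vol(M)$ as in the paper's derivation), are both accurate and in fact slightly more precise than the claim as stated.
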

\begin{proof}
For better presentation, we define

        \begin{align*}
        &E_1:=\int_{\mathcal{F}(M)}h_\iota(v)\int_{\mathcal{F}(M)}K_{\delta}(u,v)\, d\mu_{\gamma_T}^1(u)d\mu_S^1(v) \\
        &E_2:=\int_{\mathcal{F}(M)}h_\iota(v)\int_{\mathcal{F}(M)}\widetilde{K}^{t,\rho}_{\delta}(u,v)\, d\mu_{\gamma_T}^1(u)d\mu_S^1(v) \\
        &E_3:= \int_{\mathcal{F}(M)}h_\iota(v)\int_{\mathcal{F}(M)}\widetilde{K}^{t,\rho}_{\delta}(u,v)\, dL_M^1(u)d\mu_S^1(v) \\
        &E_4:=\int_{\mathcal{F}(M)}h_\iota(v)\int_{\mathcal{F}(M)}\widetilde{K}^{t,\rho}_{\delta}(u,v)\, dL_M^1(u)dL_M^1(v)\\
        \end{align*}

        Consider the inequalities in Claims \ref{claim: inter_kernel}--\ref{claim: equi_hyper} for $f=h_{\iota}$, then we have
        \begin{align}
        &\left|1-c(S,\gamma_T)\int_{\mathcal{F}(M)}\widetilde{K}^{t,\rho}_{\delta}(u,v)dL_M^1(u) \right|\nonumber\\
        \leq &\left|1-\int_Mh_\iota\,dI^1(\gamma_T,S)\right|\nonumber+\left|\int_Mh_\iota\,dI^1(\gamma_T,S)-c(\gamma_T,S)E_1\right|\nonumber\\
        &+c(\gamma_T,S)\left(\left|E_1-E_2 \right|+\left|E_2-E_3\right|+\left|E_3-E_4 \right| + \left|E_4-\int_{\mathcal{F}(M)}\widetilde{K}^{t,\rho}_{\delta}(u,v)dL_M^1(u)\right| \right)  \nonumber \\
        \leq& \frac{|I_\iota(\gamma_T,S)|}{|I(\gamma_T,S)|}+\delta\Lip(h_\iota)+c(\gamma_T,S)(A_2+A_3)+C\vol(S)^{-\epsilon_2}\Sob_q(h_\iota) \\  & +c(\gamma_T,S)\int_{\mathcal{F}(M)}\widetilde{K}^{t,\rho}_{\delta}(u,v)dL_M^1(u) \cdot \frac{\vol(M_{\iota})}{\vol(M)}
        \end{align}
        by applying the previous claims. Here we used the fact that the integral $\int_{\mathcal{F}(M)}\widetilde{K}^{t,\rho}_{\delta}(u,v)\, dL_M^1(u)$ is constant and does not depend on $v$. Moreover, this integral is bounded below by a positive constant, since it is the integral of a function that is $\asymp \delta^{-d}$ in a region with volume $\asymp \delta^{d}$. This is needed to show $c(\gamma_T,S)=O(1)$.
        
        Now by Theorem~\ref{thm: cusp_inter_points}, we have 
        $$\frac{|I_\iota(\gamma_T,S)|}{|I(\gamma_T,S)|}=c(\gamma_T,S)\frac{|I_\iota(\gamma_T,S)|}{\ell(\gamma_T)\vol(S)}\le c(\gamma_T,S)\cdot C\iota^{d-2}.$$
        Moreover, $\Lip(h_\iota)$ is chosen to be uniformly bounded independent of $\iota$, and $\Sob_q(h_\iota)$ increases in the cusp by a weight $\asymp (1/\iota)^q$ (see the relevant discussions in \cite{effective.surface}). Combining these, we have the desired estimate.
    \end{proof}

\begin{constr}\label{constr: coeff}
    Need $\delta, A_2,A_3=o(1)$, which are already covered by previous constraints. Note that we generally arrange $\delta<\iota$. We also need $\iota\to 0$ and $\vol(S)^{-\epsilon_2}\iota^{-q}\to0$.
\end{constr}

\paragraph{Step 5: Proof of the main theorem for $k=1$}
\begin{Lemma}\label{lem:constr}
    Suppose $0<\kappa<\frac{\epsilon_1}{d+qd+2q}$ and choose any $q'>q$. Set $\delta=e^{-\kappa T}$, $\rho=e^{-(1+d/2)\kappa T}$, $t=\rho^2=e^{-(d+2)\kappa T}$, and $\iota=\max\{2\delta,\vol(S_n)^{-\epsilon_2/q'}\}$, then all the constraints are satisfied.
\end{Lemma}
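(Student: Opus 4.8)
The plan is pure bookkeeping: substitute the proposed values into each of the accumulated constraints (Constraints~\ref{constr: inter_kernel}, \ref{constr: smoothing}, \ref{constr: equi_geod}, \ref{constr: equi_hyper}, \ref{constr: coeff}) and verify that each holds. Since $\delta,\rho,t$ are all explicit powers of $e^{-\kappa T}$, every $T$-dependent quantity occurring in a constraint has the form $e^{c\kappa T}$ or $e^{c\kappa T-\epsilon_1 T}$, so the verification reduces to comparing linear combinations of the exponents; the $\vol(S_n)$-dependent conditions are handled separately through the defining formula for $\iota$.

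First I would tabulate the relevant exponents. From $\delta=e^{-\kappa T}$, $\rho=e^{-(1+d/2)\kappa T}$, $t=\rho^2=e^{-(d+2)\kappa T}$ one reads off $\rho/\delta=e^{-(d/2)\kappa T}$, $t/\delta^{d+1}=e^{-\kappa T}$, $\rho^q/(t^q\delta^d)=e^{(qd/2+q+d)\kappa T}$, and $\delta^{-d}t^{-q}=e^{(d+qd+2q)\kappa T}$. Each of the last two, after multiplication by $e^{-\epsilon_1 T}$, tends to $0$ if and only if the coefficient of $\kappa$ is $<\epsilon_1$; since $d+qd+2q=(qd/2+q+d)+(qd/2+q)$ is the larger of the two coefficients, the single hypothesis $\kappa<\epsilon_1/(d+qd+2q)$ makes both tend to $0$. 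This settles Constraint~\ref{constr: inter_kernel} (indeed $\delta\to0$), all the metric conditions of Constraint~\ref{constr: smoothing} other than $\delta<\iota$ (namely $t=O(\rho^2)$, which holds with equality; $\delta=O(1)$ since $e^{-\kappa T}\le1$; $\rho/\delta\to0$; $t/\delta^{d+1}\to0$; and $\rho^q t^{-q}\delta^{-d}e^{-\epsilon_1 T}\to0$), and Constraint~\ref{constr: equi_geod}, which is precisely $\delta^{-d}t^{-q}e^{-\epsilon_1 T}\to0$ and is the binding condition forcing the stated bound on $\kappa$.

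Next I would dispose of the remaining conditions using $\iota=\max\{2\delta,\vol(S_n)^{-\epsilon_2/q'}\}$. By construction $\delta<2\delta\le\iota$, so $\delta<\iota$ holds throughout and every factor $\lceil\delta/\iota\rceil$ equals $1$; moreover $\iota\to0$ because $2\delta\to0$ as $T\to\infty$ and $\vol(S_n)^{-\epsilon_2/q'}\to0$ as $\vol(S_n)\to\infty$. Constraint~\ref{constr: equi_hyper} asks only for $\vol(S_n)\to\infty$, which is part of the hypothesis. For the last requirement of Constraint~\ref{constr: coeff}, the inequality $\iota\ge\vol(S_n)^{-\epsilon_2/q'}$ gives $\vol(S_n)^{-\epsilon_2}\iota^{-q}\le\vol(S_n)^{-\epsilon_2(1-q/q')}\to0$ since $q'>q$; and the hypotheses $A_2,A_3=o(1)$ of Claim~\ref{claim: coeff} follow by inserting $f=h_\iota$ (so $\sup|h_\iota|=1$ and $\lceil\delta/\iota\rceil=1$) into the bounds of Claims~\ref{claim: smoothing} and \ref{claim: equi_geod} and invoking the estimates just established. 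There is no genuine obstacle: the content of the lemma is merely that the feasible region left after Steps~1--4 is nonempty, and the only point that needs care is confirming that $d+qd+2q$ is indeed the maximum among the $T$-exponents that must be beaten by $\epsilon_1$—in particular that it dominates the $qd/2+q+d$ coming from the last term of Claim~\ref{claim: smoothing}—so that a single clean inequality on $\kappa$ suffices.
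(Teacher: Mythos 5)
Your proof is correct and follows essentially the same route as the paper's: substitute the explicit powers of $e^{-\kappa T}$, reduce each constraint to a comparison of exponents, and observe that $d+qd+2q$ dominates $qd/2+q+d$ (by $qd/2+q>0$), so a single bound on $\kappa$ handles all $T$-dependent conditions. You are slightly more explicit than the paper in also confirming $\iota\to0$, $\lceil\delta/\iota\rceil=1$, and $A_2,A_3=o(1)$, but these are the same observations the paper relies on implicitly.
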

\begin{proof}
    Clearly, Constraint~\ref{constr: inter_kernel} is satisfied. For Constraint~\ref{constr: smoothing}, we have $t=\rho^2$, $\delta\to0$, $t/\delta^{d+1}=e^{-\kappa T}\to0$, $\rho/\delta=e^{-\kappa d T/2}\to0$, and $\rho^q/(t^q\delta^d)e^{-\epsilon_1T}=e^{(qd/2+q+d)\kappa T}e^{-\epsilon_1T}\to 0$ since $\kappa<\frac{\epsilon_1}{qd/2+q+d}$. For Constraint~\ref{constr: equi_geod}, we have $\delta^{-d}t^{-q}e^{-\epsilon_1T}=e^{(d+qd+2q)\kappa T}e^{-\epsilon_1T}\to0$ since $\kappa<\frac{\epsilon_1}{d+qd+2q}$. Constraint~\ref{constr: equi_hyper} is automatically satisfied since we have a distinct sequence of hypersurfaces. Finally, for constraint \ref{constr: coeff}, $\iota>\delta$ by definition, and $\vol(S_n)^{-\epsilon_2}\iota^{-q}\le \vol(S_n)^{-\epsilon_2(1-q/q')}\to0$.
\end{proof}
\begin{proof}[Proof of Theorem \ref{thm: main} for $k=1$]
Consider a function $f \in C_c^{\infty}(M)$. Assume that $B_i$ is the upper bound in Claim $5.i$, for $i=1,\dots,4$ for $fh_{\iota}$, and $A$ is the upper bound found in Claim~\ref{claim: coeff}.
For better presentation, we define
\begin{align*}
    &E_1:=\int_{\mathcal{F}(M)}f(v)h_\iota(v) \int_{\mathcal{F}(M)}K_{\delta}(u,v)\, d\mu_{\gamma_T}^1(u)d\mu_{S_n}^1(v)\\
    &E_2:=\int_{\mathcal{F}(M)}f(v)h_\iota(v)\int_{\mathcal{F}(M)}K_{\delta}^{t,\rho}(u,v)\, d\mu_{\gamma_T}^1(u)d\mu_{S_n}^1(v)\\
    &E_3:=\int_{\mathcal{F}(M)}f(v)h_\iota(v)\int_{\mathcal{F}(M)}K_{\delta}^{t,\rho}(u,v)\, dL_M^1(u)d\mu_{S_n}^1(v)\\
    &E_4:= \int_{\mathcal{F}(M)}f(v)h_\iota(v)\int_{\mathcal{F}(M)}K_{\delta}^{t,\rho}(u,v)\, dL_M^1(u)dL_M^1(v)
\end{align*}
Now we have
\begin{align}\label{equ: upperbound}
      &\left|\int_M f \, dI^1(\gamma_T,S_n)-\int_M f\, d\vol^1\right|\nonumber\\
      \leq& \int_{M_\iota}|f|\, dI^1(\gamma_T,S_n)+\int_{M_\iota}|f|\, d\vol^1 +\left|\int_M f h_\iota\, dI^1(\gamma_T,S_n)-
       c(\gamma_T,S_n)E_1\right|\nonumber\\
       &+c(\gamma_T,S_n)\left(|E_1-E_2|+|E_2-E_3|+|E_3-E_4|\right)\nonumber+\left|c(\gamma_T,S_n)E_4-\int_{\mathcal{F}(M)} f(v)h_\iota(v) \, dL_M^1\right|  \nonumber \\
        \leq&C\sup_{M_{\iota}}|f|c(\gamma_T,S)\iota^{d-2}+C\|f\|_{L^2}\iota^{d-1}+B_1+c(\gamma_T,S_n)(B_2+B_3+B_4)+A\|f\|_{L^2}.
\end{align}
In the last inequality, for the first integral we used Theorem \ref{thm: cusp_inter_points}, and for the second integral we used $\vol(M_{\iota})\asymp \iota^{d-1}$. Moreover, using the constraints in Lemma~\ref{lem:constr} and noting that $\Sob_q(f)$ dominates $\sup|f|$ and $\Lip(f)$, we conclude that the Equation \ref{equ: upperbound} is bounded above by
$$\le C\Sob_q(f)(e^{-\epsilon_1T}+\vol(S)^{-\epsilon_2}),$$
for some positive constants $C, q,\epsilon_1,\epsilon_2$ depending only on the geometry of $M$.
\end{proof}

\section{Effective estimates II: the general case}\label{sec:estimates2}
Now we move to the general case. To avoid the case of closed geodesics already discussed, suppose $k\ge2$. 
Note that most of the proofs are similar to the case $k=1$ so we leave them to the reader, but the smoothing method and some of the bounds are more technical, which we will explain in detail.
We remark that the final result in the non-compact case is weaker than the result for $k=1$. 

Let $\Gamma$ be a maximal geodesic $k$-submanifold and $S$ a maximal geodesic $(d-k)$-submanifold. Define
$$c(\Gamma,S)=\frac{\vol(\Gamma)\vol(S)}{|I(\Gamma,S)|}.$$
Since we want to restrict to a fixed compact part (fixed $\iota$), we also define
$$c_{\iota}(\Gamma,S)=\frac{\vol(\Gamma)\vol(S\cap M_{\ge\iota})}{|I_{\ge\iota}(\Gamma,S)|}.$$
We also define a smooth version of $c_{\iota}$ as follows. Let $h_{\iota,\upsilon}$ be a smooth function equal to $1$ on $M_{\ge\iota}$ and $0$ on $M_{\iota-\upsilon}$ for $\iota>\upsilon>0$. We can arrange that the Lipschitz constant of $h_{\iota,\upsilon}$ is $(\log(\iota)-\log(\iota-\upsilon))^{-1}=O(\iota/\upsilon)$. Set $I_{\iota,\upsilon}(\Gamma,S):=h_{\iota,\upsilon}I(\Gamma,S)$, $\mu_{S,\iota,\upsilon}:=h_{\iota,\upsilon}\cdot\mu_S$. Then set
$$c_{\iota,\upsilon}(\Gamma,S)=\frac{\vol(\Gamma)|\mu_{S,\iota,\upsilon}|}{|I_{\iota,\upsilon}(\Gamma,S)|}.$$
Clearly, for a fixed pair of $\Gamma$ and $S$, $c_{\iota,\upsilon}\to c_\iota$ as $\upsilon\to 0$.

\paragraph{Step 1: Relate to the intersection kernel}
We have the following generalization of Claim~\ref{claim: inter_kernel}.
\begin{Claim}\label{claim: inter_kernel_gen}
    We have
    $$
    \left|\int_{M} f(x) \, dI^1(\Gamma,S)-c(\Gamma,S)\int_{\mathcal{F}(M)} f(v) \int_{\mathcal{F}(M)} K_{\delta}(u,v)\, d\mu^1_{\Gamma}(u)d\mu^1_{S}(v)\right|
    \leq \delta \, \Lip(f).
    $$
    For the smoothed version, we have
    \begin{align*}
    &\left|\int_{M} f(x) \, dI^1_{\iota,\upsilon}(\Gamma,S)-c_{\iota,\upsilon}(\Gamma,S)\int_{\mathcal{F}(M)} f(v) \int_{\mathcal{F}(M)} K_{\delta}(u,v)\, d\mu^1_{\Gamma}(u)d\mu^1_{S,\iota,\upsilon}(v)\right|
    \\&\leq \delta \, \Lip(f)+C(\iota)\cdot\delta\iota /\upsilon\cdot\sup_{\mathcal{R}}|f|,
    \end{align*}
    where $\mathcal{R}=M_{\iota e^{\delta}}\backslash M_{(\iota-\upsilon) e^{-\delta}}$, 
    for some positive function $C(\iota)$ depending only on the geometry of $M$.
\end{Claim}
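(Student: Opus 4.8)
The plan is to transcribe the unfolding argument of Claim~\ref{claim: inter_kernel}, with the closed geodesic and its cyclic stabilizer replaced by $\Gamma$ and the subgroup $\pi_1(\Gamma)\subseteq\Pi$ stabilizing a chosen lift $\widetilde\Gamma$, and with the geodesic-flow factor $2\delta$ replaced by the $k$-ball volume $\vol_k(\delta)$. For the first (unweighted) inequality, note that $c(\Gamma,S)=|I(\Gamma,S)|^{-1}\vol(\Gamma)\vol(S)=|I(\Gamma,S)|^{-1}|\mu_\Gamma|\,|\mu_S|$, so the kernel side equals $|I(\Gamma,S)|^{-1}\int f(v)\int K_\delta(u,v)\,d\mu_\Gamma(u)\,d\mu_S(v)$. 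One then passes to $\mathbb{H}^d$, choosing lifts $\widetilde\Gamma,\widetilde S$ and fundamental domains $F_\Gamma,F_S$ for the actions of $\pi_1(\Gamma),\pi_1(S)$, expands $K_\delta$ as a sum over $\Pi$, and regroups as in Equations~\ref{equ: left_cosets}--\ref{equ: claim2.proof1}: first into $\pi_1(\Gamma)$-cosets, using that $\mathcal{F}_{\mathbb{H}^d}(\widetilde\Gamma)$ and $\mu_{\widetilde\Gamma}$ are $\pi_1(\Gamma)$-invariant (so $\sum_{\gamma'\in\pi_1(\Gamma)}\int_{\mathcal{F}_{\mathbb{H}^d}(F_\Gamma)}$ may be replaced by $\int_{\mathcal{F}_{\mathbb{H}^d}(\widetilde\Gamma)}$), then into $\pi_1(S)$-cosets, arriving at a sum over $g\in\pi_1(S)\backslash\Pi/\pi_1(\Gamma)$. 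A term is nonzero precisely when $g.\widetilde\Gamma$ meets $\widetilde S$ in a single transverse point $\tilde P$ — if the complementary planes met in positive dimension, no pair of balls $\overline{B_\delta^k},\overline{B_\delta^{d-k}}$ could meet transversely, so $\mathbb{K}_\delta$ would vanish — giving, exactly as for $k=1$, a multiplicity-respecting bijection between nonzero terms and $\mathscr{P}(\Gamma,S)$.

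The one genuinely new computation is the inner integral over $\mathcal{F}_{\mathbb{H}^d}(g.\widetilde\Gamma)$: for $\tilde v$ over $\widetilde S$, the hyperbolic $k$-ball $\overline{B_\delta^k(\tilde u)}\subseteq g.\widetilde\Gamma$ meets the $(d-k)$-plane carrying $\widetilde S$ iff $\tilde P\in\overline{B_\delta^k(\tilde u)}$, i.e.\ iff $\pi(\tilde u)$ lies in the $k$-ball of radius $\delta$ about $\tilde P$ in $g.\widetilde\Gamma$ (a set of $k$-volume $\vol_k(\delta)$), and then the crossing point lies in $\overline{B_\delta^{d-k}(\tilde v)}$ iff $\pi(\tilde v)\in\overline{B_\delta^{\widetilde S}(\tilde P)}$. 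Hence this inner integral equals $\vol_k(\delta)/c_\delta$ when $\pi(\tilde v)\in\overline{B_\delta^{\widetilde S}(\tilde P)}$ and $0$ otherwise, the exact analogue of the $2\delta/c_\delta$ in the proof of Claim~\ref{claim: inter_kernel}. Using $c_\delta=\vol_k(\delta)\vol_{d-k}(\delta)$ and $\mu_{\widetilde S}(\pi^{-1}(B_\delta^{\widetilde S}(\tilde P)))=\vol(B_\delta^{\widetilde S}(\tilde P))$, the kernel side becomes $|I(\Gamma,S)|^{-1}\sum_{P}\vol(B_\delta^{\widetilde S}(\tilde P))^{-1}\int_{\pi^{-1}(B_\delta^{\widetilde S}(\tilde P))}f(\tilde v)\,d\mu_{\widetilde S}(\tilde v)$; comparing termwise with $|I(\Gamma,S)|^{-1}\sum_P f(P)$ and using $d(\tilde P,\pi(\tilde v))\le\delta$ gives the bound $\delta\,\Lip(f)$.

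For the smoothed version one runs the identical computation with $f$ replaced by $f\,h_{\iota,\upsilon}$ on the $\widetilde S$ side and with normalizing constant $|I_{\iota,\upsilon}(\Gamma,S)|$, so that the kernel side becomes $|I_{\iota,\upsilon}(\Gamma,S)|^{-1}\sum_P\vol(B_\delta^{\widetilde S}(\tilde P))^{-1}\int_{\pi^{-1}(B_\delta^{\widetilde S}(\tilde P))}f(\tilde v)h_{\iota,\upsilon}(\pi(\tilde v))\,d\mu_{\widetilde S}(\tilde v)$, to be compared with $\int_M f\,dI^1_{\iota,\upsilon}(\Gamma,S)=|I_{\iota,\upsilon}(\Gamma,S)|^{-1}\sum_P f(P)h_{\iota,\upsilon}(P)$. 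For each $P$ I would bound the termwise error by $h_{\iota,\upsilon}(\pi(\tilde v))\,\Lip(f)\,\delta+|f(\tilde P)|\,\Lip(h_{\iota,\upsilon})\,\delta$ and use $\Lip(h_{\iota,\upsilon})=O(\iota/\upsilon)$; moreover, the derivative of $h_{\iota,\upsilon}$ is supported on $M_\iota\setminus M_{\iota-\upsilon}$, so the second summand vanishes unless $B_\delta^{\widetilde S}(\tilde P)$ meets that collar, i.e.\ unless $\pi(\tilde P)\in\mathcal{R}=M_{\iota e^\delta}\setminus M_{(\iota-\upsilon)e^{-\delta}}$, where the $e^{\pm\delta}$ factors absorb the $\delta$-fattening, since moving a distance $\delta$ changes the cuspidal level by at most a factor $e^\delta$. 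Summing, the first summand contributes $\delta\,\Lip(f)$ up to the portion of $\mathscr{P}(\Gamma,S)$ lying in $\mathcal{R}$ — outside $\mathcal{R}$ the function $h_{\iota,\upsilon}$ is locally constant, so its average over $B_\delta^{\widetilde S}(\tilde P)$ equals $h_{\iota,\upsilon}(\pi(\tilde P))$ and these terms sum against $|I_{\iota,\upsilon}(\Gamma,S)|$ cleanly — while the second summand contributes $O(\iota/\upsilon)\,\delta\sup_{\mathcal{R}}|f|$ times that same portion, divided by $|I_{\iota,\upsilon}(\Gamma,S)|$.

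The main obstacle is thus the estimate on the number of intersection points in the collar, namely that $|\{P\in\mathscr{P}(\Gamma,S):\pi(\tilde P)\in\mathcal{R}\}|\le D_\iota\,|I_{\iota,\upsilon}(\Gamma,S)|$ for a constant $D_\iota$ depending only on $\iota$ and the geometry of $M$; this is exactly the content of Lemma~\ref{lem: ratio}, applicable once we check that $\mathcal{R}$ is a sufficiently thin collar — concretely that $L:=\tfrac{\iota}{\iota-\upsilon}e^{2\delta}$ is at most $C_{\iota e^\delta}$, which holds provided $\delta$ and $\upsilon/\iota$ are small relative to $\iota$, a condition that will be enforced when the parameters are finally chosen. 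Since every intersection point in $M_{\ge\iota e^\delta}\subseteq M_{\ge\iota}$ carries $h_{\iota,\upsilon}$-weight $1$, one has $|I_{\ge\iota e^\delta}(\Gamma,S)|\le|I_{\iota,\upsilon}(\Gamma,S)|$, so Lemma~\ref{lem: ratio} indeed bounds the collar count by $D_\iota\,|I_{\iota,\upsilon}(\Gamma,S)|$; feeding this back produces the two stated error terms, the numerical and $D_\iota$-type constants being absorbed into the function $C(\iota)$. Apart from this collar estimate and the elementary $\vol_k(\delta)$ computation, the argument is a mechanical transcription of the proof of Claim~\ref{claim: inter_kernel}.
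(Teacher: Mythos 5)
Your argument is essentially the paper's, and the first (unweighted) inequality is handled correctly: unfold over $\Pi$, regroup into $\pi_1(S)\backslash\Pi/\pi_1(\Gamma)$-cosets, identify nonzero terms with intersection points, and replace the geodesic-flow factor $2\delta$ by $\vol_k(\delta)$. Your verification that Lemma~\ref{lem: ratio} applies to $\mathcal{R}=\mathcal{R}(\iota e^\delta,L')$ with $L'=\tfrac{\iota}{\iota-\upsilon}e^{2\delta}$, and that $|I_{\ge\iota e^\delta}|\le|I_{\iota,\upsilon}|$ feeds it into the stated bound, is also correct.

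The one place you diverge from the paper is the triangle-inequality split. You take
$$|f(P)h(P)-f(v)h(v)|\le h(v)\,|f(P)-f(v)|+|f(P)|\,|h(P)-h(v)|,$$
whereas the paper factors through the fixed value $h(P)$:
$$|f(P)h(P)-f(v)h(v)|\le h(P)\,|f(P)-f(v)|+|f(v)|\,|h(P)-h(v)|.$$
This matters because after averaging over $\pi(\tilde v)\in B_\delta^{\widetilde S}(\tilde P)$ and summing over $P$, the paper's first term becomes
$\frac{\Lip(f)\delta}{|I_{\iota,\upsilon}|}\sum_P h(P)=\Lip(f)\delta$ exactly, since $\sum_P h(P)=|I_{\iota,\upsilon}|$ by definition. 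With your version, the coefficient is the $\delta$-ball average $\overline{h}(P)$ rather than $h(P)$; these coincide for $P\notin\mathcal{R}$ (as you observe), but for $P\in\mathcal{R}$ the sum picks up an extra $\sum_{P\in\mathcal{R}}(\overline{h}(P)-h(P))$, producing an additional error of order $C(\iota)\delta^2(\iota/\upsilon)\Lip(f)$. This term involves $\Lip(f)$ rather than $\sup_{\mathcal{R}}|f|$ and so is not of the form appearing in the claimed bound. It is higher order in $\delta$ and would not affect the downstream application (given the eventual parameter choices with $\delta/\upsilon\to 0$), but as written your proof does not deliver the inequality exactly as stated. Swapping to the paper's split, where the $h(P)$ factor is a constant for each $P$, removes the issue with no other change.
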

\begin{proof}
    For the first inequality, the proof goes exactly as that of Claim~\ref{claim: inter_kernel}. For the smoothed version, consider the function $f(x)h_{\iota,\upsilon}(x)$ instead of $f(x)$. We have 
    $$|f(P)h_{\iota,\upsilon}(P)-f(v)h_{\iota,\upsilon}(v)|\le |h_{\iota,\upsilon}(P)||f(P)-f(v)|+|f(v)||h_{\iota,\upsilon}(P)-h_{\iota,\upsilon}(v)|.$$
     Adding up the first term above gives the error bound $\delta\Lip(f)$. The second term above is only nonzero for intersection point $P$ in the region $\mathcal{R}$. Denote by $I_\mathcal{R}(\Gamma,S)$ the sum of delta measures of intersection points in $\mathcal{R}$. So adding up the second term above gives the bound
     $$\frac{|I_{\mathcal{R}}(\Gamma,S)|}{|I_{\iota,\upsilon}(\Gamma,S)|}\Lip(h_{\iota,\upsilon})\cdot\delta\cdot\sup_{\mathcal{R}}|f|$$
     We can then apply Lemma~\ref{lem: ratio}, and note that $\Lip(h_{\iota,\upsilon})=O(\iota/\upsilon)$. This gives the desired estimate.
\end{proof}

\begin{constr}
    We need $\delta\to0$ as $\vol(\Gamma),\vol(S)\to\infty$. We also want $\delta/\upsilon\to0$.
\end{constr}

\paragraph{Step 2: Smooth the kernel}
The key technical part, as before, is to smooth the kernel. We again describe the construction in $\mathbb{H}^d$ first, and then map the local construction isometrically to $M$.

Fix $o\in\mathbb{H}^d$ and $v_0\in\mathcal{F}_o(\mathbb{H}^d)$. Let $\rho<\delta$ be a positive parameter to be determined later. Let $\mathcal{N}_{\delta,\rho}(v_0)$ be the $\rho$-neighborhood of $B_{2\delta}^{d-k}(v_0)$.

Suppose $t<\rho$ is another parameter to be determined later. As before, let $\chi_\delta^{t,\rho}$ be a smooth function identically $1$ outside $\mathcal{N}_{\delta,\rho}(v_0)$, and identically $0$ on $\mathcal{N}_{\delta,\rho-t}(v_0)$. In the previous section, if the geodesic segment $B^1_\delta(u)$ intersects $B^{d-1}_\delta(v_0)$ non-transversely, then $\pi(u)$ must be contained in $B_{2\delta}^{d-1}(v_0)$. However, this is no longer true when $k\ge2$. So only taking away the frames over the neighborhood $\mathcal{N}_{\delta,\rho}(v_0)$ is not enough to resolve non-smoothness / discontinuity.


Let $\mathcal{P}^{d-k}(v_0)$ be the $(d-k)$-dimensional totally geodesic subspace tangent to $v_0$ (cf.\ \S \ref{sec: inter_kernel}). For totally geodesic hypersurfaces, we extend the definition in \ref{sec: inter_kernel} as follows. For $i=1,\ldots,d$, let $\mathcal{P}^{d-1}_i(v)$ be the totally geodesic hypersurface tangent to all the vectors except the $i$-th one in the frame $v\in\mathcal{F}_P(\mathbb{H}^d)$. In particular $\mathcal{P}^{d-1}_d(v)=\mathcal{P}^{d-1}(v)$.

Given $P\notin\mathcal{P}^{d-k}(v_0)$, we now define a natural subset to exclude non-transverse intersections. Let $\mathcal{J}_i(P)$ be the set of frames $u\in\mathcal{F}_P(\mathbb{H}^d)$ based at $P$ so that $\mathcal{P}^{d-1}_i(u)$ contains $\mathcal{P}^{d-k}(v_0)$, and set $\mathcal{J}(P):=\bigcup_{i=k+1}^d\mathcal{J}_i(P)$. In particular, if $B_\delta^k(u)$ and $B_\delta^{d-k}(v_0)$ intersect non-transversely then $u\in\mathcal{J}(P)$ for a point $P$ within $\delta$-distance of $B_\delta^{d-k}(v_0)$.

Alternatively, $\mathcal{J}(P)$ may be constructed as follows. There exists a unique $(d-k+1)$-dimensional totally geodesic subspace $\mathcal{Q}$ containing $\mathcal{P}^{d-k}(v_0)$ and $P$, and $\mathcal{J}_i(P)$ consists of all frames $u$ at $P$ so that $\mathcal{P}^{d-1}_i(u)$ contains $\mathcal{Q}$. Hence each component of $\mathcal{J}(P)$ is a submanifold of $\mathcal{F}_P(\mathbb{H}^d)$ corresponding to different ways to extend a frame over $\mathcal{Q}$ to a full frame. Moreover, each connected component has codimension $(k-1)(d-k+1)$ in $\mathcal{F}_P(\mathbb{H}^d)$.

Note that we can put a smooth metric on $\mathcal{F}(\mathbb{H}^d)$ invariant under isometries. Because of this, we can choose a uniform small enough $\rho>0$ independent of $P$ so that $\rho$-neighborhoods of the connected components of $\mathcal{J}(P)$ in $\mathcal{F}_P(\mathbb{H}^d)$ are disjoint from each other. Let $\zeta_P$ be a smooth function identically $1$ outside these $\rho$-neighborhoods, and identically $0$ on $(\rho-t)$-neighborhoods of the connected components. Define the following function $\zeta^{t,\rho}(u)=\zeta_{\pi(u)}(u)$. It is easy to see that this function is smooth when $\pi(u)\notin\mathcal{P}^{d-k}(v_0)$. Moreover, we can arrange so that its $m$-th derivatives have size $O(\rho^{-m}t^{-m})$. 

Now we can transport these functions to (frames over) neighborhoods defined for $v\in\mathcal{F}(M)$, which we denote by $\chi_{v,\delta}^{t,\rho}$ and $\zeta_v^{t,\rho}$. Moreover, we may assume the $m$-th derivatives of $\chi_{v,\delta}^{t,\rho}$ have size $O(t^{-m})$, and those of $\zeta_v^{t,\rho}$ have size $O(\rho^{-m}t^{-m})$.

Finally, recall that $\eta_\delta^{t}:\mathbb{R}\to\mathbb{R}$ is a smooth even function supported on the interval $[-\delta,\delta]$ and identically $1$ in $[-\delta+t,\delta-t]$. We also assume that $\eta_\delta^{t}$ is monotone on $\mathbb{R}_+$ and have $m$-th derivatives of size $O(t^{-m})$. Define
\begin{align*}
\mathbb{K}_{\delta}^{t,\rho}(u,v)&=
\begin{cases}
    \frac{1}{c_\delta}\chi_{v,\delta}^{t,\rho}(\pi(u))\cdot\zeta_v^{t,\rho}(u)\cdot\eta_\delta^{t}(d(P,\pi(v)))&\text{if $B^k_\delta(u)\pitchfork B^{d-k}_{\delta}(v)$ at $P$},\\
    0 &\text{otherwise.}
\end{cases}\\
\widetilde{\mathbb K}_{\delta}^{t,\rho}(u,v)&=
\begin{cases}
    \mathbb{K}_{\delta}^{t,\rho}(u,v)\cdot\eta^{t}_\delta(d(P,\pi(u)))&\text{if $B^k_\delta(u)\pitchfork B^{d-k}_{\delta}(v)$ at $P$},\\
    0 &\text{otherwise.}
\end{cases}
\end{align*}

As a guide for readers,
\begin{itemize}[topsep=0mm, itemsep=0mm]
    \item $\chi$ is used to take out a $\rho$-neighborhood near $B_{2\delta}^{d-k}(v)$ smoothly;
    \item $\zeta$ is used to smooth the discontinuity between frames giving transverse and non-transverse intersections; and
    \item $\eta$ is used to smooth the kernel near the boundary of the region, when the intersection point $P$ is close to the boundaries of $B_{\delta}^{k}(u)$ or $B_{\delta}^{d-k}(v)$.
\end{itemize}

We can then define the associated kernels $K_\delta^{t,\rho}(u,v)$ and $\widetilde{K}_\delta^{t,\rho}(u,v)$ on $\mathcal{F}(M)$. These kernels defined here have similar properties as those defined in the case $k=1$.
\begin{Lemma}\label{lem:kernel_smooth_prop_gen}
\begin{enumerate}[label=\normalfont{(\arabic*)}]
    \item For a fixed $v$, $\mathbb{K}_\delta^{t,\rho}(u,v)$ differs from $\mathbb{K}_\delta(u,v)$ over a region of volume $O(\delta^{d-1}\rho)$ if $t=O(\rho^2)$ and $\delta=O(1)$.
    \item $\widetilde{\mathbb K}_\delta^{t,\rho}(u,v)$ is smooth. Consequently, $\widetilde{K}_\delta^{t,\rho}(u,v)$ is smooth.
    \item The $m$-th derivatives of $\widetilde{\mathbb K}_\delta^{t,\rho}$ have size $O(\delta^{-d}\rho^{-m}t^{-m})$. Consequently, over $M_{\ge\iota}$, the $m$-th derivatives of $\widetilde{K}_\delta^{t,\rho}(u,v)$ have size $O(\lceil\delta/\iota\rceil^{d-1}\delta^{-d}\rho^{-m}t^{-m})$.
    \item For a fixed $v$, $\widetilde{\mathbb{K}}_\delta^{t,\rho}(u,v)$ is nonzero on a region with volume $\asymp \delta^d$ if $\rho/\delta \to 0$ as $\delta \to 0$.
\end{enumerate}
\end{Lemma}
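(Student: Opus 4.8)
The plan is to prove Lemma~\ref{lem:kernel_smooth_prop_gen} by closely following the proof of Lemma~\ref{lem:kernel_smooth_prop}, with the extra care needed because in the general case a geodesic $k$-disk $B_\delta^k(u)$ can meet $B_\delta^{d-k}(v)$ non-transversely even when $\pi(u)$ is not near $B_{2\delta}^{d-k}(v)$, which is precisely why the auxiliary cutoff $\zeta_v^{t,\rho}$ was introduced.

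For Part (1), I would describe the region where $\mathbb{K}_\delta^{t,\rho}$ and $\mathbb{K}_\delta$ disagree as contained in the union of three pieces: (i) the set of $u$ with $\pi(u)\in\mathcal{N}_{\delta,\rho}(v)$, which has volume $\asymp\delta^{d-k}\rho\cdot(\text{bounded})=O(\delta^{d-1}\rho)$ after accounting for the frame directions; (ii) the set where $B_\delta^k(u)$ meets the annular shell $B_\delta^{d-k}(v)\setminus B_{\delta-t}^{d-k}(v)$, bounded exactly as in the $k=1$ case by a geodesic flow-box argument giving $O(\delta^d t/\rho)=O(\delta^d\rho)$ since $t=O(\rho^2)$; and (iii) the set where $u$ lies in the $\rho$-neighborhood of some connected component of $\mathcal{J}(\pi(u))$. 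The key new point is to bound (iii): since each component of $\mathcal{J}(P)$ has positive codimension $(k-1)(d-k+1)\ge1$ in the fiber $\mathcal{F}_P(\mathbb{H}^d)$, its $\rho$-neighborhood in the fiber has volume $O(\rho)$ (with constant uniform in $P$ by the isometry-invariance of the chosen metric on $\mathcal{F}(\mathbb{H}^d)$); integrating over the base points $P$ within distance $\delta$ of $B_\delta^{d-k}(v)$, whose set has volume $O(\delta^d)$, gives $O(\delta^d\rho)=O(\delta^{d-1}\rho)$. Summing the three contributions gives the claimed $O(\delta^{d-1}\rho)$.

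Part (2) follows as before: $\widetilde{\mathbb K}_\delta^{t,\rho}$ is built from the functions $\chi_{v,\delta}^{t,\rho}$, $\zeta_v^{t,\rho}$, $\eta_\delta^t$ and the smooth data $d(P,\pi(v))$, $d(P,\pi(u))$ (which vary smoothly because the two $\eta_\delta^t$ factors kill everything near the boundary of the transverse-intersection region), so it is smooth on $\mathcal{F}(\mathbb{H}^d)\times\mathcal{F}(\mathbb{H}^d)$; by Proposition~\ref{prop: growth}(1), $\widetilde{K}_\delta^{t,\rho}$ is then smooth on $\mathcal{F}(M)\times\mathcal{F}(M)$. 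For Part (3), I would combine the chain rule with the prescribed derivative sizes $O(t^{-m})$ for $\chi$ and $\eta$, $O(\rho^{-m}t^{-m})$ for $\zeta$, and $c_\delta\asymp\delta^d$, to get $O(\delta^{-d}\rho^{-m}t^{-m})$ for $\widetilde{\mathbb K}_\delta^{t,\rho}$; then Proposition~\ref{prop: growth}(2) upgrades this over $M_{\ge\iota}$ with the factor $\lceil\delta/\iota\rceil^{d-1}$. For Part (4), the support of $\widetilde{\mathbb K}_\delta^{t,\rho}(\cdot,v)$ agrees with that of $\mathbb{K}_\delta^{t,\rho}(\cdot,v)$, and by Part (1) this differs from $\supp\mathbb{K}_\delta(\cdot,v)$ only on a set of volume $O(\delta^{d-1}\rho)=o(\delta^d)$ when $\rho/\delta\to0$; parametrizing $u$ in $\supp\mathbb{K}_\delta(\cdot,v)$ by the intersection point $P\in B_\delta^{d-k}(v)$ (volume $\vol_{d-k}(\delta)\asymp\delta^{d-k}$), the relative configuration of the two tangent $k$- and $(d-k)$-planes through $P$ (a compact set of bounded positive measure), and the signed distances of $P$ from the centers along each disk (ranges $\asymp\delta$ each, contributing $\delta^{k-1}\cdot\delta$ roughly — more precisely the leftover frame parameters total dimension making the volume $\asymp\delta^d$), one sees the support has volume $\asymp\delta^d$.

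The main obstacle is Part (1)(iii): one must verify that the set $\mathcal{J}(P)$ and its $\rho$-neighborhood behave uniformly in $P$ (so that the constant in $O(\rho)$ for the fiberwise volume is independent of $P$), which relies on the isometry-invariant metric on $\mathcal{F}(\mathbb{H}^d)$ and the fact — already established in the text — that each component of $\mathcal{J}(P)$ is a submanifold of fixed codimension $(k-1)(d-k+1)$ arising from extensions of a frame over the fixed-type subspace $\mathcal{Q}$. The subtlety is that the integration over base points $P$ near $B_\delta^{d-k}(v)$ and over fiber directions must be disentangled correctly, but since the relevant neighborhoods are uniformly comparable under the homogeneous structure of $\mathcal{F}(\mathbb{H}^d)$, this reduces to a bookkeeping exercise of dimension counts; everything else is a routine adaptation of the $k=1$ arguments.
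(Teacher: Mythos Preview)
Your proposal is correct and follows essentially the same approach as the paper. The paper's own proof only elaborates on Part~(1), identifying the same additional region (your piece (iii)) coming from the $\rho$-neighborhood of $\mathcal{J}(P)$, and defers Parts~(2)--(4) to the $k=1$ argument; the one cosmetic difference is that the paper exploits the full codimension $(k-1)(d-k+1)$ to record the sharper estimate $O(\delta^d\rho^{(k-1)(d-k+1)})$ for region~(iii), whereas you only use codimension $\ge1$ to get $O(\delta^d\rho)$---both are $O(\delta^{d-1}\rho)$ under $\delta=O(1)$, so the conclusion is unaffected.
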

\begin{proof}
    We only explain the proof of Part (1); other parts can be proved similarly as Lemma~\ref{lem:kernel_smooth_prop}. Compared to the case $k=1$, now we have an additional region to account for, which consists of frames $u$ in $\mathcal{J}_P$ for some $P$ within $\delta$-neighborhood of $B^{d-k}_{\delta}(v)$. It is easy to see that this has volume $O(\delta^d\rho^{(k-1)(d-k+1)})=O(\delta^{d-1}\rho)$.
\end{proof}
We thus have the following claim, which can be proved analogously as Claim~\ref{claim: smoothing}.
\begin{Claim}\label{claim: smoothing_gen}
Suppose $t=O(\rho^2)$ and $\delta=O(1)$. We have
    \begin{align}
        &\left|\int_{\mathcal{F}(M)} f(v) \int_{\mathcal{F}(M)} \widetilde{K}^{t,\rho}_{\delta}(u,v) \, d\mu^1_{\Gamma}(u)d\mu^1_{S,\iota,\upsilon}(v)-\int_{\mathcal{F}(M)} f(v) \int_{\mathcal{F}(M)} K_{\delta}(u,v) \, d\mu^1_{\Gamma}(u)d\mu^1_{S,\iota,\upsilon}(v)\right| \nonumber \\ 
        &\leq C\lceil\delta/\iota\rceil^{d-1}\sup|f|\left(\frac{t}{\delta^{d+1}}+\frac{\rho^q}{t^q\delta^d}\vol(\Gamma)^{-\epsilon_2}\right)+C\sup |f|\frac{\rho}{\delta \vol(M)}. 
        \end{align}
where $q,\epsilon_2$ are the constants from Theorem~\ref{thm: mixing.surface}.
\end{Claim}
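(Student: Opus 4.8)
The plan is to transcribe the proof of Claim~\ref{claim: smoothing}, replacing the equidistribution of closed geodesics (Theorem~\ref{thm: mixing.geod.3}) by the equidistribution of maximal geodesic $k$-submanifolds (Theorem~\ref{thm: mixing.surface}), and using Lemma~\ref{lem:kernel_smooth_prop_gen} in place of Lemma~\ref{lem:kernel_smooth_prop}. First I would bound the integrand pointwise by the triangle inequality $|\widetilde{K}_\delta^{t,\rho}(u,v)-K_\delta(u,v)|\le|\widetilde{K}_\delta^{t,\rho}(u,v)-K_\delta^{t,\rho}(u,v)|+|K_\delta^{t,\rho}(u,v)-K_\delta(u,v)|$, and estimate the two resulting terms separately after integrating against $d\mu_\Gamma^1(u)\,d\mu_{S,\iota,\upsilon}^1(v)$. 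Since $\mu_{S,\iota,\upsilon}^1$ is a probability measure supported in $M_{\ge\iota-\upsilon}$, it suffices to bound the inner $u$-integral uniformly over such $v$ and multiply by $\sup|f|$; over this region the number of sheets that appear when lifting to $\mathbb{H}^d$ is $O(\lceil\delta/\iota\rceil^{d-1})$, exactly as in Proposition~\ref{prop: growth}(2).

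For the first term, $\widetilde{\mathbb{K}}_\delta^{t,\rho}$ and $\mathbb{K}_\delta^{t,\rho}$ differ only where the extra factor $\eta_\delta^t(d(P,\pi(u)))$ is not identically $1$, i.e.\ where $d(P,\pi(u))\in(\delta-t,\delta]$. As the lifts of $\Gamma$ are totally geodesic $k$-planes, fixing $v$ (hence the intersection point $P$) this is the preimage of an annular shell in the $k$-plane of $\Gamma$ of $\vol_\Gamma$-measure $O(\delta^{k-1}t)$ inside a ball of $\vol_\Gamma$-measure $\asymp\delta^k$. Since the kernel is $\le1/c_\delta$ and $c_\delta\asymp\delta^d$, integrating against $d\mu_\Gamma^1$ contributes $O(\lceil\delta/\iota\rceil^{d-1}t/\delta^{d+1})$, which is the first term in the claimed bound, and this step uses only geometry, no equidistribution.

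For the second term, Lemma~\ref{lem:kernel_smooth_prop_gen}(1) tells us that, for fixed $v$, the set where $\mathbb{K}_\delta^{t,\rho}$ and $\mathbb{K}_\delta$ disagree — the union of the $\chi$-region $\mathcal{N}_{\delta,\rho}(v)$, the $\zeta$-region (a $\rho$-neighborhood of the submanifolds $\mathcal{J}(P)$), the shell where $P$ is within $t$ of $\partial B_\delta^{d-k}(v)$, and the boundary discontinuity set — has volume $O(\delta^{d-1}\rho)$. As in the $k=1$ case I would build a smooth $\psi_v\ge1$ on that set, still supported on a region of volume $O(\delta^{d-1}\rho)$, with $m$-th derivatives of size $O((\rho/t)^m)$; the binding scale is the shell region, while the $\chi$- and $\zeta$-regions only require derivatives of size $O(\rho^{-1})$, which is $\le\rho/t$ because $t=O(\rho^2)$. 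Putting $\mathbf{\Psi}(u,v):=\psi_v(u)/c_\delta$ and letting $\psi$ be the associated kernel on $\mathcal{F}(M)$, we get $|K_\delta^{t,\rho}-K_\delta|\le\psi$, so Theorem~\ref{thm: mixing.surface} gives $\int\psi(u,v)\,d\mu_\Gamma^1(u)\le\int\psi(u,v)\,dL_M^1(u)+O(\lceil\delta/\iota\rceil^{d-1}\Sob_q(\mathbf{\Psi})\vol(\Gamma)^{-\epsilon_2})$. By Proposition~\ref{prop: growth}(3) the first integral equals $\vol(M)^{-1}\int_{\mathcal{F}(\mathbb{H}^d)}\mathbf{\Psi}\,dL_{\mathbb{H}^d}=O(\delta^{d-1}\rho/(c_\delta\vol(M)))=O(\rho/(\delta\vol(M)))$, and $\Sob_q(\mathbf{\Psi})=O((\rho/t)^q/\delta^d)=O(\rho^q/(t^q\delta^d))$; collecting the contributions yields the remaining two terms of the claimed bound.

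The step I expect to be the main obstacle is the construction and derivative control of the dominating function $\psi_v$: one must verify that a single smooth function can cover the $\chi$-, $\zeta$- and shell-regions simultaneously with $m$-th derivatives of size $O((\rho/t)^m)$ — using $t=O(\rho^2)$ to absorb the $\zeta$-region's natural length scale $\rho$ — and that the volume estimate $O(\delta^{d-1}\rho)$ of Lemma~\ref{lem:kernel_smooth_prop_gen}(1), which rests on the codimension $(k-1)(d-k+1)\ge1$ of each component of $\mathcal{J}(P)$, is not spoiled by this mild enlargement of the support. Everything else is a line-by-line adaptation of Claim~\ref{claim: smoothing}.
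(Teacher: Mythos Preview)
Your proposal is correct and follows exactly the approach the paper intends: the paper itself only says Claim~\ref{claim: smoothing_gen} ``can be proved analogously as Claim~\ref{claim: smoothing}'', and you have accurately transcribed that proof, replacing Theorem~\ref{thm: mixing.geod.3} by Theorem~\ref{thm: mixing.surface} and Lemma~\ref{lem:kernel_smooth_prop} by Lemma~\ref{lem:kernel_smooth_prop_gen}. Your identification of the extra $\zeta$-region as the only new ingredient, together with the observation that $t=O(\rho^2)$ lets its natural scale $\rho$ be absorbed into the shell-region scale $\rho/t$, is exactly the point that makes the dominating function $\psi_v$ go through with the same derivative bound $O((\rho/t)^m)$ as in the $k=1$ case.
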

\begin{constr}\label{constr: smoothing_gen}
    We need $t=O(\rho^2)$, $\delta=O(1)$, $t/\delta^{d+1}\to0$, $\rho/\delta\to0$, and $\frac{\rho^q}{t^q\delta^d}\vol(\Gamma)^{-\epsilon_2}\to0$.
\end{constr}

\paragraph{Step 3: Apply the equidistribution results}
We can now apply the two equidistribution results to get integration over the entire frame bundle.
\begin{Claim}\label{claim: equi_geod_gen}
Replacing $\mu_\Gamma^1$ with $L_M^1$, we have
\begin{align}
        &\left|\int_{\mathcal{F}(M)} f(v) \int_{\mathcal{F}(M)} \widetilde{K}^{t,\rho}_{\delta}(u,v) \, d\mu^1_{\Gamma}(u)d\mu^1_{S,\iota,\upsilon}(v)-\int_{\mathcal{F}(M)} f(v) \int_{\mathcal{F}(M)} \widetilde{K}^{t,\rho}_{\delta}(u,v) \, dL^1_M(u)d\mu^1_{S,\iota,\upsilon}(v)\right| \nonumber \\
        &\leq C\lceil\delta/\iota\rceil^{d-1}\delta^{-d}\rho^{-q}t^{-q}\vol(\Gamma)^{-\epsilon_2}\sup|f|.
        \end{align}

\end{Claim}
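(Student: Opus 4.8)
The plan is to run the argument of Claim~\ref{claim: equi_geod} almost verbatim, replacing the equidistribution of the geodesic flow (Theorem~\ref{thm: mixing.geod.3}) by the effective equidistribution of geodesic $k$-submanifolds (Theorem~\ref{thm: mixing.surface}), which is available precisely because $\Gamma$ is a maximal geodesic $k$-submanifold with $k\ge2$.

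First I would fix $v\in\mathcal{F}(M)$ in the support of $\mu_{S,\iota,\upsilon}$, so that $\pi(v)\in M_{\ge\iota-\upsilon}$ up to the $\delta$-fattening built into the construction. By Part~(2) of Lemma~\ref{lem:kernel_smooth_prop_gen} the function $u\mapsto\widetilde{K}^{t,\rho}_\delta(u,v)$ is smooth, and it is compactly supported since its support in $u$ lies within hyperbolic distance $O(\delta)$ of $v$; hence Theorem~\ref{thm: mixing.surface} applies (with the role of $S$ there played by $\Gamma$) and gives
\begin{equation*}
\left|\int_{\mathcal{F}(M)}\widetilde{K}^{t,\rho}_\delta(u,v)\,d\mu^1_\Gamma(u)-\int_{\mathcal{F}(M)}\widetilde{K}^{t,\rho}_\delta(u,v)\,dL^1_M(u)\right|=O\!\left(\Sob_q\big(\widetilde{K}^{t,\rho}_\delta(\cdot,v)\big)\,\vol(\Gamma)^{-\epsilon_2}\right).
\end{equation*}
Next I would bound $\Sob_q(\widetilde{K}^{t,\rho}_\delta(\cdot,v))$ uniformly over such $v$: since we arrange $\delta<\iota$, the $u$-support of $\widetilde{K}^{t,\rho}_\delta(\cdot,v)$ is contained in $M_{\ge\iota'}$ for some $\iota'\asymp\iota$, so the cusp-divergent weight in the Sobolev norm is bounded there by a quantity depending only on $\iota$ and the geometry of $M$; combined with the derivative estimates of Part~(3) of Lemma~\ref{lem:kernel_smooth_prop_gen} (giving $m$-th derivatives of size $O(\lceil\delta/\iota\rceil^{d-1}\delta^{-d}\rho^{-m}t^{-m})$ over $M_{\ge\iota}$), this yields $\Sob_q(\widetilde{K}^{t,\rho}_\delta(\cdot,v))=O(\lceil\delta/\iota\rceil^{d-1}\delta^{-d}\rho^{-q}t^{-q})$ uniformly in $v$. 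Finally I would integrate the displayed inequality against $|f(v)|\,d\mu^1_{S,\iota,\upsilon}(v)$; since $\mu^1_{S,\iota,\upsilon}$ is a probability measure this produces exactly the bound $C\lceil\delta/\iota\rceil^{d-1}\delta^{-d}\rho^{-q}t^{-q}\vol(\Gamma)^{-\epsilon_2}\sup|f|$.

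The hard part will be the Sobolev-norm estimate: one has to make sure the weight in the norm of Theorem~\ref{thm: mixing.surface} stays under control on the support of $\widetilde{K}^{t,\rho}_\delta(\cdot,v)$, which holds uniformly in $v\in\supp\mu_{S,\iota,\upsilon}$ because that support is confined to a region of fixed thickness $\asymp\iota$, and that the derivative bounds of Lemma~\ref{lem:kernel_smooth_prop_gen}(3) are uniform in $v$. Everything else is identical bookkeeping to Claim~\ref{claim: equi_geod}, so I would leave those details to the reader, as the authors suggest for the routine parts of this section.
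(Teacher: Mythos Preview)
Your proposal is correct and follows exactly the approach the paper intends: it is the direct analogue of the proof of Claim~\ref{claim: equi_geod}, with Theorem~\ref{thm: mixing.surface} replacing Theorem~\ref{thm: mixing.geod.3} and the derivative bound of Lemma~\ref{lem:kernel_smooth_prop_gen}(3) (which carries the extra $\rho^{-m}$ factor) replacing that of Lemma~\ref{lem:kernel_smooth_prop}(3). The paper leaves this claim to the reader for precisely this reason, and your handling of the thickness of the support of $\mu^1_{S,\iota,\upsilon}$ correctly replaces the hypothesis $\supp(f)\subseteq M_{\ge\iota}$ used in Claim~\ref{claim: equi_geod}.
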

\begin{constr}\label{constr: equi_geod_gen}
    We need $\delta^{-d}\rho^{-q}t^{-q}\vol(\Gamma)^{-\epsilon_2}\to0$.
\end{constr}

Up to this point, we allow $k=d-1$, i.e. $S$ may be closed geodesics. For the next claim, assume $k<d-1$ as well.
\begin{Claim}\label{claim: equi_hyper_gen}
Replacing $\mu_{S,\iota,\upsilon}^1$ with $L_{M,\iota,\upsilon}^1$ (where $L_{M,\iota,\upsilon}=h_{\iota,\upsilon}L_M$), if $\vol(S)^{\epsilon_2'}\upsilon^{q}\to\infty$, we have 
    \begin{align}
&\left|\int_{\mathcal{F}(M)} f(v) \int_{\mathcal{F}(M)} \widetilde{K}^{t,\rho}_{\delta}(u,v) \, dL^1_M(u)d\mu^1_{S,\iota,\upsilon}(v)-\int_{\mathcal{F}(M)} f(v) \int_{\mathcal{F}(M)} \widetilde{K}^{t,\rho}_{\delta}(u,v) \, dL^1_{M}(u)dL^1_{M,\iota,\upsilon}(v)\right|\nonumber \\  
&\leq C \iota^{-q}\vol(S)^{-\epsilon_2'} \upsilon^{-q}\Sob_q(f).
\end{align}
where $q,\epsilon_2'>0$ are the constants from Theorem~\ref{thm: mixing.surface}.
\end{Claim}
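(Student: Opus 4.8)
The plan is to reduce the statement to an application of Theorem~\ref{thm: mixing.surface} for the geodesic $(d-k)$-submanifold $S$ (this is where the standing assumption $k<d-1$ is needed, so that $\dim S=d-k\ge 2$), following the pattern of Claim~\ref{claim: equi_hyper} but carrying the cutoff $h_{\iota,\upsilon}$ through the normalizing constants. First I would note that, by Part~(3) of Proposition~\ref{prop: growth}, the inner integral $G(v):=\int_{\mathcal{F}(M)}\widetilde{K}^{t,\rho}_{\delta}(u,v)\,dL^1_M(u)$ is a constant function of $v$; moreover, by Part~(4) of Lemma~\ref{lem:kernel_smooth_prop_gen} it is nonzero on a region of volume $\asymp\delta^d$ where $\widetilde{\mathbb K}^{t,\rho}_\delta$ has size $\asymp\delta^{-d}$, so $G\asymp 1$. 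Hence the left-hand side equals $G\,\bigl|\int f\,d\mu^1_{S,\iota,\upsilon}-\int f\,dL^1_{M,\iota,\upsilon}\bigr|$, and it suffices to bound the latter difference.

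Next I would unwind the normalizations. Set $a=\int f h_{\iota,\upsilon}\,d\mu^1_S$, $b=\int h_{\iota,\upsilon}\,d\mu^1_S$, $a'=\int f h_{\iota,\upsilon}\,dL^1_M$, $b'=\int h_{\iota,\upsilon}\,dL^1_M$, so that $\int f\,d\mu^1_{S,\iota,\upsilon}=a/b$ and $\int f\,dL^1_{M,\iota,\upsilon}=a'/b'$, and therefore
$$\left|\frac{a}{b}-\frac{a'}{b'}\right|\le\frac{|a-a'|}{b}+\frac{|a'|}{bb'}\,|b-b'|.$$
Theorem~\ref{thm: mixing.surface} applied to $fh_{\iota,\upsilon}$ and to $h_{\iota,\upsilon}$ gives $|a-a'|=O(\Sob_q(fh_{\iota,\upsilon})\vol(S)^{-\epsilon_2'})$ and $|b-b'|=O(\Sob_q(h_{\iota,\upsilon})\vol(S)^{-\epsilon_2'})$. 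The quantity $b'$ is a fixed positive constant, bounded below by $\vol(M_{\ge\iota})/\vol(M)$ independently of $\upsilon$, and $|a'|\le\sup|f|\cdot b'\le\Sob_q(f)$ by Remark~\ref{rmk: Sob}. The one delicate point is that $b$ must also be bounded away from $0$: writing $b=b'+O(\Sob_q(h_{\iota,\upsilon})\vol(S)^{-\epsilon_2'})$ and using $\Sob_q(h_{\iota,\upsilon})=O(\iota^{-q}\upsilon^{-q})$, the hypothesis $\vol(S)^{\epsilon_2'}\upsilon^q\to\infty$ (with $\iota$ fixed) makes this error $o(1)$, so $b\ge b'/2$ once $\vol(S)$ is large. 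This is precisely the role of the assumption on $\upsilon$.

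Combining the above, the left-hand side is $O\!\bigl(\vol(S)^{-\epsilon_2'}(\Sob_q(fh_{\iota,\upsilon})+\Sob_q(h_{\iota,\upsilon})\Sob_q(f))\bigr)$ with implied constant depending only on $\iota$ and the geometry of $M$. It then remains to estimate the Sobolev norms of the cutoff: since $h_{\iota,\upsilon}$ is supported in $M_{\ge\iota-\upsilon}$, where the weight in the (weighted) Sobolev norm is $O(\iota^{-q})$, and its derivatives up to order $q$ are $O((\iota/\upsilon)^q)$ because the transition occurs over hyperbolic width $\asymp\upsilon/\iota$, the Leibniz rule yields $\Sob_q(h_{\iota,\upsilon})=O(\iota^{-q}\upsilon^{-q})$ and $\Sob_q(fh_{\iota,\upsilon})=O(\iota^{-q}\upsilon^{-q}\Sob_q(f))$. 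Substituting gives the claimed bound $C\iota^{-q}\vol(S)^{-\epsilon_2'}\upsilon^{-q}\Sob_q(f)$. I expect the main difficulty to be purely bookkeeping: isolating all $\iota$-dependence into the (non-uniform) constant $C(\iota)$ and verifying the lower bound on $b$ throughout the admissible range of $\upsilon$; no input beyond Theorem~\ref{thm: mixing.surface} and Proposition~\ref{prop: growth} is required.
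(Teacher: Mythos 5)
Your proof is correct and follows essentially the same route as the paper: the algebraic decomposition $\bigl|\frac{a}{b}-\frac{a'}{b'}\bigr|\le\frac{|a-a'|}{b}+\frac{|a'|}{bb'}|b-b'|$ is exactly the paper's add-and-subtract of $\frac{\vol(S)}{|\mu_{S,\iota,\upsilon}|}\int fh_{\iota,\upsilon}\,dL^1_M$, and both arguments finish by applying Theorem~\ref{thm: mixing.surface} to $fh_{\iota,\upsilon}$ and $h_{\iota,\upsilon}$ and estimating the Sobolev norms of the cutoff. If anything, your treatment of the lower bound on $b=|\mu_{S,\iota,\upsilon}|/\vol(S)$ (deriving it from the equidistribution theorem plus the hypothesis $\vol(S)^{\epsilon_2'}\upsilon^q\to\infty$, rather than merely from the support containing a thick part) is slightly more careful than the paper's phrasing.
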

\begin{proof}
As before, the inner integral is now a constant $\asymp1$. So we only need to bound
$$\left|\int_{\mathcal{F}(M)}f\,d\mu^1_{S,\iota,\upsilon}-\int_{\mathcal{F}(M)}f\,dL^1_{M,\iota,\upsilon}\right|=\left|\frac{\vol(S)}{|\mu_{S,\iota,\upsilon}|}\int_{\mathcal{F}(M)}fh_{\iota,\upsilon}\,d\mu^1_{S}-\frac{\vol(M)}{|L_{M,\iota,\upsilon}|}\int_{\mathcal{F}(M)}fh_{\iota,\upsilon}\,dL^1_{M}\right|.$$
 By adding and subtracting the term
$$
\frac{\vol(S)}{|\mu_{S,\iota,\upsilon}|}\int_{\mathcal{F}(M)}fh_{\iota,\upsilon}\,dL^1_{M},
$$
using triangle inequality and Theorem~\ref{thm: mixing.surface}, we conclude that this is bounded by
$$C\frac{\vol(S)}{|\mu_{S,\iota,\upsilon}|}\vol(S)^{-\epsilon_2'}\Sob_q(fh_{\iota,\upsilon})+\left|\frac{\vol(S)}{|\mu_{S,\iota,\upsilon}|}-\frac{\vol(M)}{|L_{M,\iota,\upsilon}|}\right|\|fh_{\iota,\upsilon}\|_{L^1}.$$
Now $\Sob_q(fh_{\iota,\upsilon})=O(\Sob_q(f)\upsilon^{-q})$. Moreover, applying Theorem \ref{thm: mixing.surface} to the function $h_{\iota,\upsilon}$ we have
$$\frac{|\mu_{S,\iota,\upsilon}|}{\vol(S)}=\frac{|L_{M,\iota,\upsilon}|}{\vol(M)}+O(\vol(S)^{-\epsilon_2'}\Sob_q(h_{\iota,\upsilon}))=\frac{|L_{M,\iota,\upsilon}|}{\vol(M)}+O(\vol(S)^{-\epsilon_2'}\iota^{-q}\upsilon^{-q}).$$
The ratios $\vol(S)/|\mu_{S,\iota,\upsilon}|,\vol(M)/|L_{M,\iota,\upsilon}|$ are bounded above by a constant independent of $\iota$ and $\upsilon$, as the support of $h_{\iota,\upsilon}$ contains a definite thick part of $M$. Therefore, the last equation implies their difference goes to zero. Combining these with $\vol(S)^{\epsilon_2'}\upsilon^{q}\to\infty$, we have the desired upper bound.
\end{proof}
\begin{constr}\label{constr: equi_hyper_gen}
    We need $\vol(S)^{\epsilon_2'}\upsilon^{q}\to\infty$.
\end{constr}
\paragraph{Step 4: Bound the coefficient}
Finally, we have the bound on $c_{\iota,\upsilon}(\Gamma,S)$.
\begin{Claim}\label{claim: coeff_gen}
Let $A_2,A_3$ be the upper bounds we obtained in Claims~\ref{claim: smoothing_gen} and \ref{claim: equi_geod_gen} for function $f=1$. Then we have
$$
\left|1-c_{\iota,\upsilon}(\Gamma,S)\int_{\mathcal{F}(M)}\widetilde{K}^{t,\rho}_{\delta}(u,v)dL_M^1\right|\leq C\cdot \delta/\upsilon+c_{\iota,\upsilon}(\Gamma,S)(A_2+A_3).
$$
Consequently, if $A_2,A_3=o(1)$, then there exists a positive function $C(\iota)$ so that $c_{\iota,\upsilon}(\Gamma,S)\le C(\iota)$ as $\vol(\Gamma)\to\infty$.
\end{Claim}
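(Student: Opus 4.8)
The plan is to specialize the chain of estimates in Claims~\ref{claim: inter_kernel_gen}, \ref{claim: smoothing_gen}, and \ref{claim: equi_geod_gen} to the constant function $f\equiv 1$, following the proof of Claim~\ref{claim: coeff} in the case $k=1$, but carrying the $\iota,\upsilon$-truncated measures and the constant $c_{\iota,\upsilon}(\Gamma,S)$ throughout. Since $I^1_{\iota,\upsilon}(\Gamma,S)$ and $\mu^1_{S,\iota,\upsilon}$ are probability measures, $\int_M 1\,dI^1_{\iota,\upsilon}(\Gamma,S)=1$ and $\int_{\mathcal{F}(M)}1\,d\mu^1_{S,\iota,\upsilon}=1$. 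I would set
$$E_1:=\int_{\mathcal{F}(M)}\int_{\mathcal{F}(M)}K_\delta(u,v)\,d\mu^1_\Gamma(u)\,d\mu^1_{S,\iota,\upsilon}(v),\qquad E_2:=\int_{\mathcal{F}(M)}\int_{\mathcal{F}(M)}\widetilde{K}^{t,\rho}_\delta(u,v)\,d\mu^1_\Gamma(u)\,d\mu^1_{S,\iota,\upsilon}(v),$$
$$E_3:=\int_{\mathcal{F}(M)}\int_{\mathcal{F}(M)}\widetilde{K}^{t,\rho}_\delta(u,v)\,dL^1_M(u)\,d\mu^1_{S,\iota,\upsilon}(v).$$

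Next I would run the chain. The smoothed version of Claim~\ref{claim: inter_kernel_gen} with $f\equiv1$ (so $\Lip(f)=0$ and $\sup_{\mathcal{R}}|f|=1$) gives $|1-c_{\iota,\upsilon}(\Gamma,S)E_1|\le C\delta/\upsilon$, where $C$ is allowed to depend on $\iota$; Claim~\ref{claim: smoothing_gen} with $f\equiv1$ gives $|E_1-E_2|\le A_2$; and Claim~\ref{claim: equi_geod_gen} with $f\equiv1$ gives $|E_2-E_3|\le A_3$. By Part~(3) of Proposition~\ref{prop: growth}, the inner integral $\int_{\mathcal{F}(M)}\widetilde{K}^{t,\rho}_\delta(u,v)\,dL^1_M(u)$ is a constant $c_K$ independent of $v$, hence $E_3=c_K\int_{\mathcal{F}(M)}1\,d\mu^1_{S,\iota,\upsilon}=c_K$. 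Combining these by the triangle inequality,
$$\left|1-c_{\iota,\upsilon}(\Gamma,S)\,c_K\right|\le|1-c_{\iota,\upsilon}E_1|+c_{\iota,\upsilon}|E_1-E_2|+c_{\iota,\upsilon}|E_2-E_3|\le C\delta/\upsilon+c_{\iota,\upsilon}(\Gamma,S)(A_2+A_3),$$
which is the stated inequality since $c_K=\int_{\mathcal{F}(M)}\widetilde{K}^{t,\rho}_\delta(u,v)\,dL^1_M$.

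For the concluding assertion, I would first observe that $c_K$ is bounded below by a positive constant independent of all parameters: by Proposition~\ref{prop: growth}(3), $c_K=\vol(M)^{-1}\int_{\mathcal{F}(\mathbb{H}^d)}\widetilde{\mathbb K}^{t,\rho}_\delta(\tilde u,\tilde v)\,dL_{\mathbb{H}^d}(\tilde u)$, and $\widetilde{\mathbb K}^{t,\rho}_\delta(\cdot,\tilde v)$ equals $c_\delta^{-1}\asymp\delta^{-d}$ on the region where all the cutoffs $\chi,\zeta,\eta$ are $\equiv1$, which has volume $\asymp\delta^d$ since its complement inside the support of $\widetilde{\mathbb K}^{t,\rho}_\delta$ has volume $o(\delta^d)$ by Parts~(1) and (4) of Lemma~\ref{lem:kernel_smooth_prop_gen}; hence $c_K\asymp1$, and in particular $c_K\ge c_0$ for some $c_0>0$ depending only on the geometry of $M$. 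Rearranging the displayed inequality yields $c_{\iota,\upsilon}(\Gamma,S)(c_0-A_2-A_3)\le 1+C\delta/\upsilon$. Since $\delta/\upsilon\to0$ (a constraint recorded after Claim~\ref{claim: inter_kernel_gen}) and $A_2,A_3=o(1)$ by hypothesis, once $\vol(\Gamma)$ is large enough we have $c_0-A_2-A_3\ge c_0/2$ and $1+C\delta/\upsilon\le 2$, so $c_{\iota,\upsilon}(\Gamma,S)\le 4/c_0=:C(\iota)$.

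The only genuinely new feature compared to $k=1$ is structural rather than computational: for $k\ge2$ there is no analogue of Theorem~\ref{thm: cusp_inter_points}, so one cannot bound $|I_\iota(\Gamma,S)|/|I(\Gamma,S)|$ directly. This is exactly why we never leave the $\iota,\upsilon$-truncated world — the cuspidal contribution is absorbed once and for all into $c_{\iota,\upsilon}$ and was already controlled, via Lemma~\ref{lem: ratio}, inside Claim~\ref{claim: inter_kernel_gen}. I expect the only delicate point to be the uniform lower bound $c_K\ge c_0$, which is what converts the inequality into an honest upper bound on $c_{\iota,\upsilon}(\Gamma,S)$; the rest is the same error-term bookkeeping as in the case $k=1$.
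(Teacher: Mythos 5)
Your proof is correct and follows the approach the paper intends: the authors explicitly leave the proofs in \S\ref{sec:estimates2} to the reader as analogues of the $k=1$ case, and your argument mirrors the proof of Claim~\ref{claim: coeff} with the right adjustments. In particular, you correctly identify that because $\mu^1_{S,\iota,\upsilon}$ is already a probability measure (the cutoff is built into the measure rather than carried by a separate test function $h_\iota$), one has $E_3 = \int \widetilde{K}^{t,\rho}_\delta\,dL_M^1$ exactly and there is no need for a fourth step via Claim~\ref{claim: equi_hyper_gen}; this is why the claimed bound has only the terms $C\delta/\upsilon$ and $c_{\iota,\upsilon}(A_2+A_3)$ and none from Claim~\ref{claim: equi_hyper_gen}, unlike the proof of Claim~\ref{claim: coeff}. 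The lower bound $c_K\gtrsim 1$ is the one genuinely substantive point, and your argument for it — $\widetilde{\mathbb K}^{t,\rho}_\delta\equiv c_\delta^{-1}\asymp\delta^{-d}$ on a region whose complement inside the support has volume $o(\delta^d)$ — matches the paper's brief justification in the proof of Claim~\ref{claim: coeff}; to be fully airtight one should note that the $\eta^{t}_\delta(d(P,\pi(u)))$ factor removes an additional region of volume $O(\delta^{d-1}t)$, which is covered by the same estimate since $t=O(\rho^2)=o(\rho)$, but this is the same level of detail the paper itself uses.
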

\begin{constr}\label{constr: coeff_gen}
    Need $A_2,A_3=o(1)$, which are already covered by previous constraints.
\end{constr}

\paragraph{Step 5: Proof of the main theorem for $1<k<d-1$}
\begin{Lemma}\label{lem:constr_gen}
    For any $0<\kappa<\frac{\epsilon_2}{d+3qd/2+3q}$, set $\delta=\vol(\Gamma)^{-\kappa}$, $\rho=\vol(\Gamma)^{-(1+d/2)\kappa}$, $t=\vol(\Gamma)^{-(d+2)\kappa}$ and $\upsilon=\max\{\vol(\Gamma)^{-\kappa/2},\vol(S)^{-\epsilon'/(2q)}\}$, then all the constraints are satisfied.
\end{Lemma}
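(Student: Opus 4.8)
The plan is to substitute the prescribed values $\delta=\vol(\Gamma)^{-\kappa}$, $\rho=\vol(\Gamma)^{-(1+d/2)\kappa}$, $t=\vol(\Gamma)^{-(d+2)\kappa}$, $\upsilon=\max\{\vol(\Gamma)^{-\kappa/2},\vol(S)^{-\epsilon'/(2q)}\}$ into each of the constraints recorded throughout \S\ref{sec:estimates2} and reduce every condition to a comparison of exponents in $\vol(\Gamma)$ (and, where $\upsilon$ is involved, in $\vol(S)$). Throughout one uses that $\vol(\Gamma_m)\to\infty$ and $\vol(S_n)\to\infty$, which is automatic since both are sequences of distinct submanifolds. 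Set $V=\vol(\Gamma)$ for brevity.

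First I would dispatch the elementary relations. Since $d+2=2(1+d/2)$ we have $t=\rho^2$, in particular $t=O(\rho^2)$; moreover $\delta=V^{-\kappa}=O(1)$ with $\delta\to0$, $t/\delta^{d+1}=V^{-\kappa}\to0$, and $\rho/\delta=V^{-d\kappa/2}\to0$. For the constraint stated after Claim~\ref{claim: inter_kernel_gen}, the term $V^{-\kappa/2}$ in the maximum defining $\upsilon$ gives $\delta/\upsilon\le V^{-\kappa/2}\to0$; for Constraint~\ref{constr: equi_hyper_gen}, the term $\vol(S)^{-\epsilon'/(2q)}$ gives $\vol(S)^{\epsilon_2'}\upsilon^{q}\ge\vol(S)^{\epsilon_2'/2}\to\infty$, taking $\epsilon'=\epsilon_2'$.

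The one genuine computation is the pair of exponent inequalities coming from Claim~\ref{claim: smoothing_gen} (Constraint~\ref{constr: smoothing_gen}) and Claim~\ref{claim: equi_geod_gen} (Constraint~\ref{constr: equi_geod_gen}). Plugging in the values one gets
$$\frac{\rho^{q}}{t^{q}\delta^{d}}=V^{(d+q+qd/2)\kappa},\qquad \delta^{-d}\rho^{-q}t^{-q}=V^{(d+3q+3qd/2)\kappa},$$
so the requirements $\tfrac{\rho^{q}}{t^{q}\delta^{d}}\vol(\Gamma)^{-\epsilon_2}\to0$ and $\delta^{-d}\rho^{-q}t^{-q}\vol(\Gamma)^{-\epsilon_2}\to0$ become $(d+q+qd/2)\kappa<\epsilon_2$ and $(d+3q+3qd/2)\kappa<\epsilon_2$ respectively. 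The second is strictly stronger and coincides exactly with the hypothesis $\kappa<\epsilon_2/(d+3qd/2+3q)$, so both hold. Once Constraints~\ref{constr: smoothing_gen} and~\ref{constr: equi_geod_gen} are verified, the associated bounds $A_2,A_3$ are $o(1)$, which is Constraint~\ref{constr: coeff_gen}; this exhausts the list.

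Since this is a routine verification there is no substantial obstacle; the only point deserving attention is confirming that it is the equidistribution step for geodesic submanifolds (Claim~\ref{claim: equi_geod_gen}) that furnishes the binding constraint on $\kappa$. The extra factor $\rho^{-q}$ there---absent from the analogous estimate in the $k=1$ case (Claim~\ref{claim: equi_geod}), where the $u$-variable smoothing requires no $\zeta$-cutoff---is precisely what forces the denominator $d+3qd/2+3q$ in place of the milder $d+q+qd/2$. One should also record the harmless convention that $\epsilon'$ in the definition of $\upsilon$ is the constant $\epsilon_2'$ of Claim~\ref{claim: equi_hyper_gen}, so that Constraint~\ref{constr: equi_hyper_gen} holds with room to spare. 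With Lemma~\ref{lem:constr_gen} in hand, the proof of Theorem~\ref{thm: general} for $1<k<d-1$ then proceeds exactly as in \S\ref{sec:estimates1}, chaining Claims~\ref{claim: inter_kernel_gen}--\ref{claim: coeff_gen} together with Lemma~\ref{lem: ratio} to control the intersection points in $\mathcal{R}(\iota,L)$.
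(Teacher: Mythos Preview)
Your proposal is correct and matches the paper's approach: the paper leaves this lemma as a routine verification analogous to Lemma~\ref{lem:constr}, and your substitution-and-exponent-comparison carries out exactly that check. One minor slip in your commentary: the $k=1$ denominator you compare to should be $d+qd+2q$ (from $\delta^{-d}t^{-q}$ in Claim~\ref{claim: equi_geod}), not $d+q+qd/2$; the latter is the exponent arising from the \emph{smoothing} constraint, which is not the binding one in either case.
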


Using this lemma and arguing as in the previous section, we obtain the main theorem, with the following additional input to replace the unsmoothed restriction with the smoothed version.

\begin{Lemma}
    We have
    $$\left|\int_Mf\,dI_{\ge\iota}-\int_Mf\,dI_{\iota,\upsilon}\right|\le C(\iota)\cdot (\delta/\upsilon+A_2+A_3)\sup|f|.$$
\end{Lemma}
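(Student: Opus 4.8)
The plan is to reduce the estimate to controlling the fraction of intersection mass carried by the transition shell $\mathcal{R}:=M_\iota\backslash M_{\iota-\upsilon}$ of $h_{\iota,\upsilon}$, and then to bound that fraction by re-running the effective argument of Claims~\ref{claim: inter_kernel_gen}--\ref{claim: coeff_gen} on a smooth bump supported near $\mathcal{R}$. Recall $I_{\ge\iota}=\mathbf{1}_{M_{\ge\iota}}\cdot I(\Gamma,S)$ and $I_{\iota,\upsilon}=h_{\iota,\upsilon}\cdot I(\Gamma,S)$, so that $\int_M f\,dI^1_{\ge\iota}$, $\int_M f\,dI^1_{\iota,\upsilon}$ are the associated normalized integrals. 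Since $0\le h_{\iota,\upsilon}-\mathbf{1}_{M_{\ge\iota}}\le\mathbf{1}_{\mathcal{R}}$ and $|I_{\ge\iota}(\Gamma,S)|\le|I_{\iota,\upsilon}(\Gamma,S)|$, expanding the difference of the two normalized integrals and using these two facts gives, by an elementary manipulation,
$$\left|\int_M f\,dI^1_{\ge\iota}-\int_M f\,dI^1_{\iota,\upsilon}\right|\le \frac{2\sup|f|}{|I_{\ge\iota}(\Gamma,S)|}\,\big|I_{\mathcal{R}}(\Gamma,S)\big|,$$
where $|I_{\mathcal{R}}(\Gamma,S)|$ is the number of transverse intersection points in $\mathcal{R}$. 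Thus everything comes down to bounding the ratio $|I_{\mathcal{R}}(\Gamma,S)|/|I_{\ge\iota}(\Gamma,S)|$.

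The main obstacle is that Lemma~\ref{lem: ratio} alone only gives this ratio $\le D_\iota$ with a \emph{fixed} constant, which does not vanish; to get the required quantitative decay one must genuinely redo the equidistribution estimate on the shell. To that end, fix an auxiliary width $\upsilon'\asymp\upsilon$ (subject to the constraints of Lemma~\ref{lem:constr_gen}) and build, as a product of two of the cutoffs $h$ constructed in this section, a $C^\infty$ function $G$ with $0\le G\le1$, supported in the slightly fattened shell $M_{\iota+\upsilon'}\backslash M_{\iota-\upsilon-\upsilon'}$, satisfying $G\equiv1$ on $\mathcal{R}$ and $\Lip(G)=O(\iota/\upsilon')$ (the function $h_{\iota,\upsilon}-\mathbf{1}_{M_{\ge\iota}}$ itself is discontinuous across $\partial M_{\ge\iota}$, which is why the shell must be fattened on both sides). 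Since $h_{\iota-\upsilon-2\upsilon',\upsilon'}\equiv1$ on $\supp G$, we have $\int_M G\,dI(\Gamma,S)=\int_M G\,dI_{\iota-\upsilon-2\upsilon',\upsilon'}(\Gamma,S)$ and $|I_{\mathcal{R}}(\Gamma,S)|\le\int_M G\,dI(\Gamma,S)$.

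Now apply Claims~\ref{claim: inter_kernel_gen}--\ref{claim: coeff_gen}, at cut-off level $\iota-\upsilon-2\upsilon'$ and with smoothing parameters $\delta,\rho,t$, to the test function $G$. Because $\sup|G|\le1$ and $\Lip(G)=O(\iota/\upsilon')$, the $\delta\Lip$-type term in Claim~\ref{claim: inter_kernel_gen} produces exactly a term of size $C(\iota)\delta/\upsilon$, while the contributions of Claims~\ref{claim: smoothing_gen} and \ref{claim: equi_geod_gen} are the $f\equiv1$ bounds $A_2,A_3$ (times $\sup|G|$), and Claims~\ref{claim: equi_hyper_gen}, \ref{claim: coeff_gen} contribute further terms that are $o(1)$ of the shape allowed in Theorem~\ref{thm: general}; hence
$$\int_M G\,dI^1_{\iota-\upsilon-2\upsilon',\upsilon'}(\Gamma,S)=\int_M G\,d\vol^1_{M,\iota-\upsilon-2\upsilon',\upsilon'}+O\!\big(C(\iota)(\delta/\upsilon+A_2+A_3+\cdots)\big),$$
with main term $O_\iota\!\big(\vol(M_{\iota+\upsilon'}\backslash M_{\iota-\upsilon-\upsilon'})\big)=O_\iota(\upsilon)=o(1)$. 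Finally, one application of Lemma~\ref{lem: ratio} (with $L=\iota/(\iota-\upsilon-3\upsilon')$, legitimate once $\upsilon+\upsilon'$ is small compared with $\iota^2$) gives $|I_{\iota-\upsilon-2\upsilon',\upsilon'}(\Gamma,S)|\le(1+D_\iota)|I_{\ge\iota}(\Gamma,S)|$, so
$$\frac{|I_{\mathcal{R}}(\Gamma,S)|}{|I_{\ge\iota}(\Gamma,S)|}\le\frac{\int_M G\,dI(\Gamma,S)}{|I_{\ge\iota}(\Gamma,S)|}\le C(\iota)\big(\delta/\upsilon+A_2+A_3+\cdots\big),$$
the leftover $O_\iota(\upsilon)$ being harmless since, under the choices of Lemma~\ref{lem:constr_gen}, it is itself of the size required by the final error term of Theorem~\ref{thm: general}. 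Substituting back into the first display proves the lemma. Beyond the conceptual obstacle above, the only technical point to watch is fitting $\upsilon'$ into the constraint hierarchy of Claims~\ref{claim: equi_hyper_gen}--\ref{claim: coeff_gen}, which is precisely why $\upsilon'\asymp\upsilon$ is chosen as in Lemma~\ref{lem:constr_gen}.
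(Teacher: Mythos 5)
Your approach matches the paper's (very brief) sketch: reduce to bounding the fraction of intersection mass in the transition collar $\mathcal{R}$, and estimate that fraction by running Claims~\ref{claim: inter_kernel_gen}--\ref{claim: coeff_gen} on an auxiliary bump function $G$ supported in a slightly fattened collar, with derivatives controlled as for $h_{\iota,\upsilon}$. Your reduction $\bigl|\int f\,dI^1_{\ge\iota}-\int f\,dI^1_{\iota,\upsilon}\bigr|\le 2\sup|f|\cdot|I_{\mathcal{R}}|/|I_{\ge\iota}|$ is correct (note the lemma's left-hand side must be read as the normalized integrals $I^1$, as you do), and the construction of $G$ as a product of the paper's cutoffs is sound.

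Two caveats. First, as you yourself flag, your argument necessarily produces an extra main term $O_\iota(\upsilon)$ coming from $\int_M G\,d\vol^1_{M,\iota'',\upsilon'}$: since the intersection points do equidistribute, $|I_{\mathcal{R}}|/|I_{\ge\iota}|$ is genuinely $\asymp\upsilon$ and cannot be made smaller. Under the choices of Lemma~\ref{lem:constr_gen} one has $\upsilon\ge\delta/\upsilon$ (because $\upsilon^2\ge\delta$), so this leftover is not absorbed into $C(\iota)\,\delta/\upsilon$; strictly speaking the proof establishes the lemma with an additional $+\upsilon$ in the parenthesis, and one should say so explicitly (it does not affect Theorem~\ref{thm: general} since $\upsilon$ itself has the allowed decay). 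Second, you wave at the contributions of Claims~\ref{claim: equi_hyper_gen} and \ref{claim: coeff_gen} with ``further terms that are $o(1)$ of the shape allowed''; to actually prove the lemma as stated (rather than an estimate that suffices for the theorem) those terms should be tracked, since $\Sob_q(G)$ carries factors of $\upsilon'^{-q}$ and $\iota^{-q}$. A small typo: the condition for applying Lemma~\ref{lem: ratio} with $L=\iota/(\iota-\upsilon-3\upsilon')$ is that $\upsilon+\upsilon'$ is small relative to $\iota$, not $\iota^2$.
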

\begin{proof}[Sketch of proof]
    We only need to consider intersection points in a thin collar (width specified by $\upsilon$). We can apply the estimates above in this section to a function supported in a slightly larger neighborhood of the collar, while making sure its derivatives are bounded above by sup of those of $h_{\iota,\upsilon}$.
\end{proof}

\section{Joint equidistribution}\label{sec:joint}
In this part, we describe how to modify our argument to obtain the joint equidistribution result Theorem~\ref{thm: joint_equidistr}. For completeness, we include a description of the configuration space as a `nice' subspace of matrices.

\paragraph{Configuration space}
Let ${\Gr}(k,d)$ be the Grassmannian of $k$-planes in $\mathbb{R}^d$. Let $\Conf(k,d)$ denotes the set of all pairs of $k$-planes and $(d-k)$-planes in $\mathbb{R}^d$ up to the action of rigid rotations, which we call the \emph{configuration space}.

The configuration space $\Conf(k,d)$ can be described as follows. Given a $k$-plane and a $(d-k)$-plane in $\mathbb{R}^d$ (with standard basis $\{e_1,\ldots,e_d\}$), we can always apply rigid rotation so that the $(d-k)$-plane is spanned by $\{e_{k+1},\ldots,e_d\}$. The stabilizer of this standard $(d-k)$-plane in $\bigO(d)$ is $\bigO(k)\times \bigO(d-k)$, which acts on the $k$-plane as well. So we can view $\Conf(k,d)$ as the orbit space of $\Gr(k,d)$ under the action of $\bigO(k)\times\bigO(d-k)$. In particular, we can view any smooth function on $\Conf(k,d)$ as a smooth function on $\Gr(k,d)$ invariant under $\bigO(k)\times\bigO(d-k)$.

Note that $\Conf(k,d)\cong\Conf(d-k,d)$ naturally. Moreover, when $k\le d/2$, $\Conf(k,d)\cong\Conf(k,2k)$. Indeed, as above, any pair of a $k$-plane $\mathcal{P}_k$ and a $(d-k)$-plane $\mathcal{P}_{d-k}$ in $\mathbb{R}^d$ can always be rotated so that $\mathcal{P}_{d-k}=\spanset\{e_{k+1},\ldots,e_d\}$. We can then use a rotation in $\mathcal{P}_{d-k}$ to make sure the projection of $\mathcal{P}_k$ is contained in $\spanset\{e_{k+1},\ldots,e_{2k}\}$.

We now describe $\Conf(k,2k)$ as a subset of $2k\times k$ real matrices. 
As above, given two $k$-planes $\mathcal{P}$ and $\mathcal{P}'$, we will assume $\mathcal{P'}$ is spanned by the basis $\{e_{k+1},\ldots,e_{2k}\}$. Denote by $\pi_{\mathcal{P}'},\pi_{(\mathcal{P}')^{\perp}}$ the orthogonal projections onto $\mathcal{P}', (\mathcal{P}')^\perp$ respectively. For simplicity, given any vector $v$ in $\mathbb{R}^{2k}$, we denote $v':=\pi_{\mathcal{P}'}(v)$ and $v'':=\pi_{(\mathcal{P}')^\perp}(v)$.

We first find a specific orthonormal basis for $\mathcal{P}$. Suppose $v=v_1$ is a unit vector on $\mathcal{P}$ minimizing the length of $v'$
(if there are several choices, take any one).
Inductively, suppose $v_1,\ldots,v_s$ has been chosen. Then choose $v=v_{s+1}$ to be a unit vector in $\spanset\{v_1,\ldots,v_s\}^\perp\cap\mathcal{P}$ minimizing $v'$.
Note that as soon as $v_1,\ldots, v_{k-1}$ have been chosen, $v_k$ is determined uniquely by orthogonality up to sign.

Now we apply rotations in $\bigO(k)\times\bigO(k)$ to convert $\{v_1,\ldots,v_k\}$ into a particular form as follows. Note that this could change the $k$-plane $\mathcal{P}$, but it remains the same point in $\Conf(k,2k)$. When $k\ge 2$, by applying a rotation in $\bigO(d)$ that fixes $e_{1},\ldots,e_{k}$ to $\mathcal{P}$ (so we can see it as a rotation in $\spanset \{e_{k+1},\ldots,e_{2k}\}$), we may assume $v_1'\in\mathbb{R}_{\ge0}e_{k+1}$. Similarly, by applying a rotation that fixes $e_{k+1},\ldots,e_{2k}$, we may assume $v_1''\in\mathbb{R}_{\ge0}e_1$. Inductively, suppose that we have applied rotations (that change $\mathcal{P}$ and fix $\mathcal{P'}$) so that $\spanset\{v_1',\ldots,v_s'\}=\spanset\{e_{k+1},\ldots,e_{k+t}\}$ for some $t\le s$ and $\spanset\{v_1'',\ldots,v_s''\}=\spanset\{e_1,\ldots,e_l\}$ for some $l\le s$. 
When $t\le k-2$, we can use a rotation in $\spanset\{e_{k+t+1},\ldots,e_{2k}\}$ to make sure $v_{s+1}'\in\spanset\{e_{k+1},\ldots,e_{k+t+1}\}$, and $v_{s+1}\cdot e_{k+t+1}\ge0$. When $t=k-1$, we use a reflection to make sure $v_{s+1}\cdot e_k\ge0$. Similarly, we can arrange $v_{s+1}''\in\spanset\{e_1,\ldots,e_{l+1}\}$ and $v_{s+1}\cdot e_{l+1}\ge0$.

Let $A$ be the $2k\times k$ matrix $\begin{bmatrix}v_1&\cdots&v_k\end{bmatrix}=\begin{bmatrix}
    A''\\A'
\end{bmatrix}$, where $A''$ and $A'$ are $k\times k$ blocks. Note that $A$ satisfies the following conditions:
\begin{enumerate}[label=(\alph*)]
    \item $A^tA=I_n$;
    \item $A'$ and $A''$ are in row echelon form, and the first nonzero term (i.e.\ the pivot) on each nontrivial row is positive (here we do not assume the pivot of a row has to be $1$);
    \item The columns of $A''$ are $v_1',\ldots,v_k'$, and satisfy $\|v_1'\|\le \cdots\le\|v_k'\|$.
\end{enumerate}
We will denote the collection of all $2k\times k$ real matrices satisfying the three conditions above by $\mathcal{A}_k$.

Conversely, let $A$ be any matrix in $\mathcal{A}_k$, and $\mathcal{P}_A$ the $k$-plane spanned by the columns $v_1,\ldots,v_k$ of $A$. It is easy to see that the orthonormal basis $\{v_1,\ldots,v_k\}$ may be produced using the inductive procedure minimizing $\pi_{\mathcal{P}'}(v)$ described above. This gives a map $A\mapsto [\mathcal{P}_A]$ from $\mathcal{A}_k$ to $\Conf(k,2k)$, where $[\mathcal{P}_A]$ is the equivalence class of the pair $(\mathcal{P}_A,\mathcal{P}')$ in $\Conf(k,2k)$.

We claim the map $A\mapsto [\mathcal{P}_A]$ is a bijection. The procedure above producing a matrix $A$ from a given $k$-plane $\mathcal{P}$ implies that the map is surjective. To see injectivity, it suffices to show that the matrix produced from $\mathcal{P}$ is unique, i.e. independent of the choices made. Indeed, to choose $v_1$, we have to minimize $\|\pi_{\mathcal{P}'}(v)\|$ over all unit vectors $v$. The image of the unit sphere in $\mathcal{P}$ under $\pi_{\mathcal{P}'}$ is an ellipsoid of dimension $\le k$ in $\mathcal{P}'$. It is easy to see that either the vector minimizing $\|\pi_{\mathcal{P}'}(v)\|$ is unique up to sign, or such vectors form a lower dimensional sphere. In either case, there exists a rotation in $\mathcal{P}'$ sends any minimizing vector to another while fixing the ellipsoid. This implies that all the vectors chosen later are related by the same rotation, and hence together they give the same matrix $A$.

In this way, we give $\Conf(k,2k)$ a global system of coordinates, and it is easy to see that there exists a dense open subset with a smooth structure.



\paragraph{Intersection kernel}
We now define a kernel that takes into account how geodesic submanifolds intersect. Let $h$ be a smooth function on the configuration space $\Conf(k,d)$. Given $x\in M$, we can identify $T_xM$ equipped with the inner product induced from the hyperbolic metric isomorphically with $\mathbb{R}^d$ with the standard inner product. In this way, the corresponding configuration space of pairs of $k$-planes and $(d-k)$-planes in $T_xM$ can be naturally identified with $\Conf(k,d)$, and in fact does not depend on the choice of the identification $T_xM\cong\mathbb{R}^d$.


We consider only the compact case. In particular, we can define the kernel on the manifold instead of first going through the universal cover. We define the associated kernel as follows.
$$K^h_\delta(u,v)=\begin{cases}
    \frac{1}{c_\delta}h(\mathcal{P},\mathcal{Q})& \text{if }\overline{B_\delta^k(u)}\pitchfork \overline{B_\delta^{d-k}(v)}, \\
    0 &\text{otherwise,}
\end{cases}$$
where $\mathcal{P},\mathcal{Q}$ denote the tangent planes to $B_\delta^k(u)$ and $B_\delta^k(u)$, respectively, at their intersection point.

\paragraph{Sketch of the proof of Theorem \ref{thm: joint_equidistr}} We first have the following estimate, which we can prove similarly as Claim~\ref{claim: inter_kernel} and Claim~\ref{claim: inter_kernel_gen}. For any $f\in C^1(M)$ we have
    \begin{align*}
        \left|\int_{M\times\Conf(k,d)}f(x)h(\theta)\, dI^1_{conf}(\Gamma,S)-c(\Gamma,S)\int_{\mathcal{F}(M)}f(v)\int_{\mathcal{F}(M)}K^h_\delta(u,v)d\mu_\Gamma^1(u)d\mu_S^1(v)\right|\\\le\delta\Lip(f)
    \sup|h|.
    \end{align*}
    
Next we can smooth $K_\delta^h$ similarly as before and apply effective equidistribution to the inner integral, and obtain similar bounds as Claims~\ref{claim: smoothing_gen} and \ref{claim: equi_geod_gen} (with bounds for $h$ and its derivatives now also involved). For brevity, as the approach is analogous, we omit this and ignore the difference between $K_\delta^h$ and its smoothed version.

We then apply effective equidistribution to the outer integral and conclude
\begin{align*}
    \int_{\mathcal{F}(M)}f(v)\int_{\mathcal{F}(M)}K^h_\delta(u,v)\,dL^1_M\,d\mu_S^1(v)=\int_{\mathcal{F}(M)}f(v)\int_{\mathcal{F}(M)}K^h_\delta(u,v)\,dL^1_M(u)\,dL^1_M(v)\\+O(\Sob_q(f)\sup|h|\vol(S)^{-\epsilon_2'}).
\end{align*}
We again observe that $\int_{\mathcal{F}(M)}K^h_\delta(u,v)\,dL^1_M(u)$ is a constant independent of $v$. For simplicity, we write $\mathcal{L}_\delta(h):=\int_{\mathcal{F}(M)}K^h_\delta(u,v)\,dL^1_M(u)$. Note that when $\delta\ll1$, the local geometry is approximately Euclidean. We have
$$\mathcal{L}_\delta(h)=\int_{\mathcal{F}(M)}K^h_\delta(u,v)\,dL^1_M(u)=\frac1{c_\delta}\int_{\Gr(k,d)}\int_{\mathcal{F}_\theta}h(\theta)dL_\theta d\mu=\frac1{c_\delta}\int_{\Gr(k,d)}h(\theta)\vol(\mathcal{F}_\theta)d\mu,$$
where $\mu$ denotes the uniform measure on $\Gr(k,d)$, $\mathcal{F}_\theta$ denotes the collection of $k$-frames that meets $B_\delta^{d-k}(v)$ in a configuration represented by $\theta\in\Gr(k,d)$, and $L_\theta$ the induced measure on $\mathcal{F}_\theta$ from $L_M$. Here, we use the fact that $\delta\ll 1$ to ensure that the neighborhoods of the intersection points are disjoint and that the Grassmannian bundle over these local neighborhoods can be described as a product space.

When $\delta\ll 1$, $\vol(\mathcal{F}_\theta)\approx \delta^d V(\theta)$ plus higher order error terms, where $V(\theta)$ is a function depending only on $\theta$. Hence the limit
$$\mathcal{L}(h):=\lim_{\delta\to0}\mathcal{L}_\delta(h)$$
exists and is in fact a constant multiple of the integration of $h$ against $V(\theta)d\mu(\theta)$. Moreover, we can control
$|\mathcal{L}_\delta(h)-\mathcal{L}(h)|$ by sup norm of $h$ and some power of $\delta$. This gives the desired estimates.

\phantomsection
\addcontentsline{toc}{section}{References}
\bibliography{biblio.bib}
\bibliographystyle{math}

\end{document}